\numberwithin{equation}{section}
\definecolor{Darkgreen}{rgb}{0,0.4,0}
\definecolor{viola}{rgb}{0.7,0,1}
\NewDocumentCommand{\set}{m o}{\ensuremath{\left\lbrace\IfNoValueTF{#2}{#1}{#1 \: \left| \:   #2\right.}\right\rbrace }}
\numberwithin{equation}{section}
\theoremstyle{plain}
\newtheorem{deff}{Definition}[section]
\newtheorem{thm}[deff]{Theorem}
\newtheorem{prop}[deff]{Proposition}
\newtheorem{lemma}[deff]{Lemma}
\theoremstyle{remark}
\newtheorem{remark}[deff]{Remark}
\DeclareMathOperator{\capac}{cap}
\DeclareMathOperator{\Poi}{Poi}
\newcommand{\N}{\ensuremath{\mathbb{N}}}
\newcommand{\R}{\ensuremath{\mathbb{R}}}
\newcommand{\Z}{\ensuremath{\mathbb{Z}}}
\newcommand{\ind}{\boldsymbol{1}}
\newcommand{\given}{\: \big| \:}											
\newcommand{\Given}{\: \Big| \:}										
\newcommand{\as}{\text{a.s.}}
\newcommand{\barAS}{\text{-a.s.}}
\renewcommand{\tilde}{\widetilde}
\newcommand{\ab}[1]{\left| #1 \right|}				
\RenewDocumentCommand{\P}{s   O{}   m   G{} }				
{	\IfBooleanTF{#1}	{\mathcal{P}^{#4}_{#2} 		\ifblank{#3}{ }{\! \left(	#3	\right)}			}
	{\mathbb{P}^{#4}_{#2}  		\ifblank{#3}{ }{\! \left(	#3	\right)}			}
}
\newcommand{\Pgw}{\mathbb{P}^{{\rm GW}}}
\newcommand{\Egw}{\mathbb{E}^{{\rm GW}}}
\newcommand{\Pgff}[2]{\mathbb{P}^{{\rm G}}_{#1}\ifblank{#2}{}{\left(#2\right)}}
\newcommand{\Egff}[2]{\mathbb{E}^{{\rm G}}_{#1} \left[#2\right]}	
\newcommand{\Pri}{ \mathbb{P}^{{\rm RI}}}
\newlength{\leftstackrelawd}
\newlength{\leftstackrelbwd}
\def\leftstackrel#1#2{\settowidth{\leftstackrelawd}
	{${{}^{#1}}$}\settowidth{\leftstackrelbwd}{$#2$}
	\addtolength{\leftstackrelawd}{-\leftstackrelbwd}
	\leavevmode\ifthenelse{\lengthtest{\leftstackrelawd>0pt}}
	{\kern-.5\leftstackrelawd}{}\mathrel{\mathop{#2}\limits^{#1}}}
\begin{document}
	
	\title{Generating Galton--Watson trees using random walks and percolation for the Gaussian free field}
	
	\author{		
		Alexander Drewitz
		\thanks{Universität zu Köln,
			Department of Mathematics and Computer Science,
			Weyertal 86--90,
			50931 Köln, Germany.
			Emails: \url{adrewitz@uni-koeln.de},
			\url{ggallo@uni-koeln.de}}
		\and
		Gioele Gallo $ ^* $
		\and 
		Alexis Prévost
		\thanks{University of Geneva,
			Section of Mathematics,
			24, rue du Général Dufour,
			1211 Geneva, Switzerland.
			Email: \url{alexis.prevost@unige.ch}}
	}
	
	\date{\today}
	\maketitle

\begin{abstract}
The study of Gaussian free field level sets on supercritical Galton--Watson  trees has been initiated by Ab\"acherli and Sznitman in 2018. By means of entirely different tools, we continue this investigation  and generalize their main result on the positivity of the associated percolation critical parameter $h_*$ to the setting of arbitrary supercritical offspring distribution and random conductances. In our setting, this establishes a rigorous proof of the physics literature mantra that positive correlations facilitate percolation when compared to the independent case. Our proof proceeds by constructing the Galton--Watson tree through an exploration via finite random walk trajectories. This exploration of the tree progressively unveils an infinite connected component in the  random interlacements set on the tree, which is stable under small quenched noise. Using a Dynkin-type isomorphism theorem, we then infer the strict positivity of the critical parameter $ h_* .$ As a byproduct, we obtain transience results for the above-mentioned sets.
\end{abstract}

\section{Introduction}
The main subject of this article is the study of level set percolation for the Gaussian free field  on supercritical Galton--Watson trees. Due to the strong correlations inherent to the model, the problem of level set percolation induced by the Gaussian free field is quite intricate and significantly harder to understand than that of Bernoulli percolation. In the setting of fairly general transient graphs, the model has received increased attention in the last decade, as it is an important showcase for percolation problems with long-range correlations. A fundamental question in this context is to show the positivity of the associated critical parameter $h_*$ -- see \eqref{DEF-h_*} below for its definition -- which entails a coexistence phase for $h>0$ close to zero. It has been investigated on $\Z^d,$ $d\geq3,$ in \cite{BricmontLebowitzMaes87,RodriguezSznitman2013,DrewitzPrevostRodriguez2018-1}, and on more general graphs with polynomial growth in \cite{DrewitzPrevostRodriguez2018-2}. Of particular relevance for us is the setting of the Gaussian free field on trees, which has been studied in \cite{Sznitman2016,AbacherliSznitman2018,AbacherliCerny2019}. More precisely, in \cite[Section~5]{AbacherliSznitman2018}, Ab\"acherli and Sznitman consider the particular case of the Gaussian free field on supercritical Galton--Watson trees with mean offspring distribution  $m\in{(1,\infty)},$ and prove that $h_*\in{[0,\infty)}$ for all $m\in{(1,\infty)},$ as well as the strict inequality $h_*>0$ when $m>2.$

The main goal of the current article is to extend this result $h_*>0$ to all supercritical Galton--Watson trees, i.e.\,with offspring mean $m\in{(1,\infty)},$ which along the way solves an open question of \cite[Remark~5.6]{AbacherliSznitman2018}. Moreover, we additionally allow the edges of the tree to be equipped with random conductances with finite mean, and show that the associated critical parameter $ h_*$ is still deterministic and strictly positive.

It is intriguing to compare our main result with Bernoulli site percolation on supercritical Galton--Watson trees $\mathcal T,$ for which -- conditioned on survival -- the associated critical parameter is known to almost surely equal the inverse of the offspring mean, i.e., $p_c(\mathcal T) = 1/m;$ see \cite{Lyons90} or \cite[Proposition~5.9]{LyonsPeres17}. Contrasting this well-known result with the inequality $h_*(\mathcal T) >0$ is particularly interesting in the newly investigated range $m \in (1,2]$ in our article. Indeed, in this range we have that the density of Bernoulli percolation at the critical parameter is given by $p_c(\mathcal T) = 1/m \ge 1/2,$ whereas the density of percolation for the Gaussian free field level sets at the critical parameter is strictly smaller than $1/2,$ since $h_*(\mathcal T) >0.$ Therefore, when $m \in (1,2]$  the positive correlations of the Gaussian free field make percolation easier.
This is a behavior expected for many percolation models, see in particular  \cite{PhysRevA.46.R1724} as well as \cite{Marinov2006PercolationIT} for numerical reasonings concerning the setting of percolation with long-range correlations.
To the best of our knowledge, the only other class of transient graphs where an inequality between densities at criticality of Gaussian free field and independent percolation has been rigorously proven are $d$-regular trees, see \cite[Corollary~4.5]{Sznitman2016}, but it is conjectured to hold for a large class of transient graphs.

A key tool in our proof is based on a construction of the Galton--Watson tree and random walks on it at the same time, see Section~\ref{SECTION-watershed}. Each random walk will explore a portion of the tree below its starting point, and we call such a subset of the tree a ``watershed''.
The specific exploration via watersheds will prevent the random walks from ``predicting the future of the tree''\,during its construction; that is, we construct each watershed on a part of the Galton--Watson tree while preserving the independence of the rest of the tree. The main feature of the explored tree is its stability to perturbation by small quenched noise.  The desired positivity of $h_*$ will then be obtained by means of a Dynkin-type isomorphism theorem between the Gaussian free field and random walks, see \cite{Einsenbaumandall2000}, or more precisely with random interlacements, a random soup of random walks, see \cite{Sznitman2011a,Lupu2016}. Moreover, we expect that our exploration procedure of the Galton--Watson tree via watersheds can also be used to obtain other interesting results. A first manifestation of this is already provided by the results on noise-stability and transience for the interlacements set as well as for the level sets of the Gaussian free field above small positive levels, see Theorems~\ref{THM-quenchednoise} and~\ref{THM-Transience}  below.

\subsection{Main results}

Let us now explain our setting and results in more detail. We consider a
\begin{equation}
	\label{DEF-mathcalT}
	\text{Galton--Watson random tree $ \mathcal{T}$ with mean offspring distribution $m>1,$ conditioned on survival,}
\end{equation}
and denote the underlying probability measure by $\mathbb{P}^{{\rm GW}}.$
We endow the natural graph structure induced by $ \mathcal{T}$ with positive random conductances $λ_{x,y},$ $x\sim y,$ such that, conditionally on $\mathcal{T},$  and denoting by $y^-$ the parent of $ y \in \mathcal T,$ with $y $ different from the root $\emptyset,$
\begin{equation}\label{DEF-Conductances+}
	\begin{gathered}
		\text{the family $ \{λ_{x,y}\, : \, y \in \mathcal T \text{ and } y^-=x \}_{x \in \mathcal T},$ is i.i.d.\ and} \\
		\mathbb{E}^{{\rm GW}}[\lambda_{x,+}] < ∞	\quad ∀x\in\mathcal{T},\text{ where }λ_{x,+}:=\sum_{y:\,y^-=x} λ_{x,y};
	\end{gathered}
\end{equation}
note that this setting is slightly more general than endowing the edges of the Galton--Watson tree with independent conductances.
In particular, when the conductances $\lambda_{x,y},$ $x\sim y,$ are constant equal to $1,$ we recover the usual Galton--Watson tree, and in this case condition \eqref{DEF-Conductances+} simply boils down to the mean offspring distribution $m$ being finite. In a slight abuse of notation, we also denote by $\mathcal{T}$ the weighted graph with the conductances $\lambda,$ and will explicitly mention when we consider the tree $\mathcal{T}$ to be weightless as in \eqref{DEF-mathcalT} to avoid confusion. We refer to Section~\ref{SECTION-GWTree} for precise notation and definitions.

It is known that the random tree $\mathcal{T}$ is almost surely transient, cf.\ Proposition~\ref{prop:GWtransient}, and conditionally on its realization, we denote by $g^{\mathcal{T}}$ the Green function associated to the random walk on $\mathcal{T},$ see below \eqref{DEF-GreenFunction}.

Conditionally on the realization of $\mathcal T,$ we then define the Gaussian free field $ (φ_x)_{x\in\mathcal{T}} $ under some probability measure $\Pgff{\mathcal T}{}$ as the centered Gaussian field with covariance function $g^{\mathcal{T}},$ see Section~\ref{sec:GFF} for further details. Note that this is a Gaussian free field in a random environment, that is we first generate the Galton--Watson tree $\mathcal{T}$ with random conductances and then --  conditionally on the surviving Galton--Watson tree $\mathcal{T}$ -- we generate a Gaussian free field on $\mathcal{T}.$

We will study the percolative properties of the \emph{level sets} or {\em excursion sets} of the Gaussian free field on $\mathcal T,$ i.e., of the random set
\begin{equation}\label{DEF-E ge h}
	E^{\ge h} := E^{\ge h}(\mathcal T)= \set{x\in \mathcal{T} \colon φ_x\ge h}, \quad h\in \R.
\end{equation}
We observe that the level set is clearly decreasing in $ h $, and we define the critical parameter
\begin{equation}\label{DEF-h_*}
	h_*:= h_*(\mathcal T):= \inf \set{h\in \R\, \colon  \Pgff{\mathcal T}{}\text{-a.s.\ all connected components of $ E^{\ge h}(\mathcal{T}) $ are bounded}}
\end{equation}
for the corresponding percolation problem.

A priori, it is not known if $h_*$ is deterministic, nor whether the phase transition is nontrivial, i.e., whether $h_* \in \R$. For unitary conductances, the former is proved in \cite[Lemma~5.1]{AbacherliSznitman2018}, and the latter -- more precisely the inequality $0\leq h_*<\infty$ -- is proved in \cite[Proposition~5.2]{AbacherliSznitman2018}, taking advantage of  \cite{Tassy2010}. The result $ h_*>0$ is shown to hold in \cite{AbacherliSznitman2018} for constant conductances under the additional assumption $ m\in (2,∞);$ however,  it seems that the assumption of finite mean is not essential to their proof.  Let us also note in passing that even for Galton--Watson trees with random i.i.d.\ conductances, $h_*(\mathcal T)$ is still deterministic, see Appendix \ref{SECTION-uhDeterministic}. We now state our main result.
\begin{thm}\label{THM-h*>0}
	Under \eqref{DEF-mathcalT} and \eqref{DEF-Conductances+}, there exists $h>0$ such that $E^{\geq h}$ contains $\mathbb{E}^{{\rm GW}}[\Pgff{\mathcal{T}}{}(\cdot)]$-almost surely an unbounded connected component, and hence $ h_*(\mathcal T)>0.$
\end{thm}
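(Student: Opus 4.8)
The plan is to establish percolation of $E^{\geq h}$ for small $h>0$ via a Dynkin-type isomorphism theorem linking the Gaussian free field to random interlacements, and to control the interlacement set by constructing the Galton--Watson tree together with the random walk trajectories that generate it. First I would fix the level-$u$ random interlacements $\mathcal{I}^u$ on $\mathcal{T}$ (a soup of bi-infinite trajectories, or, since $\mathcal{T}$ is a transient tree, equivalently a Poisson cloud of forward trajectories issued from the appropriate points), and recall the isomorphism (Lupu's version, \cite{Lupu2016}) which couples $\mathcal{I}^u$ and $\varphi$ so that $\{x:\varphi_x\geq h\}\supseteq\mathcal{I}^u$ on the same probability space with $u=h^2/2$. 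Thus it suffices to show that for some $u>0$, $\mathcal{I}^u$ contains an unbounded connected component with positive $\mathbb{E}^{\mathrm{GW}}$-probability; by a $0$--$1$ argument (conditioning on survival and using the deterministic nature of $h_*$, Appendix~\ref{SECTION-uhDeterministic}), positive probability upgrades to the claimed statement and to $h_*>0$.

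The core of the argument is the joint construction of $\mathcal{T}$ and the interlacement trajectories, as outlined in Section~\ref{SECTION-watershed}. I would proceed by an exploration that, starting from the root, reveals the offspring of a vertex only when a random walk trajectory first reaches it; each such trajectory, run until it leaves a suitable neighbourhood, explores a ``watershed'' — a finite subtree rooted at the trajectory's starting point. The key structural point to check is that this exploration is \emph{honest}: at every stage, conditionally on what has been revealed, the unexplored part of $\mathcal{T}$ is still a fresh Galton--Watson tree with conductances distributed as in \eqref{DEF-Conductances+}, so that one can iterate and feed in independent offspring variables and conductances as needed. With this in hand, I would show that along a suitably chosen infinite ray (or more robustly, a Galton--Watson-like subtree of ``good'' vertices obtained by a branching-process comparison), the watersheds overlap enough to force an infinite connected cluster in $\mathcal{I}^u$: the crucial quantitative estimate is that the probability a watershed issued near a given vertex reaches ``deep enough'' into the tree is bounded below uniformly, using transience (Proposition~\ref{prop:GWtransient}) and the finite-mean conductance hypothesis to control escape probabilities and Green function values.

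The last ingredient is quenched noise stability: I would argue that the infinite component produced above persists when the environment is perturbed slightly, which is what ultimately allows the passage from a statement about $\mathcal{I}^u$ (an idealized object) to the genuinely random level set $E^{\geq h}$ under the annealed measure $\mathbb{E}^{\mathrm{GW}}[\Pgff{\mathcal{T}}{}(\cdot)]$; this is where Theorems~\ref{THM-quenchednoise} are invoked, and it is exactly the feature that the watershed construction is designed to deliver — each watershed depends on only finitely much information, so a small perturbation disturbs only finitely many of them and cannot disconnect the infinite chain. Assembling these pieces, the existence of an unbounded component of $E^{\geq h}$ for small $h>0$ follows, and hence $h_*(\mathcal{T})>0$ almost surely by determinism of $h_*$. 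The main obstacle I anticipate is the second step: proving that the watershed exploration genuinely preserves the branching structure of the unexplored tree while simultaneously giving a uniform lower bound on how far each watershed penetrates — reconciling the Markovian bookkeeping of the exploration with quantitative random-walk estimates on a random (and a priori irregular) tree is the delicate heart of the proof.
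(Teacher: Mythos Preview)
Your proposal contains a fundamental error in the use of the isomorphism. Lupu's coupling does \emph{not} give $\mathcal{I}^u\subset\{x:\varphi_x\geq h\}$ for $h=\sqrt{2u}>0$; it gives $\mathcal{I}^u\subset E^{\geq -\sqrt{2u}}$, i.e.\ an inclusion into the level set at a \emph{negative} level (this is \eqref{eq:isoforsets} in the paper). Letting $u\downarrow 0$ this yields only $h_*\geq 0$. Since $\mathcal{I}^u$ trivially contains an infinite connected component for every $u>0$ (it is a union of bi-infinite trajectories), your reduction ``it suffices to show $\mathcal{I}^u$ has an unbounded component'' is vacuous and cannot deliver $h_*>0$.

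What the paper actually uses is the sharper inclusion \eqref{eq:Iu+Auincluded}: $\mathcal{I}^u\cap A_u\subset\widehat{E}^{\geq\sqrt{2u}}$, where $A_u$ is the random set \eqref{PROP1-DEF-Au} built from the free field and auxiliary exponentials. The whole difficulty is then to prove that $\mathcal{I}^u\cap A_u$ percolates for small $u$, and this is \emph{not} a soft ``stability under small perturbation'' statement: as $u\to 0$ the interlacement becomes sparse (trajectories are rare, of density $\sim u$) while the noise $A_u^c$ becomes rare (density $\sim u^{3/2}$), and one must show the second effect wins. Quantitatively, the watershed trajectories must be run for length $\sim 1/u$ to guarantee branching, and one checks that the probability any of these $\sim 1/u$ vertices falls in $A_u^c$ is $O(u^{3/2}\cdot u^{-1})=O(u^{1/2})\to 0$; see \eqref{eq:boundprobaAu}--\eqref{eq:nextboundonprobaphiE}. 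Your proposal neither identifies the set $A_u$ nor this competition, and treats the noise-stability step as a post-hoc robustness check rather than as the heart of the argument. The watershed construction you sketch is indeed the right tool, but its purpose is precisely to make the scaling $L\sim 1/u$ versus $\mathbb{P}(x\in A_u^c)\sim u^{3/2}$ work, not merely to preserve the Galton--Watson law of the unexplored part.
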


Note that Theorem~\ref{THM-h*>0} does not yet imply that the phase transition is non-trivial, that is $h_*(\mathcal{T})<\infty.$ Indeed, this finiteness property does hold true for i.i.d.\ weights, but it may fail without this condition -- we refer to the discussion below \eqref{eq:h*small2u*} for details.

In the case $m>2,$ the assumption $\Egw[λ_{x,+}]<∞$ from \eqref{DEF-Conductances+} is not necessary to prove the inequality $h_*>0$ as explained at the end of Section~\ref{Section-warmup} (for unitary conductances this also follows from \cite[Theorem~5.5]{AbacherliSznitman2018}). In view of Theorem~\ref{THM-h*>0}, a natural question then is whether $h_*>0$ under the broader assumptions $\Egw[λ_{x,+}]=∞$ and $m\in{(1,2]}.$

We will now put our result into the context of previous literature on percolation for the Gaussian free field. The study of this percolation problem for unitary conductances had been initiated by Bricmont, Lebowitz and Maes in \cite{BricmontLebowitzMaes87} on the Euclidean lattice $ \Z^d $ in transient dimensions $ d\ge 3.$ Using a soft but quite robust contour approach, they proved  that $h_*(\Z^d) \ge 0$ for all $d \ge 3,$ as well as $h_*(\Z^3) < \infty.$ More recently,  on $ \mathbb{Z}^d $, it has been established in \cite{RodriguezSznitman2013} that $ h_*(\Z^d)<\infty$ for all $ d\ge 3,$ as well as $h_*(\Z^d)>0$ for all sufficiently large $d;$ in \cite{DrewitzPrevostRodriguez2018-1} it has then subsequently been shown that $ h_*(\Z^d)>0 $  for all $ d\ge 3 $. For trees with unitary conductances, the parameter $h_*\in{(0,\infty)}$ was first characterized in \cite{Sznitman2016} on $ d$-regular trees, $ d\ge 3 $,  and subsequently in \cite{AbacherliSznitman2018} for a larger class of transient trees, including supercritical Galton--Watson trees with mean $ m>2 $.  In the regular tree case, finer percolative properties have been obtained in the recent preprint \cite{Cerny2023}, which appeared after the preprint version of this article.

In \cite{AbacherliCerny2019}, further percolative properties for $ d$-regular trees have then been studied in the super- and sub-critical regime. In \cite{DrewitzPrevostRodriguez2018-2}, $ h_*>0 ,$ and in fact local uniqueness of the infinite cluster at a positive level, has been shown for a larger class of graphs with polynomial growth. This class of graphs actually include $\Z^d,$ $d\geq3,$ with bounded conductances as a special case, which was further studied in \cite{ChiariniNitzschner2021}. We also refer to \cite{Sznitman2015,AbacherliCerny2020,DuminilCopin2020,GoswamiRodriguezSevero2021,Conchon2021,Cerny2021} for further recent progress in this area.

Our proof crucially relies on another important object: the random interlacements set $ \mathcal{I}^u,$ $u>0,$ which has been introduced in $\Z^d,$ $d \ge 3,$ by \cite{Sznitman2010}. Later on, it has been generalized to transient weighted graphs in \cite{Teixeira2009}.  It is related to the Gaussian free field via Ray-Knight type isomorphism theorems, first obtained in \cite{Sznitman2011a}, and later on extended in a series of works \cite{Lupu2016,Sznitman2016,DrewitzPrevostRodriguez2021}. From a heuristic point of view, random interlacements is a random soup of doubly infinite transient random walks, and the union $\mathcal{I}^u$ of their traces thus trivially has an unbounded connected component (and hence percolates). On $\Z^d,$ $d\geq3,$ it was proved in \cite{MR3024098} that $\mathcal{I}^u$ still percolates when perturbed by a small quenched noise, and this property was essential in the proof of $h_*>0$ from \cite{DrewitzPrevostRodriguez2018-1}. Although our approach to proving $h_*>0$ on Galton--Watson trees is quite different from that of \cite{DrewitzPrevostRodriguez2018-1}, the stability of $\mathcal{I}^u$ to perturbation via small quenched noise will still play an essential role in our proof of Theorem~\ref{THM-h*>0}. Note that in the context of random Galton--Watson trees, we will see $\mathcal{I}^u$ as a quenched random interlacements on the realization of the tree $\mathcal{T}; $ see Section \ref{sec:RI} for details.

We now describe this stability property -- which is of independent interest, see its implications in Theorem~\ref{THM-Transience} below -- in more detail. Again conditionally on the realization of the tree $\mathcal{T},$ for some $p\in{(0,1)},$ denote by $\mathcal{B}_x,$ $x\in{\mathcal{T}},$ an independent family of i.i.d.\ Bernoulli random variables with parameter $p$ and let
\begin{equation}
	\label{eq:defBp}
	B_p:=\{x\in{\mathcal{T}}:\mathcal{B}_x=1\}.
\end{equation}
\begin{thm}
	\label{THM-quenchednoise}
	Under \eqref{DEF-mathcalT} and \eqref{DEF-Conductances+}, for all $u>0,$ there exists $p\in{(0,1)}$ such that $\mathcal{I}^u\cap B_p$ contains almost surely an infinite connected component. Moreover, there exist $h>0$ and $p\in{(0,1)}$ such that $E^{\geq h}\cap B_p$ contains almost surely an infinite connected component.
\end{thm}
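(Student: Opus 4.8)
Both assertions rest on the watershed exploration of Section~\ref{SECTION-watershed}; the one about $\mathcal{I}^u$ is the heart of the matter, and the one about $E^{\ge h}$ follows from it via a Ray--Knight-type isomorphism. Fix $u>0$. The plan for the first assertion is to reveal \emph{one specific} infinite connected component of $\mathcal{I}^u$ on $\mathcal{T}$ by an exploration that is consistent with the Poissonian/random-walk structure of random interlacements and yet never exposes the Galton--Watson offspring (nor the conductances) below a vertex it has not visited. Starting from an already-revealed vertex $x$ together with the finite piece of $\mathcal{T}$ seen so far, I would run the excursions into the subtree below $x$ of the interlacement trajectories touching it, revealing offspring and conductances only at the vertices actually visited, and stop the first time the revealed cluster of $x$ descends a prescribed number $L$ of generations below $x$; the cluster vertices at that depth are recorded as the \emph{child roots} spawned by the watershed of $x$. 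Using the excursion (soft local time) decomposition of $\mathcal{I}^u$ together with the fact that this procedure does not touch the subtrees below the child roots, one checks that, conditionally on the watershed of $x$, those subtrees are independent Galton--Watson trees with independent conductances, and that $\mathcal{I}^u$ below each child root is again a suitably seeded random interlacement of the same kind; hence the construction iterates, and the child roots become the generations of a branching structure living inside $\mathcal{I}^u$.

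\textbf{Branching estimate and noise stability.} The key quantitative input is that for \emph{every} $u>0$ one can choose $L=L(u)$ so that a watershed produces, in expectation, strictly more than one child root --- say at least $3$. The mechanism I would exploit is that the revealed cluster of $x$ does not merely follow the finitely many trajectories entering through $x$: as it descends, at each newly revealed vertex it meets, with probability bounded below by some $q(u)>0$ (the number of trajectories through a vertex $v$ being Poisson with parameter $u\,\capac(\{v\})$, which transience of $\mathcal{T}$ keeps bounded away from $0$), a further interlacement trajectory whose trace descends into a new branch and is absorbed by the cluster. Iterating over $L$ generations, the expected number of depth-$L$ cluster vertices grows at least geometrically at some rate $\beta(u)>1$ (with $\beta(u)\downarrow1$ as $u\downarrow0$, but $\beta(u)>1$ for all $u>0$), so taking $L$ large makes this expectation as large as we wish. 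Intersecting with $B_p$ then amounts to an independent site percolation on this branching structure, so by the estimate above the thinned watershed still reaches depth $L$ at more than one vertex in expectation --- equivalently, produces more than one child root joined to $x$ inside $\mathcal{I}^u\cap B_p$ and lying in $B_p$ --- as soon as $\beta(u)\,p>1$, i.e.\ for $p=p(u)$ sufficiently close to $1$.

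\textbf{Iteration and the second statement.} Combining the independence from the first step with the noise-stable branching estimate, the child roots connected to their parents inside $\mathcal{I}^u\cap B_p$ dominate a Galton--Watson process (a branching process in the random conductance environment) with mean offspring $>1$; it survives with positive probability under the annealed measure governing $\mathcal{T}$, $\mathcal{I}^u$ and $B_p$, and on this event $\mathcal{I}^u\cap B_p$ has an infinite connected component. To upgrade this to an almost-sure statement, note that conditionally on $\mathcal{T}$ the event in question is a tail event of the interlacement and of the field $\mathcal{B}$, hence has conditional probability $0$ or $1$, while the set of trees for which it equals $1$ is, modulo $\Pgw$-null sets, invariant under passing to subtrees; a $0$--$1$ law for Galton--Watson measures conditioned on survival (or, alternatively, restarting the branching process in the fresh subtrees hanging along a fixed infinite ray together with a Borel--Cantelli argument) then forces probability one. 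For the second assertion I would invoke the Ray--Knight-type isomorphism between $\mathcal{I}^u$ and the Gaussian free field on transient weighted graphs (Lupu's isomorphism on the cable system \cite{Lupu2016}, with the extensions of \cite{Sznitman2016,DrewitzPrevostRodriguez2021}), exactly as $h_*(\mathbb{Z}^d)>0$ was derived from the noise-stability of $\mathcal{I}^u$ in \cite{DrewitzPrevostRodriguez2018-1}: with $u>0$ small and $p\in(0,1)$ as above, the isomorphism provides a coupling of $\mathcal{I}^u$ with $\varphi\sim\Pgff{\mathcal{T}}{}$ under which, with high probability, the infinite cluster of $\mathcal{I}^u\cap B_p$ lies in $E^{\ge h}$ for a suitable $h=h(u)>0$ apart from a sparse exceptional set; absorbing the latter into an extra independent thinning yields $h>0$ and $p'\in(0,1)$ with $E^{\ge h}\cap B_{p'}$ containing an infinite component almost surely (which in particular re-proves Theorem~\ref{THM-h*>0}, since $E^{\ge h}\cap B_{p'}\subseteq E^{\ge h}$).

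\textbf{Main obstacle.} The delicate point is the combination of the first two steps: arranging the watershed exploration so that it genuinely keeps the unexplored subtrees fresh while remaining faithful to the excursion structure of $\mathcal{I}^u$, and then establishing the quantitative branching estimate $\beta(u)>1$ at \emph{every} positive level $u$ --- in particular for small $u$ and for offspring mean $m\in(1,2]$, which is precisely the regime left open by \cite{AbacherliSznitman2018}.
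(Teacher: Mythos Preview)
Your strategy --- simultaneous exploration of the tree and the interlacement, a supercritical branching of ``child roots'', and the cable-system isomorphism for the free-field statement --- is exactly the paper's, and you correctly name the main obstacle at the end. But the proposal does not supply the ideas that resolve it. To decide whether a fresh interlacement trajectory is born at a revealed vertex $v$ you need (via the highest-point decomposition, Theorem~\ref{Thm-Interlacement and RW}) the Poisson parameter $u\,\widecheck{e}_{\mathcal T}(v)$, which depends on the \emph{unrevealed} subtree below $v$; your assertion that ``transience of $\mathcal{T}$ keeps $\capac(\{v\})$ bounded away from $0$'' is false on a Galton--Watson tree with random conductances. The paper's resolution is to work with a \emph{fixed} parameter $\widetilde{u}=uc_e$ and to force $e_{\{\widehat a\},\mathcal T^{\mathbf W}_{\widehat a}}(\widehat a)\ge c_e$ by sacrificing one child $\widehat a1$ of every watershed root, attaching an independent fresh Galton--Watson tree there, and bounding its Green function (Definition~\ref{DEF-Goodwatershed}\,\ref{item:ii}; Proposition~\ref{PROP-GoodwatershedsInInterlacements}). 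This device --- absent from your plan --- is precisely what lets the exploration stay both fresh and inside $\mathcal{I}^u$.

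The branching bookkeeping is also different and your version is not justified. The paper's watershed is the trace of a \emph{single} walk visiting $L$ vertices; a law of large numbers (Lemma~\ref{LEMMA-Goodwatershed}\,\ref{item:lemma iv}) gives $\ge c_fL$ boundary free points, each good with probability $\gtrsim 1-e^{-\widetilde u}$ (Lemma~\ref{lem:probatobegood}), so the mean number of good children is $\asymp L(1-e^{-\widetilde u})$, driven large by $L\asymp 1/u$; the Bernoulli noise then costs $p^L$ per watershed, handled by $p=2^{-1/L}$ as in \eqref{eq:choiceLbaru}. Your per-depth growth rate $\beta(u)>1$ is asserted but never proved, and for $m\in(1,2]$ the tree contains arbitrarily long single-child stretches on which the cluster width is exactly one per generation, so a uniform $\beta(u)>1$ is not available without the averaging that the paper's watershed accounting provides. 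Finally, for $E^{\ge h}$: the isomorphism gives $\mathcal{I}^u\cap A_u\subset\widehat E^{\ge\sqrt{2u}}$ with $A_u$ as in \eqref{PROP1-DEF-Au}, and $A_u^c$ is \emph{not} an independent Bernoulli noise (it involves $\varphi$ itself). One must prove percolation of $\mathcal{I}^u\cap A_u\cap B_p$, which is why the paper carries the extra goodness property~\ref{item:v} (a Marcinkiewicz--Zygmund bound on $\sum_{y\in\mathbf W^a}(\boldsymbol{\lambda}_y^a)^{3/2}$) and the conditional estimate \eqref{eq:boundprobaAu}; ``absorbing the exceptional set into an extra independent thinning'' does not work as stated.
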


In \cite{MR3024098}, the question of stability of the vacant set $\mathcal{V}^u:=(\mathcal{I}^u)^c$ to perturbation by small quenched noise on $\Z^d$ has also been studied. In a similar vein, on Galton--Watson trees one can also easily prove that $\mathcal{V}^u\cap B_p$ percolates for $p$ large enough, see Remark~\ref{REMARK-u*Proof}. In \cite{MR3024098}, the proof of stability of $\mathcal{I}^u$ to perturbation by small quenched noise involves some local connectivity result for random interlacements, which can also be used to prove transience of the interlacements set \cite{MR2819660}, or of $\mathcal{I}^u\cap B_p,$ see \cite{MR3024098}. It turns out that, although our proof of Theorem~\ref{THM-quenchednoise} is entirely different from that of \cite{MR3024098}, it can also be employed to show transience of $\mathcal{I}^u\cap B_p,$ or of $E^{\geq h}\cap B_p$ at small, but positive, levels, under some additional assumptions on the conductances.

\begin{thm}
	\label{THM-Transience}
	Assume \eqref{DEF-mathcalT}, \eqref{DEF-Conductances+} and that, conditionally on the non-weighted graph $\mathcal{T},$  $(\lambda_{x,y})_{x\sim y\in{\mathcal{T}}}$ are i.i.d.\ conductances with compact support in $(0,\infty).$ Then for all $u>0,$ there exists $p\in{(0,1)}$ such that $\mathcal{I}^u\cap B_p$ contains almost surely a transient connected component. Moreover, there exist $h>0$ and $p\in{(0,1)}$ such that $E^{\geq h}\cap B_p$ contains almost surely a transient connected component.
\end{thm}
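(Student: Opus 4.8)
The plan is to upgrade the percolation statement of Theorem~\ref{THM-quenchednoise} to a \emph{transience} statement, using the same watershed exploration. Recall that in Theorem~\ref{THM-quenchednoise} one produces an infinite connected component inside $\mathcal{I}^u\cap B_p$ (resp.\ $E^{\geq h}\cap B_p$) by exploring the Galton--Watson tree via random walk trajectories; the point now is that the infinite cluster one builds is not merely a ray, but a subtree which itself contains a Galton--Watson-like branching structure, and such trees are transient by the Nash--Williams/flow criterion (cf.\ Proposition~\ref{prop:GWtransient}). So the first step is to revisit the construction in Section~\ref{SECTION-watershed} and observe that the infinite cluster emerging from the watershed exploration dominates, from below, a supercritical Galton--Watson tree with some mean offspring $m'>1$ (after thinning by $B_p$ and intersecting with $\mathcal{I}^u$ or $E^{\geq h}$). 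The compact-support assumption on the conductances, $\lambda_{x,y}\in[c,C]\subset(0,\infty)$, enters precisely here: it guarantees that the effective resistances along the explored edges are uniformly comparable to the combinatorial ones, so that a unit flow on the combinatorial subtree of finite energy transfers to a finite-energy flow on the weighted graph, and conversely that the relevant hitting/connection probabilities used in the watershed construction are bounded away from $0$ uniformly in the environment.

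Concretely, I would proceed as follows. First, fix $u>0$ and choose $p\in(0,1)$ as in Theorem~\ref{THM-quenchednoise}, possibly enlarging $p$; run the watershed exploration and record the (random) subtree $\mathcal{S}\subseteq\mathcal{I}^u\cap B_p$ it unveils. Second, show that $\mathcal{S}$ stochastically dominates a supercritical Galton--Watson tree: because each watershed is constructed on a fresh, independent portion of $\mathcal{T}$ (this is the whole design principle recalled in the introduction, ``preventing the random walks from predicting the future''), the number of new watersheds spawned from a given one is, uniformly, at least a fixed random variable $Z$ with $\mathbb{E}[Z]>1$ — here one reuses the estimates already established for Theorem~\ref{THM-quenchednoise} that make the exploration survive with positive probability, now phrased as a branching-number lower bound. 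Third, on the event of survival, the explored subtree contains an infinite binary-like tree after a further deterministic pruning; more robustly, its branching number exceeds $1$, so by the standard flow argument (a percolation/Nash--Williams estimate, exactly as in the proof of Proposition~\ref{prop:GWtransient}) it supports a nonzero finite-energy flow to infinity. Fourth, transfer this to the weighted graph: since $\lambda_{x,y}\ge c>0$ on every edge of $\mathcal{S}$, the same flow has energy at most $c^{-1}$ times its combinatorial energy, hence is still of finite energy with respect to the conductances $\lambda$, so the connected component of $\mathcal{S}$ is transient. The argument for $E^{\geq h}\cap B_p$ is identical after replacing $\mathcal{I}^u$ by $E^{\geq h}$ via the isomorphism theorem, exactly as in the proof of Theorem~\ref{THM-quenchednoise}, choosing $h>0$ small and $p$ close to $1$.

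The main obstacle I anticipate is \textbf{step two}: verifying that the explored subtree genuinely branches, i.e.\ that each watershed typically gives rise to more than one child-watershed all of whose connecting edges lie in $\mathcal{I}^u\cap B_p$, rather than merely guaranteeing survival along a single path. In the proof of Theorem~\ref{THM-quenchednoise} one only needs an infinite \emph{connected component}, which a priori could be produced by a sequence of explorations that at each stage keeps just one successful branch alive; to get transience one needs a quantitative lower bound on the \emph{expected number} of successful branches, uniformly over the environment and over the (random) geometry of the current watershed. I expect this to follow by a second-moment or Peierls-type argument applied to the already-established one-step connection estimates, using that the offspring mean $m>1$ gives slack: with probability bounded below, a watershed sees at least $\lceil 2/(\text{connection prob.})\rceil$ candidate children in $\mathcal{T}$, of which in expectation strictly more than one connect through. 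The compact-support hypothesis is what makes the connection probability bounded away from $0$ uniformly, so that this counting is honest; this is presumably why Theorem~\ref{THM-Transience} carries that extra assumption while Theorems~\ref{THM-h*>0} and~\ref{THM-quenchednoise} do not. Once the branching lower bound is in place, the flow construction and the resistance comparison are routine.
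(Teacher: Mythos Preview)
Your proposal is correct and takes a genuinely different route from the paper's. The paper does not use a flow/Nash--Williams argument; instead, following \cite{Collevecchio}, it argues by contradiction via a \emph{drift} computation for the random walk on the watershed tree. Concretely, the paper strengthens the notion of a good free point by an extra condition~iv)$'$: the children $\widehat{ai}$ of a good free point $a$ must also satisfy $d_{\mathcal{T}^{\mathbf W}}(\widehat a,\widehat{ai})\geq c_L L$ (Lemma~\ref{lem:probatobeTRANgood}, proved using the positive speed of the walk on Galton--Watson trees from \cite{Gantert2012}). Assuming recurrence, the walk restricted to the path $\widehat{a^-}\!-\!\widehat a\!-\!\widehat{ai}$ hits $\widehat{ai}$ before $\widehat{a^-}$ with probability at least $\tfrac{\overline c_\lambda}{\overline C_\Lambda}\tfrac{c_L}{2}$ (this is where \emph{both} conductance bounds and \emph{both} length bounds $c_LL\le d\le L$ enter, see \eqref{eq:drift}); a coloring argument then produces a supercritical tree of ``white'' vertices visited before the first return, contradicting recurrence.

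Your flow approach is arguably cleaner and uses less: you need only the \emph{upper} bound $L$ on super-edge lengths (automatic from the watershed construction) and the \emph{lower} bound $\overline c_\lambda$ on the conductances; the i.i.d.\ assumption and the upper bound $\overline C_\Lambda$ play no role, and condition~iv)$'$ is unnecessary. Two remarks, though. First, your ``main obstacle'' in step two is not an obstacle: Proposition~\ref{PROP-Prop1} (together with \eqref{eq:boundonprobaFg1}) already gives that $F^{g1}$ contains a $d$-ary tree with $d$ as large as you like, so the branching is established. Second, be careful when you say ``its branching number exceeds~$1$'' and then compare combinatorial and weighted energies: the object whose transience you want is the union $\bigcup_{a}\mathbf W^a\subset\mathcal{T}$, not the abstract tree of free points, and you must route the unit flow \emph{through} the watersheds. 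This is fine --- each watershed has at most $L$ vertices, the flow through any of its edges is at most $d^{-k}$ at free-point depth $k$, so the energy of the equal-splitting flow is bounded by $\sum_k d^{k}\cdot L\cdot d^{-2k}/\overline c_\lambda<\infty$ --- but it is a step that should be made explicit rather than absorbed into ``the same flow has energy at most $c^{-1}$ times its combinatorial energy''.
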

For the reader's convenience we refer to the discussion above \eqref{eq-BoundedConductancesIn6} for the precise definition of what it means in our context that, conditionally on the non-weighted graph $\mathcal{T},$  $(\lambda_{x,y})_{x\sim y\in{\mathcal{T}}}$ are i.i.d.\ conductances with compact support in $(0,\infty)$ -- which, in fact, is arguably the  ``natural'' way of endowing a tree with i.i.d.\ random conductances, but less general when compared to \eqref{DEF-Conductances+}.

Let us finish this subsection with some comments on percolation for the vacant set of random interlacements, and the finiteness of $h_*.$ The random interlacements set $\mathcal{I}^u$ always percolates since the trace of a transient random walk is an unbounded connected set; one may, however, wonder if the same holds true for its complement the \emph{vacant set} $ \mathcal{V}^u $ when the intensity parameter varies.

Denoting by $u_*$ the critical parameter associated to the percolation of $\mathcal{V}^u,$ $u>0,$ the isomorphism between random interlacements and the Gaussian free field, see Proposition~\ref{THM-Isomorphism} below (which can be used in our context in view of Proposition~\ref{prop:capRWinfty}), implies similarly as in \cite[Theorem~3]{Lupu2016} that
\begin{equation}
	\label{eq:h*small2u*}
	h_*\leq\sqrt{2u_*}.
\end{equation}
The inequality \eqref{eq:h*small2u*} combined with Theorem~\ref{THM-h*>0} implies $u_*>0$, but note that the inequality $u_*>0$ could be proved via easier means, see Remark~\ref{REMARK-u*Proof}. Let us note here that in the special case of unitary conductances, an explicit formula for $u_*$ has been derived in \cite{Tassy2010}. The proof of \cite[Theorem~1]{Tassy2010} can be adapted to random conductances as long as $(\lambda_{x,y})_{x\sim y\in{\mathcal{T}}}$ are i.i.d.\ conductances conditionally on the non-weighted graph $\mathcal{T}.$
In particular, $u_*<\infty$ under the same conditions, and thus $h_*<\infty$ as well by  \eqref{eq:h*small2u*}.
However, if we allow the
weights $(\lambda_{x,y})_{x\sim y\in{\mathcal{T}}}$ to not be i.i.d.\ conditionally on the non-weighted graph $\mathcal{T}$ -- but still satisfying the usual setup of \eqref{DEF-Conductances+} --
one can find Galton--Watson trees where $h_*=\infty,$ see \eqref{EQ-h*infinity}.

The weak inequality \eqref{eq:h*small2u*} can actually  be improved to $ h_* < \sqrt{2u_*} $ on $ d$-regular trees, $ d\ge 3,$ see \cite{Sznitman2016}. In \cite{AbacherliSznitman2018}, the authors provide general enough conditions to obtain $ h_* < \sqrt{2u_*} $ on transient trees, and in particular for Galton--Watson trees with unitary conductances this strict inequality holds under additional hypotheses on exponential moments of the offspring distribution, see \cite[Theorem~5.4]{AbacherliSznitman2018}. They also provide an example, namely the tree where each vertex has an offspring size equal to its distance to the root, where actually $ 0=h_* = \sqrt{2u_*}.$ Note that this entails that Theorem~\ref{THM-h*>0} does not hold when removing the assumption $ \mathbb{E}^{{\rm GW}}[\lambda_{x,+}]<\infty$ from \eqref{DEF-Conductances+}, as well as the assumption that the distribution of the number of children does not depend on the generation.

\subsection{Outline of the proof}
\label{sec:outline}
We now comment on the proofs of Theorems~\ref{THM-h*>0}, \ref{THM-quenchednoise} and \ref{THM-Transience} in more detail. Let us first elaborate on the fact that Theorem~\ref{THM-quenchednoise} is useful to obtain Theorem~\ref{THM-h*>0}.
The isomorphism between random interlacements and the Gaussian free field, see Proposition~\ref{THM-Isomorphism}, implies that for each $u>0,$ random interlacements and the Gaussian free field on $\mathcal T$ can be coupled in such a way that
\begin{equation}
	\label{eq:isoforsets}
	\text{ almost  surely,} \quad \mathcal{I}^u\subset E^{\geq -\sqrt{2u}}.
\end{equation}
This implies in particular that $E^{\geq-\sqrt{2u}}$ percolates for all $u>0,$ and taking $u\downarrow 0$ we infer that $h_*\geq0.$ Note that the validity of the inclusion \eqref{eq:isoforsets} requires some condition on the tree to be fulfilled -- see \eqref{eq:capRWinfty} -- but we will actually show in Proposition~\ref{prop:capRWinfty} that this condition is always satisfied in our context.  In \cite{DrewitzPrevostRodriguez2018-1,DrewitzPrevostRodriguez2018-2}, an extension of the inclusion \eqref{eq:isoforsets} to a continuous metric structure associated with the discrete graph, the so-called cable system, was used to lift the inclusion \eqref{eq:isoforsets} -- when the field was taking not too high values -- to level sets of the Gaussian free field at positive levels, which then yielded the desired strict inequality $h_*>0.$ Here, we follow a simpler approach, that is we use an extension of the inclusion \eqref{eq:isoforsets}, see Proposition~\ref{THM-Isomorphism} below,  which includes information about the exact values of the free field, as well as the local times of random interlacements. Proposition~\ref{THM-Isomorphism} is proven using the cable system, cf.\ \cite{Lupu2016} for further details. The proposition readily implies that there exists a coupling such that for each $u>0,$
\begin{equation}
	\label{eq:Iu+Auincluded}
	\text{ almost surely,} \quad \mathcal{I}^u\cap A_u\subset \widehat{E}^{\geq \sqrt{2u}},
\end{equation}
where $\widehat{E}^{\geq \sqrt{2u}}$  has the same law as $E^{\geq\sqrt{2u}},$ see \eqref{DEF-E ge h},  and
\begin{equation}\label{PROP1-DEF-Au}
	A_{u}:= \left\lbrace	x\in \mathcal{T}\colon
	\mathcal{E}_x>4u\lambda_x\text{ or }\ab{φ_x}>2\sqrt{2u}	\right\rbrace,
\end{equation}
for some i.i.d.\ exponential random variables $(\mathcal{E}_x)_{x\in{\mathcal{T}}}$ with parameter one, independent of the Gaussian free field $φ$ and the interlacements set $\mathcal{I}^u.$ Note that $A_u$ increases a.s.\ to $\mathcal{T}$ as $u\rightarrow0,$ and one can thus interpret the intersection with $A_u$ as applying a small quenched noise. Theorem~\ref{THM-quenchednoise} then suggests that $\mathcal{I}^u \cap A_u$ might percolate for $u$ small enough, which again would imply Theorem~\ref{THM-h*>0} by \eqref{eq:Iu+Auincluded}.

However, one cannot directly use Theorem~\ref{THM-quenchednoise} for proving Theorem~\ref{THM-h*>0} for two reasons: first, the variables $\{x\in{A_u}\},$ $x\in{\mathcal{T}},$ are not independent, and second, the probability that $x\in{A_u}$ depends on the parameter $u$ of the interlacements set, and thus, contrary to $p$ in Theorem~\ref{THM-quenchednoise}, it cannot be taken arbitrarily close to one for a fixed $u.$ The first problem will be essentially solved by lower bounding the probability that $x\in{A_u}$ conditionally on $\{y\in{A_u}\},$ $y\neq x,$ using the Markov property of the free field, see \eqref{eq:boundprobaAu}. To solve the second problem, we will make the dependency of $p$ on $u$ in Theorem~\ref{THM-quenchednoise} explicit, that is, we find a function $p(u),$ with $p(u)\uparrow1$ as $u\rightarrow 0,$ such that $\mathcal{I}^u\cap B_{p(u)}$ percolates for all $u>0,$ and we show that the probability that $x\in{A_u}$ is larger than $p(u)$ for $u$ small enough, see the proof of Proposition~\ref{PROP-Percolation-Au}.

Therefore, in order to obtain Theorem~\ref{THM-h*>0}, it is essentially enough to show that $\mathcal{I}^u\cap B_{p(u)}$ percolates, where $p(u)$ is smaller than the probability that $x\in{A_u}$  for $u$ small enough. The main difficulty is that, when $u$ is small, there are two competing effects at play in this percolation problem. On the one hand, in the $u>0$ small regime, the interlacements set $\mathcal{I}^u$ consists of few trajectories, and hence is less well-connected; i.e., intersecting $\mathcal I^u$ with $B_p$ might break its infinite connected components into finite pieces. This is particularly problematic when $m$ is close to one, since the tree tends to contain long stretches which locally look like $\Z,$ and hence the connectivity of such components turns out to be sensitive to an independent noise. On the other hand, as $u\rightarrow0,$ for each $x\in{\mathcal{T}},$ the probability that $x$ is in $A_u$ tends to one, and it thus becomes less likely to break a fixed connected component of $\mathcal{I}^u$ into finite pieces when intersecting  with $B_{p(u)}.$ The proof of Theorem~\ref{THM-h*>0} therefore requires a subtle comparison of the influences of these two opposite effects as $u\rightarrow0.$ We now provide a short explanation of how this is done.

The probability that a vertex $x$ is contained in ${A_u^c}$ can be easily upper bounded by $u^{3/2}\lambda_x^{3/2},$ see \eqref{eq:boundprobaAu} below, and we can thus take $p(u)=1-u^{3/2}\lambda_x^{3/2}$ for $u$ small enough.
To prove percolation of $\mathcal{I}^u\cap B_{p(u)}$, we use a description of the trajectories in $\mathcal{I}^u$ via their highest (i.e., minimal distance to the root) visited vertex, Theorem~\ref{Thm-Interlacement and RW}, which can be seen as a generalization of \cite[Theorem~5.1]{Teixeira2009}. This description entails that $\mathcal{I}^u$ can be generated by starting, for each vertex $x\in \mathcal T,$ an independent Poissonian number $Γ_x$  of random walks starting at $x$ going down the tree. Here, the Poisson distribution underlying $Γ_x$ has parameter $u\widecheck{e}_{\mathcal{T}}(x),$ where
$\widecheck{e}_{\mathcal{T}}(x) $ -- see \eqref{DEF-cy} -- is a parameter depending on the subtree rooted at $x,$ which bears some similarity with the square of the conductance from $x$ to infinity.

Now in the simpler case where each vertex in the tree $\mathcal{T}$ always had at least two children and the conductances were bounded, one could finish the proof by first conditioning on $\mathcal T$ and by then
proceeding as follows.  One can under these conditions easily show that $\widecheck{e}_{\mathcal{T}}(x)$ is of constant order, uniformly in $x\in{\mathcal{T}}.$ Thus, when $Γ_x\geq1,$ with high probability, starting a random walk at $x$ going down the tree up to the first time it has visited $C/u$ vertices, for a large constant $C,$ there are at least two vertices $y$ with $Γ_y\geq1$ which are not visited by the walk, but children of vertices visited by the walk (the existence of such vertices is guaranteed by the fact that each vertex visited by the walk has at least two children). We will say that such a point $y$ corresponds to a free point, see \eqref{DEF-AllFreePoints}. Moreover, again with high probability as $u\rightarrow0,$ all the vertices visited by this walk are contained in $B_{p(u)},$ with $p(u)=1-u^{3/2}\lambda_x^{3/2},$ and in particular there is a path between $x$ and $y$ in $\mathcal{I}^u\cap B_{p(u)}.$ One can now iterate this procedure starting a new trajectory at each $y$ corresponding to a new free point, and show that the tree of free points contains a $d$-ary tree, see Proposition~\ref{PROP-Prop1}. In particular it percolates, which directly implies the percolation of $\mathcal{I}^u\cap B_{p(u)}$ also.

In this approach, we thus first generate $\mathcal{T},$ and then construct an infinite cluster in $\mathcal{I}^u\cap B_{p(u)}$ on the now fixed tree $\mathcal{T}.$ However, when the mean offspring number $m$ is close to one, or the conductances are not bounded, then the tree $\mathcal{T}$ will contain some connected components of vertices, each with exactly one child, with size more than $C/u,$ on which the above approach is bound to fail. Note, however, that as $u\rightarrow0,$ condition \eqref{DEF-Conductances+} in combination with the Marcinkiewicz-Zygmund law of large numbers
implies that these bad sequences in $\mathcal{T}$ become rarer when the tree is generated, see \eqref{eq:proofofv)}. In order to benefit from this information, we are going to generate the interlacements set $\mathcal{I}^u$ and the Galton--Watson tree $\mathcal{T}$ simultaneously. Generating the two processes at the same time is of considerable importance as it allows us to operate with the interlacements process without being forced to generate the whole tree beforehand.

To generate these two processes at the same time, we will explore the Galton--Watson tree using random walks, in the form of an object that we will call \emph{watershed}, as is explained in Section~\ref{SECTION-watershed} in more detail. The previously mentioned description of random interlacement trajectories via their highest visited vertex then implies that for each vertex $x,$ if a Poisson random variable with parameter $u$ takes the value at least one, one can start a watershed at $x,$ that is a walk starting at $x$ and exploring the tree below $x,$ which is included in random interlacements at level $u/{e}_{\{x\},\mathcal{T}_{x}}(x),$ see Proposition~\ref{PROP-W_Gw_WInterl}; here, ${e}_{\{x\},\mathcal{T}_{x}}$ is the equilibrium measure of the set $\{x\}$ for the subtree $\mathcal{T}_{x}$ of $\mathcal T$ rooted in $x,$ see \eqref{DEF-EquilibriumMeasure}. Now, for each vertex $x$, we will first generate a portion of the tree to make sure that ${e}_{\{x\},\mathcal{T}_{x}}(x)\geq c_e$ for some constant $c_e,$ see \eqref{PROP1-DEF-Uprime}, and then start a watershed  at $x$ if a Poisson random variable with parameter $u$ is at least one, which will thus be included in random interlacements at level $u/c_e,$  see Proposition~\ref{PROP-GoodwatershedsInInterlacements}. We can now use the additional randomness of the tree --  which in particular entails that with high probability there are no large components of  vertices each with exactly one child -- to show that, for $u>0$ small enough, the intersection of all the watersheds and $B_{p(u/c_e)}$ percolates for each $m>1,$ and thus $E^{\geq h}$ percolates for $h$ small enough as well; see Section~\ref{SECTION-Percolation} for details.

Finally, in order to prove Theorem~\ref{THM-Transience}, we note that, for uniformly bounded weights, the trace of a random walk on the watersheds is essentially a coarse-grained random walk on the tree of free points with a drift, see \eqref{eq:drift}. Using an argument from \cite{Collevecchio}, we deduce that such a random walk is transient, which finishes the proof using the isomorphism \eqref{eq:Iu+Auincluded} again.

The structure of the article is as follows: in Section~\ref{SECTION-AllDefinitions} we will define the main objects and set up  notation. In Section~\ref{Section-warmup} we provide a short and simple proof of Theorem~\ref{THM-h*>0} under the additional assumption $m>2$ -- this will turn out instructive for the proof of the general result also. Furthermore, we provide examples of Galton--Watson trees with $h_*=\infty.$ In Section~\ref{SECTION-watershed} we will introduce the exploration of the Galton--Watson tree through random walks, which is used in Section~\ref{SECTION-Percolation} to prove Theorems~\ref{THM-h*>0} and \ref{THM-quenchednoise}. In Section~\ref{SEC-transience}, we use similar methods to prove Theorem~\ref{THM-Transience}. Finally, we prove in Appendix \ref{SECTION-uhDeterministic} that $ h_* $ is deterministic in our setting.

	\bigskip
{\bf Acknowledgment:}
	The authors would like to thank Alain-Sol Sznitman for suggesting this problem, as well as Guillaume Conchon-Kerjan for useful remarks on an earlier version of the paper. The final version of the paper benefited from a careful reviewer's report. AD and GG have been supported by  Deutsche Forschungsgemeinschaft (DFG) grant DR 1096/1-1. 
	AP has been supported by the Engineering and Physical Sciences Research Council (EPSRC) grant EP/R022615/1, Isaac Newton Trust (INT) grant G101121 and European Research Council (ERC) starting grant 804166 (SPRS).

\section{Notation and definitions}\label{SECTION-AllDefinitions}
In Sections~\ref{SECTION-GWTree} and \ref{SECTION-Pruning} we introduce the Galton--Watson trees which we will be considering.  Subsequently, Sections~\ref{sec:GFF} and \ref{sec:RI} are then devoted to random walks, the Gaussian free field, as well as random interlacements on trees. In  Section~\ref{sec:iso} we introduce the isomorphism theorem between random interlacements and the Gaussian free field.

\subsection{Galton--Watson trees}\label{SECTION-GWTree}

We will investigate trees using the Ulam-Harris labeling. For this purpose, consider the space
\begin{equation}\label{DEF-UlamHarris}
	\mathcal{X}:= \bigcup_{i=0}^{∞}\N^i,
\end{equation}
where $\N$ is the set of positive integers, $\N_0$ the set of non-negative integers and $ \N^0 $ is defined as $ \{\emptyset\} $.
For $i,j \in \N $ as well as $ x,y \in \mathcal{X} $ such that $ x=(x_1,\dots,x_i)\in \N^i$ and $ y=(y_1,\dots,y_j)\in \N^j$, we define the  concatenation of $ x $ and $ y $ as $ xy=(x_1,\dots,x_i,y_1,\dots,y_j)\in \N^{i+j}\subseteq\mathcal{X}.$ Moreover, for $A\subseteq\mathcal{X}$ and $x\in{\mathcal{X}}$ we introduce $x\cdot A:=\{xy \, : \, y\in{A}\};$
note that in contrast to pointwise concatenation we put an additional dot for aesthetic reasons.
For all $x=(x_1,\dots,x_i)\in{\mathcal{X}},$ $i\in{\N},$ we define $x^-:=(x_1,\dots,x_{i-1}),$ the parent of $x,$ with the convention $()=\emptyset.$
For a set ${A}\subseteq \mathcal{X}$ we define its (interior) boundary as $∂{A}:=\{x\in {A}\, \colon\, \nexists \, y\in {A}, \,y^-=x \} $. Note that this is not exactly the natural topological boundary, but this slightly modified definition will turn out useful for our purposes. We moreover introduce, for  $A\subseteq\mathcal{X}$ and $ x \in A,$ the set of children of $x$ in $A$ as
\begin{equation}\label{DEF-GenerationOfTree}
	G_x^A := \set { y\in A\given\,y^-=x}.
\end{equation}

We call $T\subset\mathcal{X}$ a tree if for each $x\in{T\setminus\{\emptyset\}},$ we have $x^-\in{T}$ and $|G_x^T|<\infty.$ We then say that $x\in{T}\setminus \{\emptyset\}$ is a child of $y\in{T}$ if $x^-=y.$  If the tree $T$ under consideration is clear from the context, for all $x,y\in{T},$ we write $x\sim y$ if either $x=y^-$ or $y=x^-.$  One can also view a tree $T$ as a graph with edges between $x$ and $y$ if and only if $x\sim y.$  On this graph, we denote by $ d_T(x,y) $ the usual graph distance.
We say that $T$ is a weighted tree if each edge between $x$ and $y$ is endowed with a symmetric conductance $\lambda_{x,y}=\lambda_{y,x}\in{(0,\infty)}.$ For $x\in{T}$ we also define $\lambda_{x,+}$  as in \eqref{DEF-Conductances+}. Since weights are not encoded in $\mathcal X,$  a weighted tree is not a subset of $\mathcal{X}.$ However, to simplify notation, we will often implicitly identify a weighted tree with its set of vertices, a subset of $\mathcal{X}.$  Note that most of the previous notation depends on the choice of the tree $T,$ which will always be clear from the context. For $ x\in{T} $, we write ${T}_x$ for the subtree of $T$ consisting of $x$ and all descendants of $x,$ endowed with the same conductances as in the underlying tree $T.$ In this article, we think of trees as growing from top to bottom, so we sometimes refer to the points in the subtree $T_x$ as the points below $x.$ A priori, ${T}_x$ may consist of finitely many nodes only, but with a standard pruning procedure, we will actually soon reduce ourselves to the case of infinite Galton--Watson trees, see Section~\ref{SECTION-Pruning}.

We now explain how to define a Galton--Watson tree with random weights as a random weighted tree $ \mathcal{T}.$
We consider a probability measure $ ν $ on $[0,\infty)^{\N},$ which will form a canonical probability space, in order to describe the offspring distribution as well as the associated conductances. More precisely, we consider $\nu$ such that if the sequence $(\lambda_i)_{i\in{\N}}$ on $[0,\infty)^{\N}$ has law $\nu,$ then there exists  $d\in{\N}$ such that $\nu$-a.s., $\lambda_i>0$ for all $i< d$ and $\lambda_i=0$ for all $i\geq d.$ We will soon use $\nu$ to assign weights to the edges of the tree by means of a vector $(λ_{x,xi})_{i=1}^∞,$ distributed according to $ν$ for each vertex $x.$
Throughout this article, except in Section~\ref{Section-warmup}, we moreover assume that the law of the conductances satisfies
\begin{equation}
	\label{eq:assfinitemoment}
	\mathbb{E}^{ν}\big[\lambda_+\big] < ∞,\text{ where }\lambda_+=\sum_{i} λ_{i};
\end{equation}
essentially, this is just a reformulation of the second condition in \eqref{DEF-Conductances+}.
Note that we do not assume the conductances to be bounded away from zero or infinity, nor that the conductances $\lambda_i,$ $i\in{\N},$ are independent under $\nu.$
Defining the function
$ π\colon [0,\infty)^{\N}\to \N_0$ via
$ (λ_{i})_{i\in\N} \mapsto  |\{i\in\N:\,\lambda_i>0\}|,$ we introduce the pushforward probability measure
\begin{equation}\label{DEF-OffspringDistribution}
	μ := ν \circ π^{-1}
\end{equation}
on $ \N_0.$ As it corresponds to the law of the number of edges with conductances different from 0,
it will play the role of the offspring distribution. We will assume from now on that the mean of the offspring distribution satisfies
\begin{equation} \label{eq:superCrit}
	m:= \sum_{i=0}^{∞} i μ(i) >1,
\end{equation}
which will correspond to  the case of  supercritical Galton--Watson trees.

On some rich enough probability space we define the Galton--Watson tree $ \mathcal{T}$  by constructing $\mathcal{T}\cap \N^k(\subset\mathcal{X}),$ endowed with conductances on the (undirected) edges with the vertices in $\mathcal{T}\cap\N^{k-1},$ recursively in $k.$ For $k=0,$ we simply start with the vertex $ \emptyset \in\N^0\subseteq \mathcal{X}$ called the root. For $k\geq0,$ once the tree $\mathcal T$ has been generated up to generation $k,$ for each vertex $ x\in\N^k\cap\mathcal{T}$ we generate independently a random vector $  (λ_{x,xi})_{i\in{\N}}$ with law $\nu.$ The vertex $x$ has $ π( (λ_{x,xi})_{i\in{\N}}) $ children, and we endow the edge from $x$ to its child $xi,$ $1\leq i\leq π( (λ_{x,xi})_{i\in{\N}}),$ with the conductance $\lambda_{x,xi}\in{(0,\infty)}.$ This defines $\mathcal{T}\cap\N^{k+1}$ and its conductances with vertices in $\mathcal{T}\cap\N^k.$
The union over $k \in \N_0$ of these sets, endowed with the respective conductances, is denoted by $\mathcal{T},$ the weighted Galton--Watson tree.
Note that the structure of the tree is completely determined by the weights $λ,$ and that an edge between two vertices is present if and only if the conductance between them is non-zero. Under our standing assumption \eqref{eq:superCrit}, the tree becomes extinct with probability $ q <1$ (cf.\ for instance the discussion below \cite[Proposition~5.4]{LyonsPeres17}). Hence, it has a positive probability to survive indefinitely, and in order to avoid trivial situations, we will always condition the Galton--Watson tree on this event of survival in what follows. We denote by $ \Pgw$ the probability measure underlying the Galton--Watson tree constructed above, conditioned on survival.

Let us also define here already the canonical $\sigma$-algebras that we consider throughout the article, and which only become relevant at later points in this article. The set $\mathcal{X}$ is endowed with the $\sigma$-algebra $ σ(\set{x},\,{x\in\mathcal{X}}),$  and the space of subsets of $\mathcal{X}$ is endowed with the  $\sigma$-algebra generated by the coordinate functions $A\mapsto\ind_{\{x\in{A}\}},$ $x\in{\mathcal{X}}.$ If $T\subset\mathcal{X},$ we will often regard $(\lambda_{x,y})_{x\sim y\in{T}}\in{(0,\infty)^{\{x,y\in{T}:\,x\sim y\}}}$ as an element of $[0,\infty)^{\mathcal{X}\times\mathcal{X}},$ endowed with the product of the Borel-$\sigma$-algebras, by taking $\lambda_{x,y}=0$ if either $x\notin{T}$ or $y\notin{T},$ or else if $x$ and $y$ are not neighbors in $T.$

\subsection{Pruning of the tree}\label{SECTION-Pruning}

In this subsection we describe a useful pruning procedure for the tree conditioned on survival, which corresponds to chopping all finite branches of the tree -- the remaining subtree is known as the reduced subtree in the literature, see e.g.\ \cite{LyonsPeres17}. In order to simplify our investigations, we will then observe that the conditioned chopped Galton--Watson tree can also be constructed as a Galton--Watson tree with modified offspring distribution and which then survives almost surely, see \eqref{eq:deff*}. For this purpose, we define the reduced subtree $\mathcal{T}^{\infty}$ of $\mathcal{T}$ as consisting of those vertices of $\mathcal{T}$ which have an infinite line of descendants:
\begin{equation*}
	\mathcal{T}^{∞}:=\set{ x\in\mathcal{T} :\, \mathcal{T}_x\text{ is infinite}},
\end{equation*}
where we recall that the notation $\mathcal T_x$ has been introduced in the paragraph below \eqref{DEF-GenerationOfTree}.

Then \cite[Proposition~5.28 (i)]{LyonsPeres17} entails that  $ \mathcal{T}^∞ ,$ which can be seen as a tree in $\mathcal{X}$, has -- possibly after relabeling and conditionally on survival -- the same law  as a Galton--Watson tree $ \mathcal{T}^* $ with offspring distribution $ μ^*.$ The latter is characterized by its probability generating function
\begin{align}
	\label{eq:deff*}
	\begin{split}
		f^*(s)= \frac{f(q+ s(1-q)) - q}{1-q},		\qquad &\text{where } q\text{ is the probability that }\mathcal{T}\text{ is finite, and }\\
		&\text{$ f $ is the  probability generating function of }\mu.
	\end{split}
\end{align}
Note that $ f^*(0)=0 $, hence $ μ^*(0)=0, $ i.e.\,points in $ \mathcal{T}^* $ have zero probability of generating no children, and that $ μ^* $ has the same mean $ m $ as the law $ μ $ associated to $ \mathcal{T}.$

The behavior of the law of the conductances under pruning is slightly more involved. Indeed, conditionally on $\mathcal T$ and for each $x\in{\mathcal{T}},$  conditionally on its number of children $|G_x^{\mathcal{T}}|,$ the weights $(\lambda_{x,y})_{y\sim x}$ are independent of the event $\{x\in{\mathcal{T}^{\infty}}\}.$ Therefore, one can find a probability measure $\nu^*$ on $[0,\infty)^{\N}$ with $\nu^*\circ\pi^{-1}=\mu^*$ such that the weighted tree $\mathcal{T}^{\infty}$ has -- after relabeling -- the same law conditionally on survival as a weighted Galton--Watson tree $\mathcal{T}$ obtained from the probability $\nu^*.$ The law of $\nu^*$ is the same as the law of $\nu$ restricted to $P$ positive coordinates chosen uniformly at random among the $K+P$ positive coordinates of $\nu,$ where $P$ has law $\mu^*$ and $K$ has the law of the number of children of the root which do not survive, given that the root has $P$ surviving children (its probability generating function is described in \cite[Proposition~5.28 (iv)]{LyonsPeres17}).

Note that even under $ν^*$ it holds true that $\mathbb{E}^{ν^*}[\sum_{i \in \N} λ_i]<∞ $. Indeed, we first condition on survival which is an event of positive probability, and then we delete those points not belonging to $\mathcal{T}^∞$, which can only decrease the respective expected conductance.

We already remark at this point that the above pruning procedure does not change the critical parameter $h_*$ we are interested in, as the Gaussian free field restricted to $\mathcal{T}^∞$ has the same law on the pruned tree, and similarly for random interlacements. In particular, Theorems~\ref{THM-h*>0}, \ref{THM-quenchednoise} and \ref{THM-Transience} can be proven equivalently on the initial tree or on the pruned tree, and we refer to Remark~\ref{RMK-PruningGFFRI} for further details.

Therefore, without loss of generality, from now on we always work under the standing assumption that
\begin{equation}\label{REMARK-TInfiniteDescendants}\tag{SA}
	\begin{gathered}
		\text{$\nu$ is a probability measure such that $\pi\big((\lambda_i)_{i\in{\N}}\big)\geq1$ $\nu$-a.s.; }
		\\\text{i.e., under $ \Pgw $ all $ x\in \mathcal{T} $ have a.s.\ an infinite line of descendants.}
	\end{gathered}
\end{equation}
In particular, under \eqref{REMARK-TInfiniteDescendants}, $\Pgw$ is the law of a Galton--Watson tree without conditioning on survival, since survival occurs with probability one.

\subsection{Gaussian free field}
\label{sec:GFF}

Let us now define one of our main objects of interest, the Gaussian free field. We start with some general definitions related to random walks. Let $T$ be a weighted tree with positive weights $(\lambda_{x,y})_{x\sim y\in{T}}.$ For $x_0\in{T}$ we define a random walk $(X_n)_{n\in\N_0}$ on $T$ under  $P_{x_0}^{T}$ as the Markov chain on its canonical space $\N_0$ starting in $ x_0 $ with transition probabilities
\begin{equation}\label{DEF-RandomWalk}
	P_{x_0}^{T}(X_{n+1}=y\given X_n=x)= \frac{λ_{x,y}}{λ_x}\text{ for all } x\sim y\in T,
\end{equation}
where the total weight $\lambda_x$ at $x$ is defined as
\begin{equation}\label{DEF-TotalConductances}
	λ_x=\sum_{y\sim x}λ_{xy};
\end{equation}
note that the total weight, unlike $λ_{x,+}$ in \eqref{DEF-Conductances+}, sums over the conductance $λ_{x,x^-}$ also. For a set $U\subseteq T$, the hitting and return times of $X$, respectively, are denoted by
\begin{align}
	\begin{split}\label{DEF-stoppingTimes}
		H_{U}(X):=H_U			:= \inf\set{n \ge 0 \, : \,  X_n\in U } \text{ and }
		\tilde{H}_{U}(X):=\tilde{H}_U	:= \inf\set{n \ge 1 \, : \,  X_n \in U
		}	,
	\end{split}
\end{align}
respectively,
with the convention $ \inf\emptyset=∞ $. In the case of a single point $ U:=\set{x} $, we will write $H_x$ and  $\tilde{H}_x $ in place of $ H_{\{x\}}$ and  $\tilde{H}_{\{x\}} $.

In this section, we assume that the random walk $X$ on $T$ is transient, an assumption which will in particular be satisfied for supercritical  Galton--Watson trees conditioned on survival, see Proposition~\ref{prop:GWtransient}. For $U\subset T,$ the Green function associated to $X,$ killed upon exiting $ U $ under $P_{\cdot}^T,$ is given by

\begin{equation}\label{DEF-GreenFunction}
	g^{T}_U(x,y):= \frac{1}{\lambda_y}E_x^T \Big[\sum_{k=0}^{H_{T\setminus U}-1}\ind_{\{X_k=y\}}	\Big]
	\text{ for all }x,y\in{T}.
\end{equation}
In particular, we note that $g^T_U(x,y)=0$ if either $x\notin{U}$ or $y\notin{U}.$
In addition, we write $g^{T}(x,y):=\frac{1}{\lambda_y}E_x^T [\sum_{k=0}^\infty \ind_{\{X_k=y\}}]$, where $x, y \in T,$ for the Green function associated to $X$ on $T.$

Then $g^T$ is symmetric positive definite, and we can hence consider a probability measure $ \Pgff{T}{} $ on $ \R^{{T}} $ endowed with the canonical $ σ $-algebra generated by the coordinate maps $(φ_x)_{x\in{T}}$ such that
\begin{equation*}
	\begin{split}
		&(φ_x)_{x\in{T}} \text{ is a centered Gaussian field}
		\text{ with covariance given by }\Egff{T}{φ_x φ_y}= g^{{T}}(x,y), \,x,y \in T.
	\end{split}
\end{equation*}

We call $φ$ the Gaussian free field on the tree $T.$ Let us now recall the Markov property for $ φ,$ see for instance \cite[Proposition~2.3]{MR2932978}. For a finite set $ K\subseteq {T} $ and $ U:= {T}\setminus K, $ define for all $z\in{T},$
\begin{align}\label{LEMMA-MarkovProperty}
	\begin{split}
		β_z^{U}   := E_z^T \big[φ_{X_{H_{K}}} \ind_{\set{H_K<∞}} \big]\quad\text{ and }\quad
		\psi_z^{U}   := φ_z - β_z^U.
	\end{split}
\end{align}
Then
\begin{equation*}
	\begin{split}
		(\psi_z^{U})_{z\in{T}} \text{ is a centered Gaussian field with covariance function }\Egff{T}{\psi^{U}_z \psi^{U}_w}= g^{{T}}_{U}(z,w),
	\end{split}
\end{equation*}
which vanishes in $K$ and is independent of $ σ(φ_z, z\in K) $. Note moreover that $\beta^U$ is $ σ(φ_z, z\in K) $-measurable, and thus independent of $\psi^U.$

Putting the previous general considerations in our context of interest, we note that for almost all realizations of a weighted Galton--Watson tree $\mathcal{T},$ under $\Pgw$ the Green function $ g^{\mathcal{T}} $ is finite since the random walk is transient: the proof in \cite[Proposition~2.1]{Gantert2012} can be straightforwardly adapted to our case, i.e.\ the case where for each $x\in\mathcal{X}$, the family $(λ_{x,y})_{y\sim x},$ is not necessarily independent. This yields the following result.
\begin{prop}[\cite{Gantert2012}] \label{prop:GWtransient}
	$ \Pgw$-almost surely, the random walk on the tree $\mathcal T$ with conductances  $(\lambda_{x,y})_{x,y\in{\mathcal{T}},x\sim y }$ is transient.
\end{prop}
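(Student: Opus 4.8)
The plan is to invoke the standard electrical criterion for transience: the random walk on $\mathcal{T}$ is transient if and only if the effective resistance from the root $\emptyset$ to infinity is finite, equivalently if and only if $\mathcal{T}$ carries a unit flow from $\emptyset$ to infinity of finite energy $\mathcal{E}(\theta)=\sum_{e}\lambda_e^{-1}\theta(e)^2$ (Thomson's principle, cf.\ \cite{LyonsPeres17}). Since small conductances are the only obstruction to this, the first step is to pass to a subtree on which the conductances are bounded below. For $\delta>0$ let $\mathcal{T}_\delta$ be the subtree obtained by deleting every edge with conductance $<\delta$ (and the vertices thereby disconnected from $\emptyset$); because the weight vectors are i.i.d.\ across vertices, $\mathcal{T}_\delta$ is again a Galton--Watson tree, with offspring mean $m_\delta=\mathbb{E}^{\nu}\big[\#\{i:\lambda_i\ge\delta\}\big]$, and $m_\delta\uparrow m>1$ as $\delta\downarrow0$ by monotone convergence. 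Fixing $\delta_0$ with $m_{\delta_0}>1$, the tree $\mathcal{T}_{\delta_0}$ is supercritical and survives with some probability $p_0\in(0,1)$; on that event its reduced subtree $\mathcal{T}_{\delta_0}^{\infty}\subseteq\mathcal{T}$ is infinite, carries only conductances $\ge\delta_0$, and --- as in Section~\ref{SECTION-Pruning} --- after relabeling has the law of a Galton--Watson tree whose offspring law has no atom at $0$ and mean $m_{\delta_0}>1$.

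On $\mathcal{T}_{\delta_0}^{\infty}$ I would use the unit flow $\theta$ that at each vertex $v$ splits the incoming flow $\theta_v$ among the children of $v$ in $\mathcal{T}_{\delta_0}^\infty$ proportionally to the edge conductances. A direct computation gives $\mathcal{E}(\theta)=\sum_{v}\theta_v^{2}/\widetilde\lambda_{v,+}\le\delta_0^{-1}\sum_{n\ge0}S_n$ with $S_n:=\sum_{|v|=n}\theta_v^{2}$, where $\widetilde\lambda_{v,+}$ is the total conductance from $v$ to its children in the reduced tree, which is $\ge\delta_0$ since each such vertex has at least one child there. The key point is that $\theta_v$ depends only on the weights strictly above $v$, whereas the local contraction factor $R_v:=\sum_i\lambda_{v,vi}^2/\widetilde\lambda_{v,+}^2\le 1$ depends only on $v$'s own (surviving, kept) offspring weights and on $v$'s number $K$ of children in the reduced tree, while being independent of $K$; conditioning on the reduced tree up to generation $n$ then yields $\mathbb{E}[S_{n+1}]=\bar\rho\,\mathbb{E}[S_n]$, hence $\mathbb{E}[S_n]=\bar\rho^{\,n}$, where $\bar\rho:=\mathbb{E}[\mathbb{E}[R_v\mid K]]\le1$. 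Now $\mathbb{E}[R_v\mid K=1]=1$ while $\mathbb{E}[R_v\mid K=k]<1$ for $k\ge2$, and since the reduced offspring law has mean $>1$ it charges some $k\ge2$; hence $\bar\rho<1$ and $\mathbb{E}[\mathcal{E}(\theta)\mid \mathcal{T}_{\delta_0}\text{ survives}]\le(\delta_0(1-\bar\rho))^{-1}<\infty$. Thus, on survival of $\mathcal{T}_{\delta_0}$, a.s.\ $\mathcal{E}(\theta)<\infty$, so $\mathcal{T}_{\delta_0}^{\infty}$ is transient, and by Rayleigh monotonicity so is $\mathcal{T}$; in particular $\mathbb{P}^{\mathrm{GW}}(\mathcal{T}\text{ transient})\ge p_0>0$.

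Finally I would upgrade this to an almost sure statement via a zero--one law. Writing $C_\emptyset$ for the effective conductance from $\emptyset$ to infinity and $\alpha:=\mathbb{P}^{\mathrm{GW}}(C_\emptyset=0)$, the series--parallel reduction $C_\emptyset=\sum_i\big(\lambda_{\emptyset,\emptyset i}^{-1}+C_{\emptyset i}^{-1}\big)^{-1}$, together with the fact that under \eqref{REMARK-TInfiniteDescendants} the subtrees hanging from the children of $\emptyset$ are, given the root's offspring, i.i.d.\ copies of $\mathcal{T}$, shows that $\{C_\emptyset=0\}$ coincides a.s.\ with $\bigcap_i\{C_{\emptyset i}=0\}$ and hence $\alpha=f^*(\alpha)$, where $f^*$ is the generating function of the offspring law. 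Since $f^*(0)=0$, $f^*$ is strictly convex (because $m>1$ and $\mu^*(0)=0$ force the offspring law off $\{0,1\}$) and $(f^*)'(1)=m>1$, the only fixed points of $f^*$ in $[0,1]$ are $0$ and $1$; the previous paragraph excludes $\alpha=1$, so $\alpha=0$, i.e.\ the random walk on $\mathcal{T}$ is transient $\mathbb{P}^{\mathrm{GW}}$-almost surely.

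The step I expect to be the most delicate is the energy estimate of the second paragraph: one must describe carefully how the $\delta$-truncation and the pruning to the reduced tree interact, so as to correctly identify the law of $\mathcal{T}_{\delta_0}^{\infty}$ together with its conductances, and one must be sure to use only the independence of the weight vectors \emph{across} vertices, never the independence of the coordinates \emph{within} a single vector $(\lambda_{x,xi})_i$ --- this last point being precisely what distinguishes the present setting from the i.i.d.-conductance framework of \cite{Gantert2012} whose argument is being adapted.
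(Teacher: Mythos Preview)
Your argument is correct and is exactly the adaptation of \cite[Proposition~2.1]{Gantert2012} that the paper invokes without giving details: thin the conductances from below to obtain a supercritical Galton--Watson subtree $\mathcal{T}_{\delta_0}$ with weights bounded below, build the conductance-proportional unit flow on its reduced subtree $\mathcal{T}_{\delta_0}^{\infty}$, show its expected energy is finite via the contraction $\mathbb{E}[S_{n+1}]=\bar\rho\,\mathbb{E}[S_n]$ with $\bar\rho<1$, and upgrade from positive probability to almost sure via the fixed-point equation $\alpha=f^*(\alpha)$.

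One phrase should be cleaned up: you write that $R_v$ ``depends on \ldots\ $K$ \ldots\ while being independent of $K$'', which as stated is contradictory. What the recursion actually requires---and what holds, by applying the pruning of Section~\ref{SECTION-Pruning} to the thinned tree $\mathcal{T}_{\delta_0}$---is that the weight vectors in $\mathcal{T}_{\delta_0}^{\infty}$ are i.i.d.\ across vertices with a fixed law, so that $R_v$ is independent of $\theta_v$ (which is measurable with respect to the weight vectors at strict ancestors of $v$) and has a common law with mean $\bar\rho$; no independence between $R_v$ and its own child count $K$ is claimed, needed, or true.
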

Hence, for almost all realizations of the Galton--Watson tree $\mathcal{T},$ we can define the Gaussian free field on $\mathcal T$  as the field $φ$ under $\Pgff{\mathcal{T}}{}.$

\subsection{Random interlacements}
\label{sec:RI}

The random interlacements process has been introduced by Sznitman \cite{Sznitman2010} for $ \mathbb{Z}^d$ (see \cite{DrewitzRathSapozhnikov2013} and \cite{MR3014964} for introductory texts) and it has subsequently been generalized to transient weighted graphs in \cite{Teixeira2009}. For  a transient weighted tree $T$ with conductances $(\lambda_{x,y})_{x\sim y \in T},$ we define the equilibrium measure and capacity of a finite set $K\subseteq T$ as
\begin{equation}\label{DEF-EquilibriumMeasure}
	e_{K,T}(x):= \ind_{\{x\in K\}}\lambda_xP_x^T(\tilde{H}_K=∞) \text{ and }    \capac_T (K):= \sum_{x\in K}e_{K,T}(x).
\end{equation}
We also define the capacity of an infinite set $F\subseteq T$ as the limit of the capacity of $F_n$ as $n\rightarrow\infty,$ where $(F_n)_{n\in{\N}}$ is a sequence of finite sets increasing to $F;$ we refer for instance to the end of \cite[Section~2.2]{DrewitzPrevostRodriguez2021} for as to why this limit exists and does not depend on the choice of the exhausting sequence $(F_n)_{n\in{\N}}.$  We further introduce the set
\begin{equation*}
	\overrightarrow{Z}_T:=\set{\overrightarrow{w}\colon \N_0 \to {T}      \given
		\overrightarrow{w}_n \sim \overrightarrow{w}_{n+1} \text{ for all }n \ge 0
		\text{ and } d_T(\emptyset,\overrightarrow {w}_n)\to∞ \text{ as }n\to∞}
\end{equation*}
of transient nearest neighbor trajectories on $T$ as well as the set
\begin{equation} \label{eq:biInfTr}
	\overleftrightarrow{Z}_T:=\big\{\overleftrightarrow{w}\colon \Z \to {T}       \given
	\overleftrightarrow{w}_n\sim \overleftrightarrow{w}_{n+1}  \text{ for all }n \in  \Z
	\text{ and } d_T(\emptyset,\overleftrightarrow{w}_n)\to∞ \text{ as }n\to\pm ∞\big\}
\end{equation}
of doubly infinite transient nearest neighbor trajectories.  In the literature, the set $\overleftrightarrow{Z}_T$ in \eqref{eq:biInfTr} is usually denoted by $W$; in this article, however, in a self-suggestive manner, we reserve $W$ for the notion of watersheds, a key object which will be defined in Section~\ref{SECTION-watershed}. Denote by $ \overleftrightarrow{X}$ the identity map on $\overleftrightarrow{Z}_T,$ and we indicate with $ \overrightarrow{X} $ and $ \overleftarrow{X} $ the forward and backward trajectories
\begin{align*}
	(\overrightarrow{X}_n)_{n\in\N_0} := (\overleftrightarrow{X}_n)_{n\in \N_0}\quad\text{ and }\quad
	(\overleftarrow{X}_n)_{n\in\N_0}  := (\overleftrightarrow{X}_{-n})_{n\in \N_0}.
\end{align*}
Let $ \overrightarrow{\mathcal{Z}}_T$ and $\overleftrightarrow{\mathcal{Z}}_T$ be the associated $ σ$-algebras on $\overrightarrow{Z}_T$ and $\,\overleftrightarrow{Z}_T$ generated by the coordinate functions.
On $ ( \overleftrightarrow{Z}_T, \overleftrightarrow{\mathcal{Z}}_T) $ we consider the family of measures \(Q_K^T\), $ K\subseteq {T} $ finite, which is characterized by the identities
\begin{equation}\label{DEF-Measure Q_K}
	\begin{split}
		Q_K^T \big((\overleftarrow{X}_n)_{n\in\N}\in A,\,X_0=x,\,(\overrightarrow{X}_n)_{n\in\N}\in B \big)
		=&  P_x^T\big(A, \tilde{H}_K =∞\big) λ_x P_x^T(B)\ind_{\set{x\in{K}}}\\
		\leftstackrel{\eqref{DEF-EquilibriumMeasure}}{=}  &P_x^T\big(A\given \tilde{H}_K =∞\big) e_{K,T}(x) P_x^T(B)
	\end{split}
\end{equation}
for all $ A,B\in \overrightarrow{\mathcal{Z}}_T, x \in T;$
here, $ \tilde{H}_{K} $ is the return time to $ K $ defined in \eqref{DEF-stoppingTimes}.

Following \cite{Teixeira2009}, one can then show that there exists a unique measure $ μ_T$ on the quotient space $ Z^*_T $ of trajectories in $ \overleftrightarrow{Z}_T$ modulo time shift, whose restriction to the trajectories hitting $K$ is the pushforward of the measures $ Q_K^T$ by projection onto $ Z^*_T $. Under some probability measure $ \Pri_{{T}} ,$ the random interlacements process on $T$ is then defined as the Poisson point process
\begin{equation}\label{DEF-RI}
	\sum_{i\in{\N}} δ_{(w^*_i,u_i)} \text{ on } Z_T^*\times [0,\infty) \text{ with intensity measure } μ_T\otimes λ,
\end{equation}
where $ λ $ is the one-dimensional Lebesgue measure restricted to $[0,\infty).$ For $u\in{(0,\infty)}$ we define the random interlacements process $\omega_u$ at level $u$   as the sum of $\delta_{w_i^*}$ over all $i\in{\N}$ with $u_i\in [0, u],$ and the random interlacements set $ \mathcal{I}^u $ at level $u$ as the subset of ${T} $ visited by the (equivalence classes of) random walks $w^*_i$ in the support of $\omega_u.$

We now present an alternative construction of the random interlacements process on trees, which will turn out useful for our purposes. It consists of partitioning the space $\overleftrightarrow{Z}_T$ into subsets according to the highest visited vertex of the contained trajectories.  For this purpose, for  $ x\in {T} $ define the quantity
\begin{equation}\label{DEF-cy}
	\widecheck{e}_T(x):= P_x^T \big( \tilde{H}_x=∞, H_{x^-}=∞ \big)  λ_x P_x^T (H_{x^-}=∞ ),
\end{equation}
where we recall that $ H_x $ and $ \tilde{H}_x $ are the hitting and return times, respectively, of $ x $, defined in \eqref{DEF-stoppingTimes}. If $x=\emptyset,$ we take the convention that $H_{x^-}=\infty$ occurs almost surely. We also define the law of a doubly infinite random walk with the point $x$ at smallest distance from the root $\emptyset,$ and which is reached for the first time at time $0,$ by
\begin{equation}
	\label{DEF-Measure barQ_x}
	\overline{Q}_x^T
	\big((\overleftarrow{X}_n)_{n\in\N}\in A,\,(\overrightarrow{X}_n)_{n\in\N}\in B\big)
	:=P_x^T({A}\,|\,\tilde{H}_x=\infty,H_{x^-}=\infty)P_x^T({B}\,|\,{H}_{x^-}=\infty),
\end{equation}
for all $ A,B\in \overrightarrow{\mathcal{Z}}_T.$  Here, we use the convention $H_{x^-}=\infty$ a.s.\ if $x=\emptyset.$
Note that $\widecheck{e}_{T}(\emptyset)\overline{Q}_{\emptyset}^T=Q_{\emptyset}^T.$
We now show that this alternative construction provides us with a random interlacements process as desired.
\begin{thm}\label{Thm-Interlacement and RW}
	Denote by $T$ a transient weighted tree with conductances $(\lambda_{x,y})_{x\sim y \in T}.$
	Let  $ u>0 ,$ and independently for each $ x \in {T},$ let $Γ_x$ be a ${\rm Poi}(u\widecheck{e}_{T}(x))$-distributed random variable. Furthermore, let $X_{x,i},$ ${i\in{\N}},$ be an independent i.i.d.\ family of doubly infinite random walks on $T$ with common law $\overline{Q}_x^T.$ Denote by $X_{x,i}^*$ the trajectory $X_{x,i}$ modulo time-shift. Then
	\begin{equation*}
		\sum_{x\in{T}}\sum_{i=1}^{Γ_x}\delta_{X_{x,i}^*}\text{ has the same law as }\omega_u\text{ under }\Pri_T.
	\end{equation*}

\end{thm}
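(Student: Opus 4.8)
The plan is to decompose the random interlacements point process $\omega_u$ according to the highest (minimal-distance-to-root) vertex visited by each trajectory. Recall that $\omega_u$ is the Poisson point process on $Z_T^*$ with intensity $u\mu_T$. For each $x\in T$, let $Z_{T,x}^*\subseteq Z_T^*$ be the set of (shift-classes of) trajectories $w^*$ whose trace on $T$ has $x$ as its unique vertex at minimal distance to $\emptyset$ — equivalently, trajectories that visit $x$, do not visit $x^-$, but enter $T_x$ and stay there ``from above''. Since every doubly infinite transient trajectory tends to infinity in both time directions and $T$ is a tree, each trajectory has a well-defined such highest vertex, so $\{Z_{T,x}^*\}_{x\in T}$ is a partition of $Z_T^*$ (up to a $\mu_T$-null set). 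By the restriction/superposition property of Poisson point processes, $\omega_u = \sum_{x\in T}\omega_u|_{Z_{T,x}^*}$, and the summands are independent Poisson point processes with intensities $u\,\mu_T|_{Z_{T,x}^*}$.

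\textbf{Identifying each piece.} The heart of the matter is to show that $\mu_T|_{Z_{T,x}^*}$ equals $\widecheck{e}_T(x)$ times the law of $X_{x,1}^*$ (the class of a random walk with law $\overline{Q}_x^T$). First I would compute $\mu_T|_{Z_{T,x}^*}$ via the characterization through the measures $Q_K^T$. Choosing $K$ to contain $x$ and $x^-$ (say $K=\{x,x^-\}$, or an exhausting sequence), and using \eqref{DEF-Measure Q_K} together with the observation that a trajectory lies in $Z_{T,x}^*$ iff it visits $x$, has $\tilde H_x=\infty$ after its last visit from $K$ is actually the only visit and additionally $H_{x^-}=\infty$ in both time directions, one reads off that the $Q_{\{x,x^-\}}^T$-mass carried by $Z_{T,x}^*$, when pushed to $Z_T^*$, is exactly
\[
P_x^T(\tilde H_x=\infty,\,H_{x^-}=\infty)\,\lambda_x\,P_x^T(H_{x^-}=\infty)
\]
times the conditional law of the backward/forward parts, i.e.\ exactly $\widecheck e_T(x)\,\overline Q_x^T$ by \eqref{DEF-cy} and \eqref{DEF-Measure barQ_x}. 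The case $x=\emptyset$ is the identity $\widecheck e_T(\emptyset)\overline Q_\emptyset^T=Q_\emptyset^T$ already noted in the text. One then checks consistency: the restriction of $\mu_T$ to trajectories hitting $K$ is the pushforward of $Q_K^T$; since every trajectory in $Z_{T,x}^*$ hits $x^-\notin$ ... wait — it does \emph{not} hit $x^-$; rather it hits $x$, so one should take $K\ni x$. Restricting $Q_K^T$ (for $K=\{x\}$ or $K\supseteq\{x\}$ with $x^-\notin K$ excluded) to the sub-event ``$H_{x^-}=\infty$ for both arms'' isolates $Z_{T,x}^*$; the algebra in \eqref{DEF-Measure Q_K} then yields the claimed product form. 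Because the $Q_K^T$ are a consistent family defining the single measure $\mu_T$, this computation is unambiguous.

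\textbf{Assembling.} Having shown $\mu_T|_{Z_{T,x}^*}=\widecheck e_T(x)\,\overline Q_x^T$ (viewed on $Z_T^*$), the restricted Poisson process $\omega_u|_{Z_{T,x}^*}$ has intensity $u\,\widecheck e_T(x)\,\overline Q_x^T$. A Poisson point process with intensity $\kappa\,P$, where $P$ is a probability measure, is the same in law as $\sum_{i=1}^{N}\delta_{Y_i}$ with $N\sim\mathrm{Poi}(\kappa)$ and $Y_i$ i.i.d.\ with law $P$, independent of $N$. Applying this with $\kappa=u\widecheck e_T(x)$ and $P=\overline Q_x^T$ gives that $\omega_u|_{Z_{T,x}^*}\stackrel{d}{=}\sum_{i=1}^{\Gamma_x}\delta_{X_{x,i}^*}$ with $\Gamma_x\sim\mathrm{Poi}(u\widecheck e_T(x))$ and $X_{x,i}\sim\overline Q_x^T$ i.i.d. Independence across $x$ follows from the disjointness of the $Z_{T,x}^*$ and the independence of restrictions of a Poisson process to disjoint sets. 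Summing over $x\in T$ reconstructs $\omega_u$, which is the assertion.

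\textbf{Main obstacle.} The routine parts are the Poisson-process bookkeeping; the delicate point is the measure-theoretic bit in the middle — verifying that $\{Z_{T,x}^*\}_x$ is genuinely a measurable partition of $Z_T^*$ up to a $\mu_T$-null set (using transience of both arms of $\mu_T$-a.e.\ trajectory, which holds by construction since $\mu_T$ is built from $Q_K^T$ and $Q_K^T$ is supported on $\overleftrightarrow Z_T$), and then extracting the exact constant $\widecheck e_T(x)$ and the exact conditional law $\overline Q_x^T$ from \eqref{DEF-Measure Q_K}. One must be slightly careful because a trajectory's highest vertex $x$ is reached possibly multiple times, and the time-shift quotient means ``$X_0=x$, first hit at time $0$'' must be handled via the decomposition of $Q_K^T$ at the \emph{last} entrance to $x$ from outside $T_x$ rather than an arbitrary visit; this is exactly the content of the identity $\widecheck e_T(\emptyset)\overline Q_\emptyset^T=Q_\emptyset^T$ applied to the subtree $T_x$, combined with the ``$H_{x^-}=\infty$'' constraint that pins $x$ as the highest point. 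I would phrase the argument by first reducing to the subtree $T_x$ (where $x$ plays the role of a root-like vertex) via the strong Markov property at the last visit to $x^-$, so that the known $\emptyset$-case of the statement can be invoked directly.
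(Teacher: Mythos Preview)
Your approach is correct and essentially identical to the paper's: partition $Z_T^*$ by highest vertex, identify $\mu_T|_{Z^*_{x,T}}$ via the characterizing measures $Q_K^T$, and assemble by Poisson superposition. The paper simply takes $K=\{x\}$, which immediately dissolves your worry about multiple visits to $x$ --- under $Q_{\{x\}}^T$ the backward arm already satisfies $\tilde H_x=\infty$, so the ``first visit at time $0$'' normalization is automatic and the computation $Q_{\{x\}}^T|_{\overleftrightarrow Z_{x,T}}=\widecheck e_T(x)\,\overline Q_x^T$ is a one-line read-off of \eqref{DEF-Measure Q_K}; your suggested detour through the subtree $T_x$ is therefore unnecessary.
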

\begin{proof}
	For $x \in  T$ we denote by $  \overleftrightarrow{Z}_{x,T} $ the subset of $ \overleftrightarrow{Z}_T,$ see \eqref{eq:biInfTr}, which contains only those doubly infinite trajectories with highest point equal to $ x ,$ reached for the first time at time $0,$ i.e.,
	\begin{equation*}
		\overleftrightarrow{Z}_{x,T} := \Big\lbrace X\in \overleftrightarrow{Z}_{T}	:\,X_0=x,H_{x^-}(\overrightarrow{X})=H_{x^-}(\overleftarrow{X})=\tilde{H}_{x}(\overleftarrow{X})=\infty \Big\rbrace.
	\end{equation*}
	Write $ Z_{x,T}^* $ for  the quotient space of $ \overleftrightarrow{Z}_{x,T}$ modulo time shift. Since trajectories on a tree have a unique highest point, the family of sets $Z_{x,T}^*,$ ${x\in{T}},$ forms a partition of  $Z_T^*.$

	For any measure $M$ and measurable set $A,$ write $M\big|_A$ for the restriction $M(A\cap \, \cdot \,)$ to $A.$ Recalling the definitions of $Q^T_K$, $\widecheck{e}_{T}$ and $\overline{Q}^T_x$ in \eqref{DEF-Measure Q_K}, \eqref{DEF-cy} and \eqref{DEF-Measure barQ_x}, we have for all events $A,B\in{\overrightarrow{\mathcal{Z}}}$ that
	\begin{align*}
		Q_{\{x\}}^T\Big|_{\overleftrightarrow{Z}_{x,T}}\big((\overleftarrow{X}_n)_{n\in\N}\in A,(\overrightarrow{X}_n)_{n\in\N}\in B \big)
		&=P_x^T\big(A,H_{x^-}=\infty,\tilde{H}_x=\infty\big)\lambda_xP_x^T(B,H_{x^-}=\infty)\\
		&=\widecheck{e}_{T}(x)P_x^T\big (A \given H_{x^-}=\infty,\tilde{H}_x=\infty\big )    P_x^T(B\given H_{x^-}=\infty)\\
		&=\widecheck{e}_{T}(x)\overline{Q}_x^T\big((\overleftarrow{X}_n)_{n\in\N}\in A,\,(\overrightarrow{X}_n)_{n\in\N}\in B\big).
	\end{align*}
	Next, write $(\overline{Q}_x^T)^*$ for the pushforward of $ \overline{Q}_{x}^T$ into the quotient space.  If a trajectory $X_x\in{ \overleftrightarrow{Z}_T}$ is such that
	$X_{x}^*\in{Z_{x,T}^*},$  then $ {Q}_{\{x\}}^T\barAS$ we have $X_{x}\in{\overleftrightarrow{Z}_{x,T}},$ so
	we see that
	$
	\frac{1}{\widecheck{e}_{T}(x)}μ_T\big|_{Z^*_{x,T}}= (\overline{Q}_x^T)^*.
	$
	Hence, since  $Γ_x$ is a Poisson random variable with parameter $u\widecheck{e}_{T}(x)$ we deduce that
	\begin{equation}
		\label{eq:interPPPwithhighestvertexx}
		\sum_{i=1}^{Γ_x}\delta_{X_{x,i}^*}\text{ is a Poisson point process on $Z^*_T $ with intensity measure }u\mu_T\big|_{Z_{x,T}^*}.
	\end{equation}

	Using the restriction property and the mapping theorem for Poisson point processes in order to first remove the trajectories with label bigger than $u$ and then the labels themselves, we see that the interlacements process $ω_u$ as defined below \eqref{DEF-RI} has the law of a Poisson point process with intensity measure $uμ_T $.

	Furthermore, since the subsets $Z_{x,T}^*,$  $x\in T,$ form a partition of $Z^*_T,$ due to the superposition theorem for Poisson point processes, taking the sum of \eqref{eq:interPPPwithhighestvertexx} over $x\in T$ yields  the law of a Poisson point process with intensity $uμ_T$, i.e. of $ω_u$, and the proof is complete.
\end{proof}

The representation of random interlacements via the highest vertex visited by its trajectories, Theorem~\ref{Thm-Interlacement and RW}, will be the base of our construction of the Galton--Watson tree via random interlacements, cf.\ Proposition~\ref{PROP-W_Gw_WInterl}.

\begin{remark}\label{REMARK-u*Proof}
	Theorem~\ref{Thm-Interlacement and RW} can be seen as a generalization of \cite[Theorem~5.1]{Teixeira2009}.  Indeed, if $x\in{T}$ is such that either $x^-\in{\mathcal{V}^u}:=(\mathcal{I}^u)^c$ or $x=\emptyset,$ then $x\in{\mathcal{V}^u}$ if and only if there are no trajectories in $\overleftrightarrow{Z}_{x,T}$ in the support of $\omega_u.$ By Theorem~\ref{Thm-Interlacement and RW}, this happens independently for each $x\in{T}$ with probability $\P{Γ_x=0}=\exp(-u\widecheck{e}_{T}(x)).$ In other words, the cluster of $\emptyset$ in $\mathcal{V}^u$ has the same law as the cluster of $\emptyset$ when opening each vertex $x$ of $T$ independently with probability $\exp(-u\widecheck{e}_{T}(x)).$ Moreover, $\widecheck{e}_{T}(x)$ is equal to the function $f_{\emptyset}(x)$ from \cite[(5.1)]{Teixeira2009}, and \cite[Theorem~5.1]{Teixeira2009} follows readily after rerooting.

	Similarly to \cite{Teixeira2009}, this can be used to prove the $\Pgw{}$-a.s.\ inequality $u_*(\mathcal{T})>0,$   where $u_*(\mathcal{T})$ is the critical parameter associated to the percolation of $\mathcal{V}^u$ under $\Pri_{\mathcal{T}}.$ Indeed, this follows from the following facts:
	\begin{itemize}
		\item
		the inequality $\widecheck{e}_{T}(x)\leq \lambda_x\leq\lambda_{x,+} + \lambda_{x^-,+}\ind_{\set{x\neq\emptyset}},$ and
		\item
		the fact that the cluster of $\emptyset$ for Bernoulli percolation on $\mathcal{T}$ with parameter $e^{-2uC}\ind_{\set{\lambda_{x,+}\leq C}},$ $x\in{\mathcal{T}},$ is a Galton--Watson tree since $\lambda_{x,+},$ $x\in{\mathcal{T}},$ are i.i.d.\ random variables, which is supercritical for first choosing $C$ large enough and then $u>0$ small enough.
	\end{itemize}
	Note that the inequality $u_*(\mathcal{T})>0$ can also be seen as a consequence of Theorem~\ref{THM-h*>0} as noted below \eqref{eq:h*small2u*}. One can furthermore also similarly prove that $\mathcal{V}^u\cap B_p$ -- see \eqref{eq:defBp} for notation -- percolates for $u>0$ small enough and $p\in{(0,1)}$ large enough, since it is minorized by Bernoulli percolation on $\mathcal{T}$ with parameter $pe^{-2uC}\ind_{\set{\lambda_{x,+}\leq C}},$ $x\in{\mathcal{T}}.$
\end{remark}
\begin{remark}\label{RMK-PruningGFFRI}
	Note that the trace random walk on $\mathcal{T}^{\infty}$ of the random walk on $\mathcal{T}$  is a random walk on $\mathcal{T}^{\infty},$ as follows from instance from \cite[Proposition~1.11]{MR2932978}. Therefore,  as in \cite[(1.30), (1.31)]{AbacherliSznitman2018}, the restriction of $φ$ to $\mathcal{T}^{\infty}$ has the same law as the Gaussian free field on $\mathcal{T}^{\infty},$ and so the critical parameters for level set percolation of the Gaussian free field on $\mathcal{T}$ and  $\mathcal{T}^{\infty}$ coincide -- note that this remains true in the case of weighted trees. In particular, one can substitute $\nu$ by $\nu^*$ when proving Theorem~\ref{THM-h*>0}. Moreover, one can easily prove that $\mathcal{I}^u\cap\mathcal{T}^{\infty}$ -- where $\mathcal{I}^u$ is the random interlacements set on $\mathcal T$ --  has the same law as the random interlacements set on the graph $\mathcal{T}^{\infty}$ (note to this effect that $\lambda_xP_x^T(A,\tilde{H}_K=\infty)$ is equal to $\sum_{y\in{\mathcal{T}^{\infty}}}\lambda_{x,y}P_y^T(A,H_K=\infty)$ for each $x\in{K}$ in \eqref{DEF-Measure Q_K}), and thus one can also substitute $\nu$ by $\nu^*$ when proving Theorems~\ref{THM-quenchednoise} and \ref{THM-Transience}.

\end{remark}

\subsection{An isomorphism theorem}
\label{sec:iso}

A key tool in our investigations is provided by certain Ray-Knight isomorphism theorems relating the Gaussian free field to random interlacements. Such results have a long history, dating back to Dynkin's isomorphism theorem and, less explicitly, even earlier work by Symanzik \cite{Sy68} as well as Brydges, Fr\"ohlich and Spencer \cite{BrFrSp-82}. The exact isomorphism that we are going to use here has been developed in \cite{Sznitman2011a}, \cite{Lupu2016}, \cite{Sznitman2016}, and then \cite{DrewitzPrevostRodriguez2021}.

As before, we still assume some transient weighted tree $T$ to be given. Recalling the definition below \eqref{DEF-RI} of the random interlacements process $\omega_u$ at level $u$, for $x\in{T}$ and $u>0$ let us denote by
\begin{equation*}
	\begin{gathered}
		\text{$N_x(u)$  the sum over all equivalence classes of trajectories $w^*$}\\
		\text{ in $\omega_u$ of the total number of times $w^*$ visits $x.$}
	\end{gathered}
\end{equation*}
On some possibly extended probability space, let  $\mathcal{E}_x^{(k)},$ $x\in{T}$ and $k\in{\N},$  be an i.i.d.\ family of exponential random variables with parameter one, independent of the random interlacements. The local time \(({\ell}_{x,u})_{x\in{T}},\) of random interlacements at level $u$ can then be defined as
\begin{equation}
	\label{eq:deflocaltimesRI}
	\ell_{x,u}:= \frac{1}{λ_x} \sum_{k=1}^{N_x(u)} \mathcal{E}_x^{(k)}\quad \text{ for all }x\in{T}.
\end{equation}
We can now state the isomorphism theorem; note that here and below, we use the convention that $H_{\emptyset^-}=\infty$ holds $P_x^{{T}}$-almost surely for any tree $T$ and $ x\in T. $

\begin{prop}\label{THM-Isomorphism}
	Assume that $T$ is a transient tree verifying that for all $x\in{T},$
	\begin{equation}
		\label{eq:capRWinfty}
		\capac_{T}( \{X_i,i \in\N \})=∞     \quad P_x^{{T}}(\, \cdot\,|\,{H}_{x^-}=\infty)\barAS
	\end{equation}
	Then for each $ u>0 $, there exists a coupling $\mathbb{Q}^u_T$ of two Gaussian free fields $ φ $ and $γ $ on $T,$ a random interlacements process $\omega_u$ on $T$ at level $u,$ and i.i.d.\ exponential random variables $\mathcal{E}_x^{(k)}$, $x\in{T}$ and $k\in{\N},$ with parameter one such that $φ,$ $\mathcal{E}_{\cdot}^{(\cdot)}$ and $\omega_u$ are independent, and $\mathbb{Q}^u_T$-a.s.,
	\begin{equation}\label{EQ-IsomorphismTheorem}
		\gamma_x= -\sqrt{2u} + \sqrt{2\ell_{x,u}+φ^2_x}				\qquad				\text{ for all } x \in  \mathcal{I}^u,
	\end{equation}
	where $\ell_{x,u}$ is defined as in \eqref{eq:deflocaltimesRI} and $\mathcal{I}^u$ as below \eqref{DEF-RI}.
\end{prop}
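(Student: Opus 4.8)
The plan is to reduce the statement on the (possibly) infinite tree $T$ to the analogous isomorphism on finite subtrees, where the classical generalized second Ray--Knight theorem (in the form of Lupu \cite{Lupu2016}, via the cable system) is available, and then pass to the limit along an exhausting sequence. First I would fix an exhaustion $T_1 \subset T_2 \subset \cdots$ of $T$ by finite connected subtrees all containing a fixed vertex, and work on the cable system $\widetilde{T}_n$ associated with the weighted graph $T_n$, killed at the boundary $\partial T_n$ (i.e.\ with a cemetery beyond the leaves, using the conductances inherited from $T$). On each $\widetilde{T}_n$ one has the exact isomorphism of \cite{Lupu2016}: for $\widetilde{\varphi}$ the cable Gaussian free field and $\widetilde{\omega}_u$ the cable random interlacements at level $u$, with local times $\widetilde{\ell}_{\cdot,u}$, the field $\widetilde{\gamma} := -\sqrt{2u} + \sqrt{2\widetilde{\ell}_{\cdot,u} + \widetilde{\varphi}^2}$ is again a cable Gaussian free field on $\widetilde{T}_n$, and on the discrete vertices the sign-flipping identity \eqref{EQ-IsomorphismTheorem} holds on the interlacement cluster $\mathcal{I}^u \cap T_n$. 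The discrete local times appearing here are exactly those of \eqref{eq:deflocaltimesRI}, by the standard description of cable local times at vertices as a sum of $N_x(u)$ i.i.d.\ $\mathrm{Exp}(1)$ variables divided by $\lambda_x$; this is where the auxiliary exponentials $\mathcal{E}_x^{(k)}$ enter.

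Next I would build the coupling $\mathbb{Q}_T^u$ by a consistency/projective-limit argument. The key inputs making the finite-$n$ couplings compatible are: (i) the random interlacements process $\omega_u$ on $T$, restricted to trajectories meeting $T_n$, projects to the interlacements process on $T_n$ (this is the standard restriction property, and here one can even use Theorem~\ref{Thm-Interlacement and RW} to organize the trajectories by highest visited vertex); (ii) the Gaussian free field $\varphi$ on $T$ restricts to the free field on $T_n$ with Dirichlet conditions via the Markov property \eqref{LEMMA-MarkovProperty}; and (iii) the exponentials $\mathcal{E}_x^{(k)}$ are defined globally on $T$ once and for all. Using these, the finite-$n$ versions of the triple $(\varphi, \omega_u, \gamma)$ restricted to $T_n$ are consistent, so Kolmogorov extension yields a coupling $\mathbb{Q}_T^u$ on $T$ with the required independence of $\varphi$, $\mathcal{E}_\cdot^{(\cdot)}$ and $\omega_u$, and such that $\gamma_x = -\sqrt{2u} + \sqrt{2\ell_{x,u} + \varphi_x^2}$ holds for every $x \in \mathcal{I}^u \cap T_n$ and every $n$, hence for all $x \in \mathcal{I}^u$. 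One must also check that $\gamma$, so defined as a consistent limit, is genuinely distributed as the Gaussian free field on $T$; this follows because its restriction to each $T_n$ is the free field there (being the restriction of the cable free field $\widetilde{\gamma}_n$), and these restrictions are consistent, so the limit is the free field on $T$.

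The main obstacle, and the only place where the hypothesis \eqref{eq:capRWinfty} is genuinely used, is controlling the behavior of the processes near ``infinity'' in $T$ when passing to the limit — specifically, ensuring that no mass of the cable interlacements or cable free field escapes to the boundary in a way that would spoil either the identity \eqref{EQ-IsomorphismTheorem} or the law of $\gamma$. Concretely, on the cable system $\widetilde{T}_n$ with a cemetery beyond $\partial T_n$, the sign of $\widetilde{\gamma}$ on excursions is governed by whether the interlacement/free-field cable cluster reaches the cemetery; the condition $\capac_T(\{X_i, i\in\mathbb{N}\}) = \infty$ along the forward trajectory (conditioned on never backtracking above $x^-$) is exactly what guarantees, via a Borel--Cantelli / capacity-divergence argument as in \cite{DrewitzPrevostRodriguez2021}, that almost surely each point of $\mathcal{I}^u$ lies in a cable cluster that does \emph{not} reach infinity without being pinned, so that the sign choice in \eqref{EQ-IsomorphismTheorem} is consistently ``$+$'' on all of $\mathcal{I}^u$ and the limiting $\gamma$ is well-defined with the correct law. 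I would isolate this as the technical heart: show that under \eqref{eq:capRWinfty}, for $\mathbb{Q}_T^u$-a.e.\ configuration and every $x \in \mathcal{I}^u$, the cable excursion of $\widetilde{\gamma}$ through $x$ is sign-determined, and that the discrete trace of $\widetilde{\gamma}$ converges to a field with the law of $\varphi$ — after which \eqref{EQ-IsomorphismTheorem} is just the vertex restriction of the cable identity, already established in \cite{Lupu2016,DrewitzPrevostRodriguez2021}.
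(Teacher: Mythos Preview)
Your overall instinct---go to the cable system, use Lupu's isomorphism, resolve the sign on $\mathcal{I}^u$ using the capacity hypothesis---is the right one, but the execution via finite-volume exhaustion has a genuine gap and is also unnecessarily roundabout compared to what the paper does.

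The gap is in your consistency step (ii). You write that ``the Gaussian free field $\varphi$ on $T$ restricts to the free field on $T_n$ with Dirichlet conditions via the Markov property \eqref{LEMMA-MarkovProperty}.'' This is false: the restriction of $\varphi$ to $T_n$ has covariance $g^T(\cdot,\cdot)$, while the Dirichlet free field on $T_n$ has covariance $g^T_{T_n}(\cdot,\cdot)$; the Markov property gives a \emph{decomposition} $\varphi = \psi^{T_n} + \beta^{T_n}$, not an equality. Consequently your fields $\gamma^{(n)}$ built from the Dirichlet ingredients on $T_n$ are \emph{not} consistent under restriction (neither the $\varphi^{(n)}$ nor, therefore, the $\gamma^{(n)}$ agree on overlaps), so Kolmogorov extension as you describe it does not apply. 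One can repair this by a limiting argument (couple $\varphi^{(n)} \to \varphi$, $\ell^{(n)}_{\cdot,u} \to \ell_{\cdot,u}$, pass to distributional limits and identify the law of $\gamma$), but this is substantially more work than you indicate, and in particular verifying that the sign resolution survives the limit is not automatic.

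The paper avoids all of this by working directly on the \emph{infinite} cable system $\tilde T$, where the isomorphism with absolute value,
\[
|\tilde\gamma_x + \sqrt{2u}| = \sqrt{2\tilde\ell_{x,u} + \tilde\varphi_x^2} \quad \text{for all } x \in \tilde T,
\]
is already established in \cite{Lupu2016,Sznitman2011a} on general transient weighted graphs---no exhaustion is needed to produce the coupling $\mathbb{Q}^u_T$. The only remaining task is to remove the absolute value on $\mathcal{I}^u$, and here the mechanism is cleaner than your ``Borel--Cantelli / pinning'' sketch: by \cite[(3.19)]{DrewitzPrevostRodriguez2021} each interlacement trajectory lies entirely in one sign cluster of $\tilde\gamma + \sqrt{2u}$; by \cite[Theorem~1.1,(1)]{DrewitzPrevostRodriguez2021} every connected component of $\{\tilde\gamma < -\sqrt{2u}\}$ has \emph{finite} capacity; and hypothesis \eqref{eq:capRWinfty} together with Theorem~\ref{Thm-Interlacement and RW} says every interlacement trajectory has \emph{infinite} capacity. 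Hence each trajectory sits in $\{\tilde\gamma > -\sqrt{2u}\}$, which forces the ``$+$'' sign on all of $\mathcal{I}^u$ and yields \eqref{EQ-IsomorphismTheorem} directly.
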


\begin{proof}
	The isomorphism theorem on the so-called cable system, see \cite[Proposition~6.3]{Lupu2016} or \cite[(0.4)]{Sznitman2011a} on general graphs, states that
	\begin{equation}
		\label{EQ-Isomorphismcable}
		|\tilde{\gamma}_x+\sqrt{2u}|= \sqrt{2\tilde{\ell}_{x,u}+\tilde{φ}^2_x} \quad	\text{ for all }x\in{\tilde{T}}.
	\end{equation}
	Here, $\tilde{T}$ denotes the cable system associated to $T,$ and $\tilde{γ},\tilde{φ}$ and $\tilde{\ell}_{\cdot,u}$ correspond to Gaussian free fields and local times of random interlacements on $\tilde{T}.$  We restrain from introducing the cable system $\tilde{T}$ in this article, as this metric structure will be only used in this proof; see \cite{Lupu2016} for references. We only note that $T\subset\tilde{T},$ and that the restrictions $\gamma,$ $φ$ and $\ell_{\cdot,u}$ of $\tilde{γ},\tilde{φ}$ and $\tilde{\ell}_{\cdot,u}$ to $T$ have the same laws as the corresponding fields from Proposition~\ref{THM-Isomorphism}. In order to deduce \eqref{EQ-IsomorphismTheorem} from \eqref{EQ-Isomorphismcable}, we note that
	\begin{equation}
		\label{eq:possibleinclusion}
		\begin{gathered}
			\text{each trajectory $w^*$ of ${\omega}_u$ is either included in a connected component of}
			\\\{x\in{\tilde{T}}:\,\tilde{γ}_x > -\sqrt{2u}\}
			\text{ or of }
			\{x\in{\tilde{T}}:\tilde{γ}_x < -\sqrt{2u}\},
		\end{gathered}
	\end{equation}
	which is a simple consequence of \cite[(3.19)]{DrewitzPrevostRodriguez2021}. Moreover, by \cite[Theorem~1.1,(1)]{DrewitzPrevostRodriguez2021} and symmetry it holds that
	\begin{equation}
		\label{eq:finitecapclusters}
		\text{all the connected components of }\big \{x\in{\tilde{T}}:\tilde{γ}_x < -\sqrt{2u} \big \} \text{  have finite capacity}.
	\end{equation}
	Under hypothesis \eqref{eq:capRWinfty}, for each trajectory $w^*$ of $\omega_u,$ it follows from Theorem~\ref{Thm-Interlacement and RW} that the capacity of $w^*$ is $\Pri{}$-a.s.\ infinite, and thus by \eqref{eq:possibleinclusion} and \eqref{eq:finitecapclusters}, $w^*$ must be included in $\{x\in{{T}}:\,γ_x > -\sqrt{2u}\}.$ The identity \eqref{EQ-IsomorphismTheorem} then follows readily from \eqref{EQ-Isomorphismcable}.
\end{proof}

Actually Proposition~\ref{THM-Isomorphism} remains true on any locally finite graph, but we will only need it on trees in this paper. We will prove that the hypothesis \eqref{eq:capRWinfty} holds when $T=\mathcal{T}$ is the Galton--Watson tree introduced in Section~\ref{SECTION-GWTree}, see Proposition~\ref{prop:capRWinfty}. Therefore, in our context, Proposition~\ref{THM-Isomorphism} will readily imply the inclusion \eqref{eq:Iu+Auincluded} (defining $\widehat{E}^{\geq\sqrt{2u}}$ therein as the level sets of the field $\gamma$), which is the first step in the proof of Theorem~\ref{THM-h*>0} as explained in Section~\ref{sec:outline}.

\begin{remark}
	\label{REMARK-conditionh*<0irrelevant}

	Following the proof of \cite[Proposition~5.2]{AbacherliSznitman2018}, one can easily show that a version of the isomorphism~\eqref{EQ-IsomorphismTheorem} holds on Galton--Watson trees with unitary conductances and finite mean offspring distribution $m.$ They prove this isomorphism using conditions different from \eqref{eq:capRWinfty}, namely that the sign clusters of the Gaussian free field on the cable system are bounded and a certain boundedness condition of the Green function; in view of \cite[Theorem~1.1,(2)]{DrewitzPrevostRodriguez2021}, the boundedness of the sign clusters is actually sufficient. It turns out that in the context of random conductances (and in particular, if the mean offspring distribution $m$ is infinite or if $(\lambda_{x,y})_{x\sim y\in{\mathcal{T}}}$ are not i.i.d.\ conductances conditionally on the non-weighted graph $\mathcal{T}$), it will be easier to deduce the isomorphism~\eqref{EQ-IsomorphismTheorem} from  condition \eqref{eq:capRWinfty} instead. Indeed, we will prove that condition~\eqref{eq:capRWinfty} holds in Proposition~\ref{prop:capRWinfty} using tools very similar to the proof Theorem~\ref{THM-quenchednoise}.
\end{remark}

\section{Warm up: a first proof in an easier setting}
\label{Section-warmup}

In this section we give a simple proof of the inequality $h_*(\mathcal{T})>0$ under the stronger assumption that $m>2.$  Note that this is also proved via different means in the setting of Galton--Watson trees with unit weights in \cite{AbacherliSznitman2018}. The proof in \cite{AbacherliSznitman2018} could be adapted to the setting of random weights, but it is currently not clear to us how to adapt it to the setting $m\in{(1,2]}.$ Moreover, we believe that our proof in this section for $m>2$ is simpler, and at the same time it exhibits the difficulties that are showing up when proving Theorem~\ref{THM-h*>0} for the case $m\in{(1,2]}.$ What is more, our proof will also provide us with an example of a weighted Galton--Watson tree  where $h_*=\infty,$ see \eqref{EQ-h*infinity}, showing that the phase transition is not always non-trivial in our context.

In order to introduce our setup, we consider the weighted Galton--Watson tree $\mathcal{T}\subseteq \mathcal{X}$ from Subsection~\ref{SECTION-GWTree}. Recall that the law of the weights below each vertex is a probability measure $ν$ on $[0,∞)^{\N},$ and these weights are chosen independently for different vertices, and that the function $\pi((λ_i)_{i\in{\N}})$ denotes the number of offspring, with mean $m,$ see \eqref{DEF-OffspringDistribution} and  \eqref{eq:superCrit}. Contrary to the rest of this article, in this section we do not make the usual assumption \eqref{eq:assfinitemoment} on the weights $λ,$ but keep the assumption $m>1$.
In the following, by  $F$ we denote the cumulative distribution function of a standard normal variable.

\begin{prop}
	For all $h\geq0$ such that there exists $M>0$ with
	\begin{equation}
		\label{EQ-Conditionforh_*geh}
		\mathbb{E}^{ν}\big[  π((λ_i)_{i\in{\N}})\ind_{\{\sum_{i\in{\N}}λ_i\le M\}} \big]F(-h\sqrt{2M})    >1,
	\end{equation}
	we have $h_*\geq h.$
\end{prop}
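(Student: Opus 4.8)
The plan is to use the isomorphism theorem together with the explicit construction of random interlacements via highest-visited vertices. First I would note that, by the isomorphism \eqref{eq:isoforsets} (or more precisely Proposition~\ref{THM-Isomorphism}, which applies once we verify \eqref{eq:capRWinfty}), for any $u>0$ the level set $E^{\geq -\sqrt{2u}}$ contains $\mathcal{I}^u$, so it suffices to find a percolating subset of $\mathcal{I}^u$ whose vertices also satisfy a pointwise lower bound on the field $\gamma$. Actually, the cleaner route given the condition \eqref{EQ-Conditionforh_*geh} is to directly couple the Gaussian free field with random interlacements at a well-chosen level and exhibit, inside $\mathcal{I}^u$, an infinite cluster of vertices on which $\varphi$ (hence $\gamma$) is at least $h$; but since $m>2$ is \emph{not} assumed here, the argument must instead exploit that each vertex independently has a geometrically reasonable chance of being "good''. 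Let me restructure: the key mechanism is Theorem~\ref{Thm-Interlacement and RW}, which says $\mathcal{I}^u$ can be built by, independently at each $x\in\mathcal{T}$, launching $\Gamma_x\sim\mathrm{Poi}(u\widecheck{e}_{\mathcal{T}}(x))$ doubly-infinite walks through $x$. I would not even need $u$ small here because $h$ can be taken as large as \eqref{EQ-Conditionforh_*geh} allows.

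The core of the proof is a branching-process comparison. First I would generate the weighted Galton--Watson tree $\mathcal{T}$. Call a vertex $x$ \emph{$M$-light} if $\sum_i \lambda_{x,xi}\leq M$, i.e.\ the total conductance to its children is at most $M$; by \eqref{DEF-Conductances+} this event depends only on the weights below $x$ and is independent across vertices, and its conditional expected number of children is exactly $\mathbb{E}^\nu[\pi((\lambda_i))\ind_{\{\sum\lambda_i\le M\}}]$ when we also keep only the children of $M$-light vertices. So the subtree $\mathcal{T}_M$ obtained by retaining only edges emanating from $M$-light vertices is itself (the reduced part of) a Galton--Watson tree whose offspring mean is the left-hand quantity in \eqref{EQ-Conditionforh_*geh} before multiplying by $F(-h\sqrt{2M})$. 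Next, conditionally on $\mathcal{T}$ and on the field $\varphi$, I would keep a vertex $x$ of $\mathcal{T}_M$ if $\varphi_x\geq h$. The subtle point is that $\varphi_x$ are correlated across the tree, so this is not independent site percolation on $\mathcal{T}_M$; the fix is to observe that for any \emph{fixed} infinite ray (or any fixed line of descent) in $\mathcal{T}_M$ the events $\{\varphi_{x}\geq h\}$ along it, conditioned on the parent's value, have probability bounded below. More robustly, I would use that for an $M$-light vertex $x$, $g^{\mathcal{T}}(x,x)=1/\widecheck{e}$-type quantities are controlled by $1/\lambda_x$, and $\lambda_x\le M + \lambda_{x^-,+}$; combined with the Markov property \eqref{LEMMA-MarkovProperty}, conditionally on $\varphi$ restricted to $\{x^-\}$ and its ancestors, $\varphi_x$ is Gaussian with mean the harmonic average $\beta_x$ (which is a convex combination of ancestor values, all $\ge h$ if we build the cluster from the root downward) and variance $g^{\mathcal{T}}_{\mathcal{T}_x}(x,x)\ge$ something like $1/\lambda_x \ge 1/(2M)$ on the event that $x$ is $M$-light and its parent is too. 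Hence $\mathbb{P}(\varphi_x\ge h\mid \text{ancestors in cluster})\ge \mathbb{P}(\mathcal{N}(0,1/(2M))\ge h-h)=F(-h\sqrt{2M})\cdot(\text{const})$... actually $\ge 1/2 \ge F(-h\sqrt{2M})$. This needs to be organized carefully into a genuine stochastic domination.

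Having reduced to this, the argument closes by a standard first-moment/branching comparison: exploring $\mathcal{T}_M$ generation by generation from the root and keeping $x$ iff $\varphi_x\ge h$, the kept set stochastically dominates a Galton--Watson tree whose offspring mean is at least $\mathbb{E}^\nu[\pi((\lambda_i))\ind_{\{\sum\lambda_i\le M\}}]\cdot F(-h\sqrt{2M})>1$ by hypothesis \eqref{EQ-Conditionforh_*geh}, so it is supercritical and survives with positive probability; on survival it gives an unbounded connected component of $E^{\ge h}$. Because the construction explores the tree and then the field in a compatible order and one has a positive-probability survival event, and because (as discussed below \eqref{DEF-h_*}, via \cite[Lemma~5.1]{AbacherliSznitman2018}) $h_*$ is deterministic, positive-probability survival upgrades to $h_*\ge h$. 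The main obstacle — and the step I would spend the most care on — is making rigorous the claim that the kept set dominates a supercritical branching process \emph{despite} the long-range correlations of $\varphi$: concretely, one must show that conditionally on the ancestors' field values all being $\ge h$, the conditional probability that a new $M$-light child also has field $\ge h$ is at least $F(-h\sqrt{2M})$. This hinges on the harmonic projection $\beta^U_x$ being a sub-probability average of nonnegative (indeed $\ge h$) ancestor values, hence $\ge$ something nonnegative, and on the conditional variance being $\ge 1/(2M)$; both follow from the Markov property \eqref{LEMMA-MarkovProperty} and the bound $\lambda_x\le 2M$ for an $M$-light vertex with $M$-light parent, but the bookkeeping of which vertices are conditioned on and the order of revealing $\mathcal{T}$ versus $\varphi$ must be handled explicitly (e.g.\ via an exploration martingale or an inductive coupling).
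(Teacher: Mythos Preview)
Your first paragraph is a detour: this proposition has nothing to do with random interlacements or the isomorphism theorem. The paper's proof is entirely self-contained in the Markov-property recursion for the Gaussian free field on a tree, and you should discard the $\mathcal{I}^u$ and Proposition~\ref{THM-Isomorphism} discussion completely.

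The branching-process comparison in your second and third paragraphs is the right idea, but you are making it harder than necessary by working directly with $\varphi$ and conditioning on ancestors. The paper's trick is to use the explicit recursive representation
\[
\varphi_x = P_x^{\mathcal{T}}(H_{x^-}<\infty)\,\varphi_{x^-} + \sqrt{g^{\mathcal{T}}_{\mathcal{T}_x}(x,x)}\,Z_x,
\]
with $(Z_x)_{x\in\mathcal{X}}$ i.i.d.\ standard normal and \emph{independent of the tree}. One then defines the random set $S(h,M)=\{\emptyset\}\cup\{x:\,Z_{x^-}>h\sqrt{2M},\ \lambda_{x^-,+}\le M\}$. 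Because membership of $x$ in $S(h,M)$ depends only on $(Z_{x^-},(\lambda_{x^-,x^-i})_i)$, which are i.i.d.\ across the parents $x^-$, the connected component of the root in $S(h,M)$ is \emph{exactly} a Galton--Watson tree with mean offspring given by the left-hand side of \eqref{EQ-Conditionforh_*geh}; no stochastic-domination bookkeeping is needed. Finally, the deterministic implication $\{Z_x>h\sqrt{\lambda_x},\ \varphi_{x^-}>h\}\Rightarrow\{\varphi_x>h\}$ (from $g^{\mathcal{T}}_{\mathcal{T}_x}(x,x)\ge 1/\lambda_x$ and $h\ge 0$), combined with $\lambda_x\le\lambda_{x,+}+\lambda_{x^-,+}\le 2M$ along the cluster, propagates $\varphi>h$ down the cluster.

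Two specific issues with your write-up. First, you call $\beta^U_x$ a ``convex combination of ancestor values''; it is not. On a tree with $K=\{x^-\}$ one has $\beta^U_x=P_x^{\mathcal{T}}(H_{x^-}<\infty)\,\varphi_{x^-}$, a \emph{sub}-probability weighting (the walk can escape), so in general $\beta^U_x<h$ even when $\varphi_{x^-}\ge h$. Your line ``$\mathbb{P}(\mathcal{N}(0,1/(2M))\ge h-h)=1/2$'' is therefore not the right bound; the correct inequality is
\[
\mathbb{P}(\varphi_x\ge h\mid \varphi_{x^-}=a)\ \ge\ \mathbb{P}\big(\mathcal{N}(0,\sigma^2)\ge h\big)\ =\ F(-h/\sigma)\ \ge\ F(-h\sqrt{2M}),
\]
using only $\beta^U_x\ge 0$ (valid since $a\ge h\ge 0$) and $\sigma^2\ge 1/\lambda_x\ge 1/(2M)$. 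Second, even with this fix, you still owe the exploration/domination argument that turns these one-step conditional bounds into an honest supercritical branching process, since the conditional probability depends on $\mathcal{T}$ through both $P_x^{\mathcal{T}}(H_{x^-}<\infty)$ and $g^{\mathcal{T}}_{\mathcal{T}_x}(x,x)$. This is doable but is precisely the ``main obstacle'' you flag; the paper's $Z_x$-representation eliminates it in one stroke.
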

\begin{proof}
	In this proof, we use the construction of the Gaussian free field as in \cite[Section~2.1]{AbacherliCerny2019} through independent standard normal variables, extended to our case of non-regular trees. Let $(Z_x)_{x\in \mathcal{X}}$ be a family of independent standard normal variables under $P.$ Then, conditionally on the realization of the tree $\mathcal{T}$, define
	$φ_\emptyset := \sqrt{g^{\mathcal{T}}(\emptyset,\emptyset) } Z_\emptyset $ and, recursively in the distance from the root, we set
	\begin{equation*}
		φ_x := P_x^{\mathcal{T}}(H_{x^-}<∞)φ_{x^-} + \sqrt{g^{\mathcal{T}}_{\mathcal{T}_x}(x,x) }Z_x.
	\end{equation*}
	Using the Markov property \eqref{LEMMA-MarkovProperty} with $U=\mathcal{T}_{x}$, one can check that the field $(φ_x)_{x \in \mathcal T}$ defined this way has the law of a Gaussian free field on $\mathcal{T}$. Moreover, using the bound $g^{\mathcal{T}}_{\mathcal{T}_x}(x,x)\ge \frac{1}{λ_x}$, conditioned on the realization of the weighted tree $\mathcal{T}$, the previous display then entails the implication
	\begin{equation} \label{EQ-RecursivePercolationGFF}
		\{Z_x>h\sqrt{λ_x},  φ_{x^-}>h\}
		\,\Rightarrow\,
		\{φ_x>h\},
	\end{equation}
	with the convention $φ_{x^-}>h$ a.s.\ if $x=\emptyset.$

	We define now the random set $S(h,M)\subseteq \mathcal T$ as
	\begin{equation*}
		S(h,M):=\{\emptyset\}\cup\{ x\in\mathcal{T}\setminus\{\emptyset\}\colon Z_{x^-}>h\sqrt{2M}, λ_{x^-,+}\le M\}.
	\end{equation*}
	Note that on the event $x\in \mathcal{T}$, the mean number of children the vertex $x$ has in $S(h,M)$ satisfies
	\begin{equation}\label{EQ-mTimesZ}
		\begin{split}
			\mathbb{E}^{{\rm GW}}\otimes {E}\big[|G^{S(h,M)}_x|\,\big|\,x\in{\mathcal{T}}\big]&= \Egw\left[π((λ_{x,xi})_{i\in{\N}})\ind_{\set{\lambda_{x,+}\leq M}}
			P\big( Z_{x}>h\sqrt{2M}\big)\,\big|\,x\in{\mathcal{T}}
			\right]
			\\&=\mathbb{E}^ν\big[  π((λ_i)_{i\in{\N}})\ind_{\{\sum_{i\in{\N}}λ_i\le M\}} \big]F(-h\sqrt{2M}) .
		\end{split}
	\end{equation}
	Moreover, for each $x\in{\mathcal{T}},$ the number of children of $x$ in $S(h,M)$ only depends on $(\lambda_{x,xi})_{i\in{\N}}$ and $Z_x,$ which are independent in $x.$ Therefore, the connected component of $\emptyset$ in $S(h,M)$ has the law of a Galton--Watson tree with mean given by \eqref{EQ-mTimesZ}. Due to assumption \eqref{EQ-Conditionforh_*geh}, this mean is strictly larger than one  and thus this Galton--Watson tree has a positive probability to be infinite. Finally, it follows easily from \eqref{EQ-RecursivePercolationGFF} and the inequality $\lambda_x\leq \lambda_{x,+} +\lambda_{x^-,+}$ that $φ_{x^-}\geq h$ for each $x\neq\emptyset$ in the connected component of $\emptyset$ in $S(h,M),$ and we can conclude.
\end{proof}

Let us now present two interesting assumptions on the mean offspring  $m$ and on the distribution of the weights $(\lambda_i)_{i\in{\N}},$ under which \eqref{EQ-mTimesZ} is satisfied.
\begin{itemize}
	\item
	Assume $m>2$. We can find some $M>0$ such that
	$\mathbb{E}_{\nu}\big[  π((λ_i)_{i\in{\N}})\ind_{\{\sum_{i\in{\N}}λ_i\le M\}} \big]>2$ since the left hand side converges to $m$ as $M\to∞$, and then a positive level $h$ such that $F(-h\sqrt{2M})$ is close enough to $\frac 12$, so that \eqref{EQ-mTimesZ} is bigger than 1, providing us with $h_*>0.$
	\item Let $N$ be a random variable taking values in $\mathbb N$ with infinite mean under $\nu.$ Define $(λ_i)_{i\in{\N}}$ via  $\lambda_i=1/N$ for all $i\leq N$ and $\lambda_i=0$ for all $i>N.$ Then $\sum_{i\in{\N}}\lambda_i=1$ and $m=\infty$. Hence for each $h>0$ since $F(-h\sqrt{2})>0$ we have that the left-hand side of \eqref{EQ-Conditionforh_*geh} is infinite for $M=1,$ that is
	\begin{equation}\label{EQ-h*infinity}
		h_*=∞.
	\end{equation}
\end{itemize}

Note that we have not taken advantage of the assumption \eqref{DEF-Conductances+} in this section; as a consequence, the inequality $h_*>0$ from Theorem~\ref{THM-h*>0} holds when $m>2$ even without this assumption. It is not clear whether this assumption is necessary when $m\in{(1,2]}.$

\section{A simultaneous exploration of the tree via random interlacements}\label{SECTION-watershed}

In this section we introduce an explorative construction procedure for supercritical Galton--Watson trees via random interlacements, which is tailor-made for our purposes. To the best of our knowledge, previous approaches to problems related to random interlacements on random graphs generated the random interlacements process only after having complete information on the realization of the graph. In our setting, however -- in order to gain a better control on both, the Gaussian free field and the local times of random interlacements -- we generate the underlying graph $\mathcal T$ and the random interlacements process simultaneously. In some sense, this construction provides us with  independence properties that will turn out useful in creating coarse-grained ``good'' parts of the interlacements set and the level sets of an independent Gaussian free field.

In particular, in Subsection~\ref{Section-Singlewatershed} we will first construct a ``single small piece'' of the tree. This piece will consist of the trace of a finite random walk trajectory exploring the Galton--Watson tree at each vertex visited by the walk. We will call a piece of the tree constructed in this way a \emph{watershed}.
Repeating this procedure iteratively for boundary vertices of previously constructed watersheds, in Subsection~\ref{Section-PatchingTogetherwatersheds} we will then patch together all watersheds constructed in this way, as well as some remaining ends; the resulting object will be denoted by $\mathcal{T}^{\mathbf{W}}.$ It turns out that $\mathcal{T}^{\mathbf{W}}$ will be a tree with the following properties: it is a weighted Galton--Watson tree, and the random walk trajectories used to construct its watersheds can be interpreted as part of a random interlacements process on $\mathcal{T}^{\mathbf{W}}.$ This last property will be shown in Subsection~\ref{sec:RWandRI} with the help of Theorem~\ref{Thm-Interlacement and RW}.

\subsection{Watersheds}\label{Section-Singlewatershed}
We now introduce the notion of a watershed starting at a vertex $x \in \mathcal X\setminus\{\emptyset\},$ with parameters $L \in \N, L\ge 2$, and $κ\in [0,\infty),$ on which all the objects constructed in this subsection will depend implicitly (the case $x=\emptyset$ is excluded for technical reasons).  A watershed will form a finite subtree of a Galton--Watson tree, and it will be constructed as the trace of a random walk that is visiting vertices  starting at the root $x$ of a subtree of $\mathcal X,$ until -- if successful --  at least $L$  vertices of the subtree are explored in a suitable way. The parameter $κ$ will represent the conductance of the edge between $x$ and $x^-,$ which is thus fixed. In order to facilitate readability, we will denote objects pertaining to watersheds by boldface letters throughout.

The watershed will be defined by means of a sequence of  triplets $ (\mathbf{T}_k, (\boldsymbol{λ}_{y,z})_{y \sim z, y,z\in \mathbf{T}_k},\mathbf{X}_k)_{k \in \N_0},$ such that, for each $k\in{\N_0},$ we have that
\begin{itemize}
	\item $\mathbf{T}_k\subset\mathcal{X}$ is connected,
	\item the $\boldsymbol{λ}_{y,z}\in{(0,\infty)}$ are (symmetric) weights on the edges $\{y,z\}$ of $\mathbf{T}_k$,  and
	\item $\mathbf{X}_k$ is a random variable with $\mathbf{X}_k\in{\mathbf{T}_k}.$
\end{itemize}
In order to construct this sequence, we first fix
\begin{equation}
	\label{eq:deflambdaik}
	(\boldsymbol{\lambda}_{i}^{(k)})_{i\in{\N}},k\in{\N_0},\text{ an i.i.d. family of random variables with common law }\nu,
\end{equation}
and proceed by induction. We start with	$\mathbf{T}_0$ as being characterized uniquely by the specification of its vertex set $\{x^-,x\}$ (mind that $x^-$ is well-defined as we assumed $x \ne \emptyset$), as well as the conductance  $\boldsymbol{λ}_{x^-,x} := κ$ and  the almost sure equality $\mathbf{X}_0:=x. $

We first define the the triplet $(\mathbf{T}_k, (\boldsymbol{λ}_{y,z})_{y \sim z, y,z\in \mathbf{T}_k}, \mathbf{X}_k)$ until some stopping time $\tilde{V}_L(\mathbf{X}),$ that we will define in \eqref{DEF-StoppingTimeHL},  and thus assume that this triplet is given for some non-negative integer $k<\tilde{V}_L(\mathbf{X}).$ Recalling the definition below \eqref{DEF-UlamHarris} of the boundary $∂\mathcal{T}$ for a tree $\mathcal{T}$, we then define $(\mathbf{T}_{k+1}, (\boldsymbol{λ}_{y,z})_{y \sim z, y,z\in \mathbf{T}_{k+1}}, \mathbf{X}_{k+1})$ as follows:

\begin{itemize}
	\item if $ \mathbf{X}_k\in ∂\mathbf{T}_k,$ we proceed as follows. Let $\mathbf{N}_k:=|\{\mathbf{X}_0,\dots,\mathbf{X}_k\}|,$ and construct the offspring of $\mathbf{X}_k$ via $\boldsymbol{\lambda}^{(\mathbf{N}_k)}.$ More precisely, in Ulam-Harris notation, define $\mathbf{T}_{k+1}$ as the union of $ \mathbf{T}_k $ with the set of offspring of $\mathbf{X}_k,$ that is with $\{\mathbf{X}_ki,1\leq i\leq \pi((\boldsymbol{λ}_i^{(\mathbf{N}_k)})_{i\in{\N}})\},$ so $\mathbf{T}_{k+1}$ again is a tree.  By definition, the number of offspring of $\mathbf{X}_k$ in $\mathbf{T}_{k+1}$ has distribution $\mu.$ Furthermore, the weights $\boldsymbol{λ}$ on $\mathbf{T}_{k+1}$ are the same as on $\mathbf{T}_k,$ where in addition we now attribute weights $\boldsymbol{λ}_{\mathbf{X}_k,\mathbf{X}_ki}:=\boldsymbol{λ}_i^{(\mathbf{N}_k)}$ for  $1\leq i\leq \pi((\boldsymbol{λ}_i^{(\mathbf{N}_k)})_{i\in{\N}})$ to the edges which are contained in $\mathbf{T}_{k+1}$ but not in $\mathbf{T}_{k}.$

	\item
	if $ \mathbf{X}_k\notin ∂\mathbf{T}_k ,$ then we set $ \mathbf{T}_{k+1}:=\mathbf{T}_k,$ and the weights $\boldsymbol{λ}$ on $\mathbf{T}_{k+1}$ are the same as on $\mathbf{T}_k.$
\end{itemize}

In both of the above cases, in order to construct $\mathbf{X}_{k+1},$ we consider a random walk transition of $\mathbf{X}_k$ on $\mathbf T_{k+1};$ hence, independently of everything else, we define the random variable $\mathbf{X}_{k+1}$ as a neighbor of $\mathbf{X}_k$ in $\mathbf{T}_{k+1},$ which is equal to $y\sim \mathbf{X}_k,$ $y\in{\mathbf{T}_{k+1}},$ with probability $\boldsymbol{λ}_{\mathbf{X}_k,y}/\boldsymbol{λ}_{\mathbf{X}_k},$ where $\boldsymbol{λ}_{\mathbf{X}_k}$ is a normalizing constant defined similarly to \eqref{DEF-TotalConductances}. Note that, as long as $x^-$ is not reached by $\mathbf{X},$ the event $\{\mathbf{X}_k\in{\partial \mathbf{T}_k}\}$ above corresponds to the event $\{\mathbf{X}_k\notin{\{\mathbf{X}_0,\dots,\mathbf{X}_{k-1}}\}\}.$

We iterate the above procedure in $k$ until reaching the stopping time $\tilde{V}_L(\mathbf{X})$ that we are about to define. For this purpose, set $H_{x^-}(\mathbf{X})$ to be the first hitting time of $x^-$ by $\mathbf{X},$ defined similarly as in \eqref{DEF-stoppingTimes}, and
\begin{equation}
	\label{DEF-StoppingTimeVn}
	V_L:=V_L(\mathbf{X}):=\inf\{k\geq0:\,|\{\mathbf{X}_0, \ldots, \mathbf{X}_{k}\}| \ge L\}
	\wedge H_{x^-}(\mathbf{X})
\end{equation}
the first time at which the random walk $\mathbf{X}$ has visited $L$ different vertices, or $x^-$ is hit. Then let
\begin{equation}\label{DEF-StoppingTimeHL}
	\tilde{V}_L:=\tilde{V}_L(\mathbf{X}):=
	\begin{cases}
		\inf \big\{n\ge V_L\colon \mathbf{X}_n = \mathbf{X}_{V_L}^- \big\}
		&   \text{if } V_L(\mathbf{X}) < H_{x^-}(\mathbf{X}),   \\
		H_{x^-}         (\mathbf{X})
		&   \text{if } V_L(\mathbf{X})=H_{x^-}(\mathbf{X}),
	\end{cases}
\end{equation}
where we always use the convention $\inf\emptyset=\infty.$ In words, $\tilde{V}_L(\mathbf{X})$ is the first time the parent of $\mathbf{X}_{V_L}$ is visited if $H_{x^-}>V_L,$ and otherwise it equals $H_{x^-}.$ That is, we stop our recursive construction the first time either $x^-$ is visited by $\mathbf{X},$ or $\mathbf{X}$ has visited $L$ vertices at time $V_L,$ and then $\mathbf{X}_{V_L}^-$ is hit. Note that it is possible that neither $x^-,$ nor $\mathbf{X}_{V_L}^-$ after time $V_L,$ are visited, and in this case $\tilde{V}_L=\infty,$ i.e., we continue our recursive construction indefinitely. Otherwise, we stop the recursion at time $\tilde{V}_L$, and for each $k\geq\tilde{V}_L$ we define $(\mathbf{T}_{k}, (\boldsymbol{λ}_{y,z})_{{y \sim z, y,z\in \mathbf{T}_{k}}}, \mathbf{X}_{k}) := (\mathbf{T}_{\tilde{V}_L}, (\boldsymbol{λ}_{y,z})_{{y \sim z, y,z\in \mathbf{T}_{\tilde{V}_L}}}, \mathbf{X}_{\tilde{V}_L}).$ We also abbreviate $(\mathbf{T},\boldsymbol{λ},\mathbf{X} ):=(\mathbf{T}_{k}, (\boldsymbol{λ}_{y,z})_{{y \sim z, y,z\in \mathbf{T}_{k}}},\mathbf{X}_k)_{k\in{\N_0}}.$ This concludes the recursive construction of this triplet.

The process $(\mathbf{T},\boldsymbol{λ},\mathbf{X})$ is called {\em watershed process}, and we denote by
\begin{equation}\label{DEF-watershedMeasure}
	\mathbf{Q}_x^{κ,L}\text{ the law of the watershed process }(\mathbf{T},\boldsymbol{λ},\mathbf{X})
\end{equation}
starting at $x\in{\mathcal{X}} \setminus \{\emptyset\},$ with parameters $L\in{\N}$ and $κ>0.$ Similarly to the above, if we replace the evolving state space of $\mathbf X$ by a fixed tree $T$, under the law $P_x^T$ of the simple random walk $X$ from \eqref{DEF-RandomWalk}, we define $\tilde{V}_L=\tilde{V}_L(X)$ similarly as in \eqref{DEF-StoppingTimeHL}.
In the following proposition, we explain how the process $(\mathbf{T},\boldsymbol{λ},\mathbf{X})$ can be considered a random walk exploration of the initial Galton--Watson tree $\mathcal{T}$ from Section~\ref{SECTION-GWTree}.

\begin{prop}\label{PROP-SinglewatershedLikeGW}
	For all $ x\in{\mathcal{X}}\setminus \{\emptyset\},$ $κ>0,$ and $L\in{\N},$ the process
	$ (\mathbf{T},
	\boldsymbol{λ},
	\mathbf{X}) $ under $ \mathbf{Q}^{κ,L}_x $
	has the same law as
	$(\mathcal{T}^{X}_{k\wedge\tilde{V}_L},
	(λ_{y,z})_{y,z\in \mathcal{T}^{X}_{k\wedge\tilde{V}_L}},
	X_{k\wedge\tilde{V}_L})_{k\in\N_0}$
	under $\mathbb{E}^{{\rm GW}}[P^{\mathcal{T}}_x(\cdot)\,|\,\lambda_{x,x^-}=κ,x\in{\mathcal{T}}],$
	where:
	\begin{itemize}
		\item  conditionally on $(\mathcal{T},(\lambda_{y,z})_{y,z\in{\mathcal{T}}}),$ the process
		$ (X_n)$ is the random walk on ${\mathcal{T}}$  defined in Subsection~\ref{sec:GFF}.
		\item for $k \in \N,$ the set
		$ \mathcal{T}^{X}_k:=\{z\in{\mathcal{T}}:\,z\sim X_i\text{ for some }i\leq k-1\}$ is the subset of $\mathcal{T}$ adjacent to the trace of $ \{X_1, \ldots, X_{k-1}\}. $
	\end{itemize}
\end{prop}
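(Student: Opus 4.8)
The plan is to prove this by a coupling argument: I will construct the watershed process $(\mathbf{T},\boldsymbol{\lambda},\mathbf{X})$ and the stopped random-walk-on-Galton--Watson-tree process step by step, on the same probability space, and check inductively that the two agree. The key conceptual point is that, because the watershed construction only generates the offspring (and conductances) of a vertex $\mathbf{X}_k$ at the moment the walk \emph{first} reaches it (i.e.\ when $\mathbf{X}_k \in \partial\mathbf{T}_k$), and it uses a fresh i.i.d.\ sample $\boldsymbol{\lambda}^{(\mathbf{N}_k)}$ with law $\nu$ for this, the distribution of the ``new'' part of the tree revealed at each step is exactly the conditional law of the Galton--Watson offspring of that vertex given everything explored so far. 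Thus the watershed construction is nothing but a lazy/sequential sampling of $(\mathcal{T},\lambda)$ together with the walk $X$, run until the stopping time $\tilde V_L$.

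First I would set up the induction. Let $\mathcal{F}_k$ denote the $\sigma$-algebra generated by $(\mathbf{T}_j, \boldsymbol{\lambda}|_{\mathbf{T}_j}, \mathbf{X}_j)_{j \le k}$, and on the Galton--Watson side let $\mathcal{G}_k$ be generated by $(\mathcal{T}^X_{j\wedge\tilde V_L}, \lambda|_{\mathcal{T}^X_{j\wedge\tilde V_L}}, X_{j\wedge\tilde V_L})_{j\le k}$ under $\mathbb{E}^{\rm GW}[P^{\mathcal{T}}_x(\cdot)\mid \lambda_{x,x^-}=\kappa, x\in\mathcal{T}]$. The base case $k=0$ is immediate: both processes start from the two-vertex tree $\{x^-,x\}$ with conductance $\kappa$ on the edge, and the walk at $x$. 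For the induction step, assume the two triplet-valued processes have the same law up to time $k < \tilde V_L$. I distinguish the two cases in the watershed recursion. If $\mathbf{X}_k \notin \partial\mathbf{T}_k$ — equivalently (as noted in the text, since $x^-$ not yet hit) $\mathbf{X}_k \in \{\mathbf{X}_0,\dots,\mathbf{X}_{k-1}\}$ — then no new vertices are revealed, $\mathbf{T}_{k+1} = \mathbf{T}_k$; on the Galton--Watson side, likewise, $X_k$ is an already-visited vertex, so $\mathcal{T}^X_{k+1} = \mathcal{T}^X_k$, all of its neighbours and their conductances having already been exposed. In both cases the transition $\mathbf{X}_k \mapsto \mathbf{X}_{k+1}$, resp.\ $X_k \mapsto X_{k+1}$, is a random-walk step using conductances already determined by $\mathcal{F}_k$, resp.\ $\mathcal{G}_k$, with the same transition kernel \eqref{DEF-RandomWalk}; so the conditional law of the new triplet agrees. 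If $\mathbf{X}_k \in \partial\mathbf{T}_k$, the watershed reveals a $\mu$-distributed number of children with conductances a fresh draw from $\nu$, independent of $\mathcal{F}_k$; on the Galton--Watson side, conditionally on $\mathcal{G}_k$, the vertex $X_k \in \mathcal{T}$ has not been visited before and none of its children (the ones ``below'' it) have been exposed — here one must use that the random walk $X$ started at $x$, killed at $x^-$, reaches a new vertex only ``from above'', so when $X_k$ is visited for the first time the subtree $\mathcal{T}_{X_k}$ below it is still conditionally a fresh Galton--Watson tree with offspring law $\nu$, independent of $\mathcal{G}_k$. Hence the conditional laws of the revealed offspring and conductances again coincide, and then the walk step is again a matching random-walk transition. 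This closes the induction, giving equality in law of the two processes for all $k \le \tilde V_L$; beyond $\tilde V_L$ both are frozen at their time-$\tilde V_L$ value, so the laws agree for all $k \in \N_0$, and in particular $\tilde V_L(\mathbf{X})$ and $\tilde V_L(X)$ have the same law.

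The step I expect to be the main obstacle is making precise the claim that ``when $X_k$ is first visited, its subtree is conditionally fresh Galton--Watson'' — i.e.\ that the information $\mathcal{G}_k$ accumulated by the killed walk up to time $k$ is measurable with respect to the part of $(\mathcal{T},\lambda)$ that is disjoint from (and independent of) the not-yet-explored subtree $\mathcal{T}_{X_k}$. This requires a clean description of the ``explored region'' $\mathcal{T}^X_k$ and the observation that the walk, being nearest-neighbour and killed at $x^-$, enters a fresh vertex only through its parent, so that on the event $\{X_k \in \partial(\text{explored region})\}$ the descendants of $X_k$ are untouched; the branching property of Galton--Watson trees then yields the required conditional independence and the fact that $\mathcal{T}_{X_k}$ has the original offspring/conductance law $\nu$. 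The remaining points — that the boundary event $\{\mathbf{X}_k\in\partial\mathbf{T}_k\}$ matches $\{X_k \text{ newly visited}\}$, and that $\mathbf{N}_k$ correctly counts the number of distinct vertices so far hence indexes an unused i.i.d.\ sample $\boldsymbol{\lambda}^{(\mathbf{N}_k)}$ — are bookkeeping that follows from the definitions in \eqref{eq:deflambdaik}, \eqref{DEF-StoppingTimeVn} and \eqref{DEF-StoppingTimeHL}, and the text's own remark identifying $\{\mathbf{X}_k\in\partial\mathbf{T}_k\}$ with $\{\mathbf{X}_k\notin\{\mathbf{X}_0,\dots,\mathbf{X}_{k-1}\}\}$ as long as $x^-$ has not been reached.
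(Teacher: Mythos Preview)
Your proposal is correct and follows essentially the same approach as the paper's proof: both argue that the watershed is a sequential (lazy) sampling of the Galton--Watson tree together with the random walk, so that at each step the newly revealed offspring at a first-visited vertex is a fresh $\nu$-sample, and the walk then performs an ordinary random-walk transition on the tree explored so far. Your write-up is more detailed than the paper's---you make the induction, the $\sigma$-algebras, and the branching-property step explicit---whereas the paper gives a brief sketch relying on the reader to fill in exactly the bookkeeping you spell out.
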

\begin{proof}
	At time $k,$ for  $1\leq k\leq \tilde{V}_L,$ we sample the offspring of $\mathbf{X}_{k-1}$ independently of everything else via their conductances according to $\nu$ if it is the first time $\mathbf{X}_{k-1}$ was visited by $\mathbf{X};$ therefore, $\mathbf{T}_k$ is a Galton--Watson tree restricted to the offspring of the vertices explored by $\mathbf{X}$ before time $k-1,$ union with the edge $\mathbf{T}_0=\{x^-,x\}$. After time $\tilde{V}_L$ (if it is finite),  $\mathbf{T}_k$ stays constant equal to $\mathbf{T}_{\tilde{V}_L},$ and $\mathbf{X}_k$ constant equal to $\mathbf{X}_{\tilde{V}_L}.$

	Similarly, when $\mathbf{X}$ at time $ 1\le k\leq\tilde{V}_L$ performs a jump, the offspring of the point $ \mathbf{X}_{k-1} $ has already been generated according to $\nu$, either at step $ {k} $ or in a preceding step, and then $\mathbf{X}_{k-1}$ jumps to $\mathbf{X}_k$ with the probability
	$$ \frac{\boldsymbol{λ}_{\mathbf{X}_{k-1},\mathbf{X}_k}}{\boldsymbol{λ}_{\mathbf{X}_{k-1}}},$$
	which is analogous to \eqref{DEF-RandomWalk}. Hence both $ \mathbf{X} $ and $ X $ behave like a random walk on their respective trees until time $\tilde{V}_L,$ and $\tilde{V}_L$ corresponds for both walks to the first time either $x^-$ is hit, or $L$ different vertices have been visited by the walk, and then, denoting by $y$ the last of these $L$ vertices, $y^-$ has been hit. One can easily conclude.
\end{proof}

Let us finish this section with an observation which will be essential in the proof of Lemma~\ref{lem:probatobegood} below. For this purpose, first define under $\mathbf{Q}_x^{κ,L}$ the watershed $\mathbf{W}$ as the path of $\mathbf{X}$ until $V_L-1,$ that is
\begin{equation}
	\label{eq:defW}
	\mathbf{W}:=\{\mathbf{X}_0,\dots,\mathbf{X}_{V_L-1}\}.
\end{equation}
Using the convention $\boldsymbol{\lambda}_{y,yi}=0$ if $yi\notin{\mathbf{T}},$ by \eqref{DEF-StoppingTimeVn}, \eqref{DEF-StoppingTimeHL} and the construction of the weights $\boldsymbol{\lambda}_{y,z},$ $y\sim z\in{\mathbf{T}_k},$  we have under $\mathbf{Q}_x^{κ,L}$ that
\begin{equation}
	\label{eq:lambdaonWareiid}
	\begin{gathered}
		(\boldsymbol{\lambda}_{x,xi})_{i\in{\N}}=(\boldsymbol{\lambda}_{i}^{(1)})_{i\in{\N}},\text{ and if }{V}_L(\mathbf{X})<H_{x^-}(\mathbf{X}),\text{ then }
		\\\big\{(\boldsymbol{\lambda}_{y,yi})_{i\in{\N}}:\,y\in{\mathbf{W}\setminus\{x\}}\big\}=\big\{(\boldsymbol{\lambda}_{i}^{(k)})_{i\in{\N}}:\,k\in{\{2,\dots,L-1\}}\big\},
	\end{gathered}
\end{equation}
which follows simply from the fact that the conductances $(\boldsymbol{\lambda}_{y,yi})_{i\in{\N}}$ are equal to $(\boldsymbol{λ}_i^{(k)})_{i\in{\N}}$ if $y$ is  the $k$-th vertex visited by $\mathbf{X}.$

\subsection{Patching together watersheds}\label{Section-PatchingTogetherwatersheds}

In the previous subsection we explained how to construct a watershed process $(\mathbf{T},\boldsymbol{λ},\mathbf{X})$  starting at an  arbitrary vertex. We will now iteratively patch together  watersheds at the endpoints of previously generated watersheds.  The union $\mathcal{T}^{\mathbf{W}}_{-}$ of such watersheds will already constitute a transient subset of the random interlacements set on the Galton--Watson tree.
Embellishing $\mathcal{T}^{\mathbf{W}}_{-}$ with some further ``ends''  will yield a tree  $\mathcal T^{\mathbf{W}}$ which has the law of the weighted Galton--Watson tree we are interested in.

We will now give an informal description of this procedure and provide mathematical details below.
To patch the watersheds together, we will introduce another tree $ F,$  the \emph{tree of free points}.  This tree encodes the points at which watersheds will be patched together in the construction outlined above, i.e.\ $F$ is a tree in $\mathcal{X}$ and, at the same time, to each free point $ a\in F $ we associate another point $\widehat{a} \in \mathcal{X}$ -- which will turn out to also be an element of the tree $\mathcal{T}^{\mathbf{W}}$ to be constructed -- at which we will start a new watershed.
Patching up the watersheds through their vertices corresponding to free points, we will then be able to construct inductively the tree $ \mathcal{T}^{\mathbf{W}}_-$. We refer to Figure \ref{FIG} for an illustration.
\begin{figure}[ht]
	\centering
	\hspace{32pt}
	The tree $ \mathcal{T}^{\mathbf{W}}_-$ \hspace{135pt}
	The tree $F$ of free points
	\includegraphics[height=6cm]{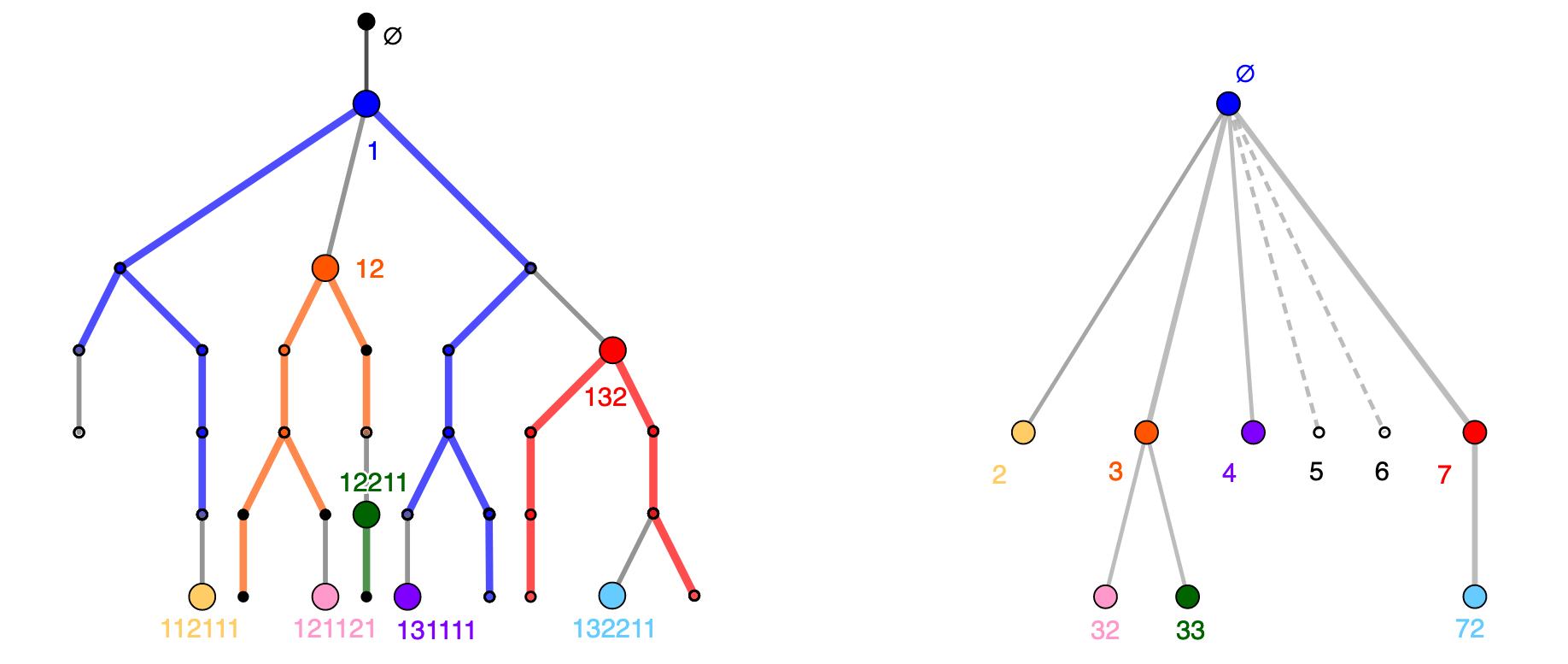}
	\caption{(A finite subset of) the tree $ \mathcal{T}^{\mathbf{W}}_-$, on the left, has some highlighted vertices, denoted by a coding $ \widehat{a},$ at which a new watershed is generated. Those points correspond to points in (a finite subset of) the tree of free points $ F $ on the right, where they have a different coding $a.$  For instance $\widehat{72}=132211$. We highlighted with different colors each  $a\in{F}$ on the right and on the left the corresponding point $\widehat{a}$ and the path on $\mathcal{T}^{\mathbf{W}}_-$ visited by the random walk $\mathbf{X}^a,$ which generates the watershed below $\widehat{a}.$ On the right, the points $5$ and $6$ are part of the tree of free points, but the corresponding vertices $\widehat{5}$ and $\widehat{6}$ do not appear yet on the left since they are below the 6th generation.}\label{FIG}
\end{figure}

We will define the weighted tree $F$ with weights denoted by $\lambda_{a,a'}^F,$ $a\sim a'\in{F},$ through a recursively defined sequence $(F_k)$ of weighted trees, such that to each $a\in{F_{k-1}}$ we associate a watershed $(\mathbf{T}^{a},\boldsymbol{λ}^{a},\mathbf{X}^{a})$ starting in $\widehat{a}$ as defined in the last subsection, and to each vertex $a\in{F_k}$ we associate another vertex $\widehat{a}\in{\mathcal{X}}$.

As explained above, this construction of $F$ as well as the corresponding watersheds, will depend on a parameter $L\in{\N},$ that we fix for the rest of this section. We denote by $\mathbf{P}^{W}_{L}$  the probability measure under which these objects are constructed. For technical reasons, we will start the first watershed in the point $1$ instead of $\emptyset.$

First set $ F_{-1}:=\emptyset $,
\(
F_0:=\set{\emptyset}
\)
take $ \widehat{\emptyset}=1,$  and  generate some weights $(\lambda_{\emptyset,i}^{\mathbf{W}})_{i\in{\N}}$ with law $\nu.$  Now assume $ F_{k-1} $ and $ F_k $ are given for some $k \in \N_0,$ and that each point $ a\in F_k $ is associated to a point $ \widehat{a}\in \mathcal{X} $.  We define $ F_{k+1} $ as follows.
For each $a\in{F_k\setminus F_{k-1}},$  we generate
\begin{equation}
	\label{eq:defwatershedata}
	\text{ an independent watershed $(\mathbf{T}^{{a}}, \boldsymbol{λ}^{{a}},\mathbf{X}^{{a}}) $ with law } \mathbf{Q}^{λ_{a^-,a}^F,L}_{\widehat{a}},
\end{equation}
as defined in \eqref{DEF-watershedMeasure}. Note that $\emptyset^-$ is not well-defined, but for $a = \emptyset$ we will take the convention
\begin{equation} \label{eq:freeRoot}
	λ_{a^-,a}^F:=\lambda_{\emptyset,1}^{\mathbf{W}}.
\end{equation}

The watershed $(\mathbf{T}^a,\boldsymbol{λ}^a,\mathbf{X}^a)$ will be used to encode the set of \emph{free points} via the following set
\begin{equation}\label{DEF-FreePoints}
	\mathfrak{F} _a := (∂\mathbf{T}^{a}_{V_L}) \setminus \{
	\mathbf{X}^a_{V_L}\};
\end{equation}
in other words, apart from $\mathbf{X}_{V_L}^a,$ the set
$\mathfrak{F}_a$ corresponds to the vertices on the boundary of the tree $\mathbf{T}^a$ once the walk has either visited $L$ vertices or hit $\widehat{a}^-.$  The vertex $\mathbf{X}_{V_L}^a$ is excluded from this set since, by definition of $V_L,$ the first generation of the tree below $\mathbf{X}_{V_L}^a$ has already been explored by $\mathbf{T}^a$.  Equivalently, the points in $ \mathfrak{F}_a $ are vertices not visited by the random walk $ \mathbf{X}^{{a}}_k,$ $1 \le k \le V_L,$ but adjacent to its trace, and which have thus already been generated during the construction of the watershed.
We will then generate new watersheds from the vertices in $\mathfrak{F}_a.$ We can now define the next generation of the tree of free points
\begin{equation}
	\label{eq:deffreepointrecursively}
	F_{k+1}:=F_k 		\cup
	\bigcup_{a\in F_k\setminus F_{k-1}} \,
	\bigcup_{i=2}^{\ab{\mathfrak{F} _a}} 		\set{ai}.
\end{equation}
In other words, the sets of points $\mathfrak{F}_a,$ $a\in{F_k\setminus F_{k-1}},$ are used to build the $(k+1)$-st level of the tree of free points, and we define $\widehat{\,ai\,}$ as the $i$-th element (in lexicographic order) of $\mathfrak{F}_a$ for each $1\leq i\le |\mathfrak{F}_a|.$  Note that the union over $i$ starts at $2$ for technical reasons, cf.\ property \ref{item:ii} in Definition~\ref{DEF-Goodwatershed}, and the explanation in the second paragraph thereafter. In particular, $\widehat{a}1$ is well-defined but not part of the tree $F,$ for instance $\widehat{1}=1111$ in Figure~\ref{FIG}.

We moreover define the conductance of the edge above the vertex ${ai}$ for $F_{k+1}$ as
\begin{equation}
	\label{eq:deflambdaF}
	λ_{a,ai}^F:= \boldsymbol{λ}_{(\widehat{\,ai\,})^-,\widehat{\,ai\,}}^a,
\end{equation}
whereas the conductances on $F_k\subset F_{k+1}$ stay the same as before. This concludes the inductive definition of the sequence $(F_k),$ and the tree of free points is simply defined via
\begin{equation}\label{DEF-AllFreePoints}
	F:= \bigcup_{k\in{\N_0}} F_k,
\end{equation}
endowed with the same conductances as the $F_k,$ $k\in{\N_0}.$

Let us now explain how to construct a Galton--Watson tree by gluing together the watersheds $(\mathbf{T}^a,\boldsymbol{λ}^a,\mathbf{X}^a),$ $a\in{F}.$ We first set
\begin{equation}\label{DEF-WUnion}
	\mathcal{T}^{\mathbf{W}}_{-}:=
	\Big\{2,\dots,\pi\big(\big(\lambda^{\mathbf{W}}_{\emptyset,j} \big)_{j\in{\N}} \big)\Big\}
	\cup
	\bigcup_{a\in{F}} \mathbf{T}^{a};
\end{equation}
in other words, $\mathcal{T}_-^{\mathbf{W}}$ consists of a first generation with weights $(\lambda^{\mathbf{W}}_{\emptyset,j})_{j\in{\N}},$ and the union of the watersheds $\mathbf{T}^a,$ $a\in{F};$ note that the root $\emptyset$ belongs to $\mathbf{T}^{\emptyset}$ by \eqref{eq:defwatershedata} and the convention $\widehat{\emptyset}=1,$ cf.\ \eqref{eq:freeRoot} also, and in particular $\emptyset \in \mathcal{T}^{\mathbf{W}}_{-}$.
One can view $\mathcal{T}^{\mathbf{W}}_{-}$ as a tree in $\mathcal{X},$ and we endow each of its edges $\{x,y\}$ such that $x,y\in{\mathbf{T}^a}$ for some $a\in{F}$ with the conductance $\boldsymbol{λ}_{x,y}^a.$ Note that each edge $\{x,y\}$ of $\mathcal{T}^{\mathbf{W}}_{-}$ is also an edge of $\mathbf{T}^a$ for some $a\in{F},$ and in fact, for each $a\in{F},$ $\mathbf{T}^a$ and $\mathbf{T}^{a^-}$ have exactly one edge in common: $\{\widehat{a}^-,\widehat{a}\}.$ Moreover, in view of \eqref{eq:defwatershedata} and \eqref{eq:deflambdaF}, $\boldsymbol{λ}_{\widehat{a}^-,\widehat{a}}^{a}=\lambda_{a^-,a}^F=\boldsymbol{λ}_{\widehat{a}^-,\widehat{a}}^{a^-},$ hence the conductances of the tree $\mathcal{T}^{\mathbf{W}}_-$ are uniquely defined.

Observe that the tree $ \mathcal{T}^{\mathbf{W}}_{-} $ is not yet a Galton--Watson tree with the desired offspring distribution since for some vertices $x\in{\mathcal{T}^{\mathbf{W}}_{-}}$ we did not construct their descendants: this is the case if
$x=\widehat{\,a1\,}$ for some $a\in{F}$
(see \eqref{eq:deffreepointrecursively}), or if $x$ is in the boundary of $\mathbf{T}^a_{\tilde{V}_L}\setminus \mathbf{T}^a_{V_L}$ (since no vertices correspond to free points in this part of the watershed). Therefore, we now add some ends to those points in order to complete the construction of the Galton--Watson tree. More precisely, define independently of everything else
\begin{equation}
	\label{eq:defrestofthetrees}
	\begin{gathered}
		\text{ an independent family of Galton--Watson trees } (\mathcal{T}^x)_{x\in{\mathcal{X}}},
		\\\text{each $\mathcal{T}^x$ with the same law as } x\cdot\mathcal{T}\text{ under }\Pgw.
	\end{gathered}
\end{equation}
In other words, $\mathcal{T}^x$ is a Galton--Watson tree rooted at $x.$
We now define $ \mathcal{T}^{\mathbf{W}} $ as the weighted tree obtained from the union of $ \mathcal{T}^{\mathbf{W}}_{-} $ with the $\mathcal{T}^x,$  $x\in{\partial \mathcal{T}^{\mathbf{W}}_{-}},$ endowed with their respective conductances, and we denote by $\lambda^{\mathbf{W}}$ the conductances on $\mathcal{T}^{\mathbf{W}}.$ We then have that for all $L\in{\N},$
\begin{equation}
	\label{eq:TWisGW}
	\mathcal{T}^{\mathbf{W}}	\text{ has the same law under }\mathbf{P}^{W}_{L}\text{ as the Galton--Watson tree }\mathcal{T}\text{ under } \Pgw;
\end{equation}
indeed, it follows from Proposition~\ref{PROP-SinglewatershedLikeGW} and \eqref{eq:defwatershedata} that, conditionally on $\mathbf{T}^{a'},$ $a'\in{F_{k-1}},$ a single watershed $\mathbf{T}^a,$ $a\in{F_k\setminus F_{k-1}},$ has the same law as a Galton--Watson tree restricted to this watershed, conditionally on $\boldsymbol{λ}_{\widehat{a}^-,\widehat{a}}^a=λ_{a^-,a}^F.$ Since
$λ_{a^-,a}^F
=   \boldsymbol{λ}_{\widehat{a}^-,\widehat{a}}^{a^-}
(=  \boldsymbol{λ}_{\widehat{a}^-,\widehat{a}}^{a}  )$ by \eqref{eq:defwatershedata} and \eqref{eq:deflambdaF} we obtain that the conductances between each vertex $x\in{\mathcal{T}^{\mathbf{W}}_{-}\setminus\partial\mathcal{T}^{\mathbf{W}}_{-}}$ and its offspring are distributed independently according to $\nu.$ Note that, for each  $x\in{\partial\mathcal{T}^{\mathbf{W}}_-}$, the subtree $\mathcal{T}^{\mathbf{W}}_x:= (\mathcal{T}^{\mathbf{W}})_x$ equals $\mathcal{T}^x$ with the desired offspring distribution by definition in \eqref{eq:defrestofthetrees} and below, and we conclude that \eqref{eq:TWisGW} holds true.

\subsection{Watersheds and random interlacements}
\label{sec:RWandRI}

In the previous subsections, we generated simultaneously the Galton--Watson tree and random walks on it through the structure of watersheds. The next goal now is to interpret these random walks as a part of a random interlacements process, which will essentially follow from Theorem~\ref{Thm-Interlacement and RW} and some additional conditions as in \eqref{EQ-ConditionsWatershedsRandInt}. Under some probability measure $\mathbb{P}_{\widetilde{u}}^{\Gamma},$ $\widetilde{u}>0,$ let
\begin{equation}\label{DEF-Poissonian}
	(\Gamma_x)_{x\in{\mathcal{X}}}\text{ be an i.i.d.\ family of } \Poi(\widetilde{u})\text{ random variables}.
\end{equation}
We denote by $\mathbf{P}^{W}_{L,\widetilde{u}}$ the product measure $\mathbf{P}^{W}_{L}\otimes\mathbb{P}_{\widetilde{u}}^{\Gamma},$ under which the tree $\mathcal{T}^{\mathbf{W}}$ and the Poisson random variables $(\Gamma_x)_{x\in{\mathcal{X}}}$ are independent. Furthermore,  for $a\in F$ let

\begin{equation}\label{DEF-watershed}
	\mathbf{W}^{a} := \big \{\mathbf{X}^{{a}}_k \, : \, k\in \{0, \ldots, V_L(\mathbf{X}^{{a}})-1\} \big\}.
\end{equation}

Recall the definition of ${e}_{K,T}$ from \eqref{DEF-EquilibriumMeasure}. 

\begin{prop}\label{PROP-W_Gw_WInterl}
	Let $\widetilde{u},u>0$ and $L\in{\N}.$ On some extension of the probability space corresponding to $\mathbf{P}^{W}_{L,\widetilde{u}},$  one can couple  $\mathcal{T}^{\mathbf{W}}$ defined in \eqref{eq:TWisGW} and a set $\mathcal{I}^u$ in such a way that conditionally on $\mathcal{T}^{\mathbf{W}}$, the set $\mathcal{I}^u$ is an interlacements set at level $u$ on $\mathcal{T}^{\mathbf{W}},$ and for all $ a\in F, $ if
	\begin{equation}\label{EQ-ConditionsWatershedsRandInt}
		Γ_{\widehat{a}}\ge 1,   \quad
		\tilde{V}_L(\mathbf{X}^a)=∞,
		\quad
		\text{ and } \quad  u \ge\frac{\widetilde{u}}{{e}_{\{\widehat{a}\},\mathcal{T}^{\mathbf{W}}_{\widehat{a}}}({\widehat{a}})},
	\end{equation}
	where $\mathcal{T}^{\mathbf{W}}_{\widehat{a}}$ is the subtree of $\mathcal{T}^{\mathbf{W}}$ below $\widehat{a}$, then
	\begin{equation*}
		\mathbf{W}^{a} \subset \mathcal{I}^u.
	\end{equation*}

\end{prop}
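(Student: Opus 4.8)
The plan is to combine Theorem~\ref{Thm-Interlacement and RW} with the structural identification of $\mathcal{T}^{\mathbf{W}}$ from \eqref{eq:TWisGW}, matching the random walks $\mathbf{X}^a$ used to build the watersheds with the walks appearing in the highest-vertex representation of random interlacements. First I would fix $L$ and work under $\mathbf{P}^{W}_{L,\widetilde u}$. Conditionally on the realization of $\mathcal{T}^{\mathbf{W}}$, by \eqref{eq:TWisGW} this is a Galton--Watson tree with the law of $\mathcal T$ under $\Pgw$, hence transient by Proposition~\ref{prop:GWtransient}, so Theorem~\ref{Thm-Interlacement and RW} applies to $T=\mathcal{T}^{\mathbf{W}}$: an interlacements set $\mathcal I^{u}$ at level $u$ can be generated by, for each $x\in\mathcal{T}^{\mathbf{W}}$, taking $\widetilde\Gamma_x\sim\Poi(u\,\widecheck e_{\mathcal{T}^{\mathbf{W}}}(x))$ and starting $\widetilde\Gamma_x$ independent doubly infinite walks with law $\overline Q_x^{\mathcal{T}^{\mathbf{W}}}$. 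The key point is that, for $x=\widehat a$, the \emph{forward} part $(\overrightarrow X_n)$ of such a walk is, by \eqref{DEF-Measure barQ_x}, a walk started at $\widehat a$ conditioned on $\{H_{\widehat a^{\,-}}=\infty\}$, i.e.\ conditioned never to go above $\widehat a$ — and this is exactly the law of the walk $\mathbf{X}^a$ restricted to the event $\{\tilde V_L(\mathbf{X}^a)=\infty\}$, up to an $\overline Q$-conditioning factor, because $\{\tilde V_L(\mathbf{X}^a)=\infty\}$ forces in particular $H_{\widehat a^{\,-}}(\mathbf{X}^a)=\infty$. So on the event $\{\tilde V_L(\mathbf{X}^a)=\infty\}$, up to a coupling, $\mathbf{W}^a$ in \eqref{DEF-watershed} is contained in the trace of one such forward walk from $\widehat a$.

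Next I would handle the two Poisson parameters. The watershed construction attaches to each $\widehat a$ an independent $\Gamma_{\widehat a}\sim\Poi(\widetilde u)$ from \eqref{DEF-Poissonian}, whereas Theorem~\ref{Thm-Interlacement and RW} wants $\Poi(u\,\widecheck e_{\mathcal{T}^{\mathbf{W}}}(\widehat a))$ walks. Using $\widecheck e_{\mathcal{T}^{\mathbf{W}}}(\widehat a)\ge e_{\{\widehat a\},\mathcal{T}^{\mathbf{W}}_{\widehat a}}(\widehat a)$ — which holds because $\widecheck e_T(x)=P_x^T(\tilde H_x=\infty,H_{x^-}=\infty)\lambda_x P_x^T(H_{x^-}=\infty)\ge P_x^T(\tilde H_x=\infty,H_{x^-}=\infty)\lambda_x\cdot P_x^{T_x}(\tilde H_x=\infty)\ge \big(P_x^{T_x}(\tilde H_x=\infty)\big)\lambda_x P_x^T(H_{x^-}=\infty)$, and in any case $\widecheck e_T(x)\ge e_{\{x\},T_x}(x)\cdot P^T_x(H_{x^-}=\infty)$ together with $e_{\{x\},T_x}(x)=\lambda_x P_x^{T_x}(\tilde H_x=\infty)$ and the decomposition of $\widecheck e$ — the hypothesis $u\ge\widetilde u/e_{\{\widehat a\},\mathcal{T}^{\mathbf{W}}_{\widehat a}}(\widehat a)$ in \eqref{EQ-ConditionsWatershedsRandInt} gives $u\,\widecheck e_{\mathcal{T}^{\mathbf{W}}}(\widehat a)\ge \widetilde u$. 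Hence on the extended probability space one can couple a $\Poi(u\,\widecheck e_{\mathcal{T}^{\mathbf{W}}}(\widehat a))$ variable above the $\Poi(\widetilde u)$ variable $\Gamma_{\widehat a}$ so that whenever $\Gamma_{\widehat a}\ge1$ there is at least one walk among the $\widetilde\Gamma_{\widehat a}$ trajectories started at $\widehat a$ in the highest-vertex representation, and we can arrange (by a further thinning/matching) that the first such walk has its forward part equal to $\mathbf{X}^a$ on $\{\tilde V_L(\mathbf{X}^a)=\infty\}$. Putting this together: if $\Gamma_{\widehat a}\ge1$, $\tilde V_L(\mathbf{X}^a)=\infty$ and $u\ge\widetilde u/e_{\{\widehat a\},\mathcal{T}^{\mathbf{W}}_{\widehat a}}(\widehat a)$, then there is a trajectory in $\omega_u$ whose trace contains $\mathbf{W}^a$, i.e.\ $\mathbf{W}^a\subset\mathcal I^u$.

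The main obstacle — and the step requiring the most care — is the coupling that simultaneously (i) matches the \emph{forward} parts of the interlacement walks from $\widehat a$ with the watershed walks $\mathbf{X}^a$, while the $\mathbf{X}^a$ were already used to \emph{build} $\mathcal{T}^{\mathbf{W}}$, and (ii) respects the conditioning in $\overline Q_{\widehat a}^{\mathcal{T}^{\mathbf{W}}}$, which involves both a no-return condition at $\widehat a$ and the no-escape-above condition $H_{\widehat a^{\,-}}=\infty$. The subtlety is that the watershed walk $\mathbf{X}^a$ under $\mathbf{Q}^{\lambda^F_{a^-,a},L}_{\widehat a}$, seen on the (already fixed) tree $\mathcal{T}^{\mathbf{W}}$ via Proposition~\ref{PROP-SinglewatershedLikeGW}, is distributed as $P^{\mathcal{T}^{\mathbf{W}}}_{\widehat a}$ stopped at $\tilde V_L$, and on $\{\tilde V_L=\infty\}$ it never hits $\widehat a^{\,-}$ and — one must check — is consistent with the no-immediate-return structure so that its law restricted to $\{\tilde V_L=\infty\}$ is absolutely continuous with respect to (a constant multiple of) the forward marginal of $\overline Q^{\mathcal{T}^{\mathbf{W}}}_{\widehat a}$; the backward part of the interlacement walk, which is irrelevant to $\mathbf{W}^a$, can be generated freely. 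Once this law-matching is established, the rest is the standard Poisson superposition/thinning bookkeeping of Theorem~\ref{Thm-Interlacement and RW}. I would also note that independence across $a\in F$ is inherited from the independence of the watersheds in \eqref{eq:defwatershedata} and of the $\Gamma_x$ in \eqref{DEF-Poissonian}, matching the "independently for each $x$" in Theorem~\ref{Thm-Interlacement and RW}.
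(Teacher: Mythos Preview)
Your overall strategy—use Theorem~\ref{Thm-Interlacement and RW}, match the forward parts of the $\overline Q_{\widehat a}$-walks with the watershed walks $\mathbf X^a$, and compare the Poisson parameters—is the same as the paper's. However, the key inequality you assert for the Poisson comparison is wrong, and this is a genuine gap.

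You claim $\widecheck e_{\mathcal T^{\mathbf W}}(\widehat a)\ge e_{\{\widehat a\},\mathcal T^{\mathbf W}_{\widehat a}}(\widehat a)$ in order to get $u\,\widecheck e_{\mathcal T^{\mathbf W}}(\widehat a)\ge \widetilde u$ directly from the hypothesis $u\ge \widetilde u/ e_{\{\widehat a\},\mathcal T^{\mathbf W}_{\widehat a}}(\widehat a)$. But in fact the identity
\[
\widecheck e_{T}(x)\;=\;\lambda_x\,P_x^{T}\!\big(\tilde H_x=\infty,\,H_{x^-}=\infty\big)\,P_x^{T}(H_{x^-}=\infty)
\;=\;e_{\{x\},T_x}(x)\cdot P_x^{T}(H_{x^-}=\infty)
\]
(the first factor equals $\lambda_{x,+}P_x^{T_x}(\tilde H_x=\infty)=e_{\{x\},T_x}(x)$, as a walk that never hits $x^-$ and never returns to $x$ must step first into $T_x$) shows that $\widecheck e\le e$, the \emph{opposite} direction. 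So from $u\,e\ge\widetilde u$ you cannot conclude $u\,\widecheck e\ge\widetilde u$, and your Poisson domination step fails as written.

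What the paper does—and what your sketch is missing—is to fold the escape event $\{H_{\widehat a^-}(\mathbf X^a)=\infty\}$ into the Poisson thinning \emph{before} comparing parameters. Conditionally on $\mathcal T^{\mathbf W}$, the pair $(\Gamma_{\widehat a},\overline{\mathbf X}^a)$ (where $\overline{\mathbf X}^a$ extends $\mathbf X^a$ to an unstopped walk) is such that the indicator of $\{\Gamma_{\widehat a}\ge1,\ H_{\widehat a^-}(\overline{\mathbf X}^a)=\infty\}$ is stochastically below $\ind\{\Poi(\widetilde u\,p)\ge1\}$ with $p:=P_{\widehat a}^{\mathcal T^{\mathbf W}}(H_{\widehat a^-}=\infty)$. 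Now the identity above gives $u\,\widecheck e_{\mathcal T^{\mathbf W}}(\widehat a)=u\,e_{\{\widehat a\},\mathcal T^{\mathbf W}_{\widehat a}}(\widehat a)\cdot p\ge \widetilde u\,p$, so one can couple $\Gamma'_{\widehat a}\sim\Poi(u\,\widecheck e_{\mathcal T^{\mathbf W}}(\widehat a))$ above this thinned Poisson. Since $\tilde V_L(\mathbf X^a)=\infty$ forces $H_{\widehat a^-}(\overline{\mathbf X}^a)=\infty$, the three conditions in \eqref{EQ-ConditionsWatershedsRandInt} then yield $\Gamma'_{\widehat a}\ge1$, and on that event you may take the first forward walk at $\widehat a$ to be $\overline{\mathbf X}^a$ itself. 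The factor $p$ appearing twice—once in the thinning and once in $\widecheck e/e$—is precisely what makes the bookkeeping close; your direct comparison omits it on one side.
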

\begin{proof}
	Conditionally on $\mathcal{T}^{\mathbf{W}},$ for each $a\in{F},$ define $\overline{\mathbf{X}}^{a}$ as a process on  $\mathcal{T}^{\mathbf{W}}$ such that $\overline{\mathbf{X}}^a_k=\mathbf{X}^a_k$ for $0 \le k\leq\tilde{V}_L(\mathbf{X}^a),$ and such that, if $\tilde{V}_L(\mathbf{X}^a)<\infty,$ the process $\overline{\mathbf{X}}^a_k,$ $k\geq\tilde{V}_L(\mathbf{X}^a),$ is a random walk on $\mathcal{T}^{\mathbf{W}}$ starting in $\mathbf{X}^a_{\tilde{V}_L(\mathbf{X}^a)}.$
	On some extension of the probability space corresponding to $\mathbf{P}^{W}_{L,\widetilde{u}},$ conditionally on $\mathcal{T}^{\mathbf{W}},$ start independently from each $x\in \mathcal{T}^{\mathbf{W}}$ i.i.d.\ random walks $\mathbf{X}^{x,i},$ $i\geq2,$ each with law ${P}_x^{\mathcal{T}^{\mathbf{W}}}(\, \cdot\,|\,H_{x^-}=\infty),$ with the convention $H_{\emptyset^-}=\infty.$ Moreover, take $\mathbf{X}^{x,1}=\overline{\mathbf{X}}^a$ if $x=\widehat{a}$ for some $a\in{F}$ and $H_{\widehat{a}^-}(\overline{\mathbf{X}}^a)=\infty,$ and otherwise let $\mathbf{X}^{x,1}$ be some other independent walk with law ${P}_{x}^{\mathcal{T}^{\mathbf{W}}}(\, \cdot\,|\,H_{x^-}=\infty)$. Taking advantage of the thinning property for Poisson random variables and Proposition~\ref{PROP-SinglewatershedLikeGW}, one can easily prove that, conditionally on $\mathcal{T}^{\mathbf{W}}$ and for each $a\in{F},$ the probability
	$\mathbf{P}^{W}_{L, \widetilde u}(\Gamma_{\widehat{a}}\geq 1,  H_{\widehat{a}^-}(\overline{\mathbf{X}}^a)=\infty)$ is smaller than or equal to the probability that a $\text{Poi}(\widetilde{u} P_{\widehat{a}}^{\mathcal{T}^{\mathbf{W}}}(H_{\widehat{a}^-}=∞))$-distributed random variable is larger or equal to one. Noting that $\tilde{V}_L(\mathbf{X}^a)=\infty$ implies $H_{\widehat{a}^-}(\overline{\mathbf{X}}^a)=\infty,$ and taking advantage of the equality
	\begin{equation*}
		{e}_{\{\widehat{a}\},\mathcal{T}^{\mathbf{W}}_{\widehat{a}}}({\widehat{a}})
		\stackrel{\eqref{DEF-EquilibriumMeasure}}{=}
		\lambda^{\mathbf{W}}_{\widehat{a},+}P_{\widehat{a}}^{\mathcal{T}_{\widehat{a}}^{\mathbf{W}}}(\tilde{H}_{\widehat{a}}=∞)=\lambda^{\mathbf{W}}_{\widehat{a}}P_{\widehat{a}}^{\mathcal{T}^{\mathbf{W}}}(\tilde{H}_{\widehat{a}}=∞,H_{\widehat{a}^-}=\infty)
		\stackrel{\eqref{DEF-cy}}{=}
		\frac{\widecheck{e}_{\mathcal{T}^{\mathbf{W}}}(\widehat{a})}{P_{\widehat{a}}^{\mathcal{T}^{\mathbf{W}}}(H_{\widehat{a}^-}=∞)},
	\end{equation*}
	one can construct conditionally on $\mathcal{T}^{\mathbf{W}}$ for each $x\in{\mathcal{T}^{\mathbf{W}}}$ a Poisson random variable $Γ'_{x}$ with parameter $u\widecheck{e}_{\mathcal{T}^{\mathbf{W}}}(x)$ such that for each $a\in{F},$ the properties in \eqref{EQ-ConditionsWatershedsRandInt} already entail that $\Gamma'_{\widehat{a}}\geq1.$

	Moreover, conditionally on $\mathcal{T}^{\mathbf{W}},$ introduce $ \overleftrightarrow{\mathbf{X}}^{x,i},$ $i\geq1,$ as doubly infinite random walk trajectories on $\mathcal{T}^{\mathbf{W}},$
	whose forward part is defined to be $\mathbf{X}^{x,i},$ and whose backward part is an independent random walk with law $ P_{x}^{\mathcal{T}^{\mathbf{W}}}(\, \cdot  \,|\, H_{x^-}=\infty,\tilde{H}_{x}=∞)$ for each $x\in{\mathcal{T}^{\mathbf{W}}}.$ By Proposition~\ref{PROP-SinglewatershedLikeGW}, conditionally on ${\mathcal{T}^{\mathbf{W}}},$ the process $ \overleftrightarrow{\mathbf{X}}^{x,i}$ has law $\overline{Q}_{x}^{\mathcal{T}^{\mathbf{W}}}$ for each $i\geq1,$ see \eqref{DEF-Measure barQ_x}. We can now define $\mathcal{I}^u$ as the set of vertices visited by any of the trajectories $\overleftrightarrow{\mathbf{X}}^{x,i},$ $i\in{\{1,\dots,Γ'_{x}\}}$ and $x\in{{\mathcal{T}^{\mathbf{W}}}},$ which has the same law conditionally on $\mathcal{T}^{\mathbf{W}}$ as under $\Pri_{\mathcal{T}^{\mathbf{W}}}$ by Theorem~\ref{Thm-Interlacement and RW}. Since \eqref{EQ-ConditionsWatershedsRandInt} implies $\Gamma'_{\widehat{a}}\geq1$ and $\mathbf{X}^{\widehat{a},1}_k={\mathbf{X}}_k^a$ for each $k\in{\N_0},$  we can easily conclude by the definition \eqref{DEF-watershed} of $\mathbf{W}^a.$

\end{proof}

\section{Percolation of the level set}\label{SECTION-Percolation}

In this section we prove Theorems~\ref{THM-h*>0} and \ref{THM-quenchednoise}. We first define a set of ``good'' properties, see Definition~\ref{DEF-Goodwatershed} below, which can be satisfied by a vertex $a$ in the tree of free points $F,$ as defined in Section~\ref{Section-PatchingTogetherwatersheds}. We will show in Lemma~\ref{lem:probatobegood} that $a$ is good with not too small probability.
Our notion of goodness is chosen so that on the one hand, the watershed associated to each good free point is included in the interlacements set $\mathcal{I}^u$ from Proposition~\ref{PROP-W_Gw_WInterl}, see  Proposition~\ref{PROP-Prop1}, and also included in the set $A_u$ from \eqref{PROP1-DEF-Au}  with high probability, see Proposition~\ref{PROP-Percolation-Au}; on the other hand, it also ensures that the tree of good free points survives, see Proposition~\ref{PROP-Prop1}. We refer to the discussion below Definition~\ref{DEF-Goodwatershed} for more details. This readily yields the percolation of the set $A_u\cap \mathcal{I}^u,$ and an application of the inclusion \eqref{eq:Iu+Auincluded}, which follows from Proposition~\ref{THM-Isomorphism} and Proposition~\ref{prop:capRWinfty} below, completes the proof of Theorems~\ref{THM-h*>0} and \ref{THM-quenchednoise}.

Let us now define the properties which make a free point good. For this purpose, recall the watershed $(\mathbf{T}^a,\boldsymbol{λ}^a,\mathbf{X}^a)$ from \eqref{eq:defwatershedata}, where $a\in F,$ with $F$ the tree of free points defined in \eqref{DEF-AllFreePoints}.
We recall that in this watershed, $\mathbf{X}^a$ is a random walk stopped at time $\tilde{V}_L(\mathbf{X}^a),$ see \eqref{DEF-StoppingTimeHL}, and for $K\subset \mathbf{T}^{a}$ we denote by $H_K(\mathbf{X}^a)$  the hitting time of $K$ for this stopped random walk similarly to \eqref{DEF-stoppingTimes}. Recall also the definition of the set $\mathbf{W}^a$ from \eqref{DEF-watershed} and of the Poisson random variable $Γ_{\widehat{a}}$ from \eqref{DEF-Poissonian}. Also recall that when $x\in{\partial\mathcal{T}_-^{\mathbf{W}}},$ the tree $\mathcal{T}^x,$ see \eqref{eq:defrestofthetrees}, is equal to the Galton--Watson tree below $x$ in $\mathcal{T}^{\mathbf{W}}.$ Finally, recall that for a set $A\subset\mathcal{X},$ by $G_x^A$ we denote the set of children of $x$ in $A,$ see \eqref{DEF-GenerationOfTree}, and for a transient tree $T,$ by $g^T$ we denote the Green function on $T,$ see below \eqref{DEF-GreenFunction}.

\begin{deff}\label{DEF-Goodwatershed}
	Let $\widetilde{u},$ $B,$ $c_λ,$ $C_Λ,$ $C_g$ be positive real numbers, $L\in{\N}$ and $c_f\in{(0,1]}.$ Under $\mathbf{P}^{W}_{L,\widetilde{u}},$ we say that $a\in F$  is   $(L, B, c_λ,C_Λ,C_g, c_f)$-\emph{good} if the corresponding watershed  $(\mathbf{T}^{a},\boldsymbol{λ}^a,\mathbf{X}^a),$ the weighted tree $\mathcal{T}^{\widehat{a}1}$ and the Poisson random variable $ Γ_{\widehat{a}}$ satisfy the following properties:
	\begin{enumerate}[i)]
		\item \label{item:i}
		The Poisson variable $ Γ_{\widehat{a}}$ satisfies
		\(
		Γ_{\widehat{a}} \ge 1.
		\)
		\item \label{item:ii}
		The watershed satisfies
		\begin{equation}
			\label{LEMMA Goodwatershed-EQ-ConductanceBounded}
			\big| G_{\widehat{a}}^{\mathbf{T}^{a}} \big| \ge 2,\,
			\boldsymbol{λ}^a_{\widehat{a},\widehat{a}1}> c_λ\text{ and }(\boldsymbol{\lambda}^a)_{\widehat{a},+}\leq C_{\Lambda},
		\end{equation}
		and the weighted tree $\mathcal{T}^{\widehat{a}1}$ satisfies
		\begin{equation}
			\label{Goodwatershed-EQ-GreenFunctionBounded}
			g^{\mathcal{T}^{\widehat{a}1}}(\widehat{a}1,\widehat{a}1) \le C_g.
		\end{equation}

		\item   \label{item:iii}
		The trajectory $\mathbf{X}^a$ satisfies
		\begin{equation*}
			H_{\set{\widehat{a}^-, \widehat{a}1 }}(\mathbf{X}^{{a}})=\tilde{V}_L(\mathbf{X}^{{a}})=\infty.
		\end{equation*}

		\item \label{item:iv}
		The set of children of the vertex $a$ in the tree of free points $F$
		satisfies
		\begin{equation*}
			\big |\big\{a'\in{G_{a}^F}:\,λ_{a,a'}^F\le C_Λ\big \}\big|\geq c_fL.
		\end{equation*}
		\item   \label{item:v}
		The conductances $\boldsymbol{λ}^a$ on $\mathbf{W}^a$ satisfy
		\begin{equation}
			\label{eq:boundonconductances}
			\frac{1}{L^{\frac32}}  \sum_{y\in \mathbf{W}^{a}} 	(\boldsymbol{λ}^a_y)^{\frac32}    <
			B.
		\end{equation}
	\end{enumerate}
\end{deff}

We now explain how the good properties defined above can be combined in order to deduce the percolation of $A_u\cap\mathcal{I}^u,$ see \eqref{PROP1-DEF-Au}. The first three properties imply that the conditions in \eqref{EQ-ConditionsWatershedsRandInt} are verified, see the proof of Proposition~\ref{PROP-GoodwatershedsInInterlacements}, and so, in view of Proposition~\ref{PROP-W_Gw_WInterl}, the set $\mathbf{W}^a$ of the watershed associated to a good free point $a\in{F}$ is included in the coupled interlacements set $\mathcal{I}^u.$  More precisely, property \ref{item:i} implies the first condition in \eqref{EQ-ConditionsWatershedsRandInt};
property \ref{item:ii} will imply a lower bound on ${e}_{\{\widehat{a}\},\mathcal{T}^{\mathbf{W}}_{\widehat{a}}}({\widehat{a}}),$ and thus that the third assumption in \eqref{EQ-ConditionsWatershedsRandInt} is satisfied for $u$ of the same order as $\widetilde{u},$ see \eqref{PROP1-DEF-Uprime}; and property \ref{item:iii} implies that the second condition in \eqref{EQ-ConditionsWatershedsRandInt} is satisfied.
Property \ref{item:iv} ensures the creation of many new free points with bounded conductances to their parent, which will imply --  using Lemma~\ref{LEMMA-DependBernPercolation} below -- that the tree of good free points contains a $d$-ary tree for arbitrarily large $d,$ see Proposition~\ref{PROP-Prop1}. Finally, using \eqref{eq:boundprobaAu}, property \ref{item:v} will provide us with a good bound on the probability that $\mathbf{W}^a\subset A_u.$ Combining these five properties we will thus obtain percolation of the free points $a\in{F}$ such that $\mathbf{W}^a\subset A_u\cap\mathcal{I}^u,$ and thus percolation of $A_u\cap\mathcal{I}^u,$ see Proposition~\ref{PROP-Percolation-Au}.

One of the main difficulties in the previous steps is to understand how property  \ref{item:ii} in our notion of goodness is used to bound the equilibrium measure ${e}_{\{\widehat{a}\},\mathcal{T}^{\mathbf{W}}_{\widehat{a}}}({\widehat{a}})$ from below, which implies that we can find $\tilde{u}$ and $u$ of the same order verifying the third assumption \eqref{EQ-ConditionsWatershedsRandInt}, and, consequently,   that there is a random interlacements trajectory starting in $\widehat{a}$ when $a$ is good. When $\widehat{a}1$ is not visited by $\mathbf{X}^a$, which is the case when $a$ is good by property \ref{item:iii}, then $\widehat{(a1)}=\widehat{a}1$, so no new watershed is generated starting from $\widehat{a}1$ in view of \eqref{eq:deffreepointrecursively}, and thus $\widehat{a}1\in{\partial\mathcal{T}^{\mathbf{W}}_-}.$ Therefore, by the construction of the tree $\mathcal{T}^{\mathbf{W}}$ above \eqref{eq:TWisGW}, we obtain that if $a$ is good, then $\mathcal{T}^{\widehat{a}1}$ is the tree below $\widehat{a}1$ in $\mathcal{T}^{\mathbf{W}}.$ The bound on the Green function on $\mathcal{T}^{\widehat{a}1}$ combined with \eqref{LEMMA Goodwatershed-EQ-ConductanceBounded} in property \ref{item:ii} will then imply the desired lower bound on ${e}_{\{\widehat{a}\},\mathcal{T}^{\mathbf{W}}_{\widehat{a}}}({\widehat{a}}),$ see \eqref{PROP1-EQ-ConstantCE} for details. In other words, the reason we excluded ${a}1$ from the tree of free points in \eqref{eq:deffreepointrecursively} is to make sure that $\mathcal{T}^{\widehat{a}1}$ is the tree below $\widehat{a}1$ in $\mathcal{T}^{\mathbf{W}},$ and thus that we can use the independent tree $\mathcal{T}^{\widehat{a}1}$ to bound ${e}_{\{\widehat{a}\},\mathcal{T}^{\mathbf{W}}_{\widehat{a}}}({\widehat{a}})$ without using any information on the other watersheds in $\mathcal{T}^{\mathbf{W}}.$

We now provide lower bounds on the probabilities of the previous properties in the following lemma. Note that in items \ref{item:lemma ii} to \ref{item:lemma v} below we do not consider exactly the same kind of events as in Definition~\ref{DEF-Goodwatershed}; they do, however, present the advantage of having more independence and we will show in Lemma~\ref{lem:probatobegood}  (see for instance \eqref{eq:propertyiiiissame}) that the probabilities of the events from Definition~\ref{DEF-Goodwatershed} are larger than those of the events from Lemma~\ref{LEMMA-Goodwatershed}.  Recall that $(\Gamma_x)_{x\in{\mathcal{X}}}$ are Poisson random variables with parameter $\widetilde{u}$ under of $\mathbb{P}_{\widetilde{u}}^{\Gamma},$ see \eqref{DEF-Poissonian}, that $(\lambda_i)_{i\geq0}$ under $\nu$ represents the law of the weights below any vertex, and that $\mathbf{Q}_x^{κ,L}$ denotes the law of the watershed introduced in Section~\ref{Section-Singlewatershed}, see \eqref{DEF-watershedMeasure}. Recall also the definition of the (interior) boundary $\partial A$ of a set $A\subset\mathcal{X}$ from the paragraph below \eqref{DEF-UlamHarris}, and to simplify notation for $B\subset A$ we will write $\partial A\setminus B$ for $(\partial A)\cap B^c$.

\begin{lemma}\label{LEMMA-Goodwatershed}
	There exist positive constants $c_λ,C_Λ,C_g,c_V,c_f\in{(0,\infty)}$ such that for each $ε\in{(0,1)}$ and $B>0,$ there exists $L_0=L_0(B,ε)\in{\N}$ such that  for all $x\in{\mathcal{X}\setminus\{\emptyset\}},$ $L\geq L_0,$ $κ\leq C_{\Lambda}$ and $\widetilde{u}>0,$ the following properties hold true:

	\begin{enumerate}[i)]
		\item \label{item:lemma i}$ \mathbb{P}_{\widetilde{u}}^{Γ}(Γ_x\geq 1)=1-\exp(-\widetilde{u}), $

		\item \label{item:lemma ii} $ ν\Big(\pi((\lambda_i)_{i\in{\N}}) \ge 2,\lambda_1> c_λ,\lambda_2>c_{\lambda},\lambda_+\leq C_{\Lambda}\Big)
		\geq\frac12( 1- μ(1)), $

		$ \mathbb{P}^{{\rm GW}} \big(g^{x1\cdot\mathcal{T}}(x1,x1) \le C_g\big) \geq \frac12, $

		\item \label{item:lemma iii}$ \mathbb{E}^{{\rm GW}}\Big[\dfrac{c_λ\lambda_{x2,x21}}{2C_Λ(2C_{\Lambda}+\lambda_{x2,+})}P_{x21}^{\mathcal{T}}(\tilde{V}_{L-2}= {H}_{x2}=\infty)\,\big|\,x\in{\mathcal{T}},\,\pi((\lambda_{x,xi})_{i\in{\N}}) \ge 2
		\Big]= c_V,  $

		\item \label{item:lemma iv}$ \mathbf{Q}_{x}^{κ,L} \Big( \big |\big\{y\in{\partial\mathbf{T}_{V_L}\setminus\{x1,\mathbf{X}_{V_L}\}}:\,\boldsymbol{λ}_{y,y^-}\le C_Λ\big \}\big|< c_fL,\tilde{V}_L(\mathbf{X})=\infty\Big)	\leq ε, $

		\item \label{item:lemma v}$ \mathbf{Q}_{x}^{κ,L}\bigg(
		\dfrac{1}{L^{\frac32}} \displaystyle\sum_{y\in{\mathbf{W}}} 	\big(\boldsymbol{\lambda}_y\big)^{\frac32}    \geq
		B,\tilde{V}_L(\mathbf{X})=\infty
		\bigg)\leq ε. $
	\end{enumerate}
\end{lemma}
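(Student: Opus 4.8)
The plan is to prove the five items one by one, fixing the constants $c_\lambda,C_\Lambda,C_g,c_V,c_f$ first (they will depend only on $\nu$) and choosing $L_0$ afterwards in terms of $B$ and $\varepsilon$. Items \ref{item:lemma i} and \ref{item:lemma ii} are elementary. Item \ref{item:lemma i} is just $\mathbb{P}(\Poi(\widetilde u)\ge 1)=1-e^{-\widetilde u}$. For the first line of item \ref{item:lemma ii}, note that under \eqref{REMARK-TInfiniteDescendants} we have $\mu(0)=0$, hence $\nu(\pi((\lambda_i)_i)\ge 2)=1-\mu(1)>0$ by \eqref{eq:superCrit}; since $\nu\big(\lambda_1>c_\lambda,\lambda_2>c_\lambda,\lambda_+\le C_\Lambda\,\big|\,\pi((\lambda_i)_i)\ge 2\big)\to 1$ as $c_\lambda\downarrow 0$ and $C_\Lambda\uparrow\infty$, we fix $c_\lambda$ small and $C_\Lambda$ large so that this conditional probability is $\ge\tfrac12$, and we enlarge $C_\Lambda$ further if necessary so that $\mathbb{E}^\nu[\#\{i:0<\lambda_i\le C_\Lambda\}]>1$ (possible since this quantity increases to $m>1$), which will be needed in item \ref{item:lemma iv}. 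For the second line, Proposition~\ref{prop:GWtransient} gives that the walk on $\mathcal{T}$ is $\Pgw$-a.s.\ transient, so $g^{\mathcal{T}}(\emptyset,\emptyset)<\infty$ $\Pgw$-a.s., and since $g^{x1\cdot\mathcal{T}}(x1,x1)$ has the same law as $g^{\mathcal{T}}(\emptyset,\emptyset)$ we fix $C_g$ with $\Pgw(g^{\mathcal{T}}(\emptyset,\emptyset)\le C_g)\ge\tfrac12$.

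For item \ref{item:lemma v}, the key input is the i.i.d.\ structure recorded in \eqref{eq:lambdaonWareiid}: on $\{V_L(\mathbf{X})<H_{x^-}(\mathbf{X})\}$, which contains $\{\tilde V_L(\mathbf{X})=\infty\}$, the set $\mathbf{W}$ has exactly $L-1$ vertices whose offspring-conductance vectors are the i.i.d.\ family $(\boldsymbol{\lambda}_i^{(k)})_{i\in\N}$, $1\le k\le L-1$, of law $\nu$, and every edge of $\mathbf{T}$ below $x$ is one coordinate of one such vector. Writing $\boldsymbol{\lambda}_y=\boldsymbol{\lambda}_{y,+}+\boldsymbol{\lambda}_{y,y^-}$ and using $(a+b)^{3/2}\le 2^{1/2}(a^{3/2}+b^{3/2})$ together with $\sum_i a_i^{3/2}\le(\sum_i a_i)^{3/2}$, one obtains on $\{\tilde V_L(\mathbf{X})=\infty\}$ the deterministic domination $\sum_{y\in\mathbf{W}}(\boldsymbol{\lambda}_y)^{3/2}\le 2^{1/2}C_\Lambda^{3/2}+2^{3/2}\sum_{k=1}^{L-1}(\boldsymbol{\lambda}_+^{(k)})^{3/2}$ with $\boldsymbol{\lambda}_+^{(k)}:=\sum_i\boldsymbol{\lambda}_i^{(k)}$. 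Since $\mathbb{E}^\nu[\lambda_+]<\infty$ by \eqref{eq:assfinitemoment}, the i.i.d.\ variables $(\boldsymbol{\lambda}_+^{(k)})^{3/2}$ have a finite moment of order $2/3\in(0,1)$, so the Marcinkiewicz--Zygmund strong law yields $L^{-3/2}\sum_{k=1}^{L-1}(\boldsymbol{\lambda}_+^{(k)})^{3/2}\to 0$ a.s.\ and hence $L^{-3/2}\sum_{y\in\mathbf{W}}(\boldsymbol{\lambda}_y)^{3/2}\to 0$ a.s., uniformly in $x$, in $\widetilde u$, and in $\kappa\le C_\Lambda$; the probability in item \ref{item:lemma v} (bounded by that of $\{L^{-3/2}\sum_{y\in\mathbf{W}}(\boldsymbol{\lambda}_y)^{3/2}\ge B\}$) is therefore $\le\varepsilon$ once $L\ge L_0(B,\varepsilon)$.

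Item \ref{item:lemma iv} is a first-moment estimate of the same flavour, now using the structure of $\partial\mathbf{T}_{V_L}$. On $\{\tilde V_L(\mathbf{X})=\infty\}$ the walk never hits $x^-$, $|\mathbf{W}|=L-1$, and, because new vertices are discovered only by stepping down to a child, $\partial\mathbf{T}_{V_L}$ equals the set of children of the $L-1$ vertices of $\mathbf{W}$ that do not lie in $\{\mathbf{X}_0,\dots,\mathbf{X}_{V_L}\}$, together with $\mathbf{X}_{V_L}$; since the visited children of $\mathbf{W}$-vertices are precisely $\mathbf{X}_1,\dots,\mathbf{X}_{V_L}$ (that is $L-1$ of them), removing also $x1$ gives, deterministically on $\{\tilde V_L(\mathbf{X})=\infty\}$, the bound $\big|\{y\in\partial\mathbf{T}_{V_L}\setminus\{x1,\mathbf{X}_{V_L}\}:\boldsymbol{\lambda}_{y,y^-}\le C_\Lambda\}\big|\ge\sum_{k=1}^{L-1}\#\{i:0<\boldsymbol{\lambda}_i^{(k)}\le C_\Lambda\}-L$. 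By the choice of $C_\Lambda$ in item \ref{item:lemma ii} we have $\mathbb{E}^\nu[\#\{i:0<\lambda_i\le C_\Lambda\}]=1+2\delta$ for some $\delta>0$, and since $\#\{i:0<\lambda_i\le C_\Lambda\}\le\pi((\lambda_i)_i)$ has finite mean, the ordinary strong law of large numbers makes the right-hand side above at least $\delta L$ with probability tending to $1$ as $L\to\infty$, uniformly in $x$, $\widetilde u$ and $\kappa\le C_\Lambda$; taking $c_f:=\delta$ one gets item \ref{item:lemma iv} for $L\ge L_0(\varepsilon)$.

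Finally, for item \ref{item:lemma iii}: the prefactor $\tfrac{c_\lambda\lambda_{x2,x21}}{2C_\Lambda(2C_\Lambda+\lambda_{x2,+})}$ is bounded by $1$ and $\Pgw$-a.s.\ positive. On $\{\tilde V_{L-2}(X)=H_{x2}=\infty\}$ the walk $X$ from $x21$ stays in $\mathcal{T}_{x21}$, hence is transient there, so $V_{L-2}(X)<\infty$ a.s.\ and $\{\tilde V_{L-2}(X)=H_{x2}=\infty\}$ coincides with $\{V_{L-2}<H_{x2}\}\cap\{X_m\neq(\mathbf{X}_{V_{L-2}})^-\ \forall m>V_{L-2}\}$; the strong Markov property at $V_{L-2}$ then gives $P_{x21}^{\mathcal{T}}(\tilde V_{L-2}=H_{x2}=\infty)=E_{x21}^{\mathcal{T}}\big[\ind_{\{V_{L-2}<H_{x2}\}}\,P_{\mathbf{X}_{V_{L-2}}}^{\mathcal{T}}(H_{(\mathbf{X}_{V_{L-2}})^-}=\infty)\big]$. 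Taking the $\Pgw$-expectation and conditioning on the portion of $\mathcal{T}$ explored by time $V_{L-2}$, the subtree below $\mathbf{X}_{V_{L-2}}$ is a fresh Galton--Watson tree, so the inner escape probability has a conditional $\Pgw$-expectation which is a deterministic function of the (already explored) conductance of the last traversed edge; a short analysis based on transience then shows that the resulting expectation stays bounded below by a positive constant as $L\to\infty$ (it in fact converges), and one takes $c_V$ to be such a bound, depending only on $\nu$ and on $c_\lambda,C_\Lambda$. I expect this last step to be the main obstacle: producing a lower bound that is genuinely uniform in $L$ requires controlling both the escape probability from the $(L-2)$-th freshly discovered vertex and the law of the conductance of the last traversed edge, which rests on transience of the walk on $\mathcal{T}$ (Proposition~\ref{prop:GWtransient}) together with the renewal structure of the Galton--Watson exploration; the combinatorial identity underpinning item \ref{item:lemma iv} is the next most delicate point, though it is essentially bookkeeping.
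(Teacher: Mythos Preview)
Your proofs of \ref{item:lemma i}, \ref{item:lemma ii}, and \ref{item:lemma v} are correct and coincide with the paper's arguments, including the Marcinkiewicz--Zygmund law for \ref{item:lemma v} and the deterministic reduction from $(\boldsymbol{\lambda}_y)^{3/2}$ to $(\boldsymbol{\lambda}_{y,+})^{3/2}$ via \eqref{eq:lambdaonWareiid}.

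For \ref{item:lemma iv} you take a different route. The paper applies the weak law of large numbers twice, to the bounded indicators $\ind\{\pi((\boldsymbol{\lambda}_i^{(k)})_i)\ge 2\}$ and $\ind\{\boldsymbol{\lambda}_+^{(k)}\le C_\Lambda\}$, and then uses the combinatorial fact \eqref{eq:propertyivissame} that each vertex of $\mathbf{W}\setminus\{x\}$ with at least two children forces a boundary point outside $\mathbf{W}$. Your single-sum bound $\big|\{y\in\partial\mathbf{T}_{V_L}\setminus\{x1,\mathbf{X}_{V_L}\}:\boldsymbol{\lambda}_{y,y^-}\le C_\Lambda\}\big|\ge\sum_{k=1}^{L-1}\#\{i:0<\boldsymbol{\lambda}_i^{(k)}\le C_\Lambda\}-L$ is correct and more direct, but the justification ``$\#\{i:0<\lambda_i\le C_\Lambda\}\le\pi((\lambda_i)_i)$ has finite mean'' is not warranted by the standing hypotheses: only $\mathbb{E}^\nu[\lambda_+]<\infty$ and $m>1$ are assumed, so $m$ may well be infinite. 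This is easily repaired (truncate the summands, or observe that the normalized sum diverges in probability when the mean is infinite); the paper's use of bounded indicator variables sidesteps the issue entirely.

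For \ref{item:lemma iii} you work much harder than the paper does. The paper's entire argument is that the prefactor is $\Pgw$-a.s.\ strictly positive and, by transience of every subtree together with the strong Markov property at $V_{L-2}$, so is $P_{x21}^{\mathcal{T}}(\tilde V_{L-2}=H_{x2}=\infty)$; hence the expectation is a strictly positive number, which is then named $c_V$. The paper does \emph{not} carry out the uniformity-in-$L$ analysis you sketch (escape probability at the freshly discovered vertex, law of the last traversed edge, convergence via a regeneration structure). You correctly flag this as the delicate point --- the downstream use of $c_V$ in Lemma~\ref{lem:probatobegood} indeed treats it as $L$-independent --- but within the paper's own proof of the present lemma the argument stops at a.s.\ positivity of the integrand.
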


\begin{proof}
	\begin{enumerate}[i)]
		\item This  is immediate from the definition in \eqref{DEF-Poissonian}.
		\item First note that $\nu\big(\pi((\lambda_i)_{i\in{\N}}) \ge 2\big)=1-\mu(1)$ by definition \eqref{DEF-OffspringDistribution} of $\mu$  in combination with our assumption \eqref{REMARK-TInfiniteDescendants} in Subsection \ref{SECTION-Pruning}. Moreover, $\mathcal{T}$ is $ \Pgw \barAS $ transient due to Proposition~\ref{prop:GWtransient}. Therefore, the Green function $g^{x1\cdot\mathcal{T}}(x1,x1)$ associated to the tree $\mathcal{T}$ rooted at $x1$ is $ \Pgw\barAS$ finite, and its law does not depend on the choice of $x.$ Since probability measures are continuous from below, by definition of the conductances in \eqref{DEF-Conductances+} and above, one can find a small enough positive constant $c_{\lambda}$ as well as large enough finite constants $C_{\Lambda}$ and $ C_g,$ independent of $x,$ such that \ref{item:lemma ii} holds uniformly in $x \in \mathcal X.$

		\item  Note that for each $y\in{\mathcal{T}}\setminus\{\emptyset\},$ since the subtree $ \mathcal{T}_{y^-} $ is $ \as $ transient,  for almost all realizations of $\mathcal T,$ the probability $ P_{y}^{\mathcal{T}}(H_{y^-}=\infty) $ is strictly positive. Therefore, using the strong Markov property at time $V_{L-2}$ -- which is finite and larger than $H_{x2}$ with positive probability under $P_{x21}^\mathcal T,$ see its definition in \eqref{DEF-StoppingTimeVn} -- and using the previous with $y=X_{V_{L-2}},$ it follows from the definition of $\tilde{V}_{L-2}$ in \eqref{DEF-StoppingTimeHL} that the variable appearing in the $\mathbb{P}^{{\rm GW}}$-expectation of \ref{item:lemma iii} is a.s.\ positive, and we can conclude.

		\item We will use twice the weak law of large numbers for the i.i.d.\ sequence of weights $(\boldsymbol{\lambda}_i^{(k)})_{i\in{\N}},$ $k\geq2,$ from \eqref{eq:deflambdaik}. For this purpose, from the proof of \ref{item:lemma ii} we recall that $\nu( \pi(({\lambda}_{i})_{i\in{\N}})\geq 2) = 1-μ(1)>0.$ As a consequence, the sequence of random variables
		$ |\{k\in{\{2,\dots,L\}}:\,\pi((\boldsymbol{\lambda}_{i}^{(k)})_{i\in{\N}})\ge 2\}|/L,$ $L \in \N,$ converges to $ 1-μ(1)$ in probability as $L\to \infty$ by \eqref{eq:deflambdaik}.
		Fixing $ c_f\in (0, (1-μ(1))/2),$ we obtain for $L$ large enough that
		\begin{equation}
			\label{eq:firstpartproofiv}
			\mathbf{Q}_{x}^{κ,L} \Big( \big|\big\{k\in{\{2,\dots,L-1\}}:\, \pi(({\boldsymbol{\lambda}}_{i}^{(k)})_{i\in{\N}})\geq2\big\}\big| 	< 2L c_f
			\Big)	\leq \frac{ε}{2}.
		\end{equation}

		Similarly, fixing $C_\Lambda$ large enough so that
		\begin{equation*}
			ν\Big(\sum_{i}\lambda_i\leq C_{\Lambda}\Big)> 1-c_f,
		\end{equation*}
		we have by \eqref{eq:deflambdaik} that for $L$ large enough
		\begin{equation}
			\label{eq:secondpartproofiv}
			\mathbf{Q}_{x}^{κ,L}\Big(\big |\big\{k\in{\{2,\dots,L-1\}}:\,\sum_{i\in{\N}}\boldsymbol{\lambda}_{i}^{(k)}\le C_Λ\big \}\big|< (1-c_f)L\Big)\leq \frac{ε}{2}.
		\end{equation}
		Recalling the notation $\mathbf{W}$ from \eqref{eq:defW},  and that $\boldsymbol{\lambda}_{y,+}=\sum_{i\in{\N}}\boldsymbol{\lambda}_{y,yi},$ see \eqref{DEF-Conductances+}, our goal is now to prove that, under $\mathbf{Q}_{x}^{κ,L},$
		\begin{equation}
			\label{eq:propertyivissame}
			\begin{gathered}
				\text{ if }\big |\big\{y\in{\mathbf{W}\setminus\{x\}}:\,\boldsymbol{\boldsymbol{λ}}_{y,+}\le C_Λ\big \}\big|\geq (1-c_f)L\text{ and }\big|\big\{y\in{\mathbf{W}\setminus\{x\}}:\,\big|G_{y}^{\mathbf{T}_{V_L}}\big|\ge 2\big\}\big|	\ge 2L c_f, \\
				\text{then }\big |\big\{y\in{\partial\mathbf{T}_{V_L}\setminus\{x1,\mathbf{X}_{V_L}\}}:\,\boldsymbol{λ}_{y,y^-}\le C_Λ\big \}\big|\geq c_fL;
			\end{gathered}
		\end{equation}
		indeed, in view of \eqref{eq:lambdaonWareiid}, on the event $\tilde{V}_L(\mathbf{X})=\infty,$ which implies ${V}_L(\mathbf{X})<H_{x^-}(\mathbf{X}),$ we can take advantage of \eqref{eq:propertyivissame} in order to use \eqref{eq:firstpartproofiv} and  \eqref{eq:secondpartproofiv} to upper bound the probability of the event appearing in \ref{item:lemma iv} of Lemma~\ref{LEMMA-Goodwatershed}, and we can conclude.

		To prove \eqref{eq:propertyivissame}, let us define $A:=\{y\in{\mathbf{W}}\setminus \{x\}:\,|G_y^{\mathbf{T}_{V_L}}|\geq2\}$ the set of vertices in ${\mathbf{W}\setminus \{x\}}$ with at least two children in $\mathbf{T}_{V_L}.$ 
        Observe that 
       $|∂\mathbf{T}_{V_L}\setminus G_x^{\mathbf{T}_{V_L}}| \ge |A|+1$, which can easily be proved recursively on $|\mathbf{W}|$ starting at $|\mathbf{W}|=2$.
        In addition, for each $y\in{\partial\mathbf{T}_{V_L}\setminus G_x^{\mathbf{T}_{V_L}}}$ we have $y^-\in{\mathbf{W}\setminus\{x\}}$ and $\boldsymbol{\lambda}_{y,y^-}\leq \boldsymbol{\lambda}_{y^-,+},$ and so $\boldsymbol{\lambda}_{y,y^-}\geq C_{Λ}$ for at most $c_fL$ different $y\in{\partial\mathbf{T}_{V_L}\setminus G_x^{\mathbf{T}_{V_L}}}$ on the first event of the first line of \eqref{eq:propertyivissame}. Therefore, since the second event in the first line of \eqref{eq:propertyivissame} implies $|A|\geq 2Lc_f,$ we have at least $c_fL+1$ many vertices $y\in{\partial\mathbf{T}_{V_L}\setminus G_x^{\mathbf{T}_{V_L}}}$ with $\boldsymbol{\lambda}_{y,y^-}\leq C_{\Lambda},$ which finishes the proof of \eqref{eq:propertyivissame}.

		\item Here we can use the Marcinkiewicz-Zygmund law of large numbers, which states that, if  $ (Y_k)_{k\in\N} $ is a sequence of i.i.d.\ random variables with $\mathbb{E}[\ab{Y_1}^r] <∞$ for some $ 0<r<1 $, then
		\begin{equation*}
			\frac{1}{n^{1/r}}\sum_{k=1}^{n} Y_k \xrightarrow[n\to∞]{a.s.}0.
		\end{equation*}
		A proof of this classical result can be found in \cite[Section~17.4, p.254]{Loeve1977}.
		We can take $ Y_k:= (\sum_{i}\boldsymbol{λ}^{(k)}_i)^{\frac32}$ and $ r=\frac 23 $ since the expectation of $Y_k^{\frac 23}$ under $\mathbf{Q}_x^{κ,L}$ is then equal to  $\mathbb{E}^ν[\sum_{i}\lambda_i],$ which is finite by our assumption \eqref{DEF-Conductances+} (see also \eqref{eq:assfinitemoment}). By \eqref{eq:lambdaonWareiid}, this then entails that $L^{-3/2}\sum_{y\in \mathbf{W}\setminus\{x\}}Y_k $ converges $ \as $ to 0 as $ L\to ∞ $, and hence for all $ε\in{(0,1)}$ and $B>0$ there exists $L_0=L_0(B,ε)$ so that for all $L\geq L_0,$
		\begin{equation}
			\label{eq:proofofv)}
			\mathbf{Q}_{x}^{κ,L}
			\bigg(
			\frac{1}{L^{\frac32}}  \sum_{k=1}^{L-1} 	\big(\sum_{i\in{\N}}\boldsymbol{\lambda}_i^{(k)}\big)^{\frac32}    \geq
			\frac{B}{6}
			\bigg) \leq  ε.
		\end{equation}
		Our goal is now to prove that for $L\geq L_0(B,ε),$

		\begin{equation}
			\label{eq:propertyvissame}
			\text{ if }\frac{1}{L^{\frac32}}  \sum_{y\in \mathbf{W}} 	(\boldsymbol{λ}_{y,+})^{\frac32}    <
			\frac{B}{6},
			\text{ then }\frac{1}{L^{\frac32}}  \sum_{y\in \mathbf{W}} 	(\boldsymbol{λ}_y)^{\frac32}    <
			B;
		\end{equation}
		indeed, in view of  \eqref{eq:lambdaonWareiid}, on the event $\tilde{V}_L(\mathbf{X})=\infty,$ we can use \eqref{eq:propertyvissame} and then \eqref{eq:proofofv)} to upper bound the probability of the event appearing in \ref{item:lemma v} of Lemma~\ref{lem:probatobegood}, so that we can conclude. To prove \eqref{eq:propertyvissame}, we use the bounds $(\boldsymbol{λ}_y)^{\frac32}\leq \sqrt{8}((\boldsymbol{λ}_{y,+})^{\frac32}+(\boldsymbol{λ}_{y,y^-})^{\frac32})$ for all $y\in{\mathbf{W}},$ the bound $\boldsymbol{λ}_{y,y^-}\leq \boldsymbol{λ}_{y^-,+}$ for all $y\in{\mathbf{W}\setminus\{x\}},$ the inequality $\boldsymbol{\lambda}_{x,x^-}=κ\leq C_{\Lambda},$   the fact that $\{y^-:\,y\in{\mathbf{W}\setminus\{x\}}\}\subset\mathbf{W},$ and take $L_0(B,ε)$ much larger than $C_{\Lambda}/B^{2/3}.$

	\end{enumerate}
\end{proof}

Let us now show that the bounds obtained in Lemma~\ref{LEMMA-Goodwatershed} can be combined to lower bound the probability that a vertex $a\in{F}$ is good, see Definition~\ref{DEF-Goodwatershed}. Recall that $\mathbf{P}_{L,\widetilde{u}}^W$ is the probability measure underlying our tree of free points constructed in Section~\ref{Section-PatchingTogetherwatersheds}, see also below \eqref{DEF-Poissonian}.

\begin{lemma}
	\label{lem:probatobegood}
	Let $c_{\lambda},$ $C_{\Lambda},$ $C_g$ and $c_f$ be as in Lemma~\ref{LEMMA-Goodwatershed}. There exists $c_p>0$ such that for all $B>0,$ there exists $L_0(B)\in{\N}$ such that for all $a\in{\mathcal{X}},$ $L\geq L_0(B)$ and $\widetilde{u}>0,$ on the event $\{\lambda_{a,a^-}^F\leq C_{\Lambda}\}$ we have
	\begin{equation*}
		\mathbf{P}^W_{L,\widetilde{u}}\left(a \text{ is }(L, B, c_λ,C_Λ,C_g, c_f) \text{-good}\,\big|\,\lambda_{a,a^-}^F,a\in{F}\right)\geq c_p(1-e^{-\widetilde{u}}).
	\end{equation*}
\end{lemma}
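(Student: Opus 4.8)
The plan is to prove the asserted bound conditionally on $\{a\in F,\ \lambda^F_{a,a^-}=\kappa\}$ (with $\kappa\le C_\Lambda$) and on the value of $\widehat a,$ and then average over $\widehat a;$ this is harmless because all the bounds in Lemma~\ref{LEMMA-Goodwatershed} are uniform in the starting vertex. By the construction in Section~\ref{Section-PatchingTogetherwatersheds}, see \eqref{eq:defwatershedata} and \eqref{eq:freeRoot}, conditionally on $\{a\in F,\ \lambda^F_{a,a^-}=\kappa,\ \widehat a=y\}$ the watershed $(\mathbf T^a,\boldsymbol\lambda^a,\mathbf X^a)$ has law $\mathbf Q^{\kappa,L}_y,$ while $\Gamma_{\widehat a}$ from \eqref{DEF-Poissonian} and the tree $\mathcal T^{\widehat a1}$ from \eqref{eq:defrestofthetrees} are independent of the watershed and of each other, with $\Gamma_y\sim\Poi(\widetilde u)$ and $\mathcal T^{y1}$ distributed as $y1\cdot\mathbb{P}^{{\rm GW}}.$ Property~\ref{item:i} depends only on $\Gamma_{\widehat a},$ and the Green-function bound \eqref{Goodwatershed-EQ-GreenFunctionBounded} only on $\mathcal T^{\widehat a1},$ whose Green function at the root has a law not depending on the label; so by items~\ref{item:lemma i} and \ref{item:lemma ii} of Lemma~\ref{LEMMA-Goodwatershed} these contribute independent multiplicative factors $1-e^{-\widetilde u}$ and $\tfrac12.$ It thus suffices to bound from below, uniformly in $y,$ $\kappa\le C_\Lambda$ and $L$ large, the $\mathbf Q^{\kappa,L}_y$-probability of the event that the conductance bounds \eqref{LEMMA Goodwatershed-EQ-ConductanceBounded} hold together with properties~\ref{item:iii}, \ref{item:iv} and \ref{item:v}.

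The next step is to show that this probability, even after dropping \ref{item:iv} and \ref{item:v}, is at least $c_0:=\tfrac12(1-\mu(1))\,c_V>0;$ i.e. $\mathbf Q^{\kappa,L}_y\big(\eqref{LEMMA Goodwatershed-EQ-ConductanceBounded}\cap\text{\ref{item:iii}}\big)\ge c_0.$ By Proposition~\ref{PROP-SinglewatershedLikeGW} this equals the probability, under $\mathbb{E}^{{\rm GW}}[P^{\mathcal T}_y(\,\cdot\,)\,|\,\lambda_{y,y^-}=\kappa,\,y\in\mathcal T],$ that $|G^{\mathcal T}_y|\ge2,$ $\lambda_{y,y1}>c_\lambda,$ $\lambda_{y,+}\le C_\Lambda$ and $H_{\{y^-,y1\}}(X)=\tilde V_L(X)=\infty,$ where $X$ is the walk on the Galton--Watson tree. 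I would lower-bound this by intersecting with: the $\nu$-event $\{\pi\ge2,\ \lambda_{y,y1}>c_\lambda,\ \lambda_{y,y2}>c_\lambda,\ \lambda_{y,+}\le C_\Lambda\}$ for the offspring of $y,$ of $\nu$-mass $\ge\tfrac12(1-\mu(1))$ by item~\ref{item:lemma ii} and on which \eqref{LEMMA Goodwatershed-EQ-ConductanceBounded} holds and $\lambda_y=\kappa+\lambda_{y,+}\le 2C_\Lambda;$ the step $X_1=y2,$ of conditional probability $\ge c_\lambda/(2C_\Lambda)$ since $\lambda_{y,y2}>c_\lambda;$ the step $X_2=y21,$ of conditional probability $\ge\lambda_{y2,y21}/(2C_\Lambda+\lambda_{y2,+})$ since $\lambda_{y2}=\lambda_{y,y2}+\lambda_{y2,+}\le C_\Lambda+\lambda_{y2,+};$ and, for the walk $X'$ issued from $y21$ after time $2,$ the event $\tilde V_{L-2}(X')=H_{y2}(X')=\infty.$ A direct check shows that for $L\ge4$ these force $H_{\{y^-,y1\}}(X)=\tilde V_L(X)=\infty$ (never returning to $y2$ prevents visiting $y,y1,y^-,$ and one has $V_L(X)=2+V_{L-2}(X')$ and hence $\tilde V_L(X)=2+\tilde V_{L-2}(X')$); this plays here the role that \eqref{eq:propertyivissame}--\eqref{eq:propertyvissame} played in the proof of Lemma~\ref{LEMMA-Goodwatershed}. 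Applying the Markov property at the first hitting time of $y21,$ taking expectations, and using that the last three events concern only the offspring of $y2$ and the subtree below $y21$ -- hence are conditionally independent of the offspring of $y$ and do not depend on $\kappa$ -- one recognizes exactly the conditional expectation defining $c_V$ in item~\ref{item:lemma iii}, so that $\mathbf Q^{\kappa,L}_y(\eqref{LEMMA Goodwatershed-EQ-ConductanceBounded}\cap\text{\ref{item:iii}})\ge\tfrac12(1-\mu(1))\,c_V=c_0.$

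To conclude, I would absorb properties~\ref{item:iv} and \ref{item:v}. First, property~\ref{item:iii} forces $\tilde V_L(\mathbf X^a)=\infty,$ and on property~\ref{item:iii} the walk $\mathbf X^a$ never visits $\widehat a1,$ so $\widehat a1$ is a boundary vertex of $\mathbf T^a_{V_L}$ and in fact its smallest one, i.e. it is the vertex $\widehat{\,a1\,}$ excluded in \eqref{eq:deffreepointrecursively}; therefore, on this event, the set of children $a'$ of $a$ in $F$ with $\lambda^F_{a,a'}\le C_\Lambda$ coincides with the set appearing in item~\ref{item:lemma iv}. Hence the event in item~\ref{item:lemma iv} contains $\{\text{\ref{item:iii} holds, \ref{item:iv} fails}\}$ and, since property~\ref{item:v} is literally the negation of the conductance condition in item~\ref{item:lemma v}, the event in item~\ref{item:lemma v} contains $\{\text{\ref{item:iii} holds, \ref{item:v} fails}\};$ by items~\ref{item:lemma iv} and \ref{item:lemma v} each of these has probability at most $\varepsilon,$ so $\mathbf Q^{\kappa,L}_y$ of the full watershed event is at least $c_0-2\varepsilon.$ Choosing $\varepsilon:=c_0/4$ -- which through Lemma~\ref{LEMMA-Goodwatershed} fixes $L_0(B):=\max\{4,L_0(B,c_0/4)\}$ -- this is $\ge c_0/2,$ and multiplying by the independent factors $1-e^{-\widetilde u}$ and $\tfrac12$ from the first paragraph yields the lemma with $c_p:=c_0/4.$

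The main obstacle is the first-step argument of the second paragraph: one must carefully track which edge conductances of the watershed are generated at which step, invoke Proposition~\ref{PROP-SinglewatershedLikeGW} to pass from the evolving watershed walk to a random walk on a fixed Galton--Watson tree, and verify that it is the single-parameter event $\{\tilde V_L(\mathbf X^a)=\infty\}$ -- and not the much smaller ``never backtrack''\ event one would obtain by intersecting over all $L$ -- that, after the two deterministic initial steps $y\to y2\to y21,$ corresponds to $\{\tilde V_{L-2}=H_{y2}=\infty\}$ for the restarted walk.
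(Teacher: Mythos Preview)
Your proof is correct and follows essentially the same approach as the paper's: factor out the independent contributions of $\Gamma_{\widehat a}$ and $\mathcal{T}^{\widehat a 1}$, force the first two steps $\widehat a\to\widehat a2\to\widehat a21$ so as to invoke the constant $c_V$ from Lemma~\ref{LEMMA-Goodwatershed}\,\ref{item:lemma iii}, and then subtract the errors coming from items~\ref{item:lemma iv} and~\ref{item:lemma v}. The only differences are organizational (you condition on $\widehat a=y$ and work under $\mathbf Q^{\kappa,L}_y$ directly, whereas the paper keeps the conditional-probability language under $\mathbf P^W_{L,\widetilde u}$ throughout, cf.\ \eqref{eq:boundprobagood2}--\eqref{eq:boundprobagood3}) and in the numerical value of $c_p$.
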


\begin{proof}

	We will check the properties of Definition~\ref{DEF-Goodwatershed}.
	In the first part of the proof, we show that the event appearing in Lemma~\ref{LEMMA-Goodwatershed} \ref{item:lemma iii} implies that Definition~\ref{DEF-Goodwatershed} \ref{item:iii} is fulfilled with positive conditional probabilities under the appropriate conditions. More precisely, we have for all $a\in{F}$ that
	\begin{equation}
		\label{eq:propertyiiiissame}
		\begin{gathered}
			\text{if }\boldsymbol{\lambda}^a_{\widehat{a},+}\leq C_{\Lambda},\lambda_{a,a^-}^F\leq C_{\Lambda}\text{ and }\boldsymbol{\lambda}_{\widehat{a},\widehat{a}2}^a> c_{λ}\\
			\text{then } P_{\widehat{a}}^{\mathcal{T}^{\mathbf{W}}}\big(H_{\set{\widehat{a}^-, \widehat{a}1 }}=\tilde{V}_L=\infty\big)
			\geq\frac{c_{\lambda}\boldsymbol{\lambda}^a_{\widehat{a}2,\widehat{a}21}}{2C_{\Lambda}(2C_{\Lambda}+\boldsymbol{\lambda}^{a}_{\widehat{a}2,+})}
			P_{\widehat{a}21}^{\mathcal{T}^{\mathbf{W}}}(\tilde{V}_{L-2}= {H}_{\widehat{a}2}=\infty);
		\end{gathered}
	\end{equation}
	indeed, under the conditions from \eqref{eq:propertyiiiissame}, noting that $\boldsymbol{\lambda}_{\widehat{a},\widehat{a}^-}^a=\lambda_{a,a^-}^F$ by \eqref{eq:deflambdaF}, and thus $\boldsymbol{\lambda}_{\widehat{a}}^a\leq 2C_{\Lambda},$ we have that
	\begin{equation*}
		P_{\widehat{a}}^{\mathcal{T}^{\mathbf{W}}}(X_2=\widehat{a}21)= \frac{\boldsymbol{λ}^a_{\widehat{a}, \widehat{a}2}\boldsymbol{\lambda}^a_{\widehat{a}2,\widehat{a}21}}{\boldsymbol{λ}^a_{\widehat{a}}(\boldsymbol{\lambda}^a_{\widehat{a},\widehat{a}2}+\boldsymbol{\lambda}^a_{\widehat{a}2,+})}\ge \frac{c_{\lambda}\boldsymbol{\lambda}^a_{\widehat{a}2,\widehat{a}21}}{2C_{\Lambda}(2C_{\Lambda}+\boldsymbol{\lambda}^a_{\widehat{a}2,+})}.
	\end{equation*}
	Therefore, \eqref{eq:propertyiiiissame} follows easily by using the Markov property at time $2,$ noting that, under $P_{\widehat{a}}^{\mathcal{T}^{\mathbf{W}}}$ and on  the event $\{X_{2}=\widehat{a}21\},$  in view of \eqref{DEF-StoppingTimeVn} and \eqref{DEF-StoppingTimeHL}, we have $\tilde{V}_{L-2}((X_{k+2})_{k\geq0})=\tilde{V}_L((X_k)_{k\geq0}).$ Furthermore, if $\widehat{a}2$ is never visited after time $2,$ then $\widehat{a}1$ and $\widehat{a}^-$ are never visited by $X.$ Moreover, note that the random variable on the right-hand side of the inequality of the second line of \eqref{eq:propertyiiiissame} is independent of $\mathcal{T}^{\widehat{a}1},$  $\Gamma_{\widehat{a}},$ $(\boldsymbol{\lambda}_{\widehat{a},\widehat{a}i}^a)_{i\in{\N}}$ and $\lambda_{a,a^-}^F.$  Combining Proposition~\ref{PROP-SinglewatershedLikeGW}, \eqref{eq:defwatershedata}, Lemma~\ref{LEMMA-Goodwatershed} \ref{item:lemma iii} and \eqref{eq:propertyiiiissame}, we thus have on the intersection of the events $\{\boldsymbol{\lambda}_{\widehat{a},\widehat{a}2}^a> c_{\Lambda}\},$ $\{\boldsymbol{\lambda}^a_{\widehat{a},+}\leq C_{\Lambda}\}$  and $\{\lambda_{a,a^-}^F\leq C_{\Lambda}\},$ that
	\begin{equation}
		\label{eq:boundprobagood2}
		\mathbf{P}_{L,\widetilde{u}}^{\mathbf{W}}\left(H_{\set{\widehat{a}^-, \widehat{a}1 }}(\mathbf{X}^{{a}})=\tilde{V}_L(\mathbf{X}^{{a}})=\infty\,
		\Big|\,
		Γ_{\widehat{a}},\,
		(\boldsymbol{\lambda}_{\widehat{a},\widehat{a}i}^a)_{i\in{\N}},\,
		\mathcal{T}^{\widehat{a}1},\,
		\lambda_{a,a^-}^F,\,
		a\in{F}\right)
		\geq c_V.
	\end{equation}

	In this second part of the proof, we aim at combining the estimates from Lemma~\ref{LEMMA-Goodwatershed} in order to infer the general lower bound $c_p(1-e^{-\widetilde{u}})$ on the probability for $a$ to be good. Obtaining a lower bound on the intersection of the events \ref{item:i}, \ref{item:ii} and \ref{item:iii} in Definition~\ref{DEF-Goodwatershed} is easy by independence, Lemma~\ref{LEMMA-Goodwatershed} and \eqref{eq:boundprobagood2}.  More care is required for the other properties though.

	It is not difficult to combine Lemma~\ref{LEMMA-Goodwatershed} \ref{item:lemma iv} and \ref{item:lemma v}, since the complements of the events there happen with high probability, as we now explain.    On the event $\{\lambda_{a,a^-}^F\leq C_{\Lambda}\},$ using the estimates from Lemma~\ref{LEMMA-Goodwatershed} \ref{item:lemma iv}, \ref{item:lemma v} for $ε=\frac{1}{3}\frac{c_V(1-μ(1))}{2},$ and writing them in the form of Definition~\ref{DEF-Goodwatershed} -- see \eqref{eq:defwatershedata}, \eqref{DEF-FreePoints}, \eqref{eq:deflambdaF} and the definition of the tree of free points from \eqref{eq:deffreepointrecursively} and below -- we thus have for all $L\geq L_0(B),$ with $L_0(B)=L_0(B,ε)$ from Lemma~\ref{LEMMA-Goodwatershed} for this choice of $ε$ that
	\begin{equation}
		\label{eq:boundprobagood1}
		\begin{split}
			\mathbf{P}_{L,\widetilde{u}}^{\mathbf{W}}&\left(
			\begin{gathered}
				\Big\{\big|\big\{a'\in{G_{a}^F}:\,λ_{a,a'}^F\le C_Λ\big \}\big|\geq c_fL,\, {L^{-\frac32}}  \sum_{y\in \mathbf{W}^{a}} 	(\boldsymbol{λ}^a_y)^{\frac32}    < B\Big\}^c,
				\\H_{\{\widehat{a}^-,\widehat{a}1\}}(\mathbf{X}^a)=\tilde{V}_L(\mathbf{X}^a)=\infty\,
			\end{gathered}
			\middle|             \,
			Γ_{\widehat{a}}, \,
			\mathcal{T}^{\widehat{a}1},\,
			\lambda_{a,a^-}^F,\,
			a\in{F} \!                \right)
			\\&\leq  \frac{2}{3}\frac{c_V(1-μ(1))}{2}.
		\end{split}
	\end{equation}
	Here, we used that both, the event $H_{\{\widehat{a}^-,\widehat{a}1\}}(\mathbf{X}^a)=\tilde{V}_L(\mathbf{X}^a)=\infty$ and the events in Definition~\ref{DEF-Goodwatershed} \ref{item:iv} and \ref{item:v}, are $(\mathbf{T}^a,\boldsymbol{\lambda}^a,\mathbf{X}^a)$-measurable, and thus independent of $Γ_{\widehat{a}}$ and $\mathcal{T}^{\widehat{a}1},$ and that $\{\widehat{a}:\,a\in{G_a^F}\}=\partial\mathbf{T}_{V_L(\mathbf{X}^a)}^{{a}}\setminus\{\widehat{a}1,\mathbf{X}^a_{V_L(\mathbf{X}^a)}\}$ when $H_{\{\widehat{a}^-,\widehat{a}1\}}(\mathbf{X}^a)=\infty$ in view of \eqref{DEF-FreePoints}, \eqref{eq:deffreepointrecursively}.

	Now we can further combine \eqref{eq:boundprobagood2} with the equation in the first line of \ref{item:lemma ii} of Lemma~\ref{LEMMA-Goodwatershed} (recall that the number of children $\big| G_{\widehat{a}}^{\mathbf{T}^{a}_{1}} \big|$ of $\widehat{a}$ in $\mathbf{T}_1^a$ is equal to $\pi((\boldsymbol{\lambda}^a_{\widehat{a},\widehat{a}i})_{i\in{\N}})$). One can combine this with \eqref{eq:boundprobagood1} thanks to the dependence of the bound \eqref{eq:boundprobagood1} on $c_V(1-μ(1))/2,$ noting also that the event in the first line of Definition~\ref{DEF-Goodwatershed} \ref{item:ii} is independent of $Γ_{\widehat{a}}$ and $\mathcal{T}^{\widehat{a}1}$, to obtain that on the event $\{\lambda_{a,a^-}^F\leq C_{\Lambda}\},$ for all $L\geq L_0(B)$ we have
	\begin{equation}
		\label{eq:boundprobagood4}
		\begin{split}
			\mathbf{P}_{L,\widetilde{u}}^{\mathbf{W}}
			&\left(
			\begin{gathered}
				\big |\big\{a'\in{G_{a}^F}:\,λ_{a,a'}^F\le C_Λ\big \}\big|\ge c_fL,\,
				{L^{-\frac32}}  \sum_{y\in \mathbf{W}^{a}} 	(\boldsymbol{λ}^a_y)^{\frac32}<B,\,\\
				H_{\set{\widehat{a}^-,\widehat{a}1}}(\mathbf{X}^{{a}})=     \tilde{V}_L(\mathbf{X}^{{a}})=\infty,\,
				\big| G_{\widehat{a}}^{\mathbf{T}^{a}_{1}} \big| \ge 2,\,
				\boldsymbol{λ}^a_{\widehat{a},\widehat{a}1}> c_λ,\,
				\boldsymbol{\lambda}^a_{\widehat{a},+}\leq C_{\Lambda}
			\end{gathered}\,
			\middle| \,
			Γ_{\widehat{a}},\,
			\mathcal{T}^{\widehat{a}1},\,
			\lambda_{a,a^-}^F,\,
			a\in{F}
			\right)         \\
			&   \ge \frac{1}{3}\frac{c_V(1-μ(1))}{2}.
		\end{split}
	\end{equation}

	Finally, for the good events in \ref{item:i} and the second line of \ref{item:ii} in Definition~\ref{DEF-Goodwatershed}, conditionally on $a\in{F}$ and $\lambda_{a,a^-}^F,$ the random variables
	$\Gamma_{\widehat{a}}$ and
	$\mathcal{T}^{\widehat{a}1}$
	have respective laws  $\mathbb{P}_{\widetilde{u}}^{\Gamma}(\Gamma_{\widehat{a}}\in{\cdot})$ and $\mathbb{P}^{{\rm GW}}(\widehat{a}1\cdot\mathcal{T}\in{\cdot}),$ (see, respectively, below \eqref{DEF-Poissonian} and \eqref{eq:defrestofthetrees}), and are independent. Therefore, the two estimates provided by Lemma~\ref{LEMMA-Goodwatershed} \ref{item:lemma i}
	and the second line of \ref{item:lemma ii}, yield that for all $\widetilde{u}>0$ one has
	\begin{equation}
		\label{eq:boundprobagood3}
		\mathbf{P}_{L,\widetilde{u}}^{\mathbf{W}}\left(Γ_{\widehat{a}} \ge 1, g^{\mathcal{T}^{\widehat{a}1}}(\widehat{a}1,\widehat{a}1) \le C_g\,\Big|\,\lambda_{a,a^-}^F,a\in{F}\right)
		\geq\frac12(1-\exp(-\widetilde{u})).
	\end{equation}
	Combining \eqref{eq:boundprobagood4} and \eqref{eq:boundprobagood3}, we can readily conclude by taking $c_p=c_V(1-\mu(1))/12.$
\end{proof}

We now want to show that the set of good free points introduced in Definition~\ref{DEF-Goodwatershed} percolates with the help of Lemma~\ref{lem:probatobegood}. This set can be interpreted as a random subset in $\mathcal{X},$ endowed with the $\sigma$-algebra introduced at the end of Section~\ref{SECTION-GWTree}. Recall the definition $G_x^A$ of the number of children of $x$ in $A\subset\mathcal{X}$ from \eqref{DEF-GenerationOfTree}. In the following technical lemma, we say that a tree is $d$-ary if it contains $\emptyset$ and every vertex has exactly $d$ children.
While it seems like a standard result, we were not able to locate it in the literature and therefore provide a proof here.

\begin{lemma}\label{LEMMA-DependBernPercolation}
	There exists a function $d:[0,\infty)\rightarrow \N_0$ such that $d(t)\rightarrow\infty$ as $t\rightarrow\infty$ and the following holds. Under some probability measure $\mathbb{P},$ let $S\subset\mathcal{X}$ be a random set containing $\emptyset$ almost surely, such that for some $N\in{\N}$ and $p\in{[0,1]},$ for all $ x\in{\mathcal{X}}$
	\begin{equation}
		\label{eq:stochadom}
		\mathbb{P}\Big(|G_x^{{S}}|\geq N
		\given \mathcal{F}_x\Big)		\geq p\text{ on the event }\{x\in{S}\};
	\end{equation}
	here, $\mathcal{F}_x=\sigma(\ind_{\set{y\in{S}}},y\in \mathcal X \setminus ({x\cdot(\mathcal{X}\setminus\{\emptyset\}})))$ is the $\sigma$-algebra generated by the restriction of ${S}$ to vertices which are not descendants of  $x.$ Then, ${S}$ contains with positive probability, depending only on $p$ and $N,$ a $d(Np)$-ary tree.
\end{lemma}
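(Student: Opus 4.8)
The plan is to realize an infinite $d$-ary subtree of $S$ by a Galton--Watson-type domination argument, where at each level we pass from "sufficiently many children" to "$d$ further good vertices with the same property". The key tension is that \eqref{eq:stochadom} is only a conditional lower bound on the \emph{number} of children in $S$, not an unconditional independence statement, and the conditioning $\sigma$-algebra $\mathcal{F}_x$ only controls vertices that are \emph{not} descendants of $x$. So I first want to set up an exploration that reveals vertices in a fixed, predetermined order so that when we query $\{x\in S\}$ and $|G_x^S|\geq N$, the information already revealed is $\mathcal{F}_x$-measurable.

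First I would introduce a breadth-first exploration of the candidate tree. Starting from $\emptyset$ (which lies in $S$ a.s.), process vertices of $\mathcal{X}$ in increasing order of generation (and lexicographically within a generation). When processing a vertex $x$ that has been declared "active", we reveal the indicators $\ind_{\{y \in S\}}$ for $y \in G_x^{\mathcal X}$ and declare active exactly the first $N$ children of $x$ that lie in $S$ (if there are at least $N$; otherwise we kill the branch at $x$). Crucially, at the moment we make this query, the only randomness revealed so far concerns vertices of strictly smaller generation, plus siblings and cousins of $x$ at the same generation that were processed earlier — all of these are non-descendants of $x$, hence $\mathcal{F}_x$-measurable. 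Therefore, conditionally on the past of the exploration, \eqref{eq:stochadom} gives that on $\{x\text{ active}\}\subset\{x\in S\}$ the event $\{|G_x^S|\geq N\}$ has conditional probability at least $p$.

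Next I would compare this exploration to a branching process. Each active vertex produces a number of active children that is either $0$ (with conditional probability at most $1-p$) or at least... here I need to be careful: \eqref{eq:stochadom} only guarantees $N$ children with probability $p$, not "$N$ children with probability $1$". The standard fix is a further thinning. Given that $|G_x^S|\geq N$ (conditional probability $\geq p$), we keep all $N$ of the selected active children; given the complementary event, we keep none. Thus the number of kept active children of each active vertex stochastically dominates $N\cdot\mathrm{Bernoulli}(p)$, \emph{conditionally on the past}. Iterating, the set of active vertices dominates a Galton--Watson tree with offspring law $N\cdot\mathrm{Ber}(p)$, whose mean is $Np$. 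Unfortunately a GW tree with mean $>1$ survives with positive probability but is not $d$-ary; to extract a genuine $d$-ary tree I invoke the classical fact (e.g. via the embedding of a $d$-ary tree in a supercritical GW tree, cf.\ the argument behind \cite[Proposition~5.9]{LyonsPeres17}): a Galton--Watson tree with offspring supported on $\{0\}\cup\{N,N+1,\dots\}$ and mean $Np$ contains, with positive probability, a $d(Np)$-ary tree for some $d(Np)\to\infty$ as $Np\to\infty$. Concretely, one shows by a fixed-point/second-moment argument that for $m:=Np$ large, with probability bounded below, every vertex can be made to have at least $\lfloor m/2\rfloor$ (say) active descendants within a bounded number of generations, yielding an embedded $d$-ary tree with $d=d(m)\to\infty$; one takes $d(t):=0$ for $t$ below the threshold where this works.

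The main obstacle is precisely this last step: turning "the active set dominates a supercritical GW tree" into "$S$ contains a $d$-ary tree with $d\to\infty$". The subtlety is twofold. First, we only have a \emph{conditional} stochastic domination along the exploration filtration, so I must phrase the comparison as: the exploration process, adapted to the natural filtration, has conditional offspring law dominating $N\cdot\mathrm{Ber}(p)$ — this is enough to couple it with an actual GW tree from below, by building the coupling vertex-by-vertex in exploration order. Second, extracting a deterministic $d$-ary tree from a random GW tree requires a uniform "regeneration" estimate: with probability bounded away from $0$ (uniformly over the thus-far-revealed past), an active vertex has at least $d$ active descendants each of which again has at least $d$ active descendants, etc.; this follows from a standard recursive/monotonicity argument using only that the offspring mean is $\geq Np$ and that offspring is either $0$ or $\geq N\geq 1$, together with $\lfloor Np\rfloor\to\infty$. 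I would carry this out by defining $d(t)$ to be, say, $\lfloor t/2\rfloor$ once $t$ is large enough for a one-step "$2d$ children, probability $\geq 1/2$" bound to iterate (via a Borel--Cantelli-free, purely deterministic survival comparison), and $0$ otherwise; the monotone structure of the event "$S$ contains a $d$-ary tree rooted at $x$" in the revealed indicators lets one close the recursion.
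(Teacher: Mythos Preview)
Your plan is essentially the same as the paper's proof: both run an exploration of $S$ in an order guaranteeing that at the moment one queries $|G_x^S|\ge N$, only non-descendants of $x$ have been revealed, then use \eqref{eq:stochadom} to couple the active set from below with a Galton--Watson tree having offspring law $p\delta_N+(1-p)\delta_0$, and finally embed a $d$-ary tree inside this supercritical Galton--Watson tree.

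Two points where the paper is cleaner than your sketch. First, the paper fixes once and for all an enumeration $x_0,x_1,\dots$ of $\mathcal{X}$ with $\{x_0,\dots,x_{k-1}\}\subset(x_k\cdot\mathcal{X})^c$ and builds the coupling step-by-step along this sequence, introducing an auxiliary i.i.d.\ Bernoulli$(p)$ family $(\zeta_{x_k})$ so that $\{\zeta_{x_k}=1,\,x_k\in\tilde S_k\}\Rightarrow\{|G_{x_k}^{\tilde S_k}|\ge N\}$; this makes the conditional domination completely explicit and avoids the (minor) awkwardness of your breadth-first order on the countably-infinite levels of $\mathcal{X}$. Second, for the embedding of a $d$-ary tree in the Galton--Watson tree, the paper invokes \cite[Theorem~5.29]{LyonsPeres17}, verifying the relevant fixed-point inequality $G_{\tilde d}(1-p/2)<1-p/2$ for $Np$ large; your reference to \cite[Proposition~5.9]{LyonsPeres17} is not the right one, and your proposed explicit choice $d(t)=\lfloor t/2\rfloor$ would need a separate argument. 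None of this is a gap in the strategy, but it is where the details need care.
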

\begin{proof}
	In this proof, we say that a random subset of $\mathcal{X}$ is a weightless Galton--Watson tree with offspring distribution $pδ_N+(1-p)δ_0$ if, after possible reordering of the labels, this set has the same law as the tree $\mathcal{T}$ seen as a subset of $\mathcal{X}$ (that is removing the weights), introduced in Section~\ref{SECTION-GWTree} when the offspring distribution $μ$ from \eqref{DEF-OffspringDistribution} is $pδ_N+(1-p)δ_0.$ Note that since we discard the weights here, the law of this tree is entirely determined by its offspring distribution.

	Let us first show that we can couple $S$ and a weightless Galton--Watson tree with offspring distribution $p\delta_N+(1-p)\delta_0,$ such that $S$ is included in this tree.   For this purpose, fix a sequence $x_0,x_1,\dots$ exhausting $\mathcal{X}$ and such that $\{x_0,\dots,x_{k-1}\}\subset{(x_k\cdot\mathcal{X})^c}$ for each $k\in{\N_0}.$ The result will follow once we have that, under some probability measure $\tilde{\mathbb{P}},$ there exist an
	i.i.d.\ family of Bernoulli random variables $\zeta_{x_k},$ $k \in \N_0$ with parameter $p,$ and random
	sets $\tilde{S}_k,$ $k\in{\N_0},$ with the following properties: $\tilde{S}_k$ is an increasing sequence of sets, each with the same law as $S_k:=\{x\in{S}:\,x\sim x_i\text{ for some }i\leq k\}$ under $\mathbb{P},$ and if $\zeta_{x_k}=1$ and $x_k\in{\tilde{S}_k},$ then $|G_{x_k}^{\tilde{S}_k}|\geq N$ (in order to facilitate reading, the construction of these random variables will take place in the last paragraph of the proof).
	Indeed, defining $\tilde{S}$ as the union of $\tilde{S}_k,$ $k\in{\N_0},$ one  obtains that  $\tilde{S}$ has the same law as $S$ under $\mathbb{P}.$ Furthermore, the tree $T$ obtained recursively by keeping exactly $N$ children in $\tilde{S}$ of $x\in{\tilde{S}}$ each time $\zeta_{x}=1,$ and keeping zero children otherwise, is then a Galton--Watson tree with offspring distribution $p\delta_N+(1-p)\delta_0,$ which is contained in $\tilde{S}.$

	In order to conclude, we still need to show that for each $\tilde{d}\in{\N_0},$ there exists $t=t(\tilde{d})\in{(0,\infty)}$ such that for each $p\in{[0,1]}$ and $N\in{\N}$ with $pN\geq t,$ a weightless Galton--Watson tree with offspring distribution $p\delta_N+(1-p)\delta_0$ contains with positive probability a $\tilde{d}$-ary tree, and then take $d(s):=\sup\{\tilde{d}\in{\N_0}:\,t(\tilde{d})\leq s\}$ for all $s>0,$ with the convention $\sup\emptyset=0.$ This can be easily proven by noting that, if $G_{\tilde{d}}$ is the function from \cite[Theorem~5.29]{LyonsPeres17}, then $G_{\tilde{d}}(0)>0$ and $G_{\tilde{d}}(1-p/2)<1-p/2$ if $pN\geq t$ for some $t$ large enough. We leave the details to the reader.

	It therefore remains to construct construct the sequences $\tilde{S}_k$ and $\zeta_{x_k},$ $k\in{\N_0}.$
	We have $x_{0}=\emptyset,$ and \eqref{eq:stochadom} applied to $x=\emptyset$ implies that one can indeed define a Bernoulli random variable $\zeta_{\emptyset}$ with parameter $p$ and $\tilde{S}_0$ such that $\tilde{S}_0$ has the same law as $\{x\in{S}:\,x\sim \emptyset\},$ and $\zeta_{\emptyset}=1$ implies $|G_{\emptyset}^{\tilde{S}_0}|\geq N.$  Assume now that $\zeta_{x_i},$ $i\leq k-1,$ and $\tilde{S}_{k-1}$ are constructed. Let $\tilde{S}_k$ be the union of $\tilde{S}_{k-1}$ and some children of $x_k,$ constructed so that, conditionally on $(\zeta_{x_i})_{i\leq k-1}$ and $\tilde{S}_{k-1},$ the law of $\tilde{S}_k$ is the same as law of $S_{k}$ conditionally on $S_{k-1}=\tilde{S}_{k-1}.$   Then
	\eqref{eq:stochadom} implies that, conditionally on $(\zeta_{x_i})_{i\leq k-1}$ and $\tilde{S}_{k-1},$ $\ind\big\{|G_{x_k}^{\tilde{S}_k}|\geq N\big\}$ stochastically dominates a Bernoulli random variable with parameter $p$ on the event $\{x_k\in{\tilde{S}_{k-1}}\}.$ Hence, up to extending the probability space $\tilde{\mathbb{P}},$ we can define a Bernoulli random variable ${\zeta}_{x_k}$ with parameter $p,$ independent of ${\zeta}_{x_i},$ $i\leq k-1,$ and $\tilde{S}_{k-1},$ and such that if $\zeta_{x_k}=1$ and $x_k\in{\tilde{S}_{k-1}}$ then $|G_{x_k}^{\tilde{S}_k}|\geq N.$ This concludes the induction, and the proof that $\tilde{S}$ contains a.s.\ a weightless Galton--Watson tree with offspring distribution $p\delta_N+(1-p)\delta_0.$
\end{proof}

We now prove that with positive probability, the tree of $(L, B, c_λ,C_Λ,C_g, c_f)$-good free points contains a $d$-ary tree for suitable choices of the parameters. To do so, observe that on the one hand, the probability for a free point to be good is bounded from below due to  Lemma~\ref{lem:probatobegood}. On the other hand, property \ref{item:iv} of Definition~\ref{DEF-Goodwatershed} will let us tune the parameter $L$ in such a way that a good free point has many children. We will then be able to use Lemma~\ref{LEMMA-DependBernPercolation} in order to conclude.

\begin{prop}\label{PROP-Prop1}
	Let $c_{\lambda},$ $C_{\Lambda},$ $C_g$ and $c_f$ be as in Lemma~\ref{LEMMA-Goodwatershed}, $c_p$ as in Lemma~\ref{lem:probatobegood}, and the function $d$ as in Lemma~\ref{LEMMA-DependBernPercolation}. For all $B>0,$ there exists $L_0(B)\in{\N}$ such that for all $L\geq L_0(B)$ and $\widetilde{u}>0,$ the set
	\begin{equation}
		\label{eq:defFg}
		F^g:=\{\emptyset\}\cup\big\{a\in F\setminus\{\emptyset\}
		\given a^- \text{ is }(L,B, c_λ,C_Λ,C_g, c_f) \text{-good and }\lambda_{a,a^-}^F\leq C_{\Lambda}
		\big \}
	\end{equation}
	contains with positive  $\mathbf{P}_{L,\widetilde{u}}^{\mathbf{W}}$ probability a $d(Lq(\widetilde{u}))$-ary tree, where $q(\widetilde{u})=c_fc_p(1-e^{-\widetilde{u}}).$
\end{prop}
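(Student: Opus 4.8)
The strategy is to apply Lemma~\ref{LEMMA-DependBernPercolation} to the random set $S := F^g$ with appropriate values of $N$ and $p$. The two ingredients we need are: (a) a lower bound on the conditional probability, given the relevant $\sigma$-algebra, that $x$ has at least $N$ children in $F^g$, on the event $\{x \in F^g\}$; and (b) the identification of the correct $\sigma$-algebra, which must be generated by the restriction of $F^g$ to vertices that are not strict descendants of $x$. Granting both, Lemma~\ref{LEMMA-DependBernPercolation} produces a $d(Np)$-ary tree inside $F^g$ with positive probability, and it remains only to match $Np$ with $Lq(\widetilde u)$.

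First I would fix $B>0$, let $L_0(B)$ be as in Lemma~\ref{lem:probatobegood}, and take $L \ge L_0(B)$. Fix $a \in \mathcal X$ and condition on $\{a \in F^g\}$; by definition \eqref{eq:defFg} this means $a^-$ is $(L,B,c_\lambda,C_\Lambda,C_g,c_f)$-good and $\lambda^F_{a,a^-} \le C_\Lambda$ (or $a = \emptyset$, which is trivial). On this event, property \ref{item:iv} of Definition~\ref{DEF-Goodwatershed} applied to $a^-$ gives that $a^-$ has at least $c_f L$ children $a' \in G_a^F$ — wait, more carefully: the free points $a'$ with $a'^- = a$ are exactly the vertices built from $\mathfrak F_a$ in \eqref{eq:deffreepointrecursively}, and the goodness of $a$ (via the reasoning in the paragraph after \eqref{eq:boundprobagood4}) together with the identity $\{\widehat{a'} : a'^- = a\} = \partial \mathbf T^a_{V_L} \setminus \{\widehat a 1, \mathbf X^a_{V_L}\}$ on the relevant event shows that $a$ itself — when good — has at least $c_f L$ children $a'$ in $F$ with $\lambda^F_{a,a'} \le C_\Lambda$, hence at least $c_f L$ children in $F^g$-candidates. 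Then, conditionally on the tree $F$ explored down to (and including) $a$ with the conductances $\lambda^F_{a,a'}$, each such child $a'$ is good independently (the watersheds $\mathbf T^{a'}$, the trees $\mathcal T^{\widehat{a'}1}$, and the Poisson variables $\Gamma_{\widehat{a'}}$ for distinct $a'$ are constructed from disjoint blocks of the i.i.d.\ data in \eqref{eq:deflambdaik}, \eqref{eq:defrestofthetrees}, \eqref{DEF-Poissonian}), and by Lemma~\ref{lem:probatobegood} each is good with conditional probability at least $c_p(1-e^{-\widetilde u})$. Therefore, conditionally on this data, the number of children of $a$ in $F^g$ stochastically dominates a ${\rm Binomial}(\lfloor c_f L\rfloor, c_p(1-e^{-\widetilde u}))$ variable, whose mean is $\ge \tfrac12 c_f L c_p(1-e^{-\widetilde u}) = \tfrac12 L q(\widetilde u)$ for $L$ large; a Chernoff bound gives $\mathbf P(|G_a^{F^g}| \ge N \mid \cdot) \ge p$ with, say, $N = \lceil \tfrac14 L q(\widetilde u)\rceil$ and $p$ a universal constant, or more simply one arranges $Np \ge c L q(\widetilde u)$ directly.

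The conditioning $\sigma$-algebra must be checked to contain (be finer than) the restriction $\mathcal F_a$ of $F^g$ to non-descendants of $a$. This is where the structure of the watershed construction matters: the watershed $(\mathbf T^{a'},\boldsymbol\lambda^{a'},\mathbf X^{a'})$ attached to a child $a'$ of $a$, together with $\mathcal T^{\widehat{a'}1}$ and $\Gamma_{\widehat{a'}}$, is built from fresh i.i.d.\ randomness not used in constructing $F$ down to level $\mathrm{gen}(a)$ nor in the watersheds hanging off other branches; since membership of a vertex $b \in F^g$ depends only on the goodness of $b^-$ and the conductance $\lambda^F_{b,b^-}$, and the goodness of $b^-$ depends only on $(\mathbf T^{b^-},\boldsymbol\lambda^{b^-},\mathbf X^{b^-})$, $\mathcal T^{\widehat{b^-}1}$, $\Gamma_{\widehat{b^-}}$, one verifies that the indicators $\ind\{b \in F^g\}$ for $b$ not a strict descendant of $a$ are all measurable with respect to a $\sigma$-algebra with respect to which the children-of-$a$ data is conditionally independent and each child good with probability $\ge c_p(1-e^{-\widetilde u})$. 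Thus \eqref{eq:stochadom} holds with this $N$ and $p$.

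Finally, apply Lemma~\ref{LEMMA-DependBernPercolation}: $F^g$ contains with positive probability a $d(Np)$-ary tree, and since $Np \ge c L q(\widetilde u)$ and $d$ is non-decreasing we may absorb the constant (adjusting $d$, or noting the statement only claims a $d(Lq(\widetilde u))$-ary tree which is weaker for smaller argument — here one should instead carry the binomial bound so that $Np$ is at least $Lq(\widetilde u)$ on the nose, which is possible by choosing $N = \lfloor c_f L\rfloor$ and $p = c_p(1-e^{-\widetilde u})$ and checking the binomial lower tail only costs a constant that can be folded into redefining $d$ at the level of Lemma~\ref{LEMMA-DependBernPercolation}, whose function $d$ we are free to shrink). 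I expect the main obstacle to be the bookkeeping in the previous paragraph: cleanly exhibiting a $\sigma$-algebra that simultaneously (i) makes the children of $a$ conditionally i.i.d.-good and (ii) is measurable enough to contain all of $\mathcal F_a$, given that the tree $F$ and the watersheds are built by an intertwined recursion. The key point making this work is the deliberate exclusion of $\widehat a 1$ from the free points in \eqref{eq:deffreepointrecursively}, which guarantees $\mathcal T^{\widehat a 1}$ is a genuinely independent Galton--Watson tree, and the use of disjoint i.i.d.\ blocks $\boldsymbol\lambda^{(k)}$ across distinct watersheds.
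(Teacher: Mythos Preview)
Your overall strategy---apply Lemma~\ref{LEMMA-DependBernPercolation} to $S=F^g$ and match $Np$ with $Lq(\widetilde u)$---is correct, and the $\sigma$-algebra discussion in your penultimate paragraph is on the right track. But the core of your argument contains an indexing slip that makes the binomial/Chernoff step both unnecessary and, as written, wrong.

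By the definition \eqref{eq:defFg}, a child $a'$ of $a$ lies in $F^g$ if and only if $(a')^-=a$ is good and $\lambda^F_{a,a'}\le C_\Lambda$. The goodness of $a'$ itself plays \emph{no role} in whether $a'\in F^g$. Consequently, on the event $\{a\text{ is good}\}$, property~\ref{item:iv} of Definition~\ref{DEF-Goodwatershed} gives \emph{deterministically}
\[
|G_a^{F^g}| \;=\; \big|\{a'\in G_a^F:\lambda^F_{a,a'}\le C_\Lambda\}\big| \;\ge\; c_f L,
\]
with no need to sample the children's watersheds. Your claim that ``the number of children of $a$ in $F^g$ stochastically dominates a $\mathrm{Binomial}(\lfloor c_f L\rfloor, c_p(1-e^{-\widetilde u}))$'' is therefore based on the wrong event: what you should be lower-bounding is $\mathbf P(a\text{ is good}\mid\cdots)$, not the probabilities that the children are good.

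The paper's argument is accordingly simpler than yours. It introduces $\mathcal W^a:=\sigma(\Gamma_{\widehat a},\mathbf X^a,\boldsymbol\lambda^a,\mathcal T^{\widehat a1})$ and $\mathcal F^g_a:=\sigma(\mathcal W^{a'}:a'\notin a\cdot\mathcal X)$, observes that $\{a\in F^g\}\in\mathcal W^{a^-}\subset\mathcal F^g_a$, and uses that the watershed at $a$ depends on the past only through $\lambda^F_{a,a^-}$ to get
\[
\mathbf P^{\mathbf W}_{L,\widetilde u}\big(a\text{ is good}\,\big|\,\mathcal F^g_a\big)
=\mathbf P^{\mathbf W}_{L,\widetilde u}\big(a\text{ is good}\,\big|\,\lambda^F_{a,a^-},\,a\in F\big)
\ge c_p(1-e^{-\widetilde u})
\]
on $\{\lambda^F_{a,a^-}\le C_\Lambda\}\supseteq\{a\in F^g\}$, by Lemma~\ref{lem:probatobegood}. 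Combining with the deterministic implication above yields \eqref{eq:stochadom} with $N=c_fL$ and $p=c_p(1-e^{-\widetilde u})$ exactly, so $Np=Lq(\widetilde u)$ with no constant to absorb.
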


\begin{proof}\
	Let $B>0.$ Fix $c_{\lambda},$ $C_{\Lambda},$ $C_g,$ $c_f,$ and $L_0(B)$  as in Lemma~\ref{lem:probatobegood}, and fix $L\geq L_0(B)$ and $\widetilde{u}>0.$ Throughout the proof we write ``good'' instead of ``$(L, B, c_λ,C_Λ,C_g, c_f)$-good'' to simplify notation, keeping the implicit dependence on the parameters in mind. Let us first extend the definition of the weights $\lambda^F$ from $\{\{a,a^-\}\, : \, a\in{F\setminus\{\emptyset\}}\}$ to $\{\{a,a^-\}\, : \, a\in\mathcal{X}\setminus\{\emptyset\}\}$  by letting $\lambda_{a,a^-}^F=0$ if $a\in{\mathcal{X}\setminus F}.$ For each $a\in{\mathcal{X}\setminus F},$ we also fix arbitrarily some $\widehat{a}\in{\mathcal{X}},$  so that $\widehat{a}\neq\widehat{a'}$ for all $a\neq a'\in{\mathcal{X}}.$  This way, we can also define $(\mathbf{T}^a,\boldsymbol{\lambda}^a,\mathbf{X}^a),$ $a \in \mathcal X\setminus F,$ as a family of independent watersheds with law $\mathbf{Q}^{λ_{a^-,a}^F,L}_{\widehat{a}},$ see \eqref{eq:defwatershedata}.  Note that for $a\notin{F}$ we never actually use the additional watershed $(\mathbf{T}^a,\boldsymbol{\lambda}^a,\mathbf{X}^a)$ nor the notation $\widehat{a},$  they are however necessary to define the following $\sigma$-algebra
	\begin{equation*}
		\mathcal{W}^{a} := σ\big(Γ_{\widehat{a}},\mathbf{X}^a,
		(\boldsymbol{λ}_{x,y}^a)_{x\sim y\in \mathbf{T}^{{a}}}	,
		(λ_{x,y}^{\widehat{a}1})_{x\sim y\in{ \mathcal{T}^{\widehat{a}1}}}\big)\text{ for all }a\in{\mathcal{X}},
	\end{equation*}
	where $\lambda^{\widehat{a}1}$ are the weights of the tree $\mathcal{T}^{\widehat{a}1}$ which was defined in \eqref{eq:defrestofthetrees}; also recall that $\mathbf{X}^a,$ $\boldsymbol{\lambda}^a$ and $\lambda^{\widehat{a}1}$ are random variables whose canonical $\sigma$-algebras on their respective state spaces have been defined at the end of Section~\ref{SECTION-GWTree}.
	By construction, $(\mathbf{T}^{a^-},\boldsymbol{λ}^{a^-},\mathbf{X}^{a^-}),$  $\mathcal{T}_{(\widehat{a^-})1}^{\mathbf{W}},$ the weight $\lambda_{a^-,a}^F=\boldsymbol{\lambda}_{\widehat{a},\widehat{a}^-}^{a^-},$ see  \eqref{eq:deflambdaF},  as well as the event $\{a\in{F}\}=\{\lambda_{a^-,a}^F>0\}$ are $\mathcal{W}^{a^-}$-measurable. Therefore, in view of Definition~\ref{DEF-Goodwatershed}
	\begin{equation}
		\label{eq:FgisWames}
		\{a \in{F^g}\}\in{\mathcal{W}^{a^-}}\text{ for all }a\in{\mathcal{X}},
	\end{equation}
	where we recall $F^g$ from \eqref{eq:defFg}, and
	with the convention $\mathcal{W}^{\emptyset^-}:=\sigma(\{\emptyset\})$ is the trivial $\sigma$-algebra.   By \eqref{eq:defwatershedata}, a watershed depends on the previous watersheds only through the weights $\lambda_{a,a^-}^F,$ that is $\mathcal{W}^a$ and $\mathcal{W}^{a'},$ $a'\notin{a\cdot \mathcal{X}},$ are independent conditionally on $\lambda_{a,a^-}^F$ for all $a\in{F\setminus\{\emptyset\}}.$
	Therefore, defining for each $a\in{\mathcal{X}}$ the $\sigma$-algebra
	\begin{equation}
		\label{eq:defFga}
		\mathcal{F}^g_a:=\sigma\big(\mathcal{W}^{(a')^-},a'\notin{a\cdot(\mathcal{X}\setminus\{\emptyset\})}\big)=\sigma(\mathcal{W}^{a'},a'\notin{a\cdot\mathcal{X}}),
	\end{equation}
	we have that for all $a\in{F},$
	\begin{equation}
		\label{eq:replaceFabylambdaa}
		\mathbf{P}_{L,\widetilde{u}}^{\mathbf{W}}(a\text{ is good}\,|\,\mathcal{F}^g_{a})=\mathbf{P}_{L,\widetilde{u}}^{\mathbf{W}}(\text{$a$ is good}\,|\,\lambda_{a,a^-}^F,a\in{F}),
	\end{equation}
	with the convention $\lambda_{\emptyset,\emptyset^-}^F=0.$
	Note that, in view of \eqref{eq:FgisWames}, the $\sigma$-algebra $\mathcal{F}^g_a$ contains the $\sigma$-algebra $\mathcal{F}_a$ from Lemma~\ref{LEMMA-DependBernPercolation} when $S=F^g.$
	By property \ref{item:iv} of Definition~\ref{DEF-Goodwatershed}, we moreover have $|G_a^{F^g}|=|\{a'\in{G_a^F}:\,\lambda_{a,a'}^F\leq C_{\Lambda}\}|\geq c_fL$ if $a\in{F}$ is good. Thus since $\{\lambda_{a,a^-}^F\leq C_{\Lambda}\}\subset\{a\in{F^g}\}\in{\mathcal{F}^g_{a}}$ by  \eqref{eq:FgisWames} and \eqref{eq:defFga}, we have that on the event $\{a\in{F^g}\},$
	\begin{equation*}
		\mathbf{P}_{L,\widetilde{u}}^{\mathbf{W}}(|G_a^{F^g}|\geq c_fL\,|\,\mathcal{F}^g_{a})\geq \mathbf{P}_{L,\widetilde{u}}^{\mathbf{W}}(a\text{ is good}\,|\,\mathcal{F}^g_{a}) \geq c_p(1-e^{-\widetilde{u}}),
	\end{equation*}
	where we used  Lemma~\ref{lem:probatobegood} and \eqref{eq:replaceFabylambdaa} in the last inequality. Using \eqref{eq:defFga} and Lemma~\ref{LEMMA-DependBernPercolation} for $S=F^g,$ we can conclude.

\end{proof}

With the help of Proposition~\ref{PROP-W_Gw_WInterl}, we now show that for a suitable choice of the parameters $u,\widetilde{u}>0,$ under $\mathbf{P}_{L,\widetilde{u}}^W,$ for each $(L, B, c_λ,C_Λ,C_g, c_f)$-good free point $a\in{F},$ one can include the watershed $\mathbf{W}^a$ in the random interlacements set $\mathcal{I}^u$ from Proposition~\ref{PROP-W_Gw_WInterl}.   For this purpose, we need to verify that all the assumptions of \eqref{EQ-ConditionsWatershedsRandInt} are verified for good free points.

\begin{prop}\label{PROP-GoodwatershedsInInterlacements}
	Let $ u,B,c_λ,c_Λ,C_g, c_f>0,$  $L\in{\N},$ $ a\in F $
	and
	\begin{equation}\label{PROP1-DEF-Uprime}
		\widetilde{u}= u c_e,\text{ where }c_e:=\frac{c_λ}{c_λC_g+1}.
	\end{equation}
	Then, under the extension of the probability space $\mathbf{P}_{L,\widetilde{u}}^W$ from Proposition~\ref{PROP-W_Gw_WInterl},
	\begin{equation}
		\label{eq:WainclusIu}
		\mathbf{W}^{a} \subset \mathcal{I}^u\text{ for all }(L, B, c_λ,c_Λ,C_g, c_f)\text{-good vertices }a\in{F}.
	\end{equation}
\end{prop}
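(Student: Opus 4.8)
The plan is to verify, for a good free point $a\in F$, that all three conditions in \eqref{EQ-ConditionsWatershedsRandInt} of Proposition~\ref{PROP-W_Gw_WInterl} are satisfied with the specified choice $\widetilde u = uc_e$, so that the conclusion $\mathbf{W}^a\subset\mathcal I^u$ follows. The first two conditions are immediate: property \ref{item:i} of Definition~\ref{DEF-Goodwatershed} gives $\Gamma_{\widehat a}\geq1$, and property \ref{item:iii} gives $H_{\{\widehat a^-,\widehat a1\}}(\mathbf X^a)=\tilde V_L(\mathbf X^a)=\infty$, which in particular yields $\tilde V_L(\mathbf X^a)=\infty$. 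The entire substance of the proof is therefore the third condition, namely that
\[
u \geq \frac{\widetilde u}{e_{\{\widehat a\},\mathcal T^{\mathbf W}_{\widehat a}}(\widehat a)},
\]
which, given $\widetilde u = uc_e$, is equivalent to the lower bound $e_{\{\widehat a\},\mathcal T^{\mathbf W}_{\widehat a}}(\widehat a)\geq c_e$.

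So the heart of the argument is to bound the equilibrium measure $e_{\{\widehat a\},\mathcal T^{\mathbf W}_{\widehat a}}(\widehat a)$ from below by $c_e = c_\lambda/(c_\lambda C_g+1)$. I would start from the identity $e_{\{\widehat a\},\mathcal T^{\mathbf W}_{\widehat a}}(\widehat a) = \lambda^{\mathbf W}_{\widehat a,+}\,P^{\mathcal T^{\mathbf W}_{\widehat a}}_{\widehat a}(\tilde H_{\widehat a}=\infty)$ from \eqref{DEF-EquilibriumMeasure} (here $\widehat a$ is the root of $\mathcal T^{\mathbf W}_{\widehat a}$, so there is no edge to a parent and $\lambda^{\mathbf W}_{\widehat a} = \lambda^{\mathbf W}_{\widehat a,+}$). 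The key observation, spelled out in the discussion below Definition~\ref{DEF-Goodwatershed}, is that since $a$ is good, property \ref{item:iii} guarantees that $\widehat a1$ is not visited by $\mathbf X^a$, hence $\widehat a1\in\partial\mathcal T^{\mathbf W}_-$, and therefore the independent tree $\mathcal T^{\widehat a1}$ from \eqref{eq:defrestofthetrees} is exactly the subtree of $\mathcal T^{\mathbf W}$ rooted at $\widehat a1$. I would then restrict the random walk started at $\widehat a$: on its first step it moves to $\widehat a1$ with probability $\lambda^{\mathbf W}_{\widehat a,\widehat a1}/\lambda^{\mathbf W}_{\widehat a,+} \geq c_\lambda/C_\Lambda$ by property \ref{item:ii}, and then one asks for the walk never to return to $\widehat a$. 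Using that the chance of never hitting $\widehat a$ from $\widehat a1$ while confined to $\mathcal T^{\widehat a1}$ is $1/(\lambda^{\mathbf W}_{\widehat a,\widehat a1}\, g^{\mathcal T^{\widehat a1}}(\widehat a1,\widehat a1))$ — the standard relation between escape probabilities and Green function values, using that removing the edge $\{\widehat a,\widehat a1\}$ disconnects $\widehat a1$ from $\widehat a$ in the tree — together with the bound $g^{\mathcal T^{\widehat a1}}(\widehat a1,\widehat a1)\leq C_g$ from property \ref{item:ii}, one gets
\[
P^{\mathcal T^{\mathbf W}_{\widehat a}}_{\widehat a}(\tilde H_{\widehat a}=\infty) \geq \frac{\lambda^{\mathbf W}_{\widehat a,\widehat a1}}{\lambda^{\mathbf W}_{\widehat a,+}}\cdot\frac{1}{\lambda^{\mathbf W}_{\widehat a,\widehat a1}\,C_g}\cdot\frac{1}{1+\tfrac{1}{\lambda^{\mathbf W}_{\widehat a,\widehat a1}C_g}}\,?
\]
— more cleanly, $\lambda^{\mathbf W}_{\widehat a,+}\,P^{\mathcal T^{\mathbf W}_{\widehat a}}_{\widehat a}(\tilde H_{\widehat a}=\infty)$ is bounded below by the effective conductance from $\widehat a$ to $\infty$ through the single child $\widehat a1$, which is the series combination of the edge $\widehat a$–$\widehat a1$ (conductance $\lambda^{\mathbf W}_{\widehat a,\widehat a1}>c_\lambda$) and the conductance $1/g^{\mathcal T^{\widehat a1}}(\widehat a1,\widehat a1)\geq 1/C_g$ from $\widehat a1$ to $\infty$ inside $\mathcal T^{\widehat a1}$; this series combination is $\big(1/\lambda^{\mathbf W}_{\widehat a,\widehat a1}+C_g\big)^{-1}\geq\big(1/c_\lambda+C_g\big)^{-1}=c_\lambda/(1+c_\lambda C_g)=c_e$. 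This is exactly the bound \eqref{PROP1-EQ-ConstantCE} referred to in the text.

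I expect the main obstacle to be making this effective-resistance / escape-probability estimate fully rigorous while tracking that one only uses information about $\mathcal T^{\widehat a1}$ and not about the rest of $\mathcal T^{\mathbf W}$: one must be careful that restricting the walk to never leave the subtree $\mathcal T^{\widehat a1}$ after reaching $\widehat a1$ only lowers the escape probability, so that the monotonicity goes in the right direction, and that the decomposition of $e_{\{\widehat a\},\mathcal T^{\mathbf W}_{\widehat a}}(\widehat a)$ as conductance-to-infinity through the single vertex $\widehat a1$ is correct even though $\widehat a$ may have other children in $\mathcal T^{\mathbf W}_{\widehat a}$ (those only increase the escape probability, so dropping them is again a valid lower bound). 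Once the inequality $e_{\{\widehat a\},\mathcal T^{\mathbf W}_{\widehat a}}(\widehat a)\geq c_e$ is in hand, condition \eqref{EQ-ConditionsWatershedsRandInt} holds with $u=\widetilde u/c_e$, and Proposition~\ref{PROP-W_Gw_WInterl} immediately yields $\mathbf W^a\subset\mathcal I^u$ for every good $a\in F$, which is \eqref{eq:WainclusIu}.
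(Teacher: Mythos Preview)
Your proposal is correct and follows essentially the same route as the paper: verify that properties \ref{item:i} and \ref{item:iii} of goodness give the first two conditions in \eqref{EQ-ConditionsWatershedsRandInt}, and then establish the lower bound $e_{\{\widehat a\},\mathcal T^{\mathbf W}_{\widehat a}}(\widehat a)\geq c_e$ via the electrical-network interpretation (Rayleigh monotonicity to drop all children of $\widehat a$ except $\widehat a1$, then the series law with the edge conductance $>c_\lambda$ and the conductance $1/g^{\mathcal T^{\widehat a1}}(\widehat a1,\widehat a1)\geq 1/C_g$ from $\widehat a1$ to infinity). The paper phrases the same computation dually as a Green-function upper bound $g^{\mathcal T^{\mathbf W}_{\widehat a}}(\widehat a,\widehat a)\leq C_g+1/c_\lambda$ together with $e_{\{\widehat a\},\mathcal T^{\mathbf W}_{\widehat a}}(\widehat a)=1/g^{\mathcal T^{\mathbf W}_{\widehat a}}(\widehat a,\widehat a)$, which is exactly your series-conductance bound rewritten; your identification of $\mathcal T^{\widehat a1}$ with the subtree of $\mathcal T^{\mathbf W}$ below $\widehat a1$ via property \ref{item:iii} is also precisely what the paper invokes.
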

\begin{proof}
	Fix some $(L, B, c_λ,c_Λ,C_g, c_f)$-good vertex $a\in{F}.$
	First note that by properties \ref{item:i} and \ref{item:iii} of Definition~\ref{DEF-Goodwatershed}, the first and second condition in \eqref{EQ-ConditionsWatershedsRandInt} are satisfied, and thus by Proposition~\ref{PROP-W_Gw_WInterl},
	\begin{equation}
		\label{eq:WainclusIuif}
		\mathbf{W}^a\subset\mathcal{I}^u \quad \text{ once we show } \quad {u}\geq\frac{\widetilde{u}}{{e}_{\{\widehat{a}\},\mathcal{T}^{\mathbf{W}}_{\widehat{a}}}({\widehat{a}})}.
	\end{equation}
	To bound the parameter ${e}_{\{\widehat{a}\},\mathcal{T}^{\mathbf{W}}_{\widehat{a}}}({\widehat{a}})$ from below we will use property \ref{item:ii} of Definition~\ref{DEF-Goodwatershed}. We  use the  analogy to electrical circuits, and note that by Rayleigh’s Monotonicity Principle \cite[(2.5) and Sections 2.3 and 2.4]{LyonsPeres17}, we have that $ g^{\mathcal{T}_{\widehat{a}}^{\mathbf{W}}}(\widehat{a},\widehat{a})\leq g^{\mathcal{T}_{\widehat{a},1}^{^{\mathbf{W}}}}(\widehat{a},\widehat{a}),$ where $\mathcal{T}_{\widehat{a},1}^{^{\mathbf{W}}}$ denotes the subtree of $\mathcal{T}_{\widehat{a}}^{\mathbf{W}}$ consisting only of $\widehat{a}$ and $\mathcal{T}_{\widehat{a}1}^{\mathbf{W}}$. Moreover, using a series transformation \cite[Subsection 2.3.I]{LyonsPeres17}, equations \eqref{LEMMA Goodwatershed-EQ-ConductanceBounded} and \eqref{Goodwatershed-EQ-GreenFunctionBounded} imply that $ g^{\mathcal{T}_{\widehat{a},1}^{\mathbf{W}}}(\widehat{a},\widehat{a}) \le C_g+\frac{1}{c_λ}$ since, on the event $H_{\widehat{a}1}(\mathbf{X}^{\widehat{a}})=\infty$ which is implied by property \ref{item:iii} of Definition~\ref{DEF-Goodwatershed}, $\mathcal{T}^{\widehat{a}1}$ is the subtree $\mathcal{T}_{\widehat{a}1}^{\mathbf{W}}$ of $\mathcal{T}^{\mathbf{W}}$ below $\widehat{a}1$ as explained in the second paragraph below Definition~\ref{DEF-Goodwatershed}.
	Thus, the equilibrium measure at $ \widehat{a}  $ for $\mathcal{T}_{\widehat{a}}^{\mathbf{W}}$ is bounded from below by
	\begin{equation}\label{PROP1-EQ-ConstantCE}
		{e}_{\{\widehat{a}\},\mathcal{T}^{\mathbf{W}}_{\widehat{a}}}({\widehat{a}})
		=\frac{1}{g^{\mathcal{T}_{\widehat{a}}^{\mathbf{W}}}(\widehat{a},\widehat{a})} \ge \frac{c_λ}{c_λC_g+1}=:c_e.
	\end{equation}
	We can conclude by combining \eqref{PROP1-DEF-Uprime}, \eqref{eq:WainclusIuif} and \eqref{PROP1-EQ-ConstantCE}.

\end{proof}

If $q(\widetilde{u})L$ is large enough, combining Propositions~\ref{PROP-Prop1} and \ref{PROP-GoodwatershedsInInterlacements} provides us with an infinite tree of good free points $a$ satisfying $\mathbf{W}^a\subset\mathcal{I}^u.$  Taking advantage of  property \ref{item:v} from Definition~\ref{DEF-Goodwatershed}, we are now ready to prove percolation for the set on the left-hand side of \eqref{eq:Iu+Auincluded}. For each $p\in{(0,1)},$ under some probability $\mathbb{P}^{\rm E}_p,$ let $(\mathcal{E}_x)_{x\in{\mathcal{X}}}$ be an independent family of exponential random variables with parameter one, and $(\mathcal{B}_x)_{x\in{\mathcal{X}}}$ the independent family of Bernoulli random variables defined above \eqref{eq:defBp}. Recall that $φ$ is a Gaussian free field on $T$ under $\Pgff{T}{},$ see Section~\ref{sec:GFF}, that $\mathcal{I}^u$ is a random interlacements set on $T$ under $\Pri_T,$ see Section~\ref{sec:RI}, that $\mathcal{T}$ is a Galton--Watson tree under $\mathbb{P}^{{\rm GW}},$ see Section~\ref{SECTION-GWTree}, and let $B_p$ be as in \eqref{eq:defBp} and $A_u$ as in \eqref{PROP1-DEF-Au}.

\begin{prop}\label{PROP-Percolation-Au}
	There exists $ u_0>0 $ such that for each $ u\in (0,u_0],$ there exists  $p\in(0,1)$ so that the set $A_u\cap B_p \cap \mathcal{I}^u$ contains $\mathbb{E}^{{\rm GW}}[\Pri_{\mathcal{T}}\otimes\Pgff{\mathcal{T}}{}\otimes\mathbb{P}_{p}^{\rm E}(\cdot)]$-a.s.\ an unbounded cluster.
\end{prop}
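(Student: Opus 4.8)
The plan is to combine Proposition~\ref{PROP-Prop1} (percolation of the tree of good free points) and Proposition~\ref{PROP-GoodwatershedsInInterlacements} (inclusion of their watersheds in $\mathcal I^u$) with a further thinning that only keeps the good free points whose watershed also survives the extra noise carried by $A_u$ and $B_p$. First I fix once and for all the constants $c_\lambda,C_\Lambda,C_g,c_f$ of Lemma~\ref{LEMMA-Goodwatershed}, the constant $c_p$ of Lemma~\ref{lem:probatobegood}, the constant $c_e=c_\lambda/(c_\lambda C_g+1)$ of \eqref{PROP1-DEF-Uprime} and the function $d$ of Lemma~\ref{LEMMA-DependBernPercolation}; I always take $\widetilde u=uc_e$, and the integer $L=L(u)\geq 2$ and the Bernoulli parameter $p=p(u)\in(0,1)$ are chosen at the end. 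I work on the probability space of Proposition~\ref{PROP-W_Gw_WInterl} enriched by an independent Gaussian free field $\varphi$, i.i.d.\ exponentials $(\mathcal E_x)_x$ and i.i.d.\ Bernoulli$(p)$ variables $(\mathcal B_x)_x$, all independent given $\mathcal T^{\mathbf W}$; by \eqref{eq:TWisGW} and Proposition~\ref{PROP-W_Gw_WInterl} the $5$-tuple $(\mathcal T^{\mathbf W},\mathcal I^u,\varphi,(\mathcal E_x)_x,(\mathcal B_x)_x)$ has the law appearing in the statement, so it suffices to produce an unbounded cluster of $A_u\cap B_p\cap\mathcal I^u$ on this space, first with positive and then with full probability. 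Call $a\in F$ \emph{excellent} if it is $(L,B,c_\lambda,C_\Lambda,C_g,c_f)$-good and moreover $\mathbf W^a\subset A_u\cap B_p$. By Proposition~\ref{PROP-GoodwatershedsInInterlacements} goodness already gives $\mathbf W^a\subset\mathcal I^u$, so $\mathbf W^a\subset A_u\cap B_p\cap\mathcal I^u$ for every excellent $a$; and since a child $a'=ai$ of $a$ in $F$ satisfies $(\widehat{a'})^-\in\mathbf W^a$ and $\widehat{a'}\in\mathbf W^{a'}$, the union $\bigcup_n\mathbf W^{a_n}$ along any infinite ray $a_0=\emptyset,a_1,a_2,\dots$ of excellent free points is an unbounded connected subset of $A_u\cap B_p\cap\mathcal I^u$. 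The task therefore reduces to showing that the set of excellent free points contains an infinite ray with positive probability.

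The key estimate is a lower bound on the conditional probability of being excellent, obtained by mimicking the $\sigma$-algebra bookkeeping of the proof of Proposition~\ref{PROP-Prop1} with the $\sigma$-algebras $\mathcal W^a$ augmented by the values of $\varphi$, $\mathcal E$ and $\mathcal B$ outside $\mathcal T^{\mathbf W}_{\widehat a}$. Conditioning on the watershed $(\mathbf T^a,\boldsymbol{\lambda}^a,\mathbf X^a)$ — hence on whether $a$ is good and, if so, on $\mathbf W^a$, a set of at most $L-1$ vertices of $\mathcal T^{\mathbf W}_{\widehat a}$ — the event $\{\mathbf W^a\subset B_p\}$ has probability $p^{|\mathbf W^a|}\geq p^{L-1}$, independently of $\varphi$ and $\mathcal E$; and a union bound together with \eqref{eq:boundprobaAu}, valid for any conditioning not revealing $\varphi_y,\mathcal E_y$ because the Markov property \eqref{LEMMA-MarkovProperty} keeps the conditional variance of each $\varphi_y$ at least $1/\lambda_y$ regardless of the field outside $\mathbf W^a$, gives on $\{a\text{ good}\}$, using property \ref{item:v} of Definition~\ref{DEF-Goodwatershed} to bound $\sum_{y\in\mathbf W^a}(\boldsymbol{\lambda}^a_y)^{3/2}\leq BL^{3/2}$,
\begin{equation*}
	\mathbb P\big(\mathbf W^a\subset A_u \,\big|\, \cdot \,\big)\ \geq\ 1-CBL^{3/2}u^{3/2}.
\end{equation*}
Inserting Lemma~\ref{lem:probatobegood} for the probability of goodness itself, one concludes that on $\{a\in F,\ \lambda^F_{a,a^-}\leq C_\Lambda\}$ and for $L\geq L_0(B)$,
\begin{equation*}
	\mathbb P\big(a\text{ excellent}\,\big|\,\mathcal F^e_a\big)\ \geq\ c_p\,\big(1-e^{-uc_e}\big)\,p^{L-1}\,\big(1-CBL^{3/2}u^{3/2}\big),
\end{equation*}
where $\mathcal F^e_a$ is the ``non-descendant'' $\sigma$-algebra chosen, as in the proof of Proposition~\ref{PROP-Prop1}, so that $\{a\in F^e\}$ is $\mathcal F^e_a$-measurable, with $F^e:=\{\emptyset\}\cup\{a\in F\setminus\{\emptyset\}:a^-\text{ excellent},\ \lambda^F_{a,a^-}\leq C_\Lambda\}$.

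It then remains to choose the parameters so as to balance the two competing effects. I would pick $K>0$ (large, fixed last), set $L(u):=\lceil K/u\rceil$ and $p(u):=1-1/L(u)$, so that $uL\leq K+1$ and (elementarily) $p^{L-1}\geq e^{-1}$ for $L\geq 2$, choose $B=B(K)$ small enough that $CB(K+1)^{3/2}\leq 1/2$, so that $1-CBL^{3/2}u^{3/2}\geq 1/2$, and finally take $u_0$ so small that $u_0c_e\leq 1$ and $L(u_0)\geq L_0(B)$; then for every $u\leq u_0$ the previous display yields $\mathbb P(a\text{ excellent}\mid\mathcal F^e_a)\geq q_0u$ on $\{a\in F,\ \lambda^F_{a,a^-}\leq C_\Lambda\}$, with $q_0:=c_pc_e/(4e)$. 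Since by property \ref{item:iv} of Definition~\ref{DEF-Goodwatershed} an excellent $a$ has at least $c_fL$ children in $F^e$, the set $S=F^e$ satisfies \eqref{eq:stochadom} with $N=\lceil c_fL\rceil$ and $p=q_0u$, whence $Np\geq c_fKq_0$. By Lemma~\ref{LEMMA-DependBernPercolation}, $F^e$ then contains with positive probability a $d(Np)$-ary tree; taking $K$ so large that $d(c_fKq_0)\geq 1$ this tree contains an infinite ray of vertices of $F^e$, each of which (having a child in $F^e$) is excellent, and by the first paragraph $A_u\cap B_p\cap\mathcal I^u$ has an unbounded cluster with positive probability, for all $u\leq u_0$ with $p=p(u)$. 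To upgrade to ``$\Egw[\cdots]$-a.s.'', I would use that under \eqref{REMARK-TInfiniteDescendants} the tree $\mathcal T$ almost surely contains an infinite antichain $(x_i)_i$, run the watershed construction inside each $\mathcal T_{x_i}$, observe that conditionally on $\mathcal T$ and on $(\varphi_{x_j})_j$ each of these yields an unbounded cluster of $A_u\cap B_p\cap\mathcal I^u$ inside $\mathcal T_{x_i}$ with a probability bounded below uniformly in $i$ (all the above estimates being insensitive to the harmonic boundary condition, by the Markov property) and that these events are conditionally independent, and then conclude by Borel--Cantelli.

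I expect the main obstacle to be the quantitative balance underlying the parameter choice: branching of the tree of good, hence excellent, free points forces $Lq(\widetilde u)\asymp uL$ to be bounded below, hence $L\gtrsim 1/u$, whereas $B_p\cap A_u$ disconnects a watershed of $\asymp L$ vertices with probability $\asymp u^{3/2}\sum_{y\in\mathbf W^a}\lambda_y^{3/2}\asymp(uL)^{3/2}$, hence forces $uL$ to be bounded above. The resolution is that $uL$ can be taken to be a single large-but-fixed constant, large enough to branch (as $d(t)\to\infty$) yet with $(uL)^{3/2}$ kept below $1$ by shrinking the constant $B$ in property \ref{item:v}, at the sole cost of decreasing $u_0$ so that $L(u)\geq L_0(B)$. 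Note that the $L^{3/2}$-normalization built into property \ref{item:v} is exactly what makes $(uL)^{3/2}$ — rather than a worse power — the relevant quantity, in accordance with the $u^{3/2}$ in \eqref{eq:boundprobaAu} (which is the product of the exponential clock, of order $u$, and the Gaussian fluctuation, of order $u^{1/2}$). A secondary, purely technical, difficulty is to redo the $\sigma$-algebra identities of Proposition~\ref{PROP-Prop1} with $\varphi$, $(\mathcal E_x)_x$ and $(\mathcal B_x)_x$ adjoined, which is where the Markov property of the free field enters both to decouple the subtrees rooted at incomparable free points and to make the $A_u$-estimate uniform in the boundary condition.
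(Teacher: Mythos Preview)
Your overall plan and the parameter balance are right, and essentially match the paper's; but the attempt to verify the hypothesis of Lemma~\ref{LEMMA-DependBernPercolation} for the set $F^e$ of ``excellent'' free points in a \emph{single} pass has a real gap. Goodness of $a$ is a functional of the watershed randomness, while $\{\mathbf W^a\subset A_u\}$ depends on $\varphi$, whose law is only defined once the whole random tree $\mathcal T^{\mathbf W}$ is realized. When you ``augment $\mathcal W^{a'}$ by the values of $\varphi$ outside $\mathcal T^{\mathbf W}_{\widehat a}$'', you are implicitly conditioning on functionals of the Green function $g^{\mathcal T^{\mathbf W}}$, and that Green function depends on the subtree \emph{below} $\widehat a$ as well; this leaks information about $\mathcal T^{\widehat a1}$ and about the watershed $(\mathbf T^a,\boldsymbol\lambda^a,\mathbf X^a)$, and can bias the conditional probability of $\{a\text{ good}\}$ away from the bound of Lemma~\ref{lem:probatobegood}. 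The Markov property you invoke is a statement about the free field on a \emph{fixed} graph and only applies once $\mathcal T^{\mathbf W}$ has been frozen; as written, the displayed inequality for $\mathbb P(a\text{ excellent}\mid\mathcal F_a^e)$ is therefore not justified. This is not the ``purely technical'' issue you flag at the end --- it is precisely the obstacle that dictates the structure of the argument.

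The paper resolves this entanglement by a two-pass scheme. First, use Proposition~\ref{PROP-Prop1} (tree randomness only) to produce, with positive $\mathbf P^W_{L,\widetilde u}$-probability, a $d(c_dB^{-2/3})$-ary tree $F^{g0}\subset F^g$. Second, \emph{condition on all of} $\mathcal T^{\mathbf W}$ and on $\{F^{g0}\text{ exists}\}$, and thin $F^{g0}$ by the events $\{\mathbf W^{a^-}\subset A_u^{\mathbf W}\}$; on the now-deterministic tree the Markov property at $\widehat a^-$ applies cleanly and \eqref{eq:boundprobaAu} gives the uniform bound $\ge\tfrac14$, so a second application of Lemma~\ref{LEMMA-DependBernPercolation} yields a $d\big(d(c_dB^{-2/3})/4\big)$-ary subtree. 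Your parameter choices ($uL$ a large fixed constant, then $B$ small, then $u_0$ small) slot into this two-pass scheme unchanged. The ``almost sure'' upgrade is also more delicate than ``pick an infinite antichain in $\mathcal T$ and run independent copies'': the watershed procedure \emph{constructs} the tree rather than exploring a given one, so the paper instead grafts fresh independent copies of the whole watershed construction at successive boundary vertices to build a tree $\mathcal T^Z$ with law $\Pgw$, and uses the Markov property of $\varphi$ at the grafting points to get the needed conditional independence.
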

\begin{proof}

	Under $\mathbf{E}_{L,\widetilde{u}}^{\mathbf{W}}
	[\Pgff{\mathcal T^{\mathbf{W}}}{}
	\otimes  \mathbb{P}_{p}^{\rm E}
	(\cdot)],$ for some $L\in{\N}$ and $\widetilde{u}>0,$ consider the event
	\begin{equation}
		\label{eq:defAuW}
		A_{u}^{\mathbf{W}}
		:=\left\lbrace	x\in \mathcal{T}^{\mathbf{W}}\colon
		\mathcal{E}_x>4u\lambda_x^{\mathbf{W}}\text{ or }\ab{φ_x}>2\sqrt{2u}	\right\rbrace
		\cap{\left\lbrace	x\in \mathcal{T}^{\mathbf{W}}\colon
			\mathcal{B}_x=1	\right\rbrace}.
	\end{equation}
	For $a\in{F},$ we now evaluate  the probability, conditioned on the value of $ φ_{\widehat{a}^-},$ that $ \mathbf{W}^{a}\subset A_{u}^{\mathbf{W}}$ (recall \eqref{DEF-watershed}). For $ \mathcal{E} $ and $\mathcal{B},$ simple estimates for exponential and Bernoulli variables will be sufficient, while for the Gaussian free field we take advantage of the Markov property \eqref{LEMMA-MarkovProperty} applied to the set $U_a:=\mathcal{T}^{\mathbf{W}}_{{\widehat{a}} }. $  For each $y\in{U_a},$ one can decompose the field as $ φ_y= ψ^{U_a}_y + β^{U_a}_y ;$ here, $ ψ^{U_a}_y $ is a centered Gaussian field, independent of $ β_y^{U_a}$ and $φ_{\widehat{a}^-},$ and with variance $ g^{\mathcal{T}^{\mathbf{W}}}_{U_a}(y,y),$ which  by \eqref{DEF-GreenFunction} satisfies
	\begin{equation*}
		g^{\mathcal{T}^{\mathbf{W}}}_{U_a}(y,y)\ge \frac{1}{λ_y^{\mathbf{W}}}\text{ for all }y\in{U_a}.
	\end{equation*}
	Thus, for all $y\in{U_a}$ we have -- using the symmetry and unimodality of the distribution of $ψ^{U_a}_y$ to obtain the first inequality -- that
	\begin{equation}\label{PROP1-EQ-InequalityPhiU}
		\begin{split}
			\Pgff{\mathcal{T}^{\mathbf{W}}}{}\big(		\ab{φ_{y}} \leq  2\sqrt{2u} 		\given φ_{{\widehat{a}} ^-}  \big)
			&=		\Pgff{\mathcal{T}^{\mathbf{W}}}{}\big(		\ab{ψ^{U_a}_y+ β^{U_a}_y} \leq 2\sqrt{2u} 		\given φ_{{\widehat{a}} ^-}  \big)		\\
			&\le	\Pgff{\mathcal{T}^{\mathbf{W}}}{}\big(		\ab{ψ^{U_a}_y} \leq 2\sqrt{2u} 		 \big)			\le
			\frac{4\sqrt{2u}}{\sqrt{2π /λ_y^{\mathbf{W}}}}.
		\end{split}
	\end{equation}
	Therefore, for all $a\in{F},$
	\begin{equation}
		\label{eq:boundprobaAu}
		\begin{split}
			&\quad \Pgff{\mathcal T^{\mathbf{W}}}{}\otimes\mathbb{P}_{p}^{\rm E}
			\big(
			\mathbf{W}^{a} \subset A_{u}^{\mathbf{W}}\given φ_{{\widehat{a}} ^-} \big)\\
			& \leftstackrel{\eqref{eq:defAuW}}{=}  \prod_{y\in \mathbf{W}^{a} }   \mathbb{P}^{\rm E}_p(\mathcal{B}_y=1)                                     \Big(    1
			- \Pgff{\mathcal T^{\mathbf{W}}}{}\otimes\mathbb{P}_{p}^{\rm E}\Big(\bigcup_{y\in \mathbf{W}^{a}}
			\{\ab{φ_{y}} \leq 2\sqrt{2u}\} 		\cap 		\{\mathcal{E}_{y} \le 4u\lambda_y^{\mathbf{W}} \}		\Given φ_{{\widehat{a}} ^-} \Big)\Big) \\
			& \ge              p^L\bigg(                                   1
			-	\sum_{y\in \mathbf{W}^{a} } 	\Pgff{\mathcal T^{\mathbf{W}}}{}\big(\ab{φ_{y}} \leq 2\sqrt{2u}	\given φ_{{\widehat{a}} ^-}\big) \mathbb{P}_{p}^{\rm E}\big(\mathcal{E}_{y} \le 4u\lambda_y^{\mathbf{W}}\big)     \bigg)     \\
			& \leftstackrel{\eqref{PROP1-EQ-InequalityPhiU}}{\ge}
			p^L\bigg( 1
			-	\sum_{y\in \mathbf{W}^{a}} 	\frac{4\sqrt{2uλ_y^{\mathbf{W}}}}{\sqrt{2π}} \Big(1-e^{-4uλ_y^{\mathbf{W}}}\Big)     \bigg)                                              \\
			&\ge p^L\bigg( 1 -	\frac{16}{\sqrt{π}} u^{\frac32} \sum_{y\in \mathbf{W}^{a}} 	(λ_y^{\mathbf{W}})^{\frac32} \bigg),
		\end{split}
	\end{equation}
	taking advantage of the inequality $1-e^{-x} \le x$ for $x >0$ in order to obtain the last inequality.

	We now fix the parameters and start with choosing $c_λ,C_Λ,C_g, c_f,c_p>0$ as well as
	$L_0(B),$ with $B$ to be fixed later on, as the parameters from Proposition~\ref{PROP-Prop1},
	and $c_e$ as the parameter from \eqref{PROP1-DEF-Uprime}.
	Finally, for $u>0$ define

	\begin{equation}
		\label{eq:choiceLbaru}
		\widetilde{u}(u):= uc_e,
		\,L(u,B):=\left\lceil\frac{c_e}{3(1-e^{-uc_e})}\Big(\frac{\sqrt{π}}{32B}\Big)^{\frac 23}
		\right\rceil\vee L_0(B)
		\text{ and }p(u,B)=2^{-\frac1{L(u,B)}}.
	\end{equation}
	Using the bound $1-e^{-x}\geq x/2$ for $x>0$ small enough, we can now find $u_0=u_0(c_e,B)>0$ such that
	\begin{equation}
		\label{eq:Lusmallenough}
		L(u,B)\leq \frac{1}{u}\Big(\frac{\sqrt{π}}{32B}\Big)^{\frac 23}\text{ for all }u\in{(0,u_0]}.
	\end{equation}
	Then for all $u\in{(0,u_0)},$ under $\mathbf{P}_{L(u,B),\widetilde{u}(u)}^{{W}},$ for each $ (L(u,B), B, c_λ,C_Λ,C_g, c_f)$-good vertex $a\in{F},$ we can continue the chain of inequalities in \eqref{eq:boundprobaAu} to obtain
	\begin{equation}
		\label{eq:nextboundonprobaphiE}
		\begin{split}
			\Pgff{\mathcal T^{\mathbf{W}}}{}\otimes\mathbb{P}_{p(u,B)}^{\rm E}
			\big(
			\mathbf{W}^{a} \subset A_{u}^{\mathbf{W}}\given φ_{{\widehat{a}} ^-} \big)
			&\stackrel{\eqref{eq:boundprobaAu}}{\geq}
			p(u,B)^{L(u,B)} \Big(1 -	\frac{16}{\sqrt{π}} u^{\frac{3}{2}} \sum_{y\in \mathbf{W}^{a}} 	(λ_y^{\mathbf{W}})^{\frac32}     \Big)   \\
			&\stackrel{\eqref{eq:boundonconductances}}{\geq}
			p(u,B)^{L(u,B)}
			\Big(1-\frac{16}{\sqrt{π}}B{(uL(u,B))^{\frac32}}\Big)   \\
			&\stackrel{\eqref{eq:choiceLbaru},\eqref{eq:Lusmallenough}}{\geq}
			\frac12 \cdot \frac12 =
			\frac{1}{4}.
		\end{split}
	\end{equation}
	With our choice of parameters, see  in  particular \eqref{eq:choiceLbaru}, we can use Proposition~\ref{PROP-Prop1}  to show that the set $F^g$ from \eqref{eq:defFg} contains with positive probability a $d(c_dB^{-2/3})$-ary tree that we denote by $F^{g0},$ where $d(c_dB^{-2/3})$ will be large  (cf.\ \eqref{eq:limitd}), and $c_d:=c_ec_pc_f(\sqrt{\pi}/32)^{2/3}/3.$ Conditionally on the realization of the Galton--Watson tree $\mathcal{T}^{\mathbf{W}},$ and on the event that $F^{g0}$ exists,  we write
	\begin{equation}
		\label{eq:defFg1}
		\begin{split}
			&{F}^{g1}:=\{\emptyset\}\cup\left\{a\in{F^{g0}\setminus\{\emptyset\}}:\,\mathbf{W}^{a^-} \subset A_{u}^{\mathbf{W}}	\right\}\text{ and }\\
			&{\mathcal{F}}_a^{g1}:=\sigma\big(\ind_{\{\mathbf{W}^{(a')^-}\subset A_{u}^{\mathbf{W}}\}},a'\in{(F\setminus F_a)\cup\{a\}}\big)
		\end{split}
	\end{equation}
	for all $a\in{F},$ where $F_a$, the subtree below $a,$ was defined in the paragraph below \eqref{DEF-GenerationOfTree}, and where we use the convention $\mathbf{W}^{\emptyset^-}=\emptyset.$
	Taking advantage of the Markov property, see \eqref{LEMMA-MarkovProperty} and below, under $\Pgff{\mathcal{T}^{\mathbf{W}}}{}$ and conditionally on $φ_{\widehat{a}^-},$ the field $φ_{|\mathbf{W}^{a}}$ is independent of $φ_{\emptyset}$ and $φ_{|\mathbf{W}^{(a')^-}}$ for all $a'\in{(F\setminus F_a)\cup\{a\}}.$
	Thus,  for all $u\in{(0,u_0)}$ and  $a\in{\mathcal{X}},$  on the event that $F^{g0}$ exists and $a\in{F^{g1}}$ (which implies in particular that $a$ is good), we have that
	\begin{equation}
		\label{eq:boundonprobaFg1}
		\begin{split}
			\Pgff{\mathcal T^{\mathbf{W}}}{}\otimes\mathbb{P}_{p(u,B)}^{\rm E}\Big(\big|G_a^{F^{g1}}\big|\geq d(c_dB^{-2/3})
			\given \mathcal{F}_a^{g1},φ_{\emptyset}\Big)
			&=\Pgff{\mathcal T^{\mathbf{W}}}{}\otimes\mathbb{P}_{p(u,B)}^{\rm E}{(\mathbf{W}^{a} \subset A_{u}^{\mathbf{W}}\,|\,φ_{\widehat{a}^-})}
			\stackrel{\eqref{eq:nextboundonprobaphiE}}{\ge}
			\frac14.
		\end{split}
	\end{equation}
	Therefore, conditionally on the realization of the Galton--Watson tree $\mathcal{T}^{\mathbf{W}}$ and on the event that $F^{g0}$ exists, by Lemma~\ref{LEMMA-DependBernPercolation}, the set $F^{g1}$ contains with positive  $\Pgff{\mathcal T^{\mathbf{W}}}{}\otimes\mathbb{P}_{p(u,B)}^{\rm E}(\, \cdot\,|\,φ_{\emptyset})$-probability (not depending on $φ_{\emptyset}$) a $d\big(d(c_dB^{-2/3})/4\big)$-ary tree. Moreover, since
	\begin{equation}
		\label{eq:limitd}
		d\big(d(c_dB^{-2/3})/4\big) \rightarrow\infty\text{ as }B\rightarrow0,
	\end{equation}
	taking $B$ small enough we get that, under $\mathbf{E}_{L(u,B),\widetilde{u}(u)}^{{W}}[\Pgff{\mathcal T^{\mathbf{W}}}{}\otimes\mathbb{P}_{p(u,B)}^{\rm E}(\,\cdot\,|\,φ_{\emptyset})],$ the set $F^{g1}$ contains an infinite subtree with positive probability that we denote by $\delta,$ and which does not depend on $φ_{\emptyset}.$

	Write $p(u)=p(u,B)$ and $L(u)=L(u,B)$ for this choice of $B.$ For each $a\in{F^{g1}},$ we have $\mathbf{W}^{a^-}\subset A_{u}^{\mathbf{W}}\cap\mathcal{I}^u$ by  \eqref{eq:defFg}, \eqref{eq:WainclusIu} and \eqref{eq:defFg1}. Since $\widehat{a}\in{\mathbf{W}^a}$ and $\widehat{a}^-\in{\mathbf{W}^{a^-}}$ by construction, and so $\mathbf{W}^a$ and $\mathbf{W}^{a^-}$ are adjacent in $\mathcal{T}^{\mathbf{W}}$ (i.e.\ $\min_{x \in \mathbf{W}^{a^-}, \, y \in \mathbf{W}^{a}} d_{\mathcal{T}^{\mathbf{W}}}(x,y)=1$) the infinite connected tree in $F^{g1}$ yields an infinite connected subset $\bigcup_{a\in{F^{g1}}}\mathbf{W}^{a}$ in $\mathcal{T}^{\mathbf{W}}$ which is included in $A_{u}^{\mathbf{W}}\cap\mathcal{I}^u.$ Since $(\mathcal{T}^{\mathbf{W}},A_{u}^{\mathbf{W}},\mathcal{I}^u)$ under $\mathbf{E}_{L(u),\widetilde{u}(u)}^{\mathbf{W}}[\Pgff{\mathcal T^{\mathbf{W}}}{}\otimes\mathbb{P}_{p(u)}^{\rm E}(\cdot)]$ has the same law as $(\mathcal{T},A_{u}\cap B_{p(u)},\mathcal{I}^u)$ under $\mathbb{E}^{{\rm GW}}[\Pri_{\mathcal{T}}\otimes\Pgff{\mathcal{T}}{}\otimes\mathbb{P}_{p(u)}^{\rm E}(\cdot)]$ by \eqref{PROP1-DEF-Au}, \eqref{eq:TWisGW} and \eqref{eq:defAuW}, we proved that the root is included in an unbounded connected component of $A_u\cap B_p \cap \mathcal{I}^u$ with positive probability.
	\medskip

	In order to conclude, we still need to prove that percolation occurs almost surely.
	The strategy will be to construct a Galton--Watson tree $\mathcal{T}^Z$ such that there are conditionally independent copies of the tree $F^{g1}$ from \eqref{eq:defFg1} whose associated watersheds can all be embedded into $\mathcal{T}^Z.$ Since each of these copies of $F^{g1}$ is infinite with probability at least $\delta,$ at least one of them will be infinite a.s., and we can conclude. We now explain how to do this construction in detail.  Under some probability measure $\mathbb{P}^Z_u,$ let  $(Z_k)_{k\in\N}$ be an i.i.d.\ sequence of subtrees in $\mathcal{X},$ with the same law as the subtree
	\begin{equation*}
		\mathcal{T}_-^{\mathbf{W}}\cup\bigcup_{a\in{F}:\,\tilde{V}_L(\mathbf{X}^a)=H_{\widehat{a}1}(\mathbf{X}^a)=\infty}\mathcal{T}^{\widehat{a}1}
	\end{equation*}
	of $\mathcal{T}^{\mathbf{W}}$ under $\mathbf{P}_{L(u),\widetilde{u}(u)}^{{W}},$ where $\mathcal{T}_-^{\mathbf{W}}$ is defined in \eqref{DEF-WUnion} and $\mathcal{T}^x$ in \eqref{eq:defrestofthetrees}. Since $\mathcal{T}_-^{\mathbf{W}}$ is constructed by the use of watersheds, in a slight abuse of language we will also call watersheds the respective subsets of $Z_k$ corresponding to watersheds in $\mathcal{T}_-^{\mathbf{W}},$ if no confusion is to arise from this. Let us now define recursively a sequence of trees $\mathcal{T}^{Z}_k,$ $k\in{\N},$ with $\partial\mathcal{T}^{Z}_k\neq\varnothing,$ a.s.\ as follows: first take $\mathcal{T}^Z_1=Z_1.$ Note that $\partial Z_1\neq\varnothing$ a.s.\ since 
    either $\tilde{V}_L(\mathbf{X}^\emptyset)=\infty$, and then $\partial Z_1$ contains any point of $\partial (\mathbf{T}^\emptyset\setminus \mathbf{T}^\emptyset_{V_L(X^{\emptyset})}),$ which is a.s.\ non-empty; 
    or otherwise if $\tilde{V}_L(\mathbf{X}^\emptyset)<\infty$ then $\widehat{\emptyset1} \in{\partial Z_1}$ (which does not always corresponds to $\widehat{\emptyset}1$) since we did not add the tree $\mathcal{T}^{\widehat{\emptyset}1}$ in the definition of $Z_1$ and $\widehat{\emptyset1}\in{\partial \mathcal{T}_-^{\mathbf{W}}}$ by \eqref{eq:deffreepointrecursively}.

	To define $\mathcal{T}_k^Z$ recursively, assume that $\mathcal{T}^Z_{k-1}$ is defined with $\partial\mathcal{T}^Z_{k-1}\neq\varnothing.$ Let $x_k$ be the first vertex in $\partial \mathcal{T}^Z_{k-1}$ (in lexicographic order in Ulam-Harris notation). We then define $\mathcal{T}_k^Z$ as the union of $\mathcal{T}_{k-1}^Z$ and $x_k\cdot Z_k,$ which also verifies $\partial \mathcal{T}^Z_k\neq\varnothing.$

	Let $\mathcal{T}_-^Z$ be the union of $\mathcal{T}_k^Z,$ $k\in{\N},$ and $\mathcal{T}^Z$ be the union of $\mathcal{T}_-^Z$ and some additional independent Galton--Watson trees below each $x\in{\partial \mathcal{T}_-^Z},$ each with the same law as $x\cdot\mathcal{T}$ under $\mathbb{P}^{\rm GW}.$ Then, by construction, $\mathcal{T}^Z$ has the same law as the usual Galton--Watson tree $\mathcal{T}$ under $\mathbb{P}^{\rm GW}.$ Define $F_k^{g0}$ and $\mathbf{W}^a_k,$ $a\in{F_k^{g0}},$ similarly as above \eqref{eq:defFg1} and in \eqref{DEF-watershed}, but corresponding to $Z_k,$ which are i.i.d.\ copies of $F^{g0}$ and $\mathbf{W}^a,$ $a\in{F^{g0}},$ in $k\in{\N}.$ Moreover, under $P^Z_u:=\mathbb{E}^Z_u[\Pgff{\mathcal{T}^Z}{}\otimes\mathbb{P}_{p(u)}^{\rm E}(\cdot)],$ define $A_{u}^Z$ similarly as in \eqref{eq:defAuW}, but with  $\mathcal{T}^{\mathbf{W}}$ replaced by $\mathcal{T}^Z,$ and for each $k\in{\N},$ take $F_k^{g1}=\{a\in{F_k^{g0}}:\,x_k\cdot\mathbf{W}^{a^-}_k\subset A_{u}^{Z}\},$ similarly as in \eqref{eq:defFg1}. Then by Markov's property for the Gaussian free field, conditionally on $φ_{x_k},$ $F_k^{g1}$ is independent of $F_i^{g1},$ $i<k,$ and thus for each $u\in{(0,u_0)}$ we have
	\begin{equation}
		\label{eq:PZFkg1infty}
		{P}^{Z}_u\big(|F_k^{g1}|=\infty\,|\,F_i^{g1}, i<k\big)={E}_u^{Z}\big[{P}_u^{Z}(|F_k^{g1}|=\infty\,|\,φ_{x_k})\,|\,F_{i}^{g1},i<k\big]\geq\delta;
	\end{equation}
	here, the last inequality follows from the fact that, for each $a\in{\R},$ the law of $F_k^{g1}$ conditionally on $φ_{x_k}=a$ under $P^Z_u$ is the same as the law of $F^{g1}$ conditionally on $φ_{\emptyset}=a$ under $\mathbf{E}_{L(u),\widetilde{u}(u)}^{\mathbf{W}}[\Pgff{\mathcal T^{\mathbf{W}}}{}\otimes\mathbb{P}_{p(u)}^{\rm E}(\cdot)],$ and $\delta$ is the constant introduced below \eqref{eq:boundonprobaFg1}. Using the tower property recursively on $k\in{\N},$ one can easily show that \eqref{eq:PZFkg1infty} implies that there exists $P^Z_u$-a.s.\ $k_0\in{\N}$ such that $|F_{k_0}^{g1}|=\infty.$  Note moreover that one can use Proposition~\ref{PROP-W_Gw_WInterl} similarly as in the proof of Proposition~\ref{PROP-GoodwatershedsInInterlacements}, to obtain an interlacements $\mathcal{I}^{u}$ on $\mathcal{T}^Z$ with $x_k\cdot\mathbf{W}^a_k\subset\mathcal{I}^{u}$ for each $a\in{F_k^{g0}}$ and $k\in{\N}.$ To this effect, note in particular that \eqref{PROP1-EQ-ConstantCE} still holds on $\mathcal{T}^Z$ since for each $k\in{\N}$ and $a\in{F_k^{g0}},$ the subtree $\mathcal{T}^Z_{x_k\cdot\widehat{a}1}$ of $\mathcal{T}^Z$ below $x_k\cdot\widehat{a}1$ is the copy $\mathcal{T}^{\widehat{a}1}_k$ of $\mathcal{T}^{\widehat{a}1}$ associated to $Z_k,$ translated by $x_k.$ Therefore, for each $u\in{(0,u_0)},$ the set $F_{k_0}^{g1}$ is $P_u^Z$-a.s.\ infinite and its associated watersheds $\mathbf{W}^a_{k_0},$ $a\in{F_{k_0}^{g1}},$ are included in $\mathcal{I}^{u}\cap A_{u}^Z,$ and we can conclude.

\end{proof}

In order to deduce Theorem~\ref{THM-h*>0} from Proposition~\ref{PROP-Percolation-Au}, we are going to use the isomorphism \eqref{EQ-IsomorphismTheorem} between the Gaussian free field and random interlacements. We first show that condition \eqref{eq:capRWinfty} -- which entails the validity of the isomorphism \eqref{EQ-IsomorphismTheorem} by Proposition~\ref{THM-Isomorphism} -- holds $\Pgw\barAS\ $ for the Galton--Watson tree $\mathcal{T}$.

\begin{prop}\label{prop:capRWinfty}
	$\Pgw$-almost surely we have that for all $x\in{\mathcal{T}}$,
	\begin{equation*}
		\text{$P_x^{\mathcal{T}}(\, \cdot\,|\,H_{x^-}=\infty)$-almost surely,} \quad \capac_{\mathcal{T}}( \{X_i,i \in\N \})=∞.
	\end{equation*}
\end{prop}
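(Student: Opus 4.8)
The plan is to show that the trace $\{X_i, i\in\N\}$ of a random walk started at $x$ and conditioned never to hit $x^-$ almost surely has infinite capacity. By the standard relation between capacity and effective conductance to infinity on a tree, for a fixed realization of $\mathcal T$ the capacity of a set $F$ equals the effective conductance from $F$ to infinity inside $\mathcal T$ (or at least is comparable to a sum of such conductances over the ``leaves'' of $F$ in the reduced sense). So it suffices to exhibit, inside the trace, an infinite sequence of disjoint self-avoiding paths going to infinity, each carrying conductance bounded below, or — more economically — to show that the trace almost surely contains a subtree which is itself transient with infinite capacity. I would reduce, by the pruning discussion and Remark~\ref{RMK-PruningGFFRI}, to the standing assumption \eqref{REMARK-TInfiniteDescendants}, so every vertex of $\mathcal T$ has an infinite line of descendants, and it is enough to treat $x=\emptyset$ after rerooting (the conditioning $H_{x^-}=\infty$ is then vacuous).

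The key steps, in order, are as follows. First, use the watershed/free-point machinery of Section~\ref{SECTION-watershed}: by Proposition~\ref{PROP-SinglewatershedLikeGW} the pair $(\mathcal T, X)$ can be realized as the patched-together watershed construction $(\mathcal T^{\mathbf W}, \mathbf X)$, where the union of all watersheds $\mathcal T^{\mathbf W}_-$ is exactly the trace of the exploring walk (up to the extra ends added to form $\mathcal T^{\mathbf W}$), and the tree of free points $F$ with its conductances $\lambda^F$ sits inside this trace. Second, run essentially the same argument as in Lemma~\ref{lem:probatobegood} and Proposition~\ref{PROP-Prop1}: for suitable parameters $L,\widetilde u$, the tree of good free points $F^g$ from \eqref{eq:defFg} contains a $d$-ary subtree with positive probability, with $d$ large. (Here we only need properties \ref{item:ii}, \ref{item:iii}, \ref{item:iv} of Definition~\ref{DEF-Goodwatershed}, which control the conductances $\lambda^F_{a,a'}\le C_\Lambda$ and $\boldsymbol\lambda^a_{\widehat a,\widehat a1}>c_\lambda$ along the free-point tree; property \ref{item:i} on the Poisson variable is irrelevant and one may drop it, since no interlacement is needed.) Third, a $d$-ary subtree $F^{g0}\subset F^g$ with edge conductances bounded above by $C_\Lambda$ and, along its spine of watersheds, bounded below by $c_\lambda$, is a transient weighted tree with infinite effective conductance to infinity once $d\ge 2$: indeed, by Rayleigh monotonicity the effective conductance from the root of a $d$-ary tree with all conductances $\ge c_\lambda$ diverges as one exhausts it (each generation multiplies the number of disjoint escape routes by $d$, so the series of reciprocal conductances is summable and, more to the point, the parallel sum blows up). Hence $\capac_{\mathcal T}(F^{g0})=\infty$, and since $F^{g0}$ is contained in the trace, $\capac_{\mathcal T}(\{X_i,i\in\N\})=\infty$ with positive probability.

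To upgrade ``positive probability'' to ``almost surely'', I would use exactly the renewal/concatenation device already deployed at the end of the proof of Proposition~\ref{PROP-Percolation-Au}: build $\mathcal T$ (equivalently $\mathcal T^Z$) by gluing i.i.d.\ copies $Z_k$ of the watershed block at successive boundary vertices $x_k$, in each copy attempt the above construction of an infinite $d$-ary free-point tree, note these attempts succeed independently with probability $\ge\delta>0$ conditionally on the past, so by Borel--Cantelli some attempt succeeds, giving an infinite $d$-ary subtree of the trace almost surely — hence infinite capacity almost surely. Finally, undo the rerooting and the pruning reduction to conclude the statement for all $x\in\mathcal T$ under the conditioning $P_x^{\mathcal T}(\,\cdot\mid H_{x^-}=\infty)$; a countable union over $x$ handles ``for all $x$'' simultaneously. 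The main obstacle I anticipate is bookkeeping rather than conceptual: one must check that the walk conditioned on $H_{x^-}=\infty$ is exactly the object explored by the watershed (this is the content of Proposition~\ref{PROP-SinglewatershedLikeGW} together with the convention $H_{\emptyset^-}=\infty$), and that the ``good'' free points — which were designed to produce \emph{interlacement} trajectories — still make sense when we only care about the graph structure and conductances; concretely, one should verify that dropping property \ref{item:i} does not disturb the lower bounds in Lemma~\ref{lem:probatobegood}, which is immediate since that property contributes the harmless factor $(1-e^{-\widetilde u})$ and is independent of the rest.
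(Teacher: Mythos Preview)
Your proposal rests on a misreading of the watershed construction. The object $\mathcal T^{\mathbf W}_-$ is \emph{not} the trace of a single random walk: it is the union of the trees $\mathbf T^a$, $a\in F$, each explored by its own walk $\mathbf X^a$ started afresh at the free point $\widehat a$, see \eqref{eq:defwatershedata} and \eqref{DEF-WUnion}. Proposition~\ref{PROP-SinglewatershedLikeGW} only identifies one watershed with one stopped walk; it says nothing about patching them into a single trajectory. In fact the free points $\widehat{ai}\in\mathfrak F_a$ lie on $\partial\mathbf T^a_{V_L}$ and are by definition \emph{not} visited by $\mathbf X^a$, see \eqref{DEF-FreePoints}; they only enter the picture because a new independent walk is launched from each of them. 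So the $d$-ary tree $F^{g0}$ you build cannot be embedded in the range $\{X_i,i\in\N\}$ of a single walk, and the argument collapses at that step. More structurally, the trace of a single transient walk on a tree is a finite set together with a single infinite ray, so it can never contain a $d$-ary subtree.

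The paper's proof is both simpler and genuinely different in spirit: it stays with a single watershed. One realizes $(\mathcal T,(X_{k\wedge V_L}))$ as $(\tilde{\mathbf T},\tilde{\mathbf X})$ under $\tilde{\mathbf Q}_x^L$, and shows via the law of large numbers (as in Lemma~\ref{LEMMA-Goodwatershed}\,\ref{item:lemma iv}) that on $\{V_L<H_{x^-}\}$ the boundary $\partial\mathbf T_{V_L}$ contains order $L$ vertices $y$ with $\tilde{\boldsymbol\lambda}_{y,y^-}\ge c_\lambda$ and, for an i.i.d.\ fraction of them, $g^{\tilde{\mathbf T}_y}(y,y)\le C_g$. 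For each such $y$ one gets $e_{\tilde{\mathbf W},\tilde{\mathbf T}}(y^-)\ge (C_g+1/c_\lambda)^{-1}$, so the capacity of the first $L$ vertices visited is at least of order $L$. Letting $L\to\infty$ along $\{V_L<H_{x^-}\}\downarrow\{H_{x^-}=\infty\}$ gives infinite capacity directly, with no need for a renewal argument or for an infinite branching structure inside the trace.
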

\begin{proof}
	Let $x\in{\mathcal{X}}$ and $L\in{\N}.$ Under some probability $\tilde{\mathbf{Q}}_x^L,$ we now define a tree $\tilde{\mathbf{T}},$ with weights denoted by $\tilde{\boldsymbol{\lambda}}_{y,z},$ $y,z\in{\tilde{\mathbf{T}}},$ $y\sim z,$ as some extension of the tree $\mathbf{T}_{V_L}$ starting at $x$ from Section~\ref{Section-Singlewatershed}, by completing its remaining ends so that $\tilde{\mathbf{T}}$ is a Galton--Watson tree conditioned on $x\in{\tilde{\mathbf{T}}}$. More precisely, first define $\tilde{\mathbf{T}}\setminus\tilde{\mathbf{T}}_x,$ that is the part of the tree $\tilde{\mathbf{T}}$ which is not below $x,$ with the same law as $\mathcal{T}\setminus\mathcal{T}_x$ under $\Pgw(\,\cdot\,|\,x\in{\mathcal{T}}),$ endowed with the corresponding weights. Then, attach to $x$ a copy of the tree $\mathbf{T}_{{V}_L}$ with the same law as under $\mathbf{Q}_x^{\tilde{\boldsymbol{\lambda}}_{x^-,x},L},$ as defined in Section~\ref{Section-Singlewatershed}. With a slight abuse of notation, we see $\mathbf{T}_{V_L}$ as a subset of $\tilde{\mathbf{T}}.$ Finally for each remaining point $y\in{\partial\mathbf{T}_{{V}_L}},$ attach to $y$ an independent copy of $y\cdot\mathcal{T}.$  Let $\tilde{\mathbf{X}}$ be a process with the same law as $(\mathbf{X}_{k\wedge V_L})_{k\in{\N_0}}$ under $\mathbf{Q}_x^{\tilde{\mathbf{\lambda}}_{x^-,x},L},$ it follows easily from Proposition~\ref{PROP-SinglewatershedLikeGW} that $(\tilde{\mathbf{T}},\tilde{\mathbf{X}})$ under $\tilde{\mathbf{Q}}_x^L$ has the same law as $(\mathcal{T},(X_{k\wedge{V}_L})_{k\in{\N_0}})$ under  $\mathbb{E}^{{\rm GW}}[P^{\mathcal{T}}_x(\cdot)\,|\,x\in{\mathcal{T}}].$

	Similarly as in the proof of Lemma~\ref{LEMMA-Goodwatershed} \ref{item:lemma iv}, one can show that there exist positive constants $c_{\lambda}$ and $c_f$ so that,
	for each $ε>0,$ if $L$ is large enough, then

	$$ \tilde{\mathbf{Q}}_{x}^{L}\Big( \big |\big\{
	y\in{\partial{\mathbf{T}}_{V_L}}:
	\,\tilde{\boldsymbol{λ}}_{y,y^-}\geq c_λ\big \}\big| < c_fL,\,
	V_L(\tilde{\mathbf{X}})< H_{x^-}(\tilde{\mathbf{X}}) \Big)\leε. $$
	Indeed, this follows easily from \eqref{eq:lambdaonWareiid} and a reasoning similar to the one in \eqref{eq:firstpartproofiv}, \eqref{eq:secondpartproofiv} and \eqref{eq:propertyivissame}, replacing $\{\sum_{i\in{\N}}\boldsymbol{\lambda}_{i}^{(k)}\le C_Λ\}$ by $\{\exists\, i\in{\N}:\,\boldsymbol{\lambda}_i^{(k)}\ge c_{\Lambda}\}.$

	Since, conditionally on $\mathbf{T}_{V_L},$ $g^{\tilde{\mathbf{T}}_y}(y,y),$ $y\in{\partial\mathbf{T}_{V_L}},$ are i.i.d.\ with the same law as $g^{\mathcal{T}}(\emptyset,\emptyset),$ by the law of large number and the bound on the Green function from Lemma~\ref{LEMMA-Goodwatershed} \ref{item:lemma ii} we deduce that for $L$ large enough

	$$ \tilde{\mathbf{Q}}_{x}^{L}\Big( \big |\big\{
	y\in{∂{\mathbf{T}}_{V_L}}:
	\,\tilde{\boldsymbol{λ}}_{y,y^-}\ge c_λ,\,
	\,g^{\tilde{\mathbf{T}}_y}(y,y)\le C_g\
	\big \}\big| < \frac{c_f}{4}L ,V_L(\tilde{\mathbf{X}})< H_{x^-}(\tilde{\mathbf{X}}) \Big)\le 2ε. $$

	Note that the event $\{\tilde{\boldsymbol{λ}}_{y,y^-}\ge c_λ,
	\,g^{\tilde{\mathbf{T}}_y}(y,y) \le C_g \}$ implies by a similar reasoning  to above \eqref{PROP1-EQ-ConstantCE} that $g^{\tilde{\mathbf{T}}_{y^-}}(y^-,y^-) \le C_g+\frac{1}{c_λ}.$ Let $\tilde{\mathbf{W}}=\{\tilde{\mathbf{X}}_0,\dots,\tilde{\mathbf{X}}_{V_{L}}\}.$ Recalling the definition of the equilibrium measure from \eqref{DEF-EquilibriumMeasure}, we moreover have that  $e_{\tilde{\mathbf{W}},\tilde{\mathbf{T}}}(z)= e_{\{z\},\tilde{\mathbf{T}}_z}(z)=(g^{\tilde{\mathbf{T}}_{z}}(z,z))^{-1}$ for each $z\in{\partial\tilde{\mathbf{W}}\setminus\{x\}}.$ Since $y^-\in{\partial\tilde{\mathbf{W}}}$ for each $y\in{\partial\mathbf{T}_{V_L}}$ by construction, we deduce that for $L$ large enough
	\begin{equation*}
		\tilde{\mathbf{Q}}_{x}^{L}\Big( \mathrm{cap}_{\tilde{\mathbf{T}}}(\tilde{\mathbf{W}}) < \frac{c_f}{4(C_g+1/c_{\lambda})}L ,V_L(\tilde{\mathbf{X}})< H_{x^-}(\tilde{\mathbf{X}}) \Big)\le 2ε.
	\end{equation*}
	Since $\tilde{\mathbf{W}}$ has the same law under $\tilde{\mathbf{Q}}_x^L(\cdot,\,V_L(\tilde{\mathbf{X}})< H_{x^-}(\tilde{\mathbf{X}}))$  as the first $L$ points visited by $X$ under  $\mathbb{E}^{{\rm GW}}[P^{\mathcal{T}}_x(\cdot,\,V_L(X)<H_{x^-}(X))\,|\,x\in{\mathcal{T}}],$ letting first $L\to∞$ and then $ε\to 0,$ and noting that  $\{V_L(X)<H_{x^-}(X)\}$ decreases to $\{H_{x^-}(X)=\infty\},$ we readily obtain \eqref{prop:capRWinfty}.
\end{proof}

We can now deduce Theorem~\ref{THM-h*>0} from Proposition~\ref{PROP-Percolation-Au} using the isomorphism from Proposition~\ref{THM-Isomorphism} combined with Proposition~\ref{prop:capRWinfty}.

\begin{proof}[Proof of Theorem~\ref{THM-h*>0}]
	Consider the probability space $\mathbb{Q}_{\mathcal{T}}^u$ from Proposition~\ref{THM-Isomorphism}. Abbreviating $\mathcal{E}_x:=\mathcal{E}_{x}^{(1)},$ we have $\ell_{x,u}\geq\lambda_x^{-1}\mathcal{E}_x$ for all $x\in{\mathcal{I}^u}$ by \eqref{eq:deflocaltimesRI}. In view of Proposition~\ref{prop:capRWinfty}, we can apply the isomorphism \eqref{EQ-IsomorphismTheorem}, and we get $\mathbb{Q}_{\mathcal{T}}^u$-a.s.\ for all $ x\in \mathcal{I}^u\cap A_u $
	\begin{align*}
		\gamma_x & = -\sqrt{2u} + \sqrt{2\ell_{x,u}+φ^2_x} \geq -\sqrt{2u} + \sqrt{2\lambda_x^{-1}\mathcal{E}_x+φ^2_x}
		\stackrel{\eqref{PROP1-DEF-Au}}{\ge} -\sqrt{2u} + 2\sqrt{2u} = \sqrt{2u}.
	\end{align*}
	This yields \eqref{eq:Iu+Auincluded} by defining $\widehat{E}^{\geq\sqrt{2u}}= \{x\in{\mathcal{T}}:\gamma_x \ge \sqrt{2u}\}.$ By Proposition~\ref{PROP-Percolation-Au}, for all $ u\in(0,u_0)$ there is $\mathbb{Q}_{\mathcal{T}}^u$-a.s.\ an unbounded component for $A_u\cap\mathcal{I}^u,$ and so also for the level set $\widehat{E}^{\geq\sqrt{2u}}.$ This readily implies $ h_*>0$ since $\widehat{E}^{\geq\sqrt{2u}}$ has the same law as $E^{\geq\sqrt{2u}}.$
\end{proof}

\begin{remark}
	Rather surprisingly, our proof does not work anymore if one tries to replace the inclusion \eqref{eq:Iu+Auincluded} by any of the simpler inclusions $\mathcal{I}^u\cap \left\lbrace	x\colon
	\mathcal{E}_x>4u\lambda_x	\right\rbrace\subset \widehat{E}^{\geq \sqrt{2u}}$ or  $\mathcal{I}^u\cap \left\lbrace	x\colon
	\ab{φ_x}>2\sqrt{2u}	\right\rbrace\subset \widehat{E}^{\geq \sqrt{2u}}.$ In other words, we need to use both the local times of random interlacements and the Gaussian free field $φ$ in the isomorphism \eqref{EQ-IsomorphismTheorem}, and not just one of the two. Indeed, in view of Proposition~\ref{PROP-Prop1}, one needs to take $L$ at least equal to $C/u$ for some large constant $C<\infty$ in order for $F^g$ to percolate. For instance for constant conductances and small enough $u,$ the probability that $\mathbf{W}^a\subset\left\lbrace	x\colon
	\mathcal{E}_x>4u\lambda_x	\right\rbrace$ is at least $1-CuL,$ and the probability that $\mathbf{W}^a\subset\left\lbrace	x\colon
	\ab{φ_x}>2\sqrt{2u}\lambda_x	\right\rbrace$ is of order $1-C\sqrt{u}L$ in view of \eqref{PROP1-EQ-InequalityPhiU}, for some constant $C<\infty.$ These bounds are not interesting for the previous choice of $L=C/u.$ However combining them gives that the probability that $\mathbf{W}^a\subset A_u$ is of order $1-Cu^{3/2}L,$ see \eqref{eq:boundprobaAu}, which goes to one for the previous choice of $L$ when $u\rightarrow0.$
\end{remark}

\begin{proof}[Proof of Theorem~\ref{THM-quenchednoise}]
	The statement for random interlacements follows trivially from Proposition~\ref{PROP-Percolation-Au} for $u\leq u_0$ by the inclusion $\mathcal{I}^u\cap A_u \cap B_p \subseteq \mathcal{I}^u \cap B_p$. Using the monotonicity in $u$ of interlacements we obtain the statement for all $u>0.$ The statement for the Gaussian free field also follows from Propositions~\ref{PROP-Percolation-Au}, \ref{THM-Isomorphism} and \ref{prop:capRWinfty} similarly as in the proof of Theorem~\ref{THM-h*>0}.
\end{proof}

\begin{remark}
	An interesting open question is whether Theorem~\ref{THM-quenchednoise} is true in the whole supercritical phase of the Gaussian free field, that is for each $h<h_*,$ does there exist $p\in{(0,1)}$ such that $E^{\geq h}\cap B_p$ percolates, or is  transient even?
\end{remark}

\section{Transience of the level sets}\label{SEC-transience}

In this section we prove Theorem~\ref{THM-Transience}, that is that both, the interlacements set and the level sets of the Gaussian free field above small positive levels, are transient -- even when intersected with a small Bernoulli noise. More precisely, we prove that the random walk on the tree of very good watersheds is transient, see Proposition~\ref{prop:transientgoodwat}, and use arguments similar to the proof of Theorem~\ref{THM-h*>0} to conclude. The notion of very goodness we use here is a refinement of the one introduced in Definition~\ref{DEF-Goodwatershed}, see \eqref{item:iv'} below, and is adapted in order to  ensure that the random walk on the tree of very good watersheds can be compared to a random walk on a Galton--Watson with a constant drift, see \eqref{eq:drift}. We then follow the strategy of the proof of \cite[Theorem~1]{Collevecchio} in order to deduce transience. In addition to the usual assumption \eqref{DEF-Conductances+}, we assume throughout this section that, conditionally on the non-weighted tree $\mathcal{T},$ the family $(\lambda_{x,y})_{x\sim y\in{\mathcal{T}}}$ is i.i.d.\ and has compact support. In terms of the construction of the Galton--Watson tree in Section~\ref{SECTION-GWTree}, this is equivalent to assuming that, under $\nu$ and conditionally on $\pi((\lambda_j)_{j\in{\N}}),$ the family $(λ_{i})_{1 \le i\leq \pi((\lambda_j)_{i\in{\N}})}$ is  i.i.d., that the law of $\lambda_1$ does not depend on $\pi((\lambda_j)_{j\in{\N}}),$ and  that there exist $0<\overline{c}_λ<\overline{C}_Λ<∞$ such that $ν$-$\as$
\begin{equation}\label{eq-BoundedConductancesIn6}
	\overline{c}_λ<λ_{i}<\overline{C}_Λ\text{ for all }1 \le i\leq \pi((\lambda_j)_{j\in{\N}}).
\end{equation}
We use the independence of the conductances when referring to \cite{Gantert2012} in the proof of Lemma~\ref{lem:probatobeTRANgood}, and the assumption \eqref{eq-BoundedConductancesIn6} in \eqref{eq:drift}. Note that \eqref{eq:assfinitemoment} and \eqref{eq-BoundedConductancesIn6} imply that the mean offspring distribution $m$ is finite.

Let us now define a notion of goodness which is stronger than the one introduced in Definition~\ref{DEF-Goodwatershed}: in this section, we say that a point $a\in F$ is $(L, B ,C_g, c_f, c_L)$-\emph{very good} if it verifies the conditions \ref{item:i} to \ref{item:iii} with $c_λ=\overline{c}_λ$ and $C_Λ=\overline{C}_Λ$ (which simplifies these conditions in view of \eqref{eq-BoundedConductancesIn6}), and \ref{item:v} of Definition~\ref{DEF-Goodwatershed}, as well as

\begin{enumerate}[$iv)'$]
	\item \label{item:iv'} the set of children of the vertex $a$ in the tree of free points $F$ satisfies
	\begin{equation*}
		\big |\big\{a'\in{G_{a}^F}\colon
		d_{\mathcal{T}^{\mathbf{W}}}(\widehat{a},\widehat{a'}) \ge c_LL \big \}\big|\geq  \dfrac{c_f L}{2},
	\end{equation*}
\end{enumerate}
where we recall that $d_{\mathcal{T}^{\mathbf{W}}}$ denotes the graph distance within ${\mathcal{T}^{\mathbf{W}}}.$
Note that the inequality $λ_{a,a'}^F\le C_Λ=\overline{C}_Λ$ is trivially satisfied under \eqref{eq-BoundedConductancesIn6} by taking $C_Λ=\overline{C}_Λ,$ and thus \ref{item:iv'} is stronger than \ref{item:iv} in Definition~\ref{DEF-Goodwatershed} (up to changing the constant $c_f$).
We now follow a strategy inspired by that of Section~\ref{SECTION-Percolation} in order to show that the tree of very good free points contains a $d$-ary tree. We first evaluate the probability for a point to verify the property \ref{item:iv'}, analogously to Lemma~\ref{LEMMA-Goodwatershed} \ref{item:lemma iv}. Recall the construction of the trees $\mathbf{T}_k,$ $k\in{\N_0},$ under the probability measure $\mathbf{Q}_x^{κ,L}$ from Section~\ref{Section-Singlewatershed}, as well as the stopping time $V_L(\mathbf{X})$ and $\tilde{V}_L(\mathbf{X})$ from \eqref{DEF-StoppingTimeVn} and \eqref{DEF-StoppingTimeHL}. In what follows we abbreviate $V_L=V_L(\mathbf{X})$ to simplify notation.

\begin{lemma}\label{lem:probatobeTRANgood}
	Let $c_f$ be as in Lemma~\ref{LEMMA-Goodwatershed}. There exists $c_L>0$ such that for all $ε>0,$ there exists $L_0=L_0(ε)\in{\N}$ such that for all $x\in{\mathcal{X}},$ $L\geq L_0$ and $κ\leq \overline{C}_\Lambda,$
	\begin{equation*}
		\mathbf{Q}_{x}^{κ,L} \Big( \big |\big\{y\in{\partial\mathbf{T}_{V_L}\setminus\{x1,\mathbf{X}_{V_L}\}}:\,d_{\mathbf{T}_{V_L}}(x,y)\ge c_LL\big \}\big|< c_fL/2,\tilde{V}_L(\mathbf{X})=\infty\Big)	\leq ε.
	\end{equation*}
\end{lemma}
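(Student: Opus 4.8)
The strategy is to compare the watershed structure with a simple random walk on the initial Galton--Watson tree, using Proposition~\ref{PROP-SinglewatershedLikeGW} to transfer the statement to the setting of a random walk $X$ on $\mathcal{T}$ run up to the stopping time $V_L$, conditioned on the event $\{V_L(X)<H_{x^-}(X)\}$ (equivalently $\tilde V_L(\mathbf{X})=\infty$). On this event the set $\mathbf{W}=\{\mathbf{X}_0,\dots,\mathbf{X}_{V_L-1}\}$ consists of $L$ distinct vertices, and the boundary points $\partial\mathbf{T}_{V_L}\setminus\{x1,\mathbf{X}_{V_L}\}$ correspond, as in Lemma~\ref{LEMMA-Goodwatershed}~\ref{item:lemma iv}, to the children in $\mathbf{T}_{V_L}$ of the vertices of $\mathbf{W}$, excluding $x1$, $\mathbf{X}_{V_L}$ and the children of $\mathbf{X}_{V_L}$. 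The key point is that $d_{\mathbf{T}_{V_L}}(x,y) = d_{\mathbf{T}_{V_L}}(x,y^-)+1$ for such $y$, so it suffices to exhibit many vertices $z\in\mathbf{W}\setminus\{x\}$ with at least two children in $\mathbf{T}_{V_L}$ and with $d_{\mathbf{T}_{V_L}}(x,z)\ge c_LL$.

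The main quantitative input is a lower bound on the depth reached by the walk: I claim that for some constant $c_L>0$, with probability at least $1-\varepsilon/3$, the walk $\mathbf{X}$ run up to time $V_L$ visits at least $c_fL/2 + 2$ vertices at distance $\ge 2c_LL$ from $x$ which moreover have $\ge 2$ children in $\mathbf{T}_{V_L}$. To see this, recall that $\mathcal{T}$ under $\Pgw$ is a supercritical Galton--Watson tree conditioned to survive; by standard results on random walk on such trees (the walk has positive speed, see e.g.\ the references around \cite{Gantert2012,LyonsPeres17}), conditionally on $\{H_{x^-}=\infty\}$ the quantity $d_{\mathcal{T}}(x,X_n)/n$ converges a.s.\ to a positive constant, and the number of distinct vertices visited up to time $n$ grows linearly in $n$; hence the $L$-th distinct vertex visited, $\mathbf{X}_{V_L}$ (which is at distance $V_L - $ (number of backtracking steps) $\ldots$), is at distance of order $L$ from $x$, and a linear fraction of the first $L$ distinct vertices visited lie at distance $\ge c_LL$ for $c_L$ small enough. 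Concretely: write $D_j$ for the distance to $x$ of the $j$-th distinct vertex visited; then $\max_{j\le L} D_j \ge \eta L$ with high probability for some $\eta>0$, and since along the walk the sequence of distances of newly-visited vertices increases by at most $1$ at each new-vertex step, at least $(\eta/2)L$ of the first $L$ distinct vertices visited have $D_j\ge (\eta/2)L$; set $c_L:=\eta/4$. This handles the ``distance'' requirement for a linear fraction of $\mathbf{W}$.

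It remains to combine this with the ``$\ge 2$ children'' requirement, exactly as in the proof of Lemma~\ref{LEMMA-Goodwatershed}~\ref{item:lemma iv}: by \eqref{eq:deflambdaik}, \eqref{eq:lambdaonWareiid} and the weak law of large numbers, $|\{z\in\mathbf{W}\setminus\{x\}:|G_z^{\mathbf{T}_{V_L}}|\ge 2\}| \ge 2c_fL$ with probability $\ge 1-\varepsilon/3$ for $L$ large (with $c_f < (1-\mu(1))/2$ as fixed in Lemma~\ref{LEMMA-Goodwatershed}). Intersecting the two linear-fraction events, and using that the set of $z\in\mathbf{W}\setminus\{x\}$ with $d_{\mathbf{T}_{V_L}}(x,z)\ge 2c_LL$ and $|G_z^{\mathbf{T}_{V_L}}|\ge2$ has size $\ge c_fL$ (after adjusting constants, choosing $c_f$ a bit smaller if necessary and absorbing the loss of $\mathbf{X}_{V_L}$ and its children), each such $z$ produces a child $y$ of $z$ in $\mathbf{T}_{V_L}$ with $y\notin\{x1,\mathbf{X}_{V_L}\}$ and $d_{\mathbf{T}_{V_L}}(x,y)=d_{\mathbf{T}_{V_L}}(x,z)+1 \ge 2c_LL \ge c_LL$; a counting argument as in \eqref{eq:propertyivissame} (one extra child per such $z$, minus the bounded correction for $\mathbf{X}_{V_L}$) shows there are at least $c_fL/2$ such $y$ in $\partial\mathbf{T}_{V_L}\setminus\{x1,\mathbf{X}_{V_L}\}$. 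A union bound over the two bad events and the event $\{\tilde V_L=\infty\}^c$-complement contributions finishes the proof. The main obstacle is the first step, namely establishing the linear-in-$L$ lower bound on the distances of visited vertices; this requires invoking the positive-speed / linear-range property of the walk on a supercritical conditioned Galton--Watson tree with random conductances, which is available via the methods of \cite{Gantert2012} (adapted, as there, to the possibly non-independent family $(\lambda_{x,y})_{y\sim x}$), together with the observation that conditioning on $\{H_{x^-}=\infty\}$ only reweights by a positive factor and does not destroy these asymptotics.
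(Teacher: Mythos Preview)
Your overall strategy---invoke the positive speed of the random walk on a supercritical weighted Galton--Watson tree and combine it with the law-of-large-numbers count of vertices having at least two children---is the right one, and it is also what the paper does. But the argument as written has a genuine gap at the intersection step.

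From the positive-speed input you only extract that $\max_{j\le L} D_j \ge \eta L$, and then deduce (correctly, via the ancestor-path argument) that at least $(\eta/2)L$ of the visited vertices lie at depth $\ge (\eta/2)L$. Separately, $\ge 2c_fL$ visited vertices have $\ge 2$ children. Nothing links these two sets: since $\eta$ (the speed) and $c_f$ (fixed by Lemma~\ref{LEMMA-Goodwatershed} in the statement of the lemma, hence not at your disposal) may both be small, their sum can be well below $1$ and the intersection could be empty. ``Choosing $c_f$ a bit smaller'' is not permitted here.

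The fix is to use the full strength of the speed result, not just the maximal depth. For each $\varepsilon'>0$ there is $k_0$ so that for all $k\ge k_0$, with probability $\ge 1-\varepsilon'$ one has $d(\mathbf{X}_n,x)>vk/2$ for \emph{every} $n\ge k$ simultaneously (this is \eqref{eq-RWDrift} in the paper). With $k$ a small multiple of $L$, this forces all but at most $k$ of the $L$ distinct visited vertices to be at depth $>vk/2$; now the ``far'' set has size $\ge L-k$, and its intersection with the $2c_fL$ vertices of branching $\ge 2$ has size $\ge 2c_fL-k\ge c_fL$ once $k\le c_fL$.

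The paper packages this slightly differently and more directly for the goal: rather than counting far \emph{visited} vertices, it bounds the number of \emph{tree} vertices (boundary included) that can be close. Any $y\in\mathbf{T}_{V_L}$ at distance $<vk/2$ from $x$ has its parent visited only before time $k$, hence $y\in\mathbf{T}_k$; and $|\mathbf{T}_k|\le C_P k$ with high probability by a law of large numbers on offspring sizes (using $m<\infty$). Taking $k=\tfrac{c_f}{2C_P}L$ leaves at most $c_fL/2$ close tree vertices, so among the $\ge c_fL$ boundary points supplied by Lemma~\ref{LEMMA-Goodwatershed}~\ref{item:lemma iv} at least $c_fL/2$ are far; this gives $c_L=\tfrac{vc_f}{4C_P}$ and bypasses any intersection bookkeeping.

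One minor side remark: in this section the conductances are assumed i.i.d.\ conditionally on the unweighted tree (see \eqref{eq-BoundedConductancesIn6} and the sentence preceding it), so \cite{Gantert2012} applies directly; there is no need to adapt it to non-independent weights.
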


\begin{proof}
	It is known, see \cite[Theorem~17.13]{LyonsPeres17}, that the speed of a random walk on a Galton--Watson tree $\mathcal{T}$ with unit conductances is $\Pgw$-\as\, strictly positive and deterministic; i.e., the limit $v:=\lim_{k\to \infty}\frac{d_{\mathcal{T}}(\emptyset,X_k)}{k}>0$ exists and is a constant. This result was generalized in \cite{Gantert2012} to Galton--Watson trees with finite mean for the offspring distribution and i.i.d.\ conductances verifying \eqref{DEF-Conductances+}.  In view of Proposition~\ref{PROP-SinglewatershedLikeGW}, the process $\mathbf{X}$ under $\mathbf{Q}_x^{κ,L}( \  \cdot \  , \tilde{V}_L(\mathbf{X})=\infty)$ has the same law as a random walk $X$ on $\mathcal{T}$ under $P_x^{\mathcal{T}}( \  \cdot \ , \tilde{V}_L(X)=\infty\,| \, \lambda_{x,x^-}=κ).$ Therefore, for all $ε>0$ we can find a $k_0=k_0(ε)$ such that for all $k>k_0,$ $L\in{\N},$ $x\in{\mathcal{X}}$ and $κ\leq \overline{C}_Λ,$ we have
	\begin{equation}\label{eq-RWDrift}
		\mathbf{Q}_x^{κ,L} \Big(\exists \, n\geq k:\,d_{\mathbf{T}_{\tilde{V}_L}}(\mathbf{X}_{n},x)\le v k/2,\tilde{V}_L(\mathbf{X})=\infty \Big)\le ε/3.
	\end{equation}
	In order to find enough vertices in $\mathfrak{F}_a$ at distance at least $c_L$ from $x$, we note that $\ab{\mathbf{T}_k}\leq \ab{\mathbf{T}_{V_k}}= \sum_{x\in \{\mathbf{X}_1,\dots,\mathbf{X}_{V_k}\}}|\{x\}\cup G_x^{\mathbf{T}_{V_k}}|$, and that $\{G_x^{\mathbf{T}_{V_k}}:\,x\in{\{X_1,\dots,X_{V_k}}\}\}$ is an i.i.d.\ family of cardinality $k$ if $\tilde{V}_L=\infty,$ $k\leq L,$ similarly as in \eqref{eq:lambdaonWareiid}. Since $m<\infty,$ by the weak law of large number we can find $C_P>0$ such that for all $ε>0,$ there exists $k_0\in{\N}$ such that for all $k>k_0,$ $L\geq k,$ $x\in{\mathcal{X}}$ and $κ>0$
	\begin{equation}\label{eq-freePointsAbove}
		\mathbf{Q}^{κ,L}_x (\ab{\mathbf{T}_k}\ge C_P k,\tilde{V}_L(\mathbf{X})=\infty ) \le ε/3.
	\end{equation}
	Applying \eqref{eq-RWDrift} and \eqref{eq-freePointsAbove} with $k=\frac{c_f}{2C_P}L,$ for $L$ large enough so that $k\ge k_0,$ we obtain that with probability at most $2ε/3,$ on the event $\tilde{V}_L(\mathbf{X})=\infty,$ there are more than $c_fL/2$ points in ${\mathbf{T}_{{V}_L}}$ at distance less than $c_LL$  from $x,$ where $c_L:=\frac{v c_f}{4C_P}.$ We can then conclude by combining this with Lemma~\ref{LEMMA-Goodwatershed} \ref{item:lemma iv} for $ε/3.$

\end{proof}

Recall the definition of $A^{\mathbf{W}}_{u}$ in \eqref{eq:defAuW}.
We can now prove analogously to the proof of Proposition~\ref{PROP-Percolation-Au} that $(L,B,C_g, c_f, c_L)$-\emph{very good} points, whose associated watershed is included in $A_{u}^{\mathbf{W}},$ contain a supercritical Galton--Watson tree.

\begin{prop}
	\label{prop:percoFg1'}
	Let $c_λ=\overline{c}_λ$, $C_g$ and $c_f$ be as in  Lemma~\ref{LEMMA-Goodwatershed},  $c_e$ as in \eqref{PROP1-DEF-Uprime}, and $c_L$ as in Lemma~\ref{lem:probatobeTRANgood}. For each $d\in{\N},$ there exist $B>0$ and $u_0>0,$ such that, for each $u\in{(0,u_0)},$ there exist $L\in{\N}$ and $p\in{(0,1)},$ so that under $\mathbf{E}_{L,\widetilde{u}}^{{W}}[\Pgff{\mathcal T^{\mathbf{W}}}{}\otimes\mathbb{P}_{p}^{\rm E}(\, \cdot\,|\,φ_{\emptyset})],$ with $\widetilde{u}=uc_e,$ the tree
	\begin{equation*}
		\begin{split}
			F^{g1'}\!:=\!\{\emptyset\}\cup\Big\{a \in F\setminus\{\emptyset\} :\, a^- \text{ is $(L, B,C_g, c_f, c_L)$-\emph{very good}},\\
			d_{\mathcal{T}^{\mathbf{W}}}(\widehat{a},\widehat{a^-})\geq c_LL\text{ and }\mathbf{W}^{a^-}\subseteq A^{\mathbf{W}}_{u} \Big\}
		\end{split}
	\end{equation*}
	contains with positive probability, not depending on $φ_{\emptyset},$ a $d$-ary tree.
\end{prop}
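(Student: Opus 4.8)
The plan is to follow the blueprint of Proposition~\ref{PROP-Percolation-Au} essentially verbatim, upgrading the notion of goodness to very goodness. First I would lower bound the probability that a free point $a\in F$ is $(L,B,C_g,c_f,c_L)$-very good, conditionally on $\lambda_{a,a^-}^F$ and $\{a\in F\}$. This is done exactly as in Lemma~\ref{lem:probatobegood}: properties \ref{item:i}, \ref{item:ii}, \ref{item:iii} and \ref{item:v} are handled exactly as before (using Lemma~\ref{LEMMA-Goodwatershed}, with the simplification that \eqref{eq-BoundedConductancesIn6} makes the conductance conditions in \ref{item:ii} automatic once we take $c_\lambda=\overline c_\lambda$, $C_\Lambda=\overline C_\Lambda$), and property \ref{item:iv'} replaces \ref{item:iv}, its probability being controlled by Lemma~\ref{lem:probatobeTRANgood} in place of Lemma~\ref{LEMMA-Goodwatershed}\ref{item:lemma iv}. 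Combining these estimates with the same independence structure (the $\sigma$-algebras $\mathcal{W}^a$, $\mathcal{F}_a^g$ from the proof of Proposition~\ref{PROP-Prop1}) as in Lemma~\ref{lem:probatobegood} and Proposition~\ref{PROP-Prop1}, one gets that there is a constant $c_p'>0$ such that, for $L\ge L_0(B)$, on $\{\lambda_{a,a^-}^F\le \overline C_\Lambda\}$ one has $\mathbf{P}^W_{L,\widetilde u}(a\text{ is very good}\mid \lambda_{a,a^-}^F,a\in F)\ge c_p'(1-e^{-\widetilde u})$. Note that, unlike in Definition~\ref{DEF-Goodwatershed}, here the event ``$a'$ is a child of $a$ in $F$ at distance $\ge c_LL$'' is still $\mathbf{T}^a$-measurable, so the branching/independence argument of Proposition~\ref{PROP-Prop1} goes through unchanged, yielding that the tree of very good free points (with the extra requirement $\lambda_{a,a^-}^F\le\overline C_\Lambda$, which is automatic) contains, with positive probability, a $d(Lq'(\widetilde u))$-ary tree with $q'(\widetilde u)=(c_f/2)c_p'(1-e^{-\widetilde u})$.

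Next I would run the isomorphism/noise step exactly as in the proof of Proposition~\ref{PROP-Percolation-Au}: choose $\widetilde u(u)=uc_e$, $L(u,B)$ of order $1/u$ (the smallest integer $\ge \frac{c_e}{3(1-e^{-uc_e})}(\sqrt\pi/(32B))^{2/3}$, capped below by $L_0(B)$), and $p(u,B)=2^{-1/L(u,B)}$. Property \ref{item:v} (which is retained verbatim in the definition of very goodness) together with the Markov-property estimate \eqref{eq:boundprobaAu} gives, exactly as in \eqref{eq:nextboundonprobaphiE}, that for each very good $a$ one has $\Pgff{\mathcal T^{\mathbf W}}{}\otimes\mathbb{P}_{p(u,B)}^{\rm E}(\mathbf W^{a}\subset A_u^{\mathbf W}\mid \varphi_{\widehat a^-})\ge 1/4$ for $u$ small enough. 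Intersecting the $d(Lq'(\widetilde u))$-ary tree of very good points with the (conditionally on $\mathcal T^{\mathbf W}$, one-dependent in the Markov sense) thinning ``$\mathbf W^{a^-}\subset A_u^{\mathbf W}$'' via Lemma~\ref{LEMMA-DependBernPercolation}, one obtains that $F^{g1'}$ contains, with positive probability not depending on $\varphi_{\emptyset}$, a $d\big(d(c_dB^{-2/3})/4\big)$-ary tree for an appropriate constant $c_d$. Since $d(d(c_dB^{-2/3})/4)\to\infty$ as $B\to 0$ by Lemma~\ref{LEMMA-DependBernPercolation}, given the target $d$ we first pick $B$ small enough that $d(d(c_dB^{-2/3})/4)\ge d$, then the corresponding $u_0$, and this produces the claimed $d$-ary subtree of $F^{g1'}$ for all $u\in(0,u_0)$.

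The one genuinely new bookkeeping point, and the place I would be most careful, is the extra condition $d_{\mathcal T^{\mathbf W}}(\widehat a,\widehat{a^-})\ge c_LL$ appearing in the definition of $F^{g1'}$: this is precisely what property \ref{item:iv'} of the parent $a^-$ guarantees for at least $c_fL/2$ of its children in $F$, so when verifying the hypothesis \eqref{eq:stochadom} of Lemma~\ref{LEMMA-DependBernPercolation} for $S=F^{g1'}$ I must count only those children of a very good $a$ that (i) satisfy $d_{\mathcal T^{\mathbf W}}(\widehat a,\widehat{a'})\ge c_LL$ — there are $\ge c_fL/2$ of them by \ref{item:iv'} — and (ii) have $\mathbf W^{a}\subset A_u^{\mathbf W}$, which passes the $1/4$ bound above; the distance condition being $\mathbf T^{a}$-measurable causes no independence issue. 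Everything else — the measurability of $\{a\in F^{g1'}\}$ with respect to the appropriate $\sigma$-algebra, the independence of the watershed below $a$ from the past given $\lambda_{a,a^-}^F$, the Markov property of $\varphi$ giving conditional independence of $\varphi_{|\mathbf W^{a}}$ given $\varphi_{\widehat a^-}$ — is word-for-word as in Sections~\ref{Section-PatchingTogetherwatersheds} and~\ref{SECTION-Percolation}. Thus the main obstacle is not conceptual but simply organizing these estimates so that the dependence of $\varepsilon$, $B$, $L$, $p$ and $u_0$ is taken in the correct order; no new probabilistic input beyond Lemma~\ref{lem:probatobeTRANgood} (itself a consequence of the positive-speed result of \cite{Gantert2012}) and the boundedness assumption \eqref{eq-BoundedConductancesIn6} is needed.
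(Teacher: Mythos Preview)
Your proposal is correct and follows essentially the same approach as the paper's own proof, which is itself just a brief sketch: replace Lemma~\ref{LEMMA-Goodwatershed}\,\ref{item:lemma iv} by Lemma~\ref{lem:probatobeTRANgood}, add the distance condition $d_{\mathcal{T}^{\mathbf{W}}}(\widehat{a},\widehat{a^-})\geq c_LL$ to the definition of $F^g$ (legitimate by property \ref{item:iv'}), and rerun the argument of Proposition~\ref{PROP-Percolation-Au} through \eqref{eq:boundonprobaFg1} and \eqref{eq:limitd} with the same choices of $L(u,B)$ and $p(u,B)$. You have in fact spelled out more of the bookkeeping (in particular the $\mathbf{T}^a$-measurability of the distance condition and the order in which $B$, $u_0$, $L$, $p$ are chosen) than the paper does.
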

\begin{proof}
	Using Lemma~\ref{lem:probatobeTRANgood} in place of Lemma~\ref{LEMMA-Goodwatershed} \ref{item:lemma iv}, and adding the condition $d_{\mathcal{T}^{\mathbf{W}}}(\widehat{a},\widehat{a^-})\geq c_LL$ in the definition \eqref{eq:defFg} -- which is possible in view of the condition \ref{item:iv'} --  one can easily prove similarly as below \eqref{eq:boundonprobaFg1} that for each $B>0$ there exists $u_0=u_0(B),$ such that for all $u\in{(0,u_0)},$ there exists $L=L(u,B)$ and $p=p(u,B)$ as in \eqref{eq:choiceLbaru},  so that $F^{g1'}$ contains a $d\big(d(c_dB^{-2/3})/4\big)$-ary tree, and we can conclude in view of \eqref{eq:limitd}.

\end{proof}

We prove now transience using the argument of \cite[Theorem~1]{Collevecchio}.
\begin{prop}
	\label{prop:transientgoodwat}
	There exists $B>0,$ $u>0,$ $L\in{\N}$ and $p\in{(0,1)},$ such that under $\mathbf{E}_{L,uc_e}^{{W}}[\Pgff{\mathcal T^{\mathbf{W}}}{}\otimes\mathbb{P}_{p}^{\rm E}(\,\cdot\,|\,φ_{\emptyset})],$ the connected component of $\emptyset$ in the tree with vertex set
	\begin{equation*}
		\mathcal{T}^{g1'}:=\bigcup_{a\in F^{g1'}} \mathbf{W}^a
	\end{equation*}
	is transient with positive probability, not depending on $φ_{\emptyset}$.
\end{prop}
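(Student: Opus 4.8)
The strategy is to show that the connected component of $\emptyset$ in $\mathcal{T}^{g1'}$ contains a subtree on which the random walk behaves like a biased random walk on a supercritical Galton--Watson tree, and then invoke the transience criterion behind \cite[Theorem~1]{Collevecchio}. First I would fix the parameters: by Proposition~\ref{prop:percoFg1'}, for any target degree $d$ (to be chosen large) there are $B>0$, $u_0>0$ such that for $u\in(0,u_0)$ one gets $L=L(u,B)$ and $p=p(u,B)$ for which $F^{g1'}$ contains a $d$-ary subtree $\widetilde{F}$ with positive probability not depending on $\varphi_\emptyset$. Working on this event, consider the corresponding union of watersheds $\bigcup_{a\in\widetilde{F}}\mathbf{W}^a\subseteq\mathcal{T}^{g1'}$, which is a connected subtree of $\mathcal{T}^{\mathbf{W}}$ containing $\emptyset$ (adjacency of consecutive watersheds was already checked at the end of the proof of Proposition~\ref{PROP-Percolation-Au}).

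**The coarse-grained walk.** The next step is the comparison in \eqref{eq:drift}: the trace on $\bigcup_{a\in\widetilde{F}}\mathbf{W}^a$ of the random walk on $\mathcal{T}^{\mathbf{W}}$ restricted to that subtree is itself a random walk (by the Markov/trace property, e.g.\ \cite[Proposition~1.11]{MR2932978}), and one wants to view it at the scale of watersheds. Each watershed $\mathbf{W}^a$ has size at most $L$ and the conductances along it lie in $[\overline{c}_\lambda,\overline{C}_\Lambda]$ by \eqref{eq-BoundedConductancesIn6}; moreover, by condition \ref{item:iv'} and the extra condition $d_{\mathcal{T}^{\mathbf{W}}}(\widehat a,\widehat{a^-})\ge c_LL$ built into $F^{g1'}$, a very good watershed has at least $c_fL/2$ children in $\widetilde{F}$ whose entry points are at graph distance $\ge c_LL$ from $\widehat a$. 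I would collapse each $\mathbf{W}^a$ to a single vertex, obtaining a coarse-grained walk on $\widetilde{F}$ which, by a standard effective-resistance computation (series/parallel laws as in \cite[Chapter~2]{LyonsPeres17}), makes a step from $a$ towards one of its $\ge d$ children with probability bounded below by a constant, uniformly; more precisely, the ratio of the conductance from the coarse vertex $a$ down to its children to the conductance up to its parent is bounded below by a constant $>1$ when $d$ is large, because there are $\gtrsim L$ children-edges each of length $\lesssim L$ (hence effective conductance $\gtrsim 1/L$ per child, $\gtrsim d$ in total) against a single parent-edge of length $\lesssim L$. This is exactly the constant drift alluded to in \eqref{eq:drift}.

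**Transience via the Collevecchio argument.** With the coarse-grained walk now dominated below by a $\beta$-biased random walk on a Galton--Watson tree whose offspring is at least $d$ (deterministically, if we pass to the $d$-ary subtree $\widetilde F$), I would apply the argument of \cite[Theorem~1]{Collevecchio}: a biased random walk towards the leaves on a tree with uniformly positive branching and bounded geometry is transient, which one shows by exhibiting a unit flow to infinity of finite energy (Nash-Williams / effective-resistance bound along a single infinite ray of $\widetilde F$, using that the effective resistance of the ray is summable once the per-level conductances grow geometrically in the drift). Transience of the coarse-grained walk is equivalent to transience of the trace walk on $\bigcup_{a\in\widetilde F}\mathbf{W}^a$ (collapsing finitely-sized clusters does not affect transience), which in turn is equivalent, by Rayleigh monotonicity and the fact that $\bigcup_{a\in\widetilde F}\mathbf{W}^a$ is a subgraph of the connected component of $\emptyset$ in $\mathcal{T}^{g1'}$, to the latter component being transient. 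Since the event supporting $\widetilde F$ has positive $\mathbf{E}_{L,uc_e}^{W}[\Pgff{\mathcal T^{\mathbf{W}}}{}\otimes\mathbb{P}_p^{\rm E}(\,\cdot\,|\,\varphi_\emptyset)]$-probability not depending on $\varphi_\emptyset$ (Proposition~\ref{prop:percoFg1'}), this concludes.

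**Main obstacle.** The delicate point is making the drift of the coarse-grained walk genuinely uniform: one must control not only the number of children of a very good watershed in $\widetilde F$ but also the effective conductances between $\widehat a$ and the entry points $\widehat{a'}$ of these children inside $\mathbf{W}^{a}\cup\mathbf{W}^{a'}$. The condition $d_{\mathcal{T}^{\mathbf{W}}}(\widehat a,\widehat{a'})\ge c_LL$ prevents these distances from being too large, the bound $|\mathbf{W}^a|\le L$ prevents a watershed from being too long, and \eqref{eq-BoundedConductancesIn6} keeps the pointwise conductances in a fixed compact set; combining these via series-parallel reductions to get a drift constant that is (i) bounded below by something tending to infinity with $d$ and (ii) independent of $u$, $L$, $p$ and of the free field, is where the real work lies. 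Once that uniform drift is in hand, the transience conclusion is a routine flow construction following \cite{Collevecchio}.
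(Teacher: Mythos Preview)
Your outline follows the paper's strategy—invoke Proposition~\ref{prop:percoFg1'} for a $d$-ary subtree $\widetilde{F}\subset F^{g1'}$, use the very-good conditions together with \eqref{eq-BoundedConductancesIn6} to get a uniform drift at the watershed scale, and conclude transience via \cite{Collevecchio}—but the execution differs. The paper argues by \emph{contradiction}: assuming the component is recurrent, the probability from $\widehat a$ to hit a fixed child entry point $\widehat{\,ai\,}$ before $\widehat{a^-}$ reduces exactly to the gambler's-ruin ratio on the unique path between them (excursions off the path must return under recurrence), yielding the clean bound \eqref{eq:drift}; one then colors $ai$ ``white'' if $\widehat{\,ai\,}$ is reached before returning to $\widehat{a^-}$, and the white tree has mean offspring $\ge d\cdot\tfrac{\overline{c}_\lambda c_L}{2\overline{C}_\Lambda}>1$ for $d$ large, so infinitely many vertices are visited before first return to the root—a contradiction. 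Your direct route also works (for instance, the trace of the walk on $\{\widehat a:a\in\widetilde F\}$ is a walk on a $d$-ary tree with edge conductances in $[\overline{c}_\lambda/L,\overline{C}_\Lambda/(c_LL)]$, transient for $d\ge2$ by an easy finite-energy flow), and avoids the coloring scheme; the contradiction buys the paper a one-line hitting-probability computation it could not otherwise justify.

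Two slips to correct. First, in your drift estimate there are $d$ child paths, not ``$\gtrsim L$''; the factor $L$ enters only through the path lengths. Second, the role of the condition $d_{\mathcal{T}^{\mathbf{W}}}(\widehat a,\widehat{a'})\ge c_LL$ is reversed: it is a \emph{lower} bound on distance, hence an \emph{upper} bound $\le\overline{C}_\Lambda/(c_LL)$ on the effective conductance towards the parent; the upper bound on distance (and the lower bound $\ge\overline{c}_\lambda/L$ on the conductance towards each child) comes instead from $|\mathbf{W}^a|\le L$. It is precisely this pairing that makes the drift ratio $\gtrsim d\cdot c_L\overline{c}_\lambda/\overline{C}_\Lambda$ uniform in $u$ and $L$.
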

\begin{proof}
	Consider a random walk $X$ on $\mathcal{T}^{g1'}$  starting in $\emptyset.$ We proceed by contradiction, and assume that $\mathcal{T}^{g1'}$ is recurrent, that is, the walk $X$ comes back to the root almost surely. We introduce the following color scheme: $\emptyset$ is white, and a vertex $ai\in F^{g1'}$ is white if ${a}$ is white and $\!\widehat{\,ai\,}\!$ is visited by $X$ in the interval $[H_{\widehat{a}}, \inf\{k\geq H_{\widehat{a}}:\,X_k={\widehat{a^-}}\}]$. We want to show that there is an infinite number of white vertices with positive probability; indeed, since then there would in particular be an infinite connected component of white vertices, this would constitute a contradiction as the watershed associated to each white vertex in the connected component of $\emptyset$ is visited by $X$ in the interval $[H_{(\mathbf{W}^{\emptyset})^c},\inf\{k\geq H_{(\mathbf{W}^{\emptyset})^c}:\,X_k={\emptyset}\}]$ by definition.

	For a fixed vertex ${ai}\in{F^{g1'}}$, we evaluate the probability, starting from $\widehat{a},$ to visit $\!\widehat{\,ai\,}\!$ before returning to $\widehat{a^-}$. Because of recurrence, for the computation of this probability, we can restrict ourselves to the only path connecting $\widehat{a^-}$ to $\!\widehat{\,ai\,}\!$ and we compute its effective conductance $\mathcal{C}$ (see \cite[(2.4)]{LyonsPeres17}). Both the distances between $\widehat{a^-}$ and $\widehat{a}$, and the one between $\widehat{a}$ and $\!\widehat{\,ai\,}\!$ are at least $c_LL$ by definition of $F^{g1'},$ and at most $L$ by definition of watersheds, see in particular \eqref{DEF-StoppingTimeVn} and \eqref{DEF-FreePoints}. Therefore, using the series law (see \cite[Subsection 2.3.I]{LyonsPeres17}) we obtain that the probability of a random walk starting from $\widehat{a},$ to visit $\!\widehat{\,ai\,}\!$ before returning to $\widehat{a^-},$ is equal to
	\begin{equation}
		\label{eq:drift}
		\frac{\mathcal{C}(\widehat{a}\leftrightarrow \widehat{\,ai\,})}
		{\mathcal{C}(\widehat{a^-}\leftrightarrow \widehat{a})+
			\mathcal{C}(\widehat{a}\leftrightarrow \widehat{\,ai\,})}
		=
		\frac{
			\Big(\sum_{x\in (\widehat{a},\widehat{\,ai\,}]} \frac{1}{λ_{x^-, x}} \Big)^{-1}}
		{   \Big(\sum_{x\in (\widehat{a^-},\widehat{a}]} \frac{1}{λ_{x^-, x}} \Big)^{-1}
			+   \Big(\sum_{x\in (\widehat{a},\widehat{\,ai\,}]} \frac{1}{λ_{x^-, x}} \Big)^{-1}}
		\stackrel{\eqref{eq-BoundedConductancesIn6}}{\ge} \frac{\overline{c}_λ}{\overline{C}_Λ}\frac{c_L}{2},
	\end{equation}
	where $(x,y]$ denotes the unique path connecting $x$ to $y,$ minus $x.$ For each $d\in{\N},$ it follows from Proposition~\ref{prop:percoFg1'} that for an appropriate choice of $B,u,L$ and $p,$ the tree of white vertices contains with positive probability a weightless Galton--Watson tree with mean offspring distribution larger than $d\frac{\overline{c}_λ}{\overline{C}_Λ}\frac{c_L}{2}.$ Taking $d=\lceil 4 \frac{\overline{C}_Λ}{\overline{c}_λ c_L}\rceil,$ this tree of white vertices is infinite with positive probability, which concludes the proof.
\end{proof}

\begin{proof}[Proof of Theorem~\ref{THM-Transience}]
	Similarly to the proofs of Theorems~\ref{THM-h*>0} and \ref{THM-quenchednoise} at the end of Section~\ref{SECTION-Percolation}, one can use the isomorphism \eqref{EQ-IsomorphismTheorem}, which holds by Proposition~\ref{prop:capRWinfty} similarly as in the proof of Theorem~\ref{THM-h*>0}, as well as Proposition~\ref{PROP-GoodwatershedsInInterlacements} to show that the component of $\emptyset$ in the tree $\mathcal{T}^{g1'}$ from Proposition~\ref{prop:transientgoodwat} can be included in $\mathcal{I}^u \cap B_p$ or $\widehat{E}^{\ge \sqrt{2u}} \cap B_p,$ proving the transience of those sets with positive probability by Rayleigh’s Monotonicity Principle (see \cite[Section 2.4]{LyonsPeres17}). To show that transience occurs almost surely for some component, one can proceed similarly to the end of the proof of Theorem~\ref{PROP-Percolation-Au} by considering the Galton--Watson tree $\mathcal{T}^Z$ on which there are infinitely many conditionally independent copies of $\mathcal{T}^{g1'},$ and thus one of these copies is transient a.s.
\end{proof}

\begin{appendix}
\section*{Appendix: The critical parameter \texorpdfstring{$h_*$}{h*} is constant}
\setcounter{secnumdepth}{0}
\renewcommand*{\thedeff}{A.\arabic{deff}}
\setcounter{deff}{0}
\setcounter{equation}{0}
\renewcommand{\theequation}{A.\arabic{equation}}

\label{SECTION-uhDeterministic}

In this section we prove that $h_*(\mathcal T)$ does not depend on the realization of the Galton--Watson tree~$\mathcal T.$
\begin{thm} \label{PROP-h*constant}
	$\mathcal{T}\mapsto h_*(\mathcal{T})$ is  constant $ \Pgw$-almost surely.
\end{thm}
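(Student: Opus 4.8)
The plan is to show that $h_*(\mathcal T)$ is measurable with respect to a tail-type $\sigma$-algebra of the Galton--Watson tree and then invoke a zero--one law. Concretely, the key observation is that the event that $E^{\ge h}(\mathcal T)$ has an unbounded connected component does not depend on any finite portion of the tree near the root: the Gaussian free field on $\mathcal T$ restricted to a subtree $\mathcal T_x$ is, up to an additive harmonic function of the boundary data (which is $\sigma(\varphi_y, y\notin \mathcal T_x)$-measurable, by the Markov property \eqref{LEMMA-MarkovProperty}), a Gaussian free field on $\mathcal T_x$. Since adding a bounded harmonic perturbation and shifting by a finite-dimensional Gaussian vector cannot change whether some component is unbounded -- one can absorb the (a.s.\ finite) supremum of $|\beta^{\mathcal T_x}_z|$ over the first few generations into a change of level -- the existence of an unbounded component in $E^{\ge h}(\mathcal T)$ for \emph{every} level $h'>h$ happens in $\mathcal T$ if and only if it happens in $\mathcal T_x$ for one (equivalently all) of the children $x$ of $\emptyset$.

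First I would make this precise. Fix $h\in\R$ and define, for a weighted tree $T$, the event $U_h(T):=\{\Pgff{T}{}$-a.s.\ all connected components of $E^{\ge h}(T)$ are bounded$\}$, so that $h_*(T)=\inf\{h: U_h(T)$ holds$\}$. I would first check that $h_*(\mathcal T_x)$ has, conditionally on $x\in\mathcal T$, the same law as $h_*(\mathcal T)$, since $\mathcal T_x$ under $\Pgw(\,\cdot\mid x\in\mathcal T)$ is itself a Galton--Watson tree with the same offspring/conductance law (using the standing assumption \eqref{REMARK-TInfiniteDescendants}, every vertex survives, so actually $\mathcal T_x$ has exactly the law of $\mathcal T$). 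Next, using the Markov property as above, I would show the crucial identity
\begin{equation}\label{eq:App-key}
	h_*(\mathcal T)=\max_{x:\,x^-=\emptyset} h_*(\mathcal T_x)\qquad \Pgw\text{-a.s.}
\end{equation}
The inequality $h_*(\mathcal T)\ge h_*(\mathcal T_x)$ is immediate because $\mathcal T_x\subset\mathcal T$ and a percolating level set on a subgraph percolates on the whole graph. For the reverse, suppose $h> h_*(\mathcal T_x)$ for every child $x$; then for each such $x$ and each $h'\in(h_*(\mathcal T_x),h)$ the field on $\mathcal T_x$ has an unbounded cluster in $\{\varphi_z\ge h'\}$ with positive $\Pgff{\mathcal T_x}{}$-probability, and by a standard argument (the event is a tail event in the i.i.d.\ vertex-indexed Gaussian increments used in the recursive construction of Section~\ref{Section-warmup}) this probability is in fact one. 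Conditioning on $\sigma(\varphi_z: z=\emptyset$ or $z^-=\emptyset)$, the restriction of $\varphi$ to $\mathcal T_x$ equals $\psi^{\mathcal T_x}+\beta^{\mathcal T_x}$ where $\beta^{\mathcal T_x}$ is harmonic, hence constant equal to $\varphi_x$ on... more carefully, $\beta^{\mathcal T_x}_z=E^{\mathcal T}_z[\varphi_{X_{H_{\{x^-\}}}}]=\varphi_x\,P^{\mathcal T}_z(H_x<\infty)\le |\varphi_x|$ is bounded by $|\varphi_x|<\infty$ on all of $\mathcal T_x$; therefore $\{\varphi_z\ge h'-|\varphi_x|$ for $z\in\mathcal T_x\}\supseteq\{\psi^{\mathcal T_x}_z\ge h'\}$ contains an unbounded cluster a.s. Choosing $h'$ close enough to $h$ so that $h'-|\varphi_x|<h$ on an event of positive probability and noting the cluster is connected to $\emptyset$ through $x$ when $\varphi_\emptyset\ge h$ and $\varphi_x\ge h$, one concludes that $E^{\ge h}(\mathcal T)$ has an unbounded component with positive probability, hence (tail event again) a.s., so $h_*(\mathcal T)\le h$. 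This gives \eqref{eq:App-key}.

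From \eqref{eq:App-key} the theorem follows by a classical argument. Let $h_0:=\operatorname*{ess\,sup} h_*(\mathcal T)\in[-\infty,\infty]$ (the essential supremum under $\Pgw$); this is a deterministic quantity. For any child $x$ of the root, $h_*(\mathcal T_x)$ has the same distribution as $h_*(\mathcal T)$, so $h_*(\mathcal T_x)\le h_0$ a.s., and \eqref{eq:App-key} forces $h_*(\mathcal T)\le h_0$ a.s., consistent with the definition. On the other hand, for any $\varepsilon>0$, $\Pgw(h_*(\mathcal T)>h_0-\varepsilon)>0$; independence of the subtrees $\mathcal T_x$, $x^-=\emptyset$, together with the supercriticality ensuring $\mathbf{P}(|G^{\mathcal T}_\emptyset|\ge 1)=1$ and a second-moment/branching argument, then shows that $h_*(\mathcal T)=\max_x h_*(\mathcal T_x)>h_0-\varepsilon$ with probability one (intuitively: among infinitely many independent trials down the tree, the maximum reaches arbitrarily close to the essential supremum). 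Letting $\varepsilon\downarrow 0$ yields $h_*(\mathcal T)\ge h_0$ a.s., hence $h_*(\mathcal T)=h_0$ a.s.

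I expect the main obstacle to be making the two ``tail event'' claims rigorous -- namely that the probability under $\Pgff{T}{}$ of having an unbounded level-set component is zero or one for a fixed infinite tree $T$, and that the recursion \eqref{eq:App-key} can be iterated to push $h_*(\mathcal T)$ up to the essential supremum. For the first, I would use the recursive construction of the Gaussian free field via independent standard normals $(Z_x)_{x\in\mathcal X}$ from Section~\ref{Section-warmup}: the event $\{E^{\ge h'}$ has an unbounded component$\}$ is invariant under changing finitely many $Z_x$ (by the same bounded-harmonic-perturbation reasoning), hence lies in the tail $\sigma$-algebra of an i.i.d.\ family and has probability $0$ or $1$ by Kolmogorov. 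For the iteration, one either invokes the decomposition \eqref{eq:App-key} along successive generations, or argues directly: the events $\{h_*(\mathcal T_{x})>h_0-\varepsilon\}$ over all $x$ in a fixed generation $k$ are i.i.d.\ with fixed positive probability conditioned on the tree reaching generation $k$ (which it does, a.s.), so infinitely many occur a.s., and \eqref{eq:App-key} applied repeatedly gives $h_*(\mathcal T)\ge h_0-\varepsilon$. Everything else is bookkeeping with the Markov property \eqref{LEMMA-MarkovProperty} and the fact, already recorded below \eqref{DEF-h_*}, that pruning does not affect $h_*$.
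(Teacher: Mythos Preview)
Your argument for the identity \eqref{eq:App-key} contains a genuine logical error. In the ``reverse'' direction you assume $h>h_*(\mathcal T_x)$ for every child $x$ and then assert that for $h'\in(h_*(\mathcal T_x),h)$ the level set $\{\psi_z\ge h'\}$ on $\mathcal T_x$ has an unbounded cluster with positive probability. This is backwards: by definition of $h_*(\mathcal T_x)$, for $h'>h_*(\mathcal T_x)$ the level set is $\Pgff{\mathcal T_x}{}$-a.s.\ bounded. What you actually need in that direction is to show that if the GFF on $\mathcal T$ percolates at level $h$, then the GFF on some $\mathcal T_x$ percolates at level $h$ as well; your harmonic-shift reasoning would then only give percolation of $\psi^{\mathcal T_x}$ at level $h-|\varphi_{x^-}|$, and passing from this to level $h$ requires precisely the kind of $0$--$1$ input you have not yet established. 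The ``immediate'' direction is also not immediate as written: $\mathcal T_x\subset\mathcal T$ does not help, since the GFF on $\mathcal T_x$ is not the restriction of the GFF on $\mathcal T$ to $\mathcal T_x$; you again need the Markov property and a shift argument. Finally, your concluding step from \eqref{eq:App-key} to constancy of $h_*$ is not complete: the root has a.s.\ finitely many children, so a single application of \eqref{eq:App-key} gives a maximum over finitely many i.i.d.\ copies, which need not be close to the essential supremum; iterating along generations produces vertices whose subtrees are only \emph{conditionally} independent given the tree structure, and you have not supplied the argument that turns ``$\Pgw(h_*>h_0-\varepsilon)>0$'' into ``$=1$''.

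The paper bypasses all of this by working directly with the inherited-property $0$--$1$ law of \cite[Proposition~5.6]{LyonsPeres17}. The key step (Lemma~\ref{ph:inherited}) is to show that the property $\mathcal P^h=\{\Pgff{T}{}(|E^{\ge h}_\emptyset|=\infty)=0\}$ passes from a tree to each of its principal subtrees: one uses FKG to get $\Pgff{T}{}(|E^{\ge h}_\emptyset|=\infty)\ge \Pgff{T}{}(|E^{\ge h}_y\cap T_y|=\infty)\cdot\Pgff{T}{}(\varphi_\emptyset\ge h)$, and then the Markov property conditioned on $\varphi_y=b$ (under which $\varphi|_{T_y}$ under $\Pgff{T}{}$ and under $\Pgff{T_y}{}$ have the same law) to transfer the conclusion to $\Pgff{T_y}{}$. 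This gives $\Pgw(\mathcal P^h)\in\{0,1\}$ for each rational $h$, and monotonicity in $h$ finishes. If you want to salvage your approach, note that once \eqref{eq:App-key} is correctly proven, the event $\{h_*(\mathcal T)\le h_0-\varepsilon\}$ is itself inherited, and the same $0$--$1$ law immediately yields $h_*(\mathcal T)>h_0-\varepsilon$ a.s.; but at that point you are essentially reproducing the paper's proof with an extra layer.
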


This result is known
in the case of deterministic unit conductances \cite{AbacherliSznitman2018}. We provide here a proof for the generalized case of random conductances. It proof is based on the 0-1 law for inherited properties of \cite[Proposition~5.6]{LyonsPeres17}, which we shortly recall here. For this purpose, we start with  the following definition.
\begin{deff}
	A property $\mathcal{P} $ (of trees) is called \emph{inherited} if the following holds true: When a tree $ T $ with root $x$ has property $ \mathcal{P} $, then all the subtrees $ T_y,$ $ y\in{G_x^T},$ also satisfy property $ \mathcal{P} $.
\end{deff}

Let us now recall the 0-1 law from {\cite[Proposition~5.6]{LyonsPeres17}}, whose proof can easily be adapted in our context of Galton--Watson trees with random conductances verifying \eqref{REMARK-TInfiniteDescendants}.

\begin{thm}[[Proposition~5.6 of \cite{LyonsPeres17}]\label{THM-01Law}
	If $ \mathcal{P} $ is an inherited property, then
	\begin{equation*}
		\Pgw( \mathcal{T} \text{ has } \mathcal{P})\in \set{0,1}.
	\end{equation*}
\end{thm}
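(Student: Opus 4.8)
The plan is to carry over, essentially verbatim, the classical proof of the $0$--$1$ law for inherited properties from \cite[Proposition~5.6]{LyonsPeres17}, checking only that the branching structure used there is unaffected by the presence of (possibly within-vertex dependent) random conductances. Set $q:=\Pgw(\mathcal{T}\text{ has }\mathcal{P})$, where $\mathcal{P}$ is understood, as is implicit in the definition above, to be a property of isomorphism classes of rooted weighted trees. We must show $q\in\{0,1\}$, and it is enough to prove that $q<1$ forces $q=0$.

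The key structural fact is a conditional branching property. For $k\in\N_0$ let $\mathcal{F}_k$ be the $\sigma$-algebra generated by the i.i.d.\ weight vectors $(\lambda_{v,vi})_{i\in\N}$ assigned in the construction of Section~\ref{SECTION-GWTree} to the vertices $v$ of generations $0,1,\dots,k-1$. Then $\mathcal{F}_k$ determines the first $k$ generations of $\mathcal{T}$ together with all conductances on edges joining generations $\le k$; in particular the number $Z_k$ of generation-$k$ vertices is $\mathcal{F}_k$-measurable. Moreover, each subtree $\mathcal{T}_v$ rooted at a generation-$k$ vertex $v$ is built solely from the weight vectors of the descendants of $v$ (including $v$ itself), and these are sampled independently of $\mathcal{F}_k$, i.i.d.\ with law $\nu$, from disjoint collections for distinct $v$. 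Hence, conditionally on $\mathcal{F}_k$, the family $(\mathcal{T}_v)_{v\text{ in generation }k}$ is i.i.d., each member having --- after the usual relabelling --- the law of $\mathcal{T}$ under $\Pgw$. This is exactly the branching property exploited in the unit-conductance case, and nothing here is sensitive to the conductances being random.

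Next I would use inheritedness iteratively: if $\mathcal{T}$ has $\mathcal{P}$, then every subtree $\mathcal{T}_v$ with $v$ in generation $k$ has $\mathcal{P}$. This follows by induction on $k$, the induction step being that the generation-$(k+1)$ vertices are precisely the children of the generation-$k$ vertices, to each of whose subtrees the definition of an inherited property applies. Consequently $\{\mathcal{T}\text{ has }\mathcal{P}\}\subseteq B_k:=\{\mathcal{T}_v\text{ has }\mathcal{P}\text{ for every }v\text{ in generation }k\}$. Conditioning on $\mathcal{F}_k$ and using the conditional branching property, $\Pgw(B_k\mid\mathcal{F}_k)=q^{Z_k}$, so that
\begin{equation*}
	q\;=\;\Pgw(\mathcal{T}\text{ has }\mathcal{P})\;\le\;\Pgw(B_k)\;=\;\Egw\big[q^{Z_k}\big]\qquad\text{for all }k\in\N_0.
\end{equation*}

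Finally I would let $k\to\infty$. Under \eqref{REMARK-TInfiniteDescendants} every vertex of $\mathcal{T}$ has at least one child, so $(Z_k)_{k\ge 0}$ is non-decreasing; and $m>1$ together with $\mu(0)=0$ forces $\mu(1)<1$, from which a routine estimate --- on the event $\{Z_{k_0}=\ell\}$ one has $\Pgw(Z_k=\ell\text{ for all }k_0\le k\le k_0+j)=\mu(1)^{\ell j}\to 0$ as $j\to\infty$ --- shows that $Z_k$ almost surely cannot stay bounded, i.e.\ $Z_k\to\infty$ $\Pgw$-a.s. If now $q<1$, then $q^{Z_k}\to 0$ $\Pgw$-a.s., and since $0\le q^{Z_k}\le 1$, dominated convergence gives $\Egw[q^{Z_k}]\to 0$; combined with the displayed inequality this yields $q=0$, completing the proof. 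I do not anticipate any genuine difficulty: the only step that uses anything specific to our model is the conditional branching property in the second paragraph, and it is immediate from the explicit recursive construction of $\mathcal{T}$.
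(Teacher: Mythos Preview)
Your proof is correct and is exactly the standard argument from \cite[Proposition~5.6]{LyonsPeres17} that the paper invokes; the paper itself does not spell out a proof but only remarks that the Lyons--Peres argument carries over to the weighted setting under \eqref{REMARK-TInfiniteDescendants}, which is precisely what you have verified.
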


Let us now take advantage of the previous theorem in order to prove that $ h_* $ is constant. For this purpose, we define for each $h\in{\R}$ the property $ \mathcal{P}^h $ by saying  that  a tree $ {{T}} $ rooted at $x$ satisfies $ \mathcal{P}^h $ if $T_y$ is transient for all $y\in{T}$ and
\begin{align*}
	&\Pgff{{T}}{} \big(\big|E^{\ge h}_x\big|=∞\big)=0,
\end{align*}
where  for $y \in  T$ we denote by $ E^{\ge h}_{y} $ the connected component of $ y$ in $ \{z\in  T \,  : \,  φ_z\ge h\} $. We now need to prove that the property $\mathcal{P}^h$ is inherited, which has been done in the setting of unit conductances in \cite[Lemma~5.1]{AbacherliSznitman2018}.  For the reader's convenience we now present a proof in our setting inspired by \cite{Tassy2010}.

\begin{lemma}
	\label{ph:inherited}
	For each $h\in{\R},$ the property $ \mathcal{P}^h $ is inherited.
\end{lemma}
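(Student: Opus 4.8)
The plan is to verify the two defining conditions of $\mathcal{P}^h$ separately for a subtree $T_w$ with $w\in G_x^T$, assuming $T$ (rooted at $x$) has property $\mathcal{P}^h$.

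\textbf{Transience of all subtrees.} If $T$ has $\mathcal{P}^h$, then by definition $T_y$ is transient for all $y\in T$. Since $T_w$ is a tree rooted at $w$, and every vertex of $T_w$ is also a vertex of $T$, the subtrees of $T_w$ are among the subtrees $T_y$, $y\in T$, hence all transient. So this part of the property is trivially inherited.

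\textbf{No unbounded level-set component at the root.} This is the substantive step. Fix $w\in G_x^T$; I want to show $\Pgff{T_w}{}(|E^{\ge h}_w|=\infty)=0$, where here $E^{\ge h}_w$ refers to the level set of the Gaussian free field \emph{on $T_w$}. First I would invoke the Markov property \eqref{LEMMA-MarkovProperty} with $K=\{x\}$ (a single point) and $U=T\setminus\{x\}$: under $\Pgff{T}{}$, writing $\varphi_z = \psi_z^U + \beta_z^U$, the field $(\psi_z^U)_{z\in T}$ is a centered Gaussian field with covariance $g^T_U$, which vanishes at $x$ and is independent of $\varphi_x$, while $\beta_z^U = P_z^T(H_x<\infty)\,\varphi_x$. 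Because $T$ is a tree, deleting $x$ disconnects $T$ into the subtrees $T_w$, $w\in G_x^T$ (and a possible ``upward'' piece if $x\ne\emptyset$, but here $x$ is the root of $T$), and $g^T_U$ restricted to $T_w\times T_w$ equals $g^{T_w}$ — the walk killed on leaving $U=T\setminus\{x\}$ is, from any vertex of $T_w$, exactly the walk on $T_w$ (killed when it would step up to $x$), which is the Green function of $T_w$. Therefore $(\psi_z^U)_{z\in T_w}$ has the law of the Gaussian free field on $T_w$. The relation $\varphi_z = \psi_z^U + P_z^T(H_x<\infty)\varphi_x$ on $T_w$ shows that the event $\{|E^{\ge h}_w|=\infty\}$ computed from $(\psi_z^U)_{z\in T_w}$ — call it the GFF-on-$T_w$ event — is related to the analogous event for $\varphi$ on $T$ but shifted by the random harmonic function $z\mapsto P_z^T(H_x<\infty)\varphi_x$.

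\textbf{Handling the harmonic shift.} The issue is that $\beta^U$ is not zero on $T_w$, so $\{|E^{\ge h}_w|=\infty\}$ for the field $\psi^U$ is not literally the same as $\{|E^{\ge h}_w|=\infty\}$ for the field $\varphi$. I would resolve this as in \cite{Tassy2010} by a monotonicity/conditioning argument: condition on $\varphi_x = t$. On this event, $\varphi_z = \psi_z^U + t\,P_z^T(H_x<\infty)$ for $z\in T_w$, and $0\le P_z^T(H_x<\infty)\le 1$. For $t\ge 0$, $\varphi_z \ge \psi_z^U$ pointwise, so $\{\varphi|_{T_w}\text{ has an infinite }\ge h\text{-cluster at }w\}\supseteq\{\psi^U|_{T_w}\text{ has one}\}$; for $t<0$ the inclusion goes the other way, but since $P_z^T(H_x<\infty)\to 0$ as $z\to\infty$ along $T_w$ (transience of $T_{x^-}$-type argument / the walk from deep in $T_w$ rarely reaches $x$), the shift is bounded and tends to $0$, so by a standard argument an infinite cluster for one field at level $h$ forces an infinite cluster for the other at level $h$ too (e.g. an infinite $\ge h$-cluster for $\psi^U$, being infinite, goes arbitrarily deep where the shift is $<\varepsilon$ in absolute value, but one wants the reverse implication). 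The clean way: the property ``$\Pgff{S}{}(|E^{\ge h}_{\mathrm{root}(S)}| = \infty) = 0$'' is equivalent, via the $0$-$1$ behavior of the field deep in the tree and the fact that $\beta^U$ vanishes at infinity, to the same statement for the tilted field; more concretely, since $\Pgff{T}{}(|E^{\ge h}_x|=\infty)=0$ and $E^{\ge h}_x \supseteq \{x\}\cup\bigcup_{w} (\text{infinite }\ge h\text{-clusters in }T_w\text{ containing a child of }x\text{ adjacent to }x)$, one gets that for each $w$, $\Pgff{T}{}$-a.s. there is no infinite $\ge h$-cluster of $\varphi$ inside $T_w$ \emph{touching $w$} on the event $\varphi_w\ge h$; combined with the Markov decomposition and integrating over $\varphi_x$, this transfers to $\psi^U$ hence to the GFF on $T_w$. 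The main obstacle is precisely this transfer — making rigorous that the a.s. absence of an infinite cluster at the root of $T$ for $\varphi$ implies the a.s. absence at the root of $T_w$ for the intrinsic GFF on $T_w$, despite the nonzero harmonic offset $\beta^U$ — and the resolution is the vanishing-at-infinity of $\beta^U$ together with a pointwise-domination/limiting argument as in \cite[proof of Lemma~5.1]{AbacherliSznitman2018} and \cite{Tassy2010}. Once this is done for each $w\in G_x^T$, the property $\mathcal{P}^h$ is inherited, and Theorem~\ref{THM-01Law} applies.
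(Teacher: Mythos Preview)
Your transience part is fine, but the substantive step contains an error. You claim that with $U=T\setminus\{x\}$, the field $(\psi^U_z)_{z\in T_w}$ has the law of the Gaussian free field on $T_w$, i.e.\ that $g^T_U$ restricted to $T_w\times T_w$ equals $g^{T_w}$. This is false. The walk on $T$ killed at $x$ is \emph{not} the walk on $T_w$: at the vertex $w$ the two walks differ, since on $T$ the walk steps from $w$ to $x$ (and is killed) with probability $\lambda_{w,x}/\lambda_w^T>0$, whereas on $T_w$ there is no such edge. A direct computation gives
\[
g^T_U(w,w)=\frac{1}{\lambda_{w,x}+1/g^{T_w}(w,w)}\neq g^{T_w}(w,w),
\]
so the covariance structures disagree already at the root of $T_w$. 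Your subsequent ``harmonic shift'' discussion tries to control $\beta^U$, i.e.\ the \emph{mean}, but the discrepancy above is in the \emph{covariance}; the vanishing-at-infinity of $\beta^U$ cannot repair this, and the argument remains a sketch in any case.

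The paper's proof avoids both problems by two simple moves. First, it uses the FKG inequality to pass from $\Pgff{T}{}(|E^{\ge h}_x|=\infty)=0$ to $\Pgff{T}{}(|E^{\ge h}_y\cap T_y|=\infty)=0$ for each child $y\in G_x^T$, so no pointwise-domination or tail argument is needed. Second, it applies the Markov property at $K=\{y\}$ rather than at $K=\{x\}$ and conditions on $\varphi_y=b$: for $z,z'\in T_y\setminus\{y\}$ the walk on $T$ killed at $y$ never visits a vertex where $T$ and $T_y$ differ, so $g^T_{T\setminus\{y\}}(z,z')=g^{T_y}_{T_y\setminus\{y\}}(z,z')$, and likewise $P_z^T(H_y<\infty)=P_z^{T_y}(H_y<\infty)$. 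Hence $\varphi|_{T_y}$ under $\Pgff{T}{}(\cdot\mid\varphi_y=b)$ has exactly the law of the free field on $T_y$ conditioned on $\varphi_y=b$. Disintegrating and re-integrating over $b$ then transfers the zero-probability statement to $\Pgff{T_y}{}$ with no shift to handle. The fix to your argument is thus to move the Markov cut from $x$ to the child $w$, and to replace the monotonicity heuristics by the one-line FKG step.
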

\begin{proof}

	Assume that $T$ is a tree rooted at $x$ verifying $\mathcal{P}^h.$ For any $y \in  T$ with $y\in{G_x^T}$ we have
	\begin{align*}
		\Pgff{ T}{}\big(\big| E^{\ge h}_x\big|=∞\big)
		&\ge \Pgff{ T}{}\big(\big| E^{\ge h}_y\cap {T}_y\big|=∞, φ_x\ge h\big)
		\ge  \Pgff{ T}{}\big(\big| E^{\ge h}_y\cap {T}_y\big|=∞\big)
		\Pgff{ T}{}( φ_x\ge h),     \end{align*}
	where the second inequality is a consequence of the finite dimensional FKG inequality for Gaussian fields, see \cite{Pitt1982}, and a classical limiting procedure.
	Since the second factor on the right-hand side is non-zero, $\Pgff{{T}}{}\big(\big|E^{\ge h}_x\big|=∞\big)=0$ implies for each $y\in{G_x^T}$
	\begin{equation*}
		\Pgff{ T}{}\big(\big| E^{\ge h}_y\cap {T}_y\big|=∞\big)=0.
	\end{equation*}

	What is left to do is to show that the previous equation holds also for the Gaussian free field on the subtree ${T}_y $. By disintegration, we observe that for $\lambda$-almost all $ b\in\R $ we have
	\begin{equation*}
		\Pgff{ T}{\big| E^{\ge h}_y\cap {{T}}_y\big|=∞\given φ_y=b}=0.
	\end{equation*}
	From the Markov property applied to the set $ K=\{y\} $, it follows that the restriction of the Gaussian free field under $ \Pgff{{T}}{\, \cdot\,|\,φ_y=b} $ to $ {{T}}_y $ has the same law as the Gaussian free field under $ \Pgff{{T}_y}{\, \cdot\,|\,φ_y=b} .$ Hence we obtain that for each $y\in{G_x^T}$ and $\lambda$-almost all $ b\in\R $ we have
	\begin{equation*}
		\Pgff{{T}_y}{\big| E^{\ge h}_y\big|=∞\given φ_y=b}=0.
	\end{equation*}
	Integrating again we obtain $ \Pgff{T_y}{\big| E^{\ge h}_y\big|=∞}=0,$ proving that $\mathcal{P}^h$ is inherited.
\end{proof}

With the previous 0-1 law and the inherited property $ \mathcal{P}^h $, we can prove Theorem~\ref{PROP-h*constant}.

\begin{proof}[Theorem~\ref{PROP-h*constant}]
	Since the property $ \mathcal{P}^h $ is inherited by Lemma~\ref{ph:inherited}, it follows from Theorem~\ref{THM-01Law} that $ \Pgw (\mathcal{T} \text{ has } \mathcal{P}_h)\in \set{0,1}$ for each $h\in{\R}.$  Moreover by Proposition~\ref{prop:GWtransient} and since $\mathcal{T}_x$ has the same law as $x\cdot\mathcal{T}$ under $\Pgw,$ see \eqref{REMARK-TInfiniteDescendants}, $\mathcal{T}_x$ is transient for all $x\in{\mathcal{T}}$ $\Pgw{}$-a.s. Hence for every $s\in{\mathbb{Q}} $, there exists an event $ A_s  $ with $ \Pgw (A_s)=1$ such that
	$ \mathcal{T}\mapsto\ind_{
		\{\Pgff{\mathcal{T}} {}(|E^{\ge s}_\emptyset|=∞)=0\}} $ is constant on $ A_s.  $ Thus on the event $ A:=\bigcap_{s\in \mathbb{Q}} A_s$, all the functions $ \ind_{\{ \Pgff{\mathcal{T}}{}(|E^{\ge s}_\emptyset|=∞)=0 \}},$ $s\in{\mathbb{Q}},$ are constant. Now, since the function $ h\mapsto \Pgff{\mathcal{T}}{}(|E^{\ge h}_\emptyset|=∞)$ is decreasing, the function
	\begin{equation*}
		\mathcal{T}\mapsto  \inf_{s\in\mathbb{Q}} \set{\Pgff{\mathcal{T}}{}(|E^{\ge s}_\emptyset|=∞)=0} =\inf_{h\in\mathbb{R}} \set{\Pgff{\mathcal{T}}{}(|E^{\ge h}_\emptyset|=∞)=0}
	\end{equation*}
	is well defined and constant on $ A ,$ and we can conclude by \eqref{DEF-h_*} and FKG inequality.
\end{proof}

Using an inherited property $ \mathcal{P}^u $ similar to before but for the vacant set $\mathcal{V}^u,$
one can also prove in our setting the constancy of the critical parameter $u_*$ for random interlacements.
\end{appendix}

\printbibliography

\end{document}